\documentclass[a4paper]{amsart}
\usepackage[T1]{fontenc}
\usepackage{lmodern}
\usepackage[utf8]{inputenc}
\usepackage{amssymb}
\usepackage[all]{xy}
\usepackage{enumitem}
\usepackage{mathrsfs,dsfont,mathtools}
\usepackage{nicefrac}

\usepackage[pdftitle={Equivariant embedding theorems and topological index maps},
  pdfauthor={Heath Emerson and Ralf Meyer},
  pdfsubject={Mathematics; MSC 19K35, 46L80}]{hyperref}
\newcommand*{\MRref}[2]{ \href{http://www.ams.org/mathscinet-getitem?mr=#1}{MR \textbf{#1}}}
\newcommand*{\arxiv}[1]{\href{http://www.arxiv.org/abs/#1}{arXiv: #1}}

\usepackage[lite]{amsrefs}
\usepackage{microtype}
\numberwithin{equation}{section}
\theoremstyle{plain}
\newtheorem{theorem}[equation]{Theorem}
\newtheorem{lemma}[equation]{Lemma}
\newtheorem{proposition}[equation]{Proposition}
\newtheorem{corollary}[equation]{Corollary}
\theoremstyle{definition}
\newtheorem{definition}[equation]{Definition}
\theoremstyle{remark}
\newtheorem{remark}[equation]{Remark}
\newtheorem{example}[equation]{Example}

\DeclareMathOperator{\Hom}{Hom}
\DeclareMathOperator{\KK}{KK}
\DeclareMathOperator{\RKK}{RKK}
\DeclareMathOperator{\K}{K}
\DeclareMathOperator{\KO}{KO}
\DeclareMathOperator{\RK}{RK}
\DeclareMathOperator{\Vect}{Vect}
\DeclareMathOperator{\Gl}{Gl}

\newcommand*{\Coh}{\textup F}

\newcommand*{\bd}{\partial}

\newcommand*{\Id}{\textup{Id}}
\newcommand*{\ima}{\textup i}
\newcommand*{\diff}{\textup d}

\newcommand*{\Spinc}{\textup{Spin}^\textup c}

\newcommand*{\C}{\mathbb C}
\newcommand*{\Z}{\mathbb Z}
\newcommand*{\Torus}{\mathbb T}
\newcommand*{\Sphere}{\mathbb S}
\newcommand*{\Disk}{\mathbb D}
\newcommand*{\N}{\mathbb N}
\newcommand*{\R}{\mathbb R}
\newcommand*{\Comp}{\mathbb K}

\newcommand*{\Hils}{\mathcal H}
\newcommand*{\CONT}{\textup C}
\newcommand*{\EG}{\mathcal E}

\newcommand*{\Nor}{\mathfrak{Nor}}

\newcommand*{\Tvert}{\textup T}
\newcommand*{\Grd}{\mathcal G}
\newcommand*{\Base}{Z} 
\newcommand*{\Tot}{X}  
\newcommand*{\Other}{Y}
\newcommand*{\grd}{g}  
\newcommand*{\base}{z} 
\newcommand*{\tot}{x}  
\newcommand*{\other}{y}
\newcommand*{\Source}{X}
\newcommand*{\source}{x}
\newcommand*{\Target}{Y}
\newcommand*{\target}{y}
\newcommand*{\Third}{U}
\newcommand*{\VB}{V}
\newcommand*{\vb}{v}
\newcommand*{\Triv}{E}
\newcommand*{\triv}{e}
\newcommand*{\Normal}{\textup N}
\newcommand*{\normal}{\nu}
\newcommand*{\Kclass}{\xi}
\newcommand*{\an}{\textup{an}}

\newcommand*{\NM}{\Phi}
\newcommand*{\anchor}{\varrho}
\newcommand*{\mapr}{f}
\newcommand*{\pr}{\mathrm{pr}}


\newcommand*{\nb}{\nobreakdash}
\newcommand*{\Cst}{\textup C^*}
\newcommand*{\sC}{\sigma\text{-C}^*}

\newcommand*{\abs}[1]{\lvert#1\rvert}
\newcommand*{\total}[1]{\lvert#1\rvert}
\newcommand*{\proj}[1]{\pi_{#1}}
\newcommand*{\zers}[1]{\zeta_{#1}}
\newcommand*{\norm}[1]{\lVert#1\rVert}

\newcommand*{\cl}[1]{\overline{#1}}

\newcommand*{\defeq}{\mathrel{\vcentcolon=}}
\newcommand*{\eqdef}{\mathrel{=\vcentcolon}}
\newcommand*{\pt}{\star}

\newcommand*{\blank}{\textup{\textvisiblespace}}

\newcommand*{\mono}{\rightarrowtail}
\newcommand*{\epi}{\twoheadrightarrow}
\newcommand*{\opem}{\hookrightarrow}
\newcommand*{\xopem}{\xhookrightarrow}
\newcommand*{\congto}{\xrightarrow\cong}

\DeclareMathOperator{\supp}{supp}

\DeclareMathOperator{\Ind}{Index}

\hyphenation{group-oid group-oids mon-oid-al Gro-then-dieck}

\begin{document}
\title[Embedding theorems and index maps]{Equivariant embedding theorems\\ and topological index maps}

\author{Heath Emerson}
\email{hemerson@math.uvic.ca}

\address{Department of Mathematics and Statistics\\
  University of Victoria\\
  PO BOX 3045 STN CSC\\
  Victoria, B.C.\\
  Canada V8W 3P4}

\author{Ralf Meyer}
\email{rameyer@uni-math.gwdg.de}

\address{Mathematisches Institut and
  Courant Research Centre ``Higher Order Structures''\\
  Georg-August Universit\"at G\"ottingen\\
  Bunsenstra{\ss}e 3--5\\
  37073 G\"ottingen\\
  Germany}

\begin{abstract}
  The construction of topological index maps for equivariant families of Dirac operators requires factoring a general smooth map through maps of a very simple type: zero sections of vector bundles, open embeddings, and vector bundle projections.  Roughly speaking, a normally non-singular map is a map together with such a factorisation.  These factorisations are models for the topological index map.  Under some assumptions concerning the existence of equivariant vector bundles, any smooth map admits a normal factorisation, and two such factorisations are unique up to a certain notion of equivalence.  To prove this, we generalise the Mostow Embedding Theorem to spaces equipped with proper groupoid actions.  We also discuss orientations of normally non-singular maps with respect to a cohomology theory and show that oriented normally non-singular maps induce wrong-way maps on the chosen cohomology theory.  For \(\K\)\nb-oriented normally non-singular maps, we also get a functor to Kasparov's equivariant \(\KK\)-theory.  We interpret this functor as a topological index map.
\end{abstract}

\subjclass[2000]{19K35, 57R40, 22A22}
\thanks{Heath Emerson was supported by a National Science and Engineering Council of Canada (NSERC) Discovery grant.  Ralf Meyer was supported by the German Research Foundation (Deutsche Forschungsgemeinschaft (DFG)) through the Institutional Strategy of the University of G\"ottingen.}
\maketitle

\section{Introduction}
\label{sec:intro}

The claim that Kasparov theory for commutative \(\Cst\)\nb-algebras may be described using correspondences came up already in the 1980s (see \cites{Baum-Douglas:K-homology, Baum-Block:Bicycles, Connes-Skandalis:Longitudinal}).  But detailed proofs only appeared much more recently and only for special situations (see \cites{Raven:Thesis, Baum-Higson-Schick:Equivalence}).  This article prepares for a description of bivariant \(\K\)\nb-theory by geometric cycles in~\cite{Emerson-Meyer:Correspondences}.  In fact, it was part of a first draft of~\cite{Emerson-Meyer:Correspondences}.  We split it to make the results more easily accessible.

Correspondences combine the functoriality of \(\K\)\nb-theory for proper maps and its \emph{wrong-way} functoriality for \(\K\)\nb-oriented maps.  The construction of wrong-way functoriality in~\cite{Connes-Skandalis:Longitudinal} is an \emph{analytic} one, however, and a purely topological construction of wrong-way functoriality in \emph{equivariant} bivariant \(\K\)\nb-theory does not seem to exist in the same generality as the analytic construction.  In order to analyse equivariant topological wrong-way functoriality, we introduce a category of \(\K\)\nb-oriented equivariant normally non-singular maps and a covariant functor (wrong-way functor) from this category to equivariant Kasparov theory.  We show for many proper groupoids that \(\K\)\nb-oriented, equivariant, smooth, \emph{normally non-singular} maps are equivalent to \(\K\)\nb-oriented, equivariant, smooth maps in the usual sense.  This depends on the existence of enough equivariant vector bundles. 

The construction of wrong-way elements for smooth \(\K\)\nb-oriented maps in~\cite{Connes-Skandalis:Longitudinal} uses a factorisation into a smooth \(\K\)\nb-oriented embedding and a smooth \(\K\)\nb-oriented submersion.  For embeddings, the construction of wrong-way elements is purely topological, combining the Thom isomorphism for the normal bundle with the \(^*\)\nb-homomorphism on \(\CONT_0\)\nb-functions induced by the open embedding of a tubular neighbourhood.  The wrong-way element for a \(\K\)\nb-oriented submersion \(f\colon \Tot\to\Target\) is the \(\KK\)\nb-class~\(D_f\) of the family of Dirac operators along the fibres of~\(f\).  Hence we call the construction in~\cite{Connes-Skandalis:Longitudinal} \emph{analytic} wrong-way functoriality.

The Atiyah--Singer Index Theorem for families computes the action of~\(D_f\) on \(\K\)\nb-theory.  It may be strenghthened to a topological description of the class~\(D_f\) itself.  This topological model for~\(D_f\) only uses Thom isomorphisms and functoriality for open embeddings.  Like the Atiyah--Singer topological index map, it is based on an embedding~\(\iota\) of~\(\Tot\) into~\(\R^n\).  Then \((\iota,f)\colon \Tot\to\Target\times\R^n\) is an embedding as well.  Let~\(\VB\) be the normal bundle of \((\iota,f)\) and let~\(\total{\VB}\) be its total space.  This vector bundle is \(\K\)\nb-oriented because~\(f\) is \(\K\)\nb-oriented.  Hence the Thom isomorphism provides a natural class in \(\KK_*\bigl(\CONT_0(\Tot),\CONT_0(\total{\VB})\bigr)\).  A tubular neighbourhood for the embedding \((\iota,f)\) provides an open embedding \(\total{\VB}\opem\Target\times\R^n\), which induces a \(^*\)\nb-homomorphism \(\CONT_0(\total{\VB})\to\CONT_0(\Target\times\R^n)\).  Finally, Bott periodicity yields an invertible element in \(\KK_*\bigl(\CONT_0(\Target\times\R^n),\CONT_0(\Target)\bigr)\).  The Kasparov product of these three ingredients is a class~\(f!\) in \(\KK_*\bigl(\CONT_0(\Tot),\CONT_0(\Target)\bigr)\) --~the \emph{topological} wrong-way element of~\(f\).  The functoriality of the analytic wrong-way construction implies
\[
f! = D_f \qquad \text{in \(\KK_*\bigl(\CONT_0(\Tot),\CONT_0(\Target)\bigr)\).}
\]
In particular, \(f!\) depends neither on the chosen embedding~\(\iota\) nor on the chosen tubular neighbourhood for \((\iota,f)\).

Now consider the equivariant situation where a groupoid~\(\Grd\) with object space~\(\Base\) acts on~\(\Tot\).  If we assume that~\(\Grd\) is proper and acts fibrewise smoothly and that~\(f\) is \(\Grd\)\nb-equivariantly \(\K\)\nb-oriented, then the Dirac operators along the fibres of~\(f\) define an equivariant class \(D_f\in \KK^\Grd_*\bigl(\CONT_0(\Tot),\CONT_0(\Target)\bigr)\).  For the topological index map, we need an embedding of~\(\Tot\) into the total space~\(\total{\Triv}\) of a \(\Grd\)\nb-equivariant vector bundle~\(\Triv\) over~\(\Target\).  If such an embedding exists, we may also assume that~\(\Triv\) is \(\Grd\)\nb-equivariantly \(\K\)\nb-oriented.  Then the normal bundle of the embedding is \(\Grd\)\nb-equivariantly \(\K\)\nb-oriented, so that a Thom isomorphism applies, and an equivariant tubular neighbourhood theorem provides an open embedding from the total space of the normal bundle into~\(\total{\Triv}\).  Thus we may construct a topological wrong-way element \(f!\in \KK^\Grd_*\bigl(\CONT_0(\Tot),\CONT_0(\Target)\bigr)\) exactly as above.  The same argument as in the non-equivariant case shows that~\(f!\) is equal to~\(D_f\).  In the special case of bundles of compact groups, this construction is already carried out by Victor Nistor and Evgenij Troitsky in~\cite{Nistor-Troitsky:Thom_gauge}.

The above construction motivates the definition of a \emph{normally non-singular map}.  In a first approximation, a \(\Grd\)\nb-equivariant normally non-singular map from~\(\Source\) to~\(\Target\) consists of a \(\Grd\)\nb-vector bundle~\(\VB\) over~\(\Source\), a \(\Grd\)\nb-vector bundle~\(\Triv^\Target\) over~\(\Target\), and an open embedding~\(f\) from the total space of~\(\VB\) to the total space of~\(\Triv^\Target\).  Two additional assumptions are necessary for certain technical purposes: the \(\Grd\)\nb-vector bundle over~\(\Target\) should be the pull-back of a \(\Grd\)\nb-vector bundle over~\(\Base\) --~we call such \(\Grd\)\nb-vector bundles \emph{trivial}~-- and the \(\Grd\)\nb-vector bundle over~\(\Source\) should be a direct summand in a trivial \(\Grd\)\nb-vector bundle (subtrivial).

We define an appropriate notion of equivalence of normally non-singular maps, based on isotopy of open embeddings and lifting along trivial \(\Grd\)\nb-vector bundles, and we construct a composition and an exterior product that turn equivalence classes of normally non-singular maps into a symmetric monoidal category.  For all these considerations, it is irrelevant whether the maps in question are smooth.

Let~\(\Grd\) be a proper groupoid with object space~\(\Base\) and let \(\Source\) and~\(\Target\) be bundles of smooth manifolds over~\(\Base\) with fibrewise smooth actions of~\(\Grd\).  If there is a \(\Grd\)\nb-equivariant smooth embedding of~\(\Source\) into a \(\Grd\)\nb-vector bundle over~\(\Base\), then any smooth \(\Grd\)\nb-equivariant map \(\Source\to\Target\) is the trace of an essentially unique smooth normally non-singular map.  There are, however, proper groupoids with no non-trivial \(\Grd\)\nb-vector bundles over~\(\Base\).  For them, we lack normally non-singular maps to~\(\Base\), so that smooth maps need not admit the factorisation required for a normally non-singular map.  This is why the theory of normally non-singular maps is needed in~\cite{Emerson-Meyer:Correspondences}.  Under some technical assumptions, smooth normally non-singular maps are essentially equivalent to ordinary smooth maps.  A general theory of correspondences based on smooth maps would need such technical assumptions in all important theorems.  When we replace smooth maps by normally non-singular maps, the theory goes through much more smoothly.

A simple counterexample of a smooth map with no normal factorisation is the following.  Let~\(A\) be a matrix in \(\Gl(2,\Z)\).  Form the locally trivial bundle of torus groups~\(\Torus^2\) over the circle~\(\Sphere^1\) with monodromy induced by~\(A\).  This defines a compact groupoid~\(\Grd_A\) with Haar system.  If~\(A\) is hyperbolic, then it turns out that any \(\Grd_A\)\nb-equivariant vector bundle over~\(\Sphere^1\) carries a trivial action of~\(\Grd_A\).  The morphism space of~\(\Grd_A\) with the smooth translation action of~\(\Grd_A\) cannot embed equivariantly in any \(\Grd_A\)\nb-vector bundle over~\(\Sphere^1\) because the translation action is non-trivial.

As a consequence, the \(\Grd_A\)\nb-equivariant index in \(\K^*_{\Grd_A}(\Sphere^1)\) of the fibrewise Dolbeault operators on the fibres \(\C/\Z^2\) of~\(\Grd_A\) cannot be computed along the lines of the Atiyah--Singer procedure: since there is no equivariant embedding of~\(\Grd_A\) into an equivariant vector bundle over~\(\Sphere^1\), new ideas would be needed to construct a topological index map in this case.  There are similar counterexamples where~\(\Grd\) is a compact group and~\(\Tot\) is a non-compact smooth \(\Grd\)\nb-manifold that is not of finite orbit type.  In this case, Mostow's Embedding Theorem does not apply and we do not know how to construct a topological index map.

Whereas the functoriality of wrong-way elements is a difficult issue in \cites{Connes:Survey_foliations, Connes-Skandalis:Longitudinal}, it is straightforward here thanks to our restrictive notion of normally non-singular map.  For us, the difficulty is to lift smooth maps to normally non-singular maps; once this lifting is achieved, smooth structures become irrelevant.  Furthermore, the equality in Kasparov theory of the analytic and topological wrong-way elements is a strong version of the Atiyah--Singer Index Theorem, whose proof is essentially equivalent to the proof that analytic wrong-way maps are functorial.

And whereas special features of bivariant \(\K\)\nb-theory are needed to construct the \emph{analytic} wrong-way functoriality for smooth submersions in~\cite{Connes-Skandalis:Longitudinal}, the topological wrong-way functoriality for \(\K\)\nb-oriented normally non-singular maps only uses Thom isomorphisms and functoriality for open embeddings.  Therefore, we first discuss the category of normally non-singular maps without orientations and without reference to any cohomology theory.  Then we introduce orientations with respect to any equivariant multiplicative cohomology theory and construct wrong-way maps in this generality.  Finally, we specialise to \(\K\)\nb-theory and compare our construction with the analytic wrong-way maps for smooth \(\K\)\nb-oriented submersions.  The generalisation to arbitrary equivariant cohomology theories should be useful, for instance, to construct natural bivariant Chern characters.

Since analysis plays no role in the construction of our bivariant cohomology theories, we do not need spaces to be locally compact~-- paracompact Hausdorff is good enough.  Our notion of a (numerably) proper groupoid combines Abels' numerably proper group actions (\cite{Abels:Universal}) with Haar systems.  Without assuming local compactness, it ensures that pull-backs of equivariant vector bundles along equivariantly homotopic maps are isomorphic; that equivariant vector bundles carry invariant inner products; and that extensions of equivariant vector bundles split.  These are reasons why we need \(\Grd\)\nb-spaces to be numerably proper and paracompact.

The restriction to proper groupoids looks like a severe limitation of generality at first sight because it seems to exclude a description of \(\KK^\Grd_*(\CONT_0(\Tot),\C)\) for an infinite discrete group~\(\Grd\) and a smooth manifold~\(\Tot\) with a proper smooth action of~\(\Grd\).  A direct approach would require \(\Grd\)\nb-equivariant normally non-singular maps from~\(\Tot\) to the point, which rarely exist.  Instead, we consider \(\RKK^\Grd_*(\EG\Grd; \CONT_0(\Tot),\C)\) for a universal proper \(\Grd\)\nb-space~\(\EG\Grd\).  This group is isomorphic to \(\KK^\Grd_*(\CONT_0(\Tot),\C)\) if~\(\Grd\) acts properly on~\(\Tot\) (see~\cite{Meyer-Nest:BC}), and it requires \(\Grd\ltimes\EG\Grd\)-equivariant normally non-singular maps from \(\EG\Grd\times\Tot\) to~\(\EG\Grd\).  Thus we replace the discrete \emph{group}~\(\Grd\) by the \emph{proper groupoid}~\(\Grd\ltimes\EG\Grd\), and a smooth \(\Grd\)\nb-manifold by a bundle of smooth manifolds over~\(\EG\Grd\) with a fibrewise smooth action of~\(\Grd\).

\smallskip

In Section~\ref{sec:groupoids_actions}, we discuss the class of groupoids that we will be working with, define actions, proper actions, equivariant vector bundles and prove various basic results about them.  In particular, we introduce a class of numerably proper groupoids which behave nicely even without assuming local compactness.

Section~\ref{sec:equivariant_embeddings} contains the main geometric results of this article.  We prove embedding theorems for bundles of smooth manifolds, equivariant with respect to a proper groupoid satisfying some conditions about equivariant vector bundles.

In Section~\ref{sec:normal_maps}, we introduce equivariant normally non-singular maps and define an equivalence relation and a composition for them.  We study some properties of the resulting category of normally non-singular maps and use the embedding theorem to relate it to the homotopy category of smooth maps.

In Section~\ref{sec:orientations}, we assume, in addition, that we are given a multiplicative cohomology theory on the category of \(\Grd\)\nb-spaces.  We discuss the resulting notions of orientation and Thom isomorphisms for oriented \(\Grd\)\nb-vector bundles, and we use the latter to construct wrong-way maps on cohomology for oriented normally non-singular maps.

Finally, Section~\ref{sec:index} discusses wrong-way elements of \(\K\)\nb-oriented normally non-singular maps in bivariant \(\K\)\nb-theory.  We briefly recall the analytic wrong-way functoriality by Connes and Skandalis and compare it to our topological analogue.

\section{Preliminaries on groupoids and their actions}
\label{sec:groupoids_actions}

The authors at first only had locally compact groupoids with Haar system in mind.  But it is interesting to allow also non-locally compact groups such as loop groups or the projective unitary group \(\textup{PU}(\Hils)\) for a Hilbert space~\(\Hils\).  The category of free and proper actions of \(\textup{PU}(\Hils)\) seems a good setting to study twisted \(\K\)\nb-theory.  This is why we allow more general topological groupoids in our constructions, as long as this creates no serious additional difficulties.  We have not explored extensions to non-Hausdorff, locally Hausdorff groupoids as in~\cite{Tu:Non-Hausdorff}.

\begin{definition}
  \label{def:G-map}
  Let~\(\Grd\) be a topological groupoid with object space~\(\Base\) and range and source maps \(r,s\colon \Grd\rightrightarrows \Base\).

  A \emph{\(\Grd\)\nb-space} is a topological space~\(\Tot\) with a continuous action of~\(\Grd\); this means that~\(\Tot\) comes with a continuous map \(\anchor\colon \Tot\to\Base\), its \emph{anchor map}, and a homeomorphism
  \[
  \Grd\times_{s,\anchor}\Tot \congto \Grd\times_{r,\anchor} \Tot,
  \qquad (\grd,\tot)\mapsto (\grd,\grd\cdot\tot),
  \]
  subject to the usual associativity and unitality conditions for groupoid actions.

  A \emph{\(\Grd\)\nb-map} between two \(\Grd\)\nb-spaces is a continuous \(\Grd\)\nb-equivariant map.
\end{definition}

\begin{definition}
  \label{def:transformation_groupoid}
  Let~\(\Grd\) be a topological groupoid and let~\(\Tot\) be a \(\Grd\)\nb-space.  A \emph{\(\Grd\)\nb-space over~\(\Tot\)} is a \(\Grd\)\nb-space with a \(\Grd\)\nb-map to~\(\Tot\).  We define the \emph{transformation groupoid} \(\Grd\ltimes\Tot\) such that a \(\Grd\ltimes\Tot\)-space is nothing but a \(\Grd\)\nb-space over~\(\Tot\).  Its object space is~\(\Tot\), its morphism space~\(\Grd\times_{s,\anchor}\Tot\); its range and source maps are \(r(\grd,\tot) = \grd\cdot\tot\) and \(s(\grd,\tot)=\tot\); and its composition is \((\grd_1,\tot_1)\cdot (\grd_2,\tot_2) \defeq (\grd_1\cdot\grd_2,\tot_2)\).
\end{definition}

\begin{definition}
  \label{def:embedding}
  A \(\Grd\)\nb-map \(f\colon \Source\to\Target\) is called an \emph{embedding} if it is a homeomorphism onto its range, equipped with the subspace topology from~\(\Target\).  An embedding is called \emph{closed} or \emph{open} if \(f(\Source)\) is closed or open in~\(\Target\), respectively.  We reserve the arrow~\(\opem\) for open embeddings.
\end{definition}

\begin{definition}
  \label{def:G-vector_bundle}
  Let~\(\Grd\) be a topological groupoid.  A \emph{\(\Grd\)\nb-vector bundle} over a \(\Grd\)\nb-space~\(\Source\) is a vector bundle with \(\Grd\)\nb-action such that the bundle projection, addition, and scalar multiplication are \(\Grd\)\nb-equivariant.  We denote the total space of a \(\Grd\)\nb-vector bundle~\(\VB\) over~\(\Source\) by~\(\total{\VB}\) (this is a \(\Grd\)\nb-space), the bundle projection \(\total{\VB}\epi\Source\) by~\(\proj{\VB}\), and the zero section \(\Source\mono\total{\VB}\) by~\(\zers{\VB}\) (these are \(\Grd\)\nb-maps).  We reserve the arrows \(\epi\) and~\(\mono\) for vector bundle projections and zero sections.
\end{definition}

\begin{definition}
  \label{def:trivial}
  Since any \(\Grd\)\nb-space~\(\Tot\) comes with an anchor map \(\anchor\colon \Tot\to\Base\), we may pull back a \(\Grd\)\nb-vector bundle~\(\Triv\) over~\(\Base\) to one on~\(\Tot\), which we denote by~\(\Triv^\Tot\); its total space is \(\total{\Triv^\Tot} \defeq \total{\Tot}\times_\Base\Triv\).  A \(\Grd\)\nb-vector bundle over~\(\Tot\) is called \emph{trivial} if it is isomorphic to~\(\Triv^\Tot\) for some \(\Grd\)\nb-vector bundle~\(\Triv\) over~\(\Base\); direct summands of trivial \(\Grd\)\nb-vector bundles are called \emph{subtrivial}.
\end{definition}

Open embeddings and \(\Grd\)\nb-vector bundles are the two ingredients in normally non-singular maps.  For technical reasons, we often require \(\Grd\)\nb-vector bundles to be trivial or subtrivial.

\begin{example}
  \label{exa:trivial_compact}
  Let~\(\Grd\) be a \emph{group}, so that~\(\Base\) is a single point.  A \(\Grd\)\nb-vector bundle over~\(\Base\) is a finite-dimensional representation of~\(\Grd\).  Hence a vector bundle over a \(\Grd\)\nb-space~\(\Tot\) is trivial if and only if it is of the form \(\Tot\times\R^n\) with~\(\Grd\) acting by \(\grd\cdot (\tot,\xi) \defeq (\grd\tot,\pi_\grd\xi)\) for some linear representation~\(\pi\) of~\(\Grd\) on~\(\R^n\).
\end{example}

\begin{example}
  \label{exa:vb_not_subtrivial}
  Let \(\Grd=\Torus\) be the circle group.  Let \(\Tot\defeq\hat{\Torus}\cong\Z\) with the trivial action of~\(\Grd\).  Let~\(\VB\) be the constant rank-one vector bundle over~\(\Tot\) with~\(\Grd\) acting by the character~\(\chi\) at \(\chi\in\hat{\Torus}\).  This equivariant vector bundle is not subtrivial because it involves infinitely many different representations of~\(\Torus\).
\end{example}

\begin{example}
  \label{ex:subtrivial_nontrivial}
  If~\(\Grd\) is trivial and~\(\Tot\) is paracompact with finite covering dimension, then every vector bundle over~\(\Tot\) is subtrivial.
\end{example}

We recall some equivalent definitions of proper maps, which apply even to non-Hausdorff spaces:

\begin{definition}[\cite{Bourbaki:Topologie_generale}*{I.10.2}]
  \label{def:proper_map}
  Let \(\Tot\) and~\(\Other\) be topological spaces.  A continuous map \(f\colon \Tot\to\Other\) is called \emph{proper} if it satisfies the following equivalent conditions:
  \begin{itemize}
  \item For every topological space~\(\Third\), the map \(f\times\Id_\Third\colon \Tot\times\Third\to\Other\times\Third\) is closed.

  \item \(f\) is closed and for each \(\other\in\Other\), the pre-image \(f^{-1}(\other)\) is quasi-compact.

  \item \(f\) is closed and for each quasi-compact subset \(K\subseteq\Other\), the pre-image \(f^{-1}(K)\) is quasi-compact.
  \end{itemize}
\end{definition}

\emph{From now on, all topological spaces, including all topological groupoids, are assumed to be paracompact and Hausdorff.}  Since paracompact spaces are normal, the Tietze Extension Theorem applies and allows us to extend continuous scalar-valued functions from closed subsets.  Even more, we may continuously extend continuous sections of vector bundles or vector bundle homomorphisms that are defined on closed subsets.  We are going to define a notion of proper groupoid action that provides similar \(\Grd\)\nb-equivariant extension results.

\begin{definition}
  \label{def:compactly_supported_probability_measure}
  A \emph{compactly supported probability measure} on a space~\(\Tot\) is a positive, unital, linear functional on the space \(\CONT(\Tot)\) of continuous functions \(\Tot\to\C\) that factors through the restriction map \(\CONT(\Tot)\to\CONT(K)\) for some compact subset \(K\subseteq \Tot\).
\end{definition}

Under our standing assumptions on topological spaces, the restriction map to \(\CONT(K)\) in Definition~\ref{def:compactly_supported_probability_measure} is surjective by the Tietze Extension Theorem, so that a compactly supported probability measure on~\(\Tot\) uniquely determines a regular Borel probability measure on \(K\subseteq\Tot\).

\begin{definition}
  \label{def:proper_groupoid}
  A topological groupoid~\(\Grd\) is called (numerably) \emph{proper} if there is a family of compactly supported probability measures \((\mu^\base)_{\base\in\Base}\) on the fibres \(\Grd^\base\defeq r^{-1}(\base)\) of the range map \(r\colon \Grd\to\Base\) with the following properties:
  \begin{itemize}
  \item \((\mu^\base)_{\base\in\Base}\) is \(\Grd\)\nb-invariant in the sense that \(\grd_*(\mu^{s(\grd)}) = \mu^{r(\grd)}\) for all \(\grd\in\Grd\);

  \item let \(\supp\mu\) be the closure of \(\bigcup_{\base\in\Base} \supp \mu^\base \subseteq \Grd\); the map \(r\colon \supp\mu \to \Base\) is proper;

  \item \((\mu^\base)_{\base\in\Base}\) depends continuously on~\(\base\) in the sense that, for each \(f\in\CONT(\Grd)\), the function \(\base\mapsto \int_{\Grd^\base} f(\grd)\,\diff\mu^\base(\grd)\) on~\(\Base\) is continuous.
  \end{itemize}

  A \(\Grd\)\nb-space~\(\Tot\) is called \emph{proper} if the groupoid \(\Grd\ltimes\Tot\) is proper.
\end{definition}

If we replace the second condition above by the requirement that each~\(\mu^\base\) have full support \(\Grd^\base\), we get a Haar system on~\(\Grd\) in the sense of~\cite{Paterson:Groupoids}.  Our definition is inspired by the definition of numerably proper group actions by Herbert Abels~\cite{Abels:Universal}.  We use measures instead of functions to avoid assumptions about Haar systems.

Equip the \(^*\)\nb-algebras \(\CONT(\Grd)\) and \(\CONT(\Base)\) with the topology of uniform convergence on compact subsets (the compact--open topology) and the action of~\(\Grd\) by left translations.  Definition~\ref{def:proper_groupoid} implies that there is a well-defined positive, unital, \(\Grd\)\nb-equivariant, continuous, linear map
\[
\mu\colon \CONT(\Grd) \to \CONT(\Base),\qquad
(\mu f)(\base) \defeq \int_{\Grd^\base} f(\grd)\,\diff\mu^\base(\grd).
\]
Continuity means that for each compact subset \(K\subseteq\Base\), there is a compact subset \(L\subseteq\Grd\) such that \((\mu f)|_K\) only depends on~\(f|_L\).  This holds with \(L\defeq \supp\mu\cap \Grd^K\), which is compact because of the assumed properness of \(r\colon \supp\mu\to\Base\).  In addition, we require the latter map to be closed.

\begin{example}
  \label{exa:pair_groupoid_proper}
  Let~\(\Base\) be a topological space and let \(\Grd\defeq\Base\times\Base\) be the pair groupoid, equipped with the subspace topology.  Let \(\base_0\in\Base\).  Then~\(\Grd\) is numerably proper with \(\mu^\base\defeq \delta_{(\base,\base_0)}\).  The resulting map \(\mu\colon \CONT(\Base\times\Base)\to\CONT(\Base)\) is induced by the embedding \(\Base\to\Base\times\Base\), \(\base\mapsto (\base,\base_0)\).
\end{example}

\begin{theorem}
  \label{the:numerably_proper_groupoid_proper}
  Let~\(\Grd\) be a groupoid and let~\(\Tot\) be a numerably proper \(\Grd\)\nb-space.  Then the map
  \begin{equation}
    \label{eq:proper_groupoid_via_map}
    \Grd\times_{s,\anchor}\Tot\to\Tot\times \Tot,\qquad
    (\grd,\tot)\mapsto (\grd\cdot\tot,\tot),
  \end{equation}
  is proper.  For each \(\tot\in\Tot\), the map \(\Grd_{\anchor(\tot)}\to\Tot\), \(\grd\mapsto\grd\cdot\tot\), is proper.  Thus the stabiliser~\(\Grd_\tot^\tot\) is compact and the orbit \(\Grd\cdot\tot\) is closed and homeomorphic to the homogeneous space \(\Grd_{\anchor(\tot)}/\Grd_\tot^\tot\) via the map \(\grd\Grd_\tot^\tot\mapsto \grd\tot\).
\end{theorem}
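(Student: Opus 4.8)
The plan is to observe that the map~\eqref{eq:proper_groupoid_via_map}, which we denote by~\(\phi\), is precisely the map ``(range, source)'' of the transformation groupoid \(\mathcal H\defeq\Grd\ltimes\Tot\); this groupoid is numerably proper by the hypothesis on~\(\Tot\). Fix an \(\mathcal H\)\nb-invariant, continuous family of compactly supported probability measures \((\mu^\tot)_{\tot\in\Tot}\) on the range fibres \(\mathcal H^\tot\), and set \(L\defeq\supp\mu\subseteq\mathcal H\); by Definition~\ref{def:proper_groupoid}, \(r|_L\colon L\to\Tot\) is proper.

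For the first assertion I would present~\(\phi\) as a proper map pushed down along a continuous surjection out of~\(L\). Since each~\(\mu^\tot\) is a probability measure, \(\supp\mu^\tot\) is a nonempty compact subset of the range fibre~\(\mathcal H^\tot\); in particular \(r|_L\) is surjective. Invariance says \(\grdh_*\mu^{s(\grdh)}=\mu^{r(\grdh)}\), and pushing a measure forward along the homeomorphism ``left translation by~\(\grdh\)'' sends support to support, so \(\grdh\cdot\supp\mu^{s(\grdh)}=\supp\mu^{r(\grdh)}\subseteq L\) for every \(\grdh\in\mathcal H\). Choosing any \(\ell\in\supp\mu^{s(\grdh)}\) and writing \(\grdh=(\grdh\cdot\ell)\cdot\ell^{-1}\) displays each \(\grdh\in\mathcal H\) as \(\ell_1\ell_2^{-1}\) with \(\ell_1,\ell_2\in L\) and \(s(\ell_1)=s(\ell_2)\). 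Hence the continuous map
\[
m\colon L*_s L\defeq\{(\ell_1,\ell_2)\in L\times L : s(\ell_1)=s(\ell_2)\}\longrightarrow\mathcal H,\qquad
(\ell_1,\ell_2)\longmapsto\ell_1\ell_2^{-1},
\]
is surjective. Now \(L*_s L\) is closed in \(L\times L\) (it is the preimage of the diagonal under \((s,s)\), and~\(\Tot\) is Hausdorff), so the inclusion \(L*_s L\opem L\times L\) is proper; the product \(r|_L\times r|_L\colon L\times L\to\Tot\times\Tot\) is proper because a product of proper maps is proper; and \(\phi\circ m\) equals the composite of these two, namely \((\ell_1,\ell_2)\mapsto(r(\ell_1),r(\ell_2))\). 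Thus \(\phi\circ m\) is proper. Finally I would invoke the elementary fact that if \(g\circ f\) is proper and~\(f\) is a continuous surjection then~\(g\) is proper — this is seen directly, since \(g(C)=(g\circ f)(f^{-1}(C))\) shows~\(g\) is closed and \(g^{-1}(\tot)=f\bigl((g\circ f)^{-1}(\tot)\bigr)\) shows point preimages are quasi-compact — applied to \(g=\phi\) and \(f=m\).

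For the orbit map, I would identify \(\Grd_{\anchor(\tot)}\) with the subspace \(\{(\grd,\tot):s(\grd)=\anchor(\tot)\}=\phi^{-1}(\Tot\times\{\tot\})\) of~\(\mathcal H\); under this identification the map \(\grd\mapsto\grd\cdot\tot\) becomes the restriction of~\(\phi\) over \(\Tot\times\{\tot\}\cong\Tot\), which is proper because proper maps are stable under base change~\cite{Bourbaki:Topologie_generale}. The remaining statements are then formal consequences: the stabiliser \(\Grd_\tot^\tot\) is the preimage of the point~\(\tot\) under the proper orbit map, hence quasi-compact; the orbit \(\Grd\cdot\tot\) is the image of the orbit map, hence closed because proper maps are closed; and the orbit map induces a continuous bijection \(\Grd_{\anchor(\tot)}/\Grd_\tot^\tot\to\Grd\cdot\tot\) which is closed — the orbit map being closed and the quotient map a continuous surjection — and therefore a homeomorphism.

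The step I expect to be the main obstacle is the passage, within the first assertion, from properness of~\(r|_L\) on the closed subset \(L=\supp\mu\) to properness of~\(\phi\) on the whole of~\(\mathcal H\). The key device is the identity \(\mathcal H=L\cdot L^{-1}\) extracted from the invariance of the measures; once this is in hand, the rest is routine manipulation with the stability properties of proper maps.
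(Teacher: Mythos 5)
Your proof is correct and takes essentially the same route as the paper's: both reduce to the transformation groupoid \(\Grd\ltimes\Tot\), extract the key identity \(L\cdot L^{-1}=\Grd\ltimes\Tot\) (the paper writes this as \(S\cdot S^{-1}=\Grd\)) from invariance of the measures, and then descend the properness of \(r|_L\times r|_L\) to \((r,s)\) along the multiplication map. The only differences are cosmetic: you package the descent as the lemma that a continuous surjection \(f\) with \(g\circ f\) proper forces \(g\) proper, while the paper verifies the universally closed characterisation directly while tracking an auxiliary factor \(\Other\); and for the orbit map you invoke stability of proper maps under base change where the paper restricts \((r,s)\) to the closed fibre \(\Grd_\base\).
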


\begin{proof}
  Replacing~\(\Grd\) by \(\Grd\ltimes\Tot\), we may assume that the groupoid~\(\Grd\) itself is numerably proper and that \(\Tot=\Base\).  Let \(S\defeq \supp\mu\subseteq\Grd\).  Since~\(\mu\) is \(\Grd\)\nb-invariant, \(S\) is invariant under left multiplication with elements in~\(\Grd\), that is, \(\Grd\cdot S=S\).  Hence \(S=s^{-1}(S^{(0)})\) for some subset \(S^{(0)}\subseteq\Base\).  If \(\base\in\Base\), then \(\mu^\base\neq0\) and hence \(S\cap\Grd^\base\neq\emptyset\).  Therefore, there is \(\grd\in S\) with \(r(\grd)=\base\).  This means that \(\Grd\cdot S^{(0)}=\Base\).  It follows that \(S\cdot S^{-1} = \Grd\): any \(\grd\in\Grd\) may be written as \(\grd=\grd_1\cdot\grd_2^{-1}\) with \(s(\grd_1)=s(\grd_2)\in S_0\).  By assumption, the map \(r|_S\colon S\to\Base\) is proper.  The following argument only uses the existence of a subset \(S\subseteq\Grd\) with \(S\cdot S^{-1}=\Grd\) such that~\(r|_S\) is proper.

  Since the exterior product \(f\times g\) of two proper maps \(f\) and~\(g\) is again proper, the map
  \begin{equation}
    \label{eq:closed_map_for_proper}
    \Other\times S\times S\to \Other\times \Base\times\Base,\qquad
    (\other,\grd_1,\grd_2)\mapsto \bigl(\other, r(\grd_1),r(\grd_2)\bigr)
  \end{equation}
  is closed for any topological space~\(\Other\).  Now let \(A\subseteq\Other\times\Grd\) be closed.  Its pre-image~\(A'\) in \(\Other\times S\times_{s,s} S\) under the continuous map \((\other,\grd_1,\grd_2)\mapsto (\other,\grd_1\cdot\grd_2^{-1})\) is closed in \(\Other\times S\times S\).  The set~\(A''\) of \(\bigl(\other,r(\grd_1),r(\grd_2)\bigr)\) with \((\other,\grd_1,\grd_2)\in A'\) is closed because the map in~\eqref{eq:closed_map_for_proper} is closed.  Since any \(\grd\in A\) is of the form \(\grd_1\cdot\grd_2^{-1}\) for some \(\grd_1,\grd_2\in S\), \(A''\) is the image of~\(A\) under \(\Id_\Other\times r\times s\).  Thus the map \(\Id_\Other\times r\times s\) is closed for all topological spaces~\(\Other\), that is, \(r\times s\) is a proper map.

  Let \(\base\in\Base\).  The subset \(\Grd_\base\subseteq\Grd\) is closed, so that the restriction of \((r,s)\) to~\(\Grd_\base\) remains proper.  Since~\(s\) is constant on~\(\Grd_\base\), this means that \(r\colon \Grd_\base\to\Base\) is a proper map.  Equivalently, the pre-image~\(\Grd_\base^\base\) of~\(\base\) is compact and the map is closed.  In particular, the image~\(\Grd\cdot\base\) is closed in~\(\Base\).  The induced map \(\Grd_\base/\Grd_\base^\base\to\Grd\base\) is continuous, closed, and bijective, hence it is a homeomorphism.
\end{proof}

Conversely, if the map in~\eqref{eq:proper_groupoid_via_map} is proper, then the groupoid is numerably proper under some additional assumptions:

\begin{lemma}
  \label{lem:def_proper_compare}
  Let~\(\Grd\) be a locally compact groupoid with Haar system and let~\(\Tot\) be a locally compact \(\Grd\)\nb-space.  Assume that the orbit space \(\Grd\backslash\Tot\) is paracompact.  If the map \(\Grd\times_{s,\anchor}\Tot \to \Tot\times\Tot\), \((\grd,\tot)\mapsto (\grd\cdot\tot,\tot)\), is proper, then~\(\Tot\) is a numerably proper \(\Grd\)\nb-space.
\end{lemma}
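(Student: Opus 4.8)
The plan is to produce the invariant family of probability measures that makes the transformation groupoid $\Grd\ltimes\Tot$ numerably proper by cutting the Haar system $(\lambda^\base)_{\base\in\Base}$ of~$\Grd$ down with a cutoff function. The fibre of the range map of $\Grd\ltimes\Tot$ over a point $\tot\in\Tot$ is $\{(\grd,\grd^{-1}\tot)\colon \grd\in\Grd^{\anchor(\tot)}\}$, which we identify with $\Grd^{\anchor(\tot)}$; for a continuous function $c\colon\Tot\to[0,\infty)$ we put
\[
\diff\mu^\tot(\grd) \defeq c(\grd^{-1}\tot)\,\diff\lambda^{\anchor(\tot)}(\grd)
\]
on this fibre. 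Two facts will be used throughout. First, a locally compact groupoid with a Haar system has open range map, and therefore the orbit map $\pi\colon\Tot\to\Grd\backslash\Tot$ is open as well: via the action homeomorphism $(\grd,\third)\mapsto(\grd,\grd\third)$ the map $(\grd,\third)\mapsto\grd\third$ is conjugate to a pullback of the range map of~$\Grd$, hence open, so $\Grd\cdot U$ is open for open $U\subseteq\Tot$. Second, since $(\grd,\tot)\mapsto(\grd\tot,\tot)$ is proper its image, the orbit relation, is closed in $\Tot\times\Tot$, so $\Grd\backslash\Tot$ is Hausdorff; being paracompact, it is then normal and locally compact.

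To construct~$c$, fix for each $\tot\in\Tot$ a function $\chi_\tot\in\CONT(\Tot)$ with compact support, $0\le\chi_\tot\le1$, and $\chi_\tot(\tot)=1$ (possible as~$\Tot$ is locally compact Hausdorff). The open sets $\pi(\{\chi_\tot>0\})$ cover $\Grd\backslash\Tot$, so there is a locally finite partition of unity $(\phi_j)$ with $\supp\phi_j\subseteq\pi(\{\chi_{\tot_j}>0\})$ for suitable points~$\tot_j$. As~$\pi$ is continuous, $(\phi_j\circ\pi)_j$ is locally finite on~$\Tot$, hence
\[
c_0 \defeq \sum_j (\phi_j\circ\pi)\cdot\chi_{\tot_j}
\]
is a well-defined continuous function $\Tot\to[0,\infty)$. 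Over $\pi^{-1}(K)$ for a compact $K\subseteq\Grd\backslash\Tot$ only finitely many summands are nonzero, so $\supp c_0\cap\pi^{-1}(K)$ is closed in a finite union of the compacta $\supp\chi_{\tot_j}$; thus $\supp c_0\to\Grd\backslash\Tot$ is proper. Moreover $c_0$ does not vanish identically on any $\Grd$\nb-orbit: given~$\tot$, pick~$j$ with $\phi_j(\pi(\tot))>0$; then $\pi(\tot)\in\pi(\{\chi_{\tot_j}>0\})$, so $\chi_{\tot_j}(\grd\tot)>0$ for some~$\grd$, whence $c_0(\grd\tot)>0$. Setting
\[
N(\tot) \defeq \int_{\Grd^{\anchor(\tot)}} c_0(\grd^{-1}\tot)\,\diff\lambda^{\anchor(\tot)}(\grd),
\]
this integral is finite since the integrand is compactly supported in~$\grd$ (by the two properness statements above) and strictly positive since the integrand is positive on a nonempty open set and $\lambda^{\anchor(\tot)}$ has full support. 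Left-invariance of the Haar system gives $N(\grd_0\tot)=N(\tot)$, so~$N$ is $\Grd$\nb-invariant, and a standard continuity argument --~for~$\tot$ in a compact neighbourhood the integrands $\grd\mapsto c_0(\grd^{-1}\tot)$ are all supported in one compact subset of~$\Grd$, where the continuity of the Haar system applies~-- shows~$N$ is continuous. Finally $c\defeq c_0/N$ is continuous with $\supp c=\supp c_0$, and $\int_{\Grd^{\anchor(\tot)}} c(\grd^{-1}\tot)\,\diff\lambda^{\anchor(\tot)}(\grd)=1$ for all~$\tot$ because~$N$ is constant on orbits.

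It remains to check the three conditions of Definition~\ref{def:proper_groupoid} for $(\mu^\tot)_{\tot\in\Tot}$. Each~$\mu^\tot$ is a probability measure by the normalisation of~$c$, and is compactly supported because $\supp\mu^\tot\subseteq\{\grd\in\Grd^{\anchor(\tot)}\colon \grd^{-1}\tot\in\supp c\}$ is compact: $\supp c\cap\Grd\cdot\tot$ is compact (properness of $\supp c\to\Grd\backslash\Tot$) and preimages of compacta under $(\grd,\tot)\mapsto(\grd\tot,\tot)$ are compact. Invariance $(\grd_0)_*\mu^\tot=\mu^{\grd_0\tot}$ is a direct computation from the left-invariance of $(\lambda^\base)$ and the orbit-invariance of~$c$. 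For the support condition, $\bigcup_\tot\supp\mu^\tot$ lies in the closed set $\{(\grd,\third)\in\Grd\times_{s,\anchor}\Tot\colon \third\in\supp c\}$, on which the range map $(\grd,\third)\mapsto\grd\third$ is proper: the preimage of a compact $K\subseteq\Tot$ is closed in the preimage of $K\times(\supp c\cap\pi^{-1}(\pi(K)))$ under $(\grd,\third)\mapsto(\grd\third,\third)$, hence compact, and the map is automatically closed since~$\Tot$ is locally compact Hausdorff. Continuity of $\tot\mapsto\int f\,\diff\mu^\tot$ for $f\in\CONT(\Grd\ltimes\Tot)$ follows exactly as for~$N$, after multiplying~$f$ by a cutoff to make it compactly supported and using that the supports $\supp\mu^\tot$ are locally uniformly compact.

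The main obstacle is the construction of the cutoff function~$c$; this is where local compactness of~$\Tot$ and paracompactness of $\Grd\backslash\Tot$ are genuinely used, and where one must check that the partition of unity chosen on the orbit space pulls back to a locally finite family upstairs and that the normalising integral~$N$ is finite, positive and continuous. Once~$c$ is available, the verification of the axioms is routine bookkeeping with the groupoid action and the invariance of the Haar system.
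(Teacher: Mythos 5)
Your proof takes essentially the same route as the paper: both use the openness of the orbit map $\pi$ (from the Haar system), use paracompactness of $\Grd\backslash\Tot$ to assemble a cutoff function whose support projects properly to the orbit space and which is nonzero on each orbit, then set $\diff\mu^\tot(\grd)=c(\grd^{-1}\tot)\,\diff\lambda^{\anchor(\tot)}(\grd)$ and normalise by the $\Grd$\nobreakdash-invariant integral. The only presentational difference is that the paper first constructs the open set $U=\bigcup U_i$ (via a locally finite refinement of $p(U_\tot)$) and then takes any $\varphi$ with $\{\varphi>0\}=U$, whereas you build the cutoff directly as $\sum_j(\phi_j\circ\pi)\chi_{\tot_j}$; you are also somewhat more explicit in verifying the three axioms and in noting that closedness of the orbit relation makes $\Grd\backslash\Tot$ Hausdorff, a point the paper leaves implicit.
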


\begin{proof}
  For each \(\tot\in\Tot\), let~\(U_\tot\) be a relatively compact open neighbourhood.  Since~\(\Grd\) has a Haar system, so has \(\Grd\ltimes\Tot\); hence the projection \(p\colon \Tot\to\Grd\backslash\Tot\) is open (see~\cite{Paterson:Groupoids}*{Proposition 2.2.1}).  The subsets \(p(U_\tot)\) form an open covering of \(\Grd\backslash\Tot\).  By paracompactness, we may find a locally finite covering \((W_i)_{i\in I}\) of \(\Grd\backslash\Tot\) that refines the covering \(p(U_\tot)_{\tot\in\Tot}\).  For each \(i\in I\), choose \(\tot\in\Tot\) with \(W_i\subseteq p(U_\tot)\) and let \(U_i\defeq p^{-1}(W_i)\cap U_\tot\).  Let \(U\defeq \bigcup_{i\in I} U_i\).  By construction, \(p(U_i)=W_i\) and hence \(p(U)=\Grd\backslash\Tot\), that is, \(\Grd\cdot U=\Tot\); moreover, each~\(U_j\) is relatively compact, so that \((A,U_j)\) is relatively compact for compact \(A\subseteq\Tot\) and \(i\in I\).  Since the covering \((W_i)_{i\in I}\) is locally finite, \((A,U)\) is relatively compact as well for all compact subsets~\(A\) of~\(\Tot\).

  There is a continuous function \(\varphi\colon \Tot\to[0,\infty)\) with \(\varphi(x)\neq0\) if and only if \(x\in U\).  Let \(\anchor\colon \Tot\to\Base\) be the anchor map and let \((\lambda^\base)_{\base\in\Base}\) be a Haar system for~\(\Grd\).  Then
  \[
  \mu^\tot(\tot,\grd,\tot') \defeq \varphi(\tot')\cdot \lambda^{\anchor(\tot)}(\grd)
  \qquad\text{for \(\tot,\tot'\in\Tot\), \(\grd\in\Grd_{\anchor(\tot')}^{\anchor(\tot)}\) with \(\grd\cdot\tot'=\tot\)}
  \]
  defines a \(\Grd\)\nb-invariant continuous family of compactly supported positive Borel measures on the fibres of the range map of \(\Grd\ltimes\Tot\).  The measures~\(\mu^\tot\) are not yet probability measures, but they are non-negative and non-zero and hence may be normalised to probability measures, dividing by the \(\Grd\)\nb-invariant, positive, continuous function \(\tot\mapsto \int \varphi(\grd^{-1}\tot) \,\diff\lambda^{\varrho(\tot)}(\grd)\); this function is positive because \(\Grd\cdot U=\Tot\).
\end{proof}

\begin{proposition}
  \label{pro:extend_section_vb}
  Let~\(\Grd\) be a topological groupoid, let~\(\Tot\) be a numerably proper \(\Grd\)\nb-space, and let \(\Other\subseteq\Tot\) be a closed \(\Grd\)\nb-invariant subset.  Then the following kinds of objects may be extended from~\(\Other\) to~\(\Tot\):
  \begin{itemize}
  \item scalar-valued \(\Grd\)\nb-invariant continuous functions;

  \item \(\Grd\)\nb-equivariant continuous sections of \(\Grd\)\nb-vector bundles;

  \item \(\Grd\)\nb-equivariant vector bundle homomorphisms between \(\Grd\)\nb-vector bundles.
  \end{itemize}
\end{proposition}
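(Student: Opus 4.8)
The plan is to obtain all three assertions from a single averaging argument applied to the case of sections of vector bundles.  A $\Grd$\nb-invariant continuous function on~$\Other$ is the same thing as a $\Grd$\nb-equivariant continuous section of the trivial line bundle $\Tot\times\C$, and a $\Grd$\nb-equivariant vector bundle homomorphism $\VB_1\to\VB_2$ over~$\Other$ is the same thing as a $\Grd$\nb-equivariant continuous section of the $\Grd$\nb-vector bundle $\Hom(\VB_1,\VB_2)$ with its natural $\Grd$\nb-action.  Hence it suffices to extend a $\Grd$\nb-equivariant continuous section~$\xi$ of a $\Grd$\nb-vector bundle~$\VB$ over~$\Tot$ from the closed invariant subset~$\Other$ to all of~$\Tot$.

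First I would forget equivariance: since~$\Tot$ is paracompact and Hausdorff, hence normal, the Tietze Extension Theorem in the form for continuous sections of vector bundles over closed subsets, recalled after Definition~\ref{def:proper_map}, yields a continuous section~$\tilde\xi$ of~$\VB$ over~$\Tot$ with $\tilde\xi|_\Other=\xi$.  Then I would average~$\tilde\xi$ over the groupoid.  As~$\Tot$ is a numerably proper $\Grd$\nb-space, the transformation groupoid $\Grd\ltimes\Tot$ carries an invariant family $(\mu^\tot)_{\tot\in\Tot}$ of compactly supported probability measures as in Definition~\ref{def:proper_groupoid}; here $\mu^\tot$ is supported on the set of arrows of $\Grd\ltimes\Tot$ with range~$\tot$, which I identify with $\Grd^{\anchor(\tot)}$ via $\grd\mapsto(\grd,\grd^{-1}\cdot\tot)$.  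Define
\[
\Xi(\tot)\defeq \int_{\Grd^{\anchor(\tot)}} \grd\cdot\tilde\xi(\grd^{-1}\cdot\tot)\,\diff\mu^\tot(\grd)\in\VB_\tot,\qquad \tot\in\Tot.
\]
This is the integral of a continuous $\VB_\tot$\nb-valued function against a compactly supported probability measure; it makes sense because the fibres of~$\VB$ are finite-dimensional.

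It then remains to check three properties of~$\Xi$.  (1)~$\Xi$ restricts to~$\xi$ on~$\Other$: for $\tot\in\Other$, the invariance of~$\Other$ and the equivariance of~$\xi$ give $\grd\cdot\tilde\xi(\grd^{-1}\cdot\tot)=\grd\cdot\xi(\grd^{-1}\cdot\tot)=\xi(\tot)$ independently of~$\grd$, so the integral equals~$\xi(\tot)$ because~$\mu^\tot$ is normalised.  (2)~$\Xi$ is $\Grd$\nb-equivariant: substituting $\grd\mapsto\grdh\grd$ in the defining integral and using the $\Grd$\nb-invariance of the family $(\mu^\tot)$ together with the linearity of the fibrewise $\Grd$\nb-action gives $\Xi(\grdh\cdot\tot)=\grdh\cdot\Xi(\tot)$.  (3)~$\Xi$ is continuous: near a point $\tot_0$, trivialise~$\VB$ over an open neighbourhood~$U$ of~$\tot_0$; pulling this back along the range map $r\colon\Grd\ltimes\Tot\to\Tot$ trivialises the pull-back bundle~$r^*\VB$ over~$r^{-1}(U)$, which contains~$\supp\mu^\tot$ for every $\tot\in U$.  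In this trivialisation the integrand of~$\Xi$ becomes a tuple of continuous scalar functions and $\Xi|_U$ is obtained by applying the continuous integration operation of the numerably proper groupoid $\Grd\ltimes\Tot$ to these functions coordinatewise; since that operation lands in continuous functions, $\Xi$ is continuous near~$\tot_0$.

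The main obstacle is step~(3): the continuity clause of Definition~\ref{def:proper_groupoid} is formulated for scalar functions, and one must transport it to the bundle-valued average~$\Xi$ by passing to local trivialisations of~$\VB$, inserting cut-off functions supported over the chosen neighbourhood (here paracompactness of~$\Tot$ and the properness of $r|_{\supp\mu}$ enter), and checking that the locally constructed continuous sections patch together.  Everything else, namely the non-equivariant extension and the identities in~(1) and~(2), is a routine combination of the Tietze theorem with the invariance and normalisation of the measure family.
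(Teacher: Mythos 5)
Your proposal is correct and follows essentially the same route as the paper: reduce all three assertions to extending an equivariant section of a $\Grd$\nb-vector bundle, extend non-equivariantly using paracompactness (local triviality plus a partition of unity, i.e.\ the bundle-valued Tietze argument), and then make the extension equivariant by averaging against the $\Grd$\nb-invariant family of probability measures from Definition~\ref{def:proper_groupoid}. The paper simply asserts continuity of the averaged section without elaboration; your explicit verification of steps (1)--(3), and in particular the observation that continuity requires local trivialisation of the pulled-back bundle over $r^{-1}(U)$ together with cut-offs before invoking the continuity clause of the definition, is a reasonable fleshing-out of the same argument rather than a different one.
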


\begin{proof}
  Scalar-valued \(\Grd\)\nb-invariant continuous functions are \(\Grd\)\nb-equivariant sections of a constant \(\Grd\)\nb-vector bundle, and \(\Grd\)\nb-equivariant vector bundle homomorphisms between two \(\Grd\)\nb-vector bundles \(\VB_1\) and~\(\VB_2\) are \(\Grd\)\nb-equivariant sections of the \(\Grd\)\nb-vector bundle \(\Hom(\VB_1,\VB_2)\).  Hence it suffices to prove: if~\(\VB\) is a \(\Grd\)\nb-vector bundle on~\(\Tot\) and~\(\sigma\) is a \(\Grd\)\nb-equivariant continuous section of~\(\VB|_\Other\), then there is a \(\Grd\)\nb-equivariant continuous section \(\bar\sigma\colon \Tot\to \total{\VB}\) extending~\(\sigma\).

  Since~\(\VB\) is locally trivial, each \(\other\in\Other\) has a neighbourhood~\(U_\other\) on which~\(\VB\) is trivial, so that a section on~\(U_\other\) is equivalent to a family of scalar-valued functions.  Since~\(\Tot\) is paracompact, it is completely regular, that is, scalar-valued functions on~\(\Other\) extend to scalar-valued functions on~\(\Tot\).  Therefore, \(\sigma\) extends to a section of~\(\VB\) on~\(U_\other\).  Since~\(\Tot\) is paracompact, there is a partition of unity subordinate to the covering of~\(\Tot\) by the sets \(\Tot\setminus\Other\) and~\(U_\other\) for \(\other\in\Other\).  This allows us to piece the local sections on~\(U_\other\) and the zero section on \(\Tot\setminus\Other\) together to a continuous section \(h\colon \Tot\to \total{\VB}\) that extends~\(\sigma\).  But~\(h\) need not be \(\Grd\)\nb-equivariant.  We let
  \[
  \bar\sigma(\tot) \defeq \int_{\Grd^{\anchor(\tot)}}
  \grd\cdot \bigl(h(\grd^{-1}\tot)\bigr)
  \,\diff\mu^\tot(\grd),
  \]
  where \(\anchor\colon \Tot\to\Base\) is the anchor map and~\((\mu^\tot)_{\tot\in\Tot}\) is a family of probability measures as in Definition~\ref{def:proper_groupoid}.  This is a \(\Grd\)\nb-equivariant continous section of~\(\VB\).  Since \(h|_\Other=\sigma\) is \(\Grd\)\nb-equivariant and each~\(\mu^\tot\) is a probability measure, \(\bar\sigma|_\Other=\sigma\).
\end{proof}

\begin{corollary}
  \label{cor:vb_iso_extend}
  Let~\(\Grd\) act properly on~\(\Tot\).  If two \(\Grd\)\nb-vector bundles restrict to isomorphic \(\Grd\)\nb-vector bundles on a closed \(\Grd\)\nb-invariant subset~\(\Other\), then they remain isomorphic on some \(\Grd\)\nb-invariant open neighbourhood of~\(\Other\).
\end{corollary}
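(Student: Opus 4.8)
The plan is to use Proposition~\ref{pro:extend_section_vb} to extend the given equivariant isomorphism over~\(\Other\) to a \(\Grd\)\nb-equivariant vector bundle homomorphism over all of~\(\Tot\), and then to take for the neighbourhood the locus where this homomorphism is fibrewise invertible.

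Write \(\VB_1\) and~\(\VB_2\) for the two \(\Grd\)\nb-vector bundles and let \(\phi_0\colon \VB_1|_\Other\congto\VB_2|_\Other\) be the given \(\Grd\)\nb-equivariant isomorphism. We regard \(\phi_0\) as a \(\Grd\)\nb-equivariant vector bundle homomorphism \(\VB_1|_\Other\to\VB_2|_\Other\), equivalently a \(\Grd\)\nb-equivariant section over~\(\Other\) of the \(\Grd\)\nb-vector bundle \(\Hom(\VB_1,\VB_2)\). Since \(\Grd\) acts properly on~\(\Tot\), the last item of Proposition~\ref{pro:extend_section_vb} provides a \(\Grd\)\nb-equivariant vector bundle homomorphism \(\phi\colon \VB_1\to\VB_2\) over~\(\Tot\) with \(\phi|_\Other=\phi_0\).

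Now set \(\Third\defeq\{\tot\in\Tot : \phi_\tot\colon(\VB_1)_\tot\to(\VB_2)_\tot\text{ is invertible}\}\). This set is open in~\(\Tot\): over any open subset trivialising both \(\VB_1\) and~\(\VB_2\), the homomorphism~\(\phi\) is a continuous matrix\nb-valued function, and the set of invertible matrices of a given size is open (and the locus is empty unless the two ranks agree there). It contains~\(\Other\) because \(\phi|_\Other=\phi_0\) is an isomorphism. And it is \(\Grd\)\nb-invariant: for \(\grd\in\Grd\) with \(s(\grd)=\anchor(\tot)\), equivariance of~\(\phi\) shows that \(\phi_{\grd\tot}\) is \(\phi_\tot\) conjugated by the linear isomorphisms \((\VB_i)_\tot\to(\VB_i)_{\grd\tot}\) induced by~\(\grd\), so \(\phi_{\grd\tot}\) is invertible exactly when \(\phi_\tot\) is. Hence \(\phi|_\Third\colon\VB_1|_\Third\congto\VB_2|_\Third\) is a \(\Grd\)\nb-equivariant isomorphism over the \(\Grd\)\nb-invariant open neighbourhood~\(\Third\) of~\(\Other\).

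There is no serious obstacle here; the only points needing a little care are that invertibility stays an open condition even when \(\VB_1\) and~\(\VB_2\) have locally varying (and possibly unequal) ranks, and that \(\Grd\)\nb-invariance of~\(\Third\) uses that \(\Grd\) acts on the fibres of a \(\Grd\)\nb-vector bundle by linear isomorphisms, together with equivariance of the extension~\(\phi\). All the real content is carried by the equivariant extension statement, Proposition~\ref{pro:extend_section_vb}.
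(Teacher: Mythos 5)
Your proof is correct and is essentially identical to the paper's: both extend the equivariant isomorphism to a \(\Grd\)\nb-equivariant homomorphism over \(\Tot\) via Proposition~\ref{pro:extend_section_vb} and take the (open, \(\Grd\)\nb-invariant) locus where it is fibrewise invertible. You have merely spelled out the openness and invariance checks that the paper leaves implicit.
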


\begin{proof}
  Use Proposition~\ref{pro:extend_section_vb} to extend a \(\Grd\)\nb-equivariant vector bundle isomorphism on~\(\Other\) to~\(\Tot\).  The subset where it is invertible is open and \(\Grd\)\nb-invariant.
\end{proof}

\begin{proposition}
  \label{pro:vb_inner_product}
  Let~\(\Grd\) be a topological groupoid and let~\(\Tot\) be a numerably proper \(\Grd\)\nb-space.  Then any \(\Grd\)\nb-vector bundle over~\(\Tot\) has a \(\Grd\)\nb-invariant inner product.
\end{proposition}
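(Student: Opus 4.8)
The plan is to produce an arbitrary continuous inner product on~\(\VB\), ignoring the \(\Grd\)\nb-action, and then average it over~\(\Grd\) with the probability measures supplied by numerable properness. For the first step, observe that the symmetric (resp.\ Hermitian, in the complex case) bilinear forms on the fibres of~\(\VB\) are the fibres of a \(\Grd\)\nb-vector bundle~\(S\), formed functorially from~\(\VB\) just like the bundles \(\Hom(\VB_1,\VB_2)\) used in the proof of Proposition~\ref{pro:extend_section_vb}, and that the positive-definite forms constitute an open convex subset of each fibre. Over a trivialising neighbourhood of~\(\VB\), a section of~\(S\) with positive-definite values is just a continuous family of positive-definite matrices, which exists; a partition of unity, which exists because~\(\Tot\) is paracompact, patches these local choices into a global continuous positive-definite section \(\langle\blank,\blank\rangle\) of~\(S\), since a convex combination of positive-definite forms (with at least one coefficient nonzero) is positive-definite.

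For the second step, use that~\(\Tot\) is a numerably proper \(\Grd\)\nb-space, so \(\Grd\ltimes\Tot\) is proper; let \((\mu^\tot)_{\tot\in\Tot}\) be a family of compactly supported probability measures as in Definition~\ref{def:proper_groupoid}, viewed as measures on~\(\Grd^{\anchor(\tot)}\) via the homeomorphism \(\grd\mapsto(\grd,\grd^{-1}\tot)\) onto the fibre \((\Grd\ltimes\Tot)^\tot\). For \(\tot\in\Tot\) and \(v,w\in\VB_\tot\) set
\[
\langle v,w\rangle^\Grd \defeq \int_{\Grd^{\anchor(\tot)}}
\langle \grd^{-1}v,\grd^{-1}w\rangle_{\grd^{-1}\tot}
\,\diff\mu^\tot(\grd);
\]
the integral makes sense because~\(\mu^\tot\) is compactly supported and the integrand is continuous. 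This is exactly the section of~\(S\) obtained by applying to \(\langle\blank,\blank\rangle\) the averaging construction \(h\mapsto\bar h\) from the proof of Proposition~\ref{pro:extend_section_vb}. Hence \(\langle\blank,\blank\rangle^\Grd\) is again a continuous section of~\(S\), and it is \(\Grd\)\nb-equivariant because the family \((\mu^\tot)\) is \(\Grd\)\nb-invariant; \(\Grd\)\nb-equivariance of a section of~\(S\) is precisely \(\Grd\)\nb-invariance of the corresponding bilinear form.

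It remains to check that \(\langle\blank,\blank\rangle^\Grd\) really is an inner product. For each fixed \(\grd\in\Grd^{\anchor(\tot)}\) the map \(v\mapsto\grd^{-1}v\) is a linear isomorphism \(\VB_\tot\congto\VB_{\grd^{-1}\tot}\), so \((v,w)\mapsto\langle\grd^{-1}v,\grd^{-1}w\rangle_{\grd^{-1}\tot}\) is a positive-definite symmetric (resp.\ Hermitian) form on~\(\VB_\tot\); integrating such forms against the nonzero probability measure~\(\mu^\tot\) preserves positive-definiteness and symmetry. I expect the only point needing real care to be the continuity of \(\tot\mapsto\langle\blank,\blank\rangle^\Grd\), which is handled exactly as the continuity of~\(\bar\sigma\) in Proposition~\ref{pro:extend_section_vb}: it uses that the integrand depends continuously on~\(\grd\) and~\(\tot\) jointly, that \((\mu^\tot)\) depends continuously on~\(\tot\), and that properness of the range map of \(\Grd\ltimes\Tot\) on \(\supp\mu\) confines the integration, locally in~\(\tot\), to a compact set.
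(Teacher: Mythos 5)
Your proof is correct and follows exactly the paper's approach: construct a non-equivariant continuous inner product via local trivialisations and a partition of unity (using paracompactness of~\(\Tot\)), then average over~\(\Grd\) with the probability measures \((\mu^\tot)\) from numerable properness — your explicit formula expands the paper's \(\int_{\Grd^{\anchor(\tot)}}\grd\cdot h_{\grd^{-1}\tot}\,\diff\mu^\tot(\grd)\). The verifications you add (convexity for patching, positive-definiteness of the average, \(\Grd\)\nb-invariance from \(\Grd\)\nb-invariance of~\(\mu\), continuity as in Proposition~\ref{pro:extend_section_vb}) are the details the paper leaves implicit, and they are all sound.
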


\begin{proof}
  Since~\(\Tot\) is paracompact, there is a non-equivariant continuous family of inner products~\((h_\tot)_{\tot\in\Tot}\) on the fibres of a \(\Grd\)\nb-vector bundle (compare the proof of Proposition~\ref{pro:extend_section_vb}).  Then \(\int_{\Grd^{\anchor(\tot)}} \grd\cdot h_{\grd^{-1}\tot} \,\diff\mu^\tot(\grd)\) is a \(\Grd\)\nb-invariant inner product.
\end{proof}

\begin{corollary}
  \label{cor:vb_extension_splits}
  Let~\(\Grd\) be a topological groupoid and let~\(\Tot\) be a numerably proper \(\Grd\)\nb-space.  Then any extension of \(\Grd\)\nb-vector bundles over~\(\Tot\) splits.
\end{corollary}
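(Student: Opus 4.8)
The plan is to split the extension orthogonally with respect to an invariant inner product, exactly as in the non-equivariant case. Write the given extension as a short exact sequence
\[
0 \to \VB_1 \to \VB_2 \to \VB_3 \to 0
\]
of \(\Grd\)\nb-vector bundles over~\(\Tot\), in which the first map is a fibrewise injective and the second a fibrewise surjective \(\Grd\)\nb-equivariant vector bundle homomorphism. The first map identifies~\(\VB_1\) with a \(\Grd\)\nb-invariant subbundle of~\(\VB_2\), and~\(\VB_3\) with the quotient bundle \(\VB_2/\VB_1\). What must be produced is a \(\Grd\)\nb-equivariant isomorphism \(\VB_1\oplus\VB_3 \congto \VB_2\) restricting to the given inclusion on~\(\VB_1\) and to the identity on~\(\VB_3\).

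By Proposition~\ref{pro:vb_inner_product}, I would choose a \(\Grd\)\nb-invariant inner product on~\(\VB_2\) and let \(\VB_1^\perp\subseteq\VB_2\) be the fibrewise orthogonal complement of~\(\VB_1\). Locally on~\(\Tot\), Gram--Schmidt shows that~\(\VB_1^\perp\) is again a locally trivial subbundle (of the expected fibrewise rank) and that the orthogonal projection onto~\(\VB_1\) is a continuous endomorphism of~\(\VB_2\) with kernel~\(\VB_1^\perp\). Since the \(\Grd\)\nb-action preserves both the inner product and the subbundle~\(\VB_1\), it preserves~\(\VB_1^\perp\) as well, so that~\(\VB_1^\perp\) is a \(\Grd\)\nb-invariant subbundle and the associated orthogonal projections are \(\Grd\)\nb-equivariant.

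Next one checks that the composition of the inclusion \(\VB_1^\perp\subseteq\VB_2\) with the projection \(\VB_2\to\VB_3\) is an isomorphism of \(\Grd\)\nb-vector bundles. It is \(\Grd\)\nb-equivariant by construction. On each fibre it is injective, because \(\VB_1^\perp\cap\VB_1=0\) while~\(\VB_1\) is the kernel of \(\VB_2\to\VB_3\); and it is surjective, because the fibrewise rank of~\(\VB_1^\perp\) equals that of~\(\VB_2/\VB_1\cong\VB_3\). A fibrewise bijective continuous vector bundle homomorphism is an isomorphism, since its inverse is again continuous --- a purely local statement amounting to the invertibility of a continuously varying family of matrices. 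Composing the resulting isomorphism \(\VB_1^\perp\congto\VB_3\) with the inclusion \(\VB_1\subseteq\VB_2\) then yields the desired \(\Grd\)\nb-equivariant splitting \(\VB_1\oplus\VB_3\congto\VB_2\).

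I do not expect a genuine obstacle here. Everything equivariant is supplied by Proposition~\ref{pro:vb_inner_product}, and the remaining ingredients --- that the orthogonal complement of a subbundle is again a locally trivial subbundle, and that a fibrewise invertible vector bundle homomorphism is invertible --- are standard, local, and non-equivariant. The only mild care needed is to verify that~\(\VB_1^\perp\) has the correct fibrewise rank, which is immediate from local triviality of~\(\VB_1\) and~\(\VB_2\) and exactness of the sequence.
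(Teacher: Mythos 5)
Your proof is correct and follows exactly the paper's route: choose a \(\Grd\)\nb-invariant inner product via Proposition~\ref{pro:vb_inner_product} and take the fibrewise orthogonal complement of the subbundle to produce an equivariant splitting. The paper states this in one line; you have simply filled in the standard local verifications, which are fine.
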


\begin{proof}
  Let \(\VB'\mono \VB\epi \VB''\) be an extension of \(\Grd\)\nb-vector bundles over~\(\Tot\).  The fibrewise orthogonal complement of~\(\VB'\) with respect to a \(\Grd\)\nb-invariant inner product on~\(\VB\) provides a \(\Grd\)\nb-equivariant section for the extension, so that \(\VB\cong \VB'\oplus\VB''\).
\end{proof}

\begin{proposition}
  \label{pro:invariant_partition_unity}
  Let~\(\Grd\) be a topological groupoid, let~\(\Tot\) be a numerably proper \(\Grd\)\nb-space, and let \((U_i)_{i\in I}\) be a covering of~\(\Tot\) by \(\Grd\)\nb-invariant open subsets.  Then there is a \(\Grd\)\nb-invariant partition of unity \((\varphi_i)_{i\in I}\) subordinate to the covering.
\end{proposition}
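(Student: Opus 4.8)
The plan is to average an ordinary partition of unity over the groupoid, exactly in the spirit of the proofs of Propositions~\ref{pro:extend_section_vb} and~\ref{pro:vb_inner_product}. Since~\(\Tot\) is paracompact and Hausdorff, the open covering \((U_i)_{i\in I}\) admits an ordinary partition of unity \((\psi_i)_{i\in I}\), indexed by the same set~\(I\), with \(\supp\psi_i\subseteq U_i\) for all~\(i\) and with \((\supp\psi_i)_{i\in I}\) locally finite. Let \((\mu^\tot)_{\tot\in\Tot}\) be a family of probability measures witnessing that~\(\Tot\) is a numerably proper \(\Grd\)\nb-space, that is, the family provided by Definition~\ref{def:proper_groupoid} for the transformation groupoid \(\Grd\ltimes\Tot\), whose object space is~\(\Tot\). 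I would then set
\[
\varphi_i(\tot) \defeq \int_{\Grd^{\anchor(\tot)}} \psi_i(\grd^{-1}\cdot\tot)\,\diff\mu^\tot(\grd),
\]
where \(\anchor\colon\Tot\to\Base\) is the anchor map.

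As in the proof of Proposition~\ref{pro:extend_section_vb}, the function~\(\varphi_i\) is continuous -- here by the continuity axiom in Definition~\ref{def:proper_groupoid}, since the integrand is \(\psi_i\) composed with the source map of \(\Grd\ltimes\Tot\), a continuous function on the morphism space -- and it is \(\Grd\)\nb-invariant because the measures~\(\mu^\tot\) are; the computation is verbatim the one there. Because each~\(\mu^\tot\) is a probability measure and \(\sum_i\psi_i\equiv1\), and because only finitely many~\(\psi_i\) are non-zero on the image of the compact set \(\supp\mu^\tot\) under \(\grd\mapsto\grd^{-1}\tot\), one may interchange sum and integral to obtain \(\sum_i\varphi_i\equiv1\). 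No normalisation is needed, in contrast with Lemma~\ref{lem:def_proper_compare}, precisely because the~\(\mu^\tot\) are already probability measures.

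It remains to check that \((\varphi_i)_{i\in I}\) is subordinate to \((U_i)_{i\in I}\) and locally finite, and this is where properness of the action enters. Write \(S\defeq\supp\mu\subseteq\Grd\ltimes\Tot\) for the support of the whole family and \(r,s\colon S\to\Tot\) for its range and source maps; by Definition~\ref{def:proper_groupoid} applied to \(\Grd\ltimes\Tot\), the map \(r\colon S\to\Tot\) is proper, hence closed. If \(\varphi_i(\tot)\neq0\), there is \(\grd\in\supp\mu^\tot\subseteq S\) with \(s(\grd)=\grd^{-1}\tot\in\supp\psi_i\) and \(r(\grd)=\tot\); hence \(\{\varphi_i\neq0\}\) is contained in the set \(C_i\defeq r\bigl(S\cap s^{-1}(\supp\psi_i)\bigr)\), which is closed because~\(r\) is a closed map, and which satisfies \(C_i\subseteq\Grd\cdot\supp\psi_i\subseteq\Grd\cdot U_i=U_i\) since~\(U_i\) is \(\Grd\)\nb-invariant. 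Thus \(\supp\varphi_i\subseteq C_i\subseteq U_i\). Finally, \(\bigl(s^{-1}(\supp\psi_i)\bigr)_{i\in I}\) is locally finite in~\(S\) because~\(s\) is continuous and \((\supp\psi_i)_{i\in I}\) is locally finite in~\(\Tot\); and a proper map carries a locally finite family to a locally finite family: given \(\tot\in\Tot\), pick an open set \(W\subseteq S\) containing the compact fibre \(r^{-1}(\tot)\) and meeting only finitely many of the \(s^{-1}(\supp\psi_i)\), and put \(V\defeq\Tot\setminus r(S\setminus W)\); then~\(V\) is an open neighbourhood of~\(\tot\) with \(r^{-1}(V)\subseteq W\), so~\(V\) meets only finitely many~\(C_i\). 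Hence \((C_i)_{i\in I}\), and a fortiori \((\supp\varphi_i)_{i\in I}\), is locally finite, and \((\varphi_i)_{i\in I}\) is the desired partition of unity.

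The only genuinely delicate point is this last one. The averaging itself is routine, but without local compactness it could a priori destroy local finiteness; the statement that the proper map \(r\colon\supp\mu\to\Tot\) pushes locally finite families forward to locally finite ones is exactly the feature that numerable properness of the \(\Grd\)\nb-action is designed to supply.
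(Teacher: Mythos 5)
Your proof follows exactly the paper's strategy: average an ordinary partition of unity over the groupoid against the probability measures furnished by numerable properness of \(\Grd\ltimes\Tot\). You are in fact more careful than the paper on the point you yourself flag as delicate — the paper's proof states that each \(\varphi_i\) is supported in the \(\Grd\)\nb-orbit of \(\supp\psi_i\) and stops there, without verifying that \((\supp\varphi_i)_{i\in I}\) is locally finite; your observation that the proper (hence closed) map \(r\colon\supp\mu\to\Tot\) pushes locally finite families forward to locally finite families closes that gap correctly.
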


\begin{proof}
  The space~\(\Tot\) is paracompact by our standing assumption, so that there is a partition of unity \((\varphi'_i)_{i\in I}\) subordinate to the covering.  Let
  \[
  \varphi_i(\tot) \defeq \int_{\Grd^{\anchor(\tot)}} \grd\cdot \varphi'_i(\grd^{-1}\tot) \,\diff\mu^\tot(\grd)
  \qquad\text{for \(\tot\in\Tot\).}
  \]
  These functions are still positive and satisfy \(\sum \varphi_i=1\) because the measures~\(\mu^\tot\) are probability measures.  Moreover, they are continuous and \(\Grd\)\nb-invariant.  Since~\(\varphi_i\) is supported in the \(\Grd\)\nb-orbit of the support of~\(\varphi_i'\), we also get \(\varphi_i(x)=0\) for \(x\notin U_i\).
\end{proof}

\begin{proposition}
  \label{pro:vb_homotopy}
  Let~\(\Grd\) be a topological groupoid, let \(\Source\) and~\(\Target\) be \(\Grd\)\nb-spaces, let~\(\VB\) be a \(\Grd\)\nb-vector bundle over~\(\Target\), and let \(f_0,f_1\colon \Source\rightrightarrows\Target\) be homotopic \(\Grd\)\nb-maps, that is, \(f_t=f|_{\Source\times\{t\}}\) for \(t=0,1\) for a \(\Grd\)\nb-map \(f\colon \Source\times[0,1]\to\Target\).  Then the \(\Grd\)\nb-vector bundles \(f_0^*(\VB)\) and~\(f_1^*(\VB)\) are isomorphic.  Even more, there is a choice of isomorphism that is canonical up to \(\Grd\)\nb-equivariant homotopy.
\end{proposition}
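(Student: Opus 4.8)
The plan is to reduce the assertion to the homotopy invariance of \(\Grd\)\nb-vector bundles over \(\Source\times[0,1]\) and to prove the latter by an equivariant version of the classical argument that a vector bundle over \(X\times[0,1]\) is pulled back from~\(X\), using the machinery for numerably proper \(\Grd\)\nb-spaces developed above (we use that~\(\Source\), hence \(\Source\times[0,1]\), is numerably proper). Put \(W\defeq f^*(\VB)\), a \(\Grd\)\nb-vector bundle over \(\Source\times[0,1]\); let \(\pr\colon \Source\times[0,1]\to\Source\) be the projection and \(j_t\colon \Source\to\Source\times[0,1]\), \(\source\mapsto(\source,t)\), so that \(f\circ j_t=f_t\) and \(f_t^*(\VB)\cong j_t^*(W)\). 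It then suffices to construct a \(\Grd\)\nb-equivariant isomorphism \(\Phi\colon W\congto \pr^*\bigl(j_0^*(W)\bigr)\) that restricts to the identity over \(\Source\times\{0\}\): for then \(j_1^*(\Phi)\) gives a \(\Grd\)\nb-equivariant isomorphism \(f_1^*(\VB)\cong j_1^*(W)\cong j_1^*\pr^*\bigl(j_0^*(W)\bigr)=j_0^*(W)\cong f_0^*(\VB)\). The uniqueness-up-to-homotopy statement is a routine variant of the same argument, comparing two isomorphisms produced by the construction as invertible equivariant sections of \(\Hom\bigl(W,\pr^*(j_0^*W)\bigr)\).

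The core is the claim that any \(\Grd\)\nb-vector bundle~\(W\) over \(\Source\times[0,1]\) is \(\Grd\)\nb-equivariantly isomorphic, by an isomorphism that is the identity over \(\Source\times\{0\}\), to \(\pr^*\bigl(W|_{\Source\times\{0\}}\bigr)\). I would prove this in two stages. \emph{Local stage:} for every \(\source\in\Source\) I produce a \(\Grd\)\nb-invariant open neighbourhood~\(O\) of the orbit \(\Grd\cdot\source\) and a \(\Grd\)\nb-equivariant isomorphism \(W|_{O\times[0,1]}\cong \pr^*\bigl(W|_{O\times\{0\}}\bigr)\) that is the identity over \(O\times\{0\}\); here one covers \(\{\source\}\times[0,1]\) by finitely many pieces on which~\(W\) is straightened in the \([0,1]\)\nb-direction (compactness of \([0,1]\)) and clutches these together, the point being to do this \(\Grd\)\nb-equivariantly over an invariant neighbourhood. \emph{Global stage:} by Proposition~\ref{pro:invariant_partition_unity} choose a \(\Grd\)\nb-invariant partition of unity subordinate to the cover \(\{O\times[0,1]\}\); the standard combinatorial trick refining a cover to a countable one respects invariance, since the refining sets are defined purely from the (invariant) partition functions, so one gets a countable cover \(\{V_k\times[0,1]\}_{k\in\N}\) with equivariant straightenings~\(\theta_k\) and a subordinate invariant partition of unity~\((\psi_k)\). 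The classical inductive ``pushing-up'' construction of the global isomorphism is then a composite of bundle automorphisms given by explicit formulae in the~\(\theta_k\), the~\(\psi_k\) and the partial sums \(\sum_{j\le k}\psi_j\); since all of these are \(\Grd\)\nb-equivariant respectively \(\Grd\)\nb-invariant, the resulting isomorphism \(W\cong\pr^*\bigl(W|_{\Source\times\{0\}}\bigr)\) is \(\Grd\)\nb-equivariant and, by construction, the identity over \(\Source\times\{0\}\).

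The main obstacle is the local stage. Non-equivariantly the straightening \(W|_{O\times[0,1]}\cong\pr^*\bigl(W|_{O\times\{0\}}\bigr)\) is immediate from local triviality of~\(W\), but local trivialisations are not \(\Grd\)\nb-equivariant; and one cannot merely apply Corollary~\ref{cor:vb_iso_extend} to the invariant closed subset \(\Source\times\{0\}\subseteq\Source\times[0,1]\) and hope that the resulting invariant open neighbourhood contains a tube \(\Source\times[0,\varepsilon)\), since~\(\Source\) need not be compact. This forces one to work orbit by orbit: near an orbit one passes to a slice, whose stabiliser is the compact group~\(\Grd_\source^\source\) by Theorem~\ref{the:numerably_proper_groupoid_proper}, and there one has equivariant local triviality of~\(W\) along the compact segment of fixed points \(\{\source\}\times[0,1]\) — a local trivialisation near a fixed point can be averaged over the compact stabiliser and finitely many such can be clutched — which yields the required straightening over an invariant neighbourhood of the orbit. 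Once the local stage is available, everything else is bookkeeping with invariant partitions of unity and the extension results of Proposition~\ref{pro:extend_section_vb}.
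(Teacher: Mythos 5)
Your proposal correctly reduces the statement to the core claim that every \(\Grd\)\nb-vector bundle \(W\) over \(\Source\times[0,1]\) is \(\Grd\)\nb-equivariantly isomorphic to \(\pr^*\bigl(W|_{\Source\times\{0\}}\bigr)\) by an isomorphism restricting to the identity over \(\Source\times\{0\}\); that is exactly what the paper proves. But you then take a genuinely different route. The paper avoids any local/global gluing: it pulls \(W\) back along the two coordinate projections \(\pi_1,\pi_2\colon\Source'\times_\Base\Source'\rightrightarrows\Source'\) (with \(\Source'=\Source\times[0,1]\)), applies Corollary~\ref{cor:vb_iso_extend} to the diagonal to obtain a \(\Grd\)\nb-invariant neighbourhood~\(U\) of the diagonal with a \(\Grd\)\nb-equivariant family of fibre isomorphisms \(\gamma_{\source_1,\source_2}\), uses compactness of \([0,1]\) to produce a \(\Grd\)\nb-invariant ``Lebesgue function'' \(\varrho\colon\Source\to(0,1]\), and simply chains the \(\gamma\)'s along each segment \(\{\source\}\times[0,1]\). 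Your argument is instead the classical Milnor ``pushing-up'' construction made equivariant: local straightenings over tubes \(O\times[0,1]\) plus an invariant partition of unity. Your global stage is sound given the local stage, but it is considerably heavier machinery for the same result, and your treatment of the connectedness (uniqueness-up-to-homotopy) half is only asserted to be ``a routine variant'', whereas the paper gives a short direct reparametrisation argument.

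The genuine gap is in your local stage. You write ``near an orbit one passes to a slice, whose stabiliser is the compact group \(\Grd_\source^\source\) \ldots and there one has equivariant local triviality'' — but the paper's framework supplies no slice theorem. Here \(\Grd\) is only a (paracompact Hausdorff) topological groupoid, possibly non--locally-compact, and \(\Source\) is only required to be a numerably proper \(\Grd\)\nb-space; Palais/Mostow-type slices are not available and are never established in the paper. Averaging a local trivialisation over \(\Grd_\source^\source\) also presupposes a \(\Grd_\source^\source\)\nb-invariant trivialising neighbourhood, which again is a slice in disguise. Your local step can be repaired without slices: straighten \(W\) equivariantly over the closed \(\Grd\)\nb-invariant set \(\Grd\cdot\source\times[0,1]\) (a \(\Grd\)\nb-bundle there is the same as a \(\Grd_\source^\source\)\nb-bundle over \([0,1]\), and these trivialise by the compactness/clutching argument you sketch, since intertwiners of a fixed representation of a compact group form a connected group), then extend to a \(\Grd\)\nb-invariant open neighbourhood using Corollary~\ref{cor:vb_iso_extend}, and finally shrink to a tube \(O\times[0,1]\) by the tube lemma and saturation. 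That fixed-up local argument, however, is essentially the paper's diagonal construction done orbit by orbit, which is why the paper's version -- doing it once and for all on \(\Source'\times_\Base\Source'\) -- is the cleaner choice.
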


\begin{proof}
  We are going to prove the following claim.  Let~\(\VB\) be a \(\Grd\)\nb-vector bundle over \(\Source'\defeq \Source\times[0,1]\) and let~\(\VB_0\) denote its restriction to \(\Source\times\{0\}\); then the space of \(\Grd\)\nb-equivariant vector bundle isomorphisms \(\VB\cong \VB_0\times[0,1]\) that extend the identity map \(\VB_0\to\VB_0\) over \(\Source\times\{0\}\) is non-empty and connected.  We get the assertion of the proposition when we apply this to the vector bundle \(f^*(\VB)\) over \(\Source\times[0,1]\) and restrict to \(1\in[0,1]\).

  Let \(\pi_1,\pi_2\colon \Source'\times_\Base\Source'\rightrightarrows \Source'\) be the coordinate projections.  The \(\Grd\)\nb-vector bundles \(\pi_1^*\VB\) and~\(\pi_2^*\VB\) on \(\Source'\times_\Base\Source'\) become isomorphic on the diagonal.  Corollary~\ref{cor:vb_iso_extend} shows that they remain isomorphic on some \(\Grd\)\nb-invariant open neighbourhood~\(U\) of the diagonal.  This provides a \(\Grd\)\nb-equivariant continuous family of isomorphisms \(\gamma_{\source_1,\source_2}\colon \VB_{\source_1} \to \VB_{\source_2}\) for \((\source_1,\source_2)\in U\).

  For each \(\source\in\Source\) there is \(\varrho_\source>0\) such that \(\bigl((\source,s), (\source,t)\bigr) \in U\) for all \(s,t\in[0,1]\) with \(\abs{t-s}\le\varrho_\source\) because~\([0,1]\) is compact.  As in the proof of Proposition~\ref{pro:extend_section_vb}, we may construct a \(\Grd\)\nb-invariant continuous function \(\varrho\colon \Tot\to[0,1]\) such that the above holds with \(\varrho_\source=\varrho(\source)\).  We abbreviate \(\gamma_{s,t} \defeq \gamma_{(\source,s),(\source,t)}\) for \(\source\in\Source\), \(s,t\in[0,1]\).  We get a well-defined isomorphism \(\VB_{(\source,0)}\to \VB_{(\source,t)}\) for any \(t\in[0,1]\) by composing \(\gamma_{j\varrho(\source),(j+1)\varrho(\source)}\) for \(0\le j < \lfloor t/\varrho(\source)\rfloor\) and \(\gamma_{\lfloor t/\varrho(\source)\rfloor\varrho(\source),t}\).  This defines a \(\Grd\)\nb-vector bundle isomorphism \(\VB_0\times[0,1] \cong \VB\) that extends the identity map over \(\Source\times\{0\}\).

  Two such isomorphisms differ by composing with a \(\Grd\)\nb-vector bundle automorphism of \(\VB_0\times[0,1]\); this is a continuous path in the group of \(\Grd\)\nb-vector bundle automorphisms of~\(\VB_0\).  Any such path is homotopic to a constant path by reparametrization.  Hence the set of vector bundle isomorphisms under consideration is connected as asserted.
\end{proof}

\subsection{From groupoids to proper groupoids}
\label{sec:reduction}

A good source of examples of proper groupoids are the transformation groupoids \(\Grd \defeq G\ltimes \EG G\), where~\(\EG G\) is a universal numerably proper action of a group or groupoid~\(G\).  Replacing~\(G\) by~\(\Grd\) loses no information as far as equivariant vector bundles are concerned.  Now we explain this observation in more detail.

\begin{definition}
  \label{def:EG}
  A numerably proper \(\Grd\)\nb-space~\(\EG\Grd\) is \emph{universal} if any numerably proper \(\Grd\)\nb-space admits a \(\Grd\)\nb-map to it and if any two parallel \(\Grd\)\nb-maps to~\(\EG\Grd\) are (\(\Grd\)\nb-equivariantly) homotopic.
\end{definition}

This weak universal property characterises~\(\EG\Grd\) uniquely up to \(\Grd\)\nb-homotopy equivalence.  Furthermore, \(\EG\Grd=\Base\) if and only if~\(\Grd\) is numerably proper.

For a locally compact groupoid~\(G\), there is a locally compact, universal proper \(G\)\nb-space by a construction due to Gennadi Kasparov and Georges Skandalis~\cite{Kasparov-Skandalis:Bolic} (see \cite{Tu:Novikov}*{Proposition 6.13} for the groupoid case); it makes no difference whether we use numerably proper actions or proper actions here by Lemma~\ref{lem:def_proper_compare}.

Let~\(\EG G\) be a universal proper \(G\)\nb-space and let~\(\Grd\) be the crossed product groupoid \(\Grd \defeq G\ltimes\EG G\).  Then a \(\Grd\)\nb-space is nothing but a \(G\)\nb-space equipped with a \(G\)\nb-equivariant map to~\(\EG G\).  As a \(G\)\nb-space, \(\EG G\) carries a canonical map to the object space~\(\Base\) of~\(G\), which we use to pull back a \(G\)\nb-space~\(\Tot\) to a \(\Grd\)\nb-space \(\Tot\times_\Base\EG G\).

\begin{lemma}
  \label{lem:EG_times_X}
  \(\EG G\times_\Base\Tot\) is canonically \(G\)\nb-equivariantly homotopy equivalent to~\(\Tot\) if~\(\Tot\) is a proper \(G\)\nb-space.
\end{lemma}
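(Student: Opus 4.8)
The plan is to exhibit the projection $\pr_\Tot\colon \EG G\times_\Base\Tot\to\Tot$ as one half of a $G$\nb-homotopy equivalence, the other half being a section built from the universal property of~$\EG G$. First I would observe that $\EG G\times_\Base\Tot$, being the fibre product over~$\Base$ of a numerably proper $G$\nb-space with a proper $G$\nb-space, is itself a numerably proper $G$\nb-space: properness of a $G$\nb-space means properness of the transformation groupoid, and $G\ltimes(\EG G\times_\Base\Tot)$ maps to $G\ltimes\EG G$ (or to $G\ltimes\Tot$) in a way that transports the invariant family of compactly supported probability measures. Actually it is cleanest to note that $\EG G\times_\Base\Tot$ is a $(G\ltimes\Tot)$\nb-space, hence numerably proper since $\Tot$ is a proper $G$\nb-space and the pulled-back action of the proper groupoid $G\ltimes\Tot$ on $\EG G\times_\Base\Tot$ is again proper by the same measure-theoretic argument. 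Either way, Definition~\ref{def:EG} then furnishes a $G$\nb-map $s\colon \Tot\to\EG G\times_\Base\Tot$; here one uses that $\Tot$ itself, being proper, is numerably proper and admits a $G$\nb-map to $\EG G$, and pairing this with the identity on~$\Tot$ over~$\Base$ gives~$s$.

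Next I would check the two composites. The composite $\pr_\Tot\circ s\colon \Tot\to\Tot$ is the identity by construction, since the second component of~$s$ is $\Id_\Tot$. For the other composite $s\circ\pr_\Tot\colon \EG G\times_\Base\Tot\to\EG G\times_\Base\Tot$, I would compare it with the identity using the uniqueness clause of Definition~\ref{def:EG}. Both $s\circ\pr_\Tot$ and $\Id$ are $G$\nb-maps from the numerably proper $G$\nb-space $\EG G\times_\Base\Tot$ to itself; composing each with the first-coordinate projection $\pr_{\EG G}\colon \EG G\times_\Base\Tot\to\EG G$ gives two parallel $G$\nb-maps into $\EG G$, which are $G$\nb-equivariantly homotopic by universality. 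Since both $s\circ\pr_\Tot$ and $\Id$ have second coordinate equal to $\pr_\Tot$ (for $s\circ\pr_\Tot$ this is because the second component of~$s$ is the identity), the homotopy in the $\EG G$\nb-coordinate together with the constant homotopy $\pr_\Tot$ in the $\Tot$\nb-coordinate assembles to a $G$\nb-homotopy $s\circ\pr_\Tot\simeq\Id$ in $\EG G\times_\Base\Tot$. This establishes that $\pr_\Tot$ is a $G$\nb-homotopy equivalence with homotopy inverse~$s$.

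Finally I would address canonicity. The map~$s$ depends on the choice of a $G$\nb-map $\Tot\to\EG G$, but any two such choices are $G$\nb-equivariantly homotopic by the universal property, and this homotopy induces a $G$\nb-homotopy between the corresponding sections; hence the homotopy class of~$s$, and so the homotopy equivalence, is well defined independently of choices. I expect the main obstacle to be the bookkeeping in the first step: verifying carefully that $\EG G\times_\Base\Tot$ is numerably proper, i.e.\ that properness of $\Tot$ as a $G$\nb-space is inherited by the fibre product, which comes down to pulling back and suitably renormalising the invariant continuous family of compactly supported probability measures witnessing properness of $G\ltimes\Tot$ along the $G$\nb-map $\EG G\times_\Base\Tot\to\Tot$. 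The homotopy-theoretic part is then a formal consequence of Definition~\ref{def:EG}.
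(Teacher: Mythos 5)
Your argument is correct, but it takes a genuinely different route from the paper's. The paper's proof is a Yoneda-style argument: it observes that for any proper $G$\nb-space~$\Other$, a $G$\nb-homotopy class of $G$\nb-maps $\Other\to\EG G\times_\Base\Tot$ is a pair of $G$\nb-homotopy classes of $G$\nb-maps $\Other\to\EG G$ and $\Other\to\Tot$, and since the first factor is a singleton by universality, $\pr_\Tot$ induces a bijection on $[\Other,\blank]$ for all proper~$\Other$; hence it is a $G$\nb-homotopy equivalence. You instead construct an explicit homotopy inverse $s=(\chi,\Id_\Tot)$ from a classifying map $\chi\colon\Tot\to\EG G$, verify $\pr_\Tot\circ s=\Id_\Tot$ on the nose, and deduce $s\circ\pr_\Tot\simeq\Id$ by comparing the two maps coordinatewise, using universality only on the $\EG G$\nb-coordinate. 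Both proofs hinge on the same two ingredients (that $\EG G\times_\Base\Tot$ is again a proper $G$\nb-space, so the universal property applies, and the splitting of maps into the fibre product as pairs). The Yoneda argument is shorter and avoids assembling an explicit homotopy in the fibre product; your direct construction is more concrete, makes the homotopy inverse visible (which is what later parts of the paper actually use), and has the minor advantage of making the canonicity statement explicit, which the paper leaves implicit. Your remark on why $\EG G\times_\Base\Tot$ is numerably proper — that the transformation groupoid of any $(G\ltimes\Tot)$\nb-space or $(G\ltimes\EG G)$\nb-space over a proper groupoid is again proper, by transporting the invariant family of measures along the anchor map — is the right justification and is also needed, though not spelled out, in the paper's proof.
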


\begin{proof}
  By the Yoneda Lemma, it suffices to show that the canonical projection \(\EG G\times_\Base\Tot\to\Tot\) induces a bijection on the sets of \(G\)\nb-homotopy classes of \(G\)\nb-maps \(\Other\to\blank\) for any \emph{proper} \(G\)\nb-space~\(\Other\).  But \(G\)\nb-homotopy classes of \(G\)\nb-maps \(\Other\to\EG G\times_\Base\Tot\) are just pairs consisting of a \(G\)\nb-homotopy class of a \(G\)\nb-map from~\(\Other\) to~\(\EG G\) and one from~\(\Other\) to~\(\Tot\).  Since there is a unique \(G\)\nb-homotopy class of \(\Grd\)\nb-maps \(\Other\to\EG G\), we get the desired bijection.
\end{proof}

\begin{corollary}
  \label{cor:inflating_vector_bundles}
  If~\(G\) is a locally compact groupoid, then the set of isomorphism classes of \(G\)\nb-vector bundles over a proper \(G\)\nb-space~\(\Tot\) is in bijective correspondence with the set of isomorphism classes of \(G\)\nb-vector bundles over~\(\Tot\times_\Base \EG G\).

  With this identification, trivial \(G\ltimes\EG G\)\nb-vector bundles over \(\Tot\times_\Base \EG G\) agree with the \(G\)\nb-equivariant vector bundles on~\(\Tot\) which are pulled back from~\(\EG G\) under the classifying map \(\Tot \to \EG G\).
\end{corollary}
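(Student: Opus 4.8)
The plan is to derive both statements from Lemma~\ref{lem:EG_times_X}, Proposition~\ref{pro:vb_homotopy} and Definition~\ref{def:trivial}, with essentially no extra work. First I would fix a \(G\)\nb-map \(\phi\colon \Tot\to\EG G\); one exists because~\(\Tot\) is a proper \(G\)\nb-space, and it is unique up to \(G\)\nb-homotopy by Definition~\ref{def:EG}. Consider the first projection \(p\colon \Tot\times_\Base\EG G\to\Tot\) and the section \(s\colon \Tot\to\Tot\times_\Base\EG G\), \(\tot\mapsto(\tot,\phi(\tot))\), that~\(\phi\) determines. Then \(p\circ s=\Id_\Tot\), whereas \(s\circ p\) and \(\Id\) differ only in the \(\EG G\)\nb-coordinate, where they are the two \(G\)\nb-maps \(\phi\circ p\) and the second projection \(\pr_{\EG G}\) from \(\Tot\times_\Base\EG G\) to~\(\EG G\); these are \(G\)\nb-homotopic by Definition~\ref{def:EG}, so \(s\circ p\) is \(G\)\nb-homotopic to~\(\Id\) (this is Lemma~\ref{lem:EG_times_X} made explicit). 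Pull-back of \(G\)\nb-vector bundles is functorial and, by Proposition~\ref{pro:vb_homotopy}, invariant under \(G\)\nb-homotopies of maps; hence \(p^*\) and \(s^*\) are mutually inverse bijections between isomorphism classes of \(G\)\nb-vector bundles on~\(\Tot\) and on \(\Tot\times_\Base\EG G\). It remains to note that a \(\Grd\)\nb-vector bundle over the \(\Grd\)\nb-space \(\Tot\times_\Base\EG G\) (where \(\Grd=G\ltimes\EG G\)) is the same thing as a \(G\)\nb-vector bundle over its underlying \(G\)\nb-space: the \(\Grd\)\nb-action on any vector bundle over \(\Tot\times_\Base\EG G\) is literally the \(G\)\nb-action, and there is no further compatibility condition to impose. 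This gives the first claim.

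For the second claim I would unwind Definition~\ref{def:trivial} for the groupoid \(\Grd=G\ltimes\EG G\), whose object space is~\(\EG G\). A trivial \(\Grd\)\nb-vector bundle over \(\Tot\times_\Base\EG G\) is one isomorphic to \(\pr_{\EG G}^*(\VB)\) for some \(\Grd\)\nb-vector bundle~\(\VB\) over the object space~\(\EG G\), and a \(\Grd\)\nb-vector bundle over \(\EG G\) (viewed as the \(\Grd\)\nb-space~\(\EG G\) with identity anchor) is just a \(G\)\nb-vector bundle over~\(\EG G\). Applying the inverse bijection from the first part gives \(s^*\bigl(\pr_{\EG G}^*\VB\bigr)\cong(\pr_{\EG G}\circ s)^*\VB=\phi^*\VB\), so the trivial \(\Grd\)\nb-vector bundles on \(\Tot\times_\Base\EG G\) correspond precisely to the \(G\)\nb-vector bundles on~\(\Tot\) of the form \(\phi^*\VB\), that is, to those pulled back from~\(\EG G\) along the classifying map~\(\phi\). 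Conversely, if a \(G\)\nb-vector bundle on~\(\Tot\) is isomorphic to \(\phi^*\VB\), then \(p^*\phi^*\VB\cong(\phi\circ p)^*\VB\cong\pr_{\EG G}^*\VB\), because \(\phi\circ p\) and \(\pr_{\EG G}\) are \(G\)\nb-homotopic \(G\)\nb-maps \(\Tot\times_\Base\EG G\to\EG G\) (Definition~\ref{def:EG} and Proposition~\ref{pro:vb_homotopy}); hence its image under \(p^*\) is a trivial \(\Grd\)\nb-vector bundle.

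I do not expect a genuine obstacle, since everything reduces to results already in hand; the points requiring care are mostly bookkeeping. First, the repeated appeal to ``any two parallel \(G\)\nb-maps into \(\EG G\) are \(G\)\nb-homotopic'' is legitimate because \(\Tot\times_\Base\EG G\) does carry a \(G\)\nb-map to~\(\EG G\), namely \(\pr_{\EG G}\), so Definition~\ref{def:EG} applies without a separate properness check. Second, one must follow Definition~\ref{def:trivial} carefully through the change of object space from~\(\Base\) to~\(\EG G\), so that ``trivial'' for \(\Grd=G\ltimes\EG G\) means ``pulled back from~\(\EG G\)'', and one should keep in mind that \(\Grd\)\nb-vector bundles over~\(\EG G\) are the same as \(G\)\nb-vector bundles over~\(\EG G\).
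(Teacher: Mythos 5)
Your proof is correct and is essentially the argument the paper implicitly intends: the paper states this as a corollary of Lemma~\ref{lem:EG_times_X} together with Proposition~\ref{pro:vb_homotopy} and Definition~\ref{def:trivial}, and you have simply written out the maps $p$, $s$, the two $G$\nobreakdash-homotopies, and the chase through $s^*\circ\pr_{\EG G}^*=\phi^*$ and $p^*\circ\phi^*\cong\pr_{\EG G}^*$ that the paper leaves to the reader. The observation that $G\ltimes\EG G$-vector bundles over $\Tot\times_\Base\EG G$ are the same data as $G$\nobreakdash-vector bundles over the underlying $G$\nobreakdash-space, and the remark that $\Tot\times_\Base\EG G$ is automatically numerably proper because it admits a $G$\nobreakdash-map to $\EG G$, are both correct and close the small gaps that a careless reading might overlook.
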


Furthermore, since the classifying map~\(\chi\) is unique up to \(G\)\nb-homotopy, and pull-backs of \(G\)\nb-vector bundles along \(G\)\nb-homotopic maps are isomorphic by Proposition~\ref{pro:vb_homotopy}, this more general notion of trivial \(G\)\nb-vector bundle does not depend on the choice of the auxiliary map~\(\chi\); since any two universal proper \(G\)\nb-spaces are homotopy equivalent, it does not depend on the choice of~\(\EG G\) either.

\section{Equivariant embeddings of bundles of smooth manifolds}
\label{sec:equivariant_embeddings}

\subsection{Full vector bundles and enough vector bundles}
\label{sec:full}

We define full \(\Grd\)\nb-vector bundles and what it means to have enough \(\Grd\)\nb-vector bundles over a \(\Grd\)\nb-space.  These definitions emerged out of the work of Wolfgang L\"uck and Bob Oliver~\cite{Lueck-Oliver:Completion}.

\begin{definition}[\cite{Emerson-Meyer:Equivariant_K}]
  \label{def:enough_equivariant_vb}
  Let~\(\Grd\) be a topological groupoid and let~\(\Tot\) be a numerably proper \(\Grd\)\nb-space.  There are \emph{enough \(\Grd\)\nb-vector bundles} on~\(\Tot\) if for every \(\tot\in\Tot\) and every finite-dimensional representation of the stabiliser~\(\Grd_\tot^\tot\), there is a \(\Grd\)\nb-vector bundle over~\(\Tot\) whose fibre at~\(\tot\) contains the given representation of~\(\Grd_\tot^\tot\).

  A \(\Grd\)\nb-vector bundle~\(\VB\) on~\(\Tot\) is \emph{full} if for every \(\tot\in\Tot\), the fibre of~\(\VB\) at~\(\tot\) contains all irreducible representations of the stabiliser~\(\Grd_\tot^\tot\).
\end{definition}

If there is a full \(\Grd\)\nb-vector bundle over~\(\Tot\), then~\(\Tot\) has enough \(\Grd\)\nb-vector bundles.

\begin{example}
  \label{exa:constant_full_vb}
  We always have the constant \(\Grd\)\nb-vector bundles \(\Tot\times\R^n\epi\Tot\) for \(n\in\N\) with the trivial representation of~\(\Grd\) on~\(\R^n\).  Such a \(\Grd\)\nb-vector bundle is full if and only if~\(\Grd\) acts freely on~\(\Tot\).

  A groupoid \(G\ltimes\EG G\) for a groupoid~\(G\) is free if and only if~\(G\) is torsion-free in the sense that the isotropy groups~\(G_\base^\base\) for \(\base\in\Base\) contain no compact subgroups.
\end{example}

\begin{example}
  \label{ex:enough_bundles_for_compact_grps}
  Let~\(\Grd\) be a compact group.  Any finite-dimensional representation of a closed subgroup of a compact group can be embedded in the restriction of a finite-dimensional representation of the group itself (\cite{Palais:Slices}*{Theorem 3.1}).  Hence any \(\Grd\)\nb-space has enough \(\Grd\)\nb-vector bundles: trivial \(\Grd\)\nb-vector bundles suffice.  By Lemma~\ref{lem:full_finite}, a full \(\Grd\)\nb-vector bundle on a \(\Grd\)\nb-space can only exist if the size of stabilisers is uniformly bounded.  This necessary condition is not yet sufficient.
\end{example}

\begin{example}
  \label{exa:not_enough_vb}
  We examine a class of compact Lie groupoids that may or may not have enough equivariant vector bundles (see also~\cite{Nistor-Troitsky:Thom_gauge}).  Let~\(K\) be the Lie group~\(\Torus^n=(\R/\Z)^n\) for some \(n\in\N\).  A locally trivial group bundle~\(\Grd\) over the circle \(\Base\defeq\R/\Z\) with fibre~\(K\) is determined uniquely up to isomorphism by an isomorphism \(\sigma\colon K\to K\): we have \(\Grd = K\times [0,1]/{\sim}\) with \((k,0)\sim (\sigma(k),1)\) for all \(k\in K\).  The automorphism group of~\(\Torus^n\) is isomorphic to \(\Gl(n,\Z)\), so that we now write~\(\Grd_A\) for the group bundle associated to \(A\in\Gl(n,\Z)\).

  Before we study when such groupoids have enough equivariant vector bundles, we mention another equivalent construction.  Given \(A\in\Gl(n,\Z)\), we may also form a crossed product Lie group \(K\rtimes_A\Z\).  The trivial action of~\(K\) and the translation action of~\(\Z\) on~\(\R\) combine to an action of \(K\rtimes_A\Z\) on~\(\R\).  This is a universal proper action of \(K\rtimes_A\Z\).  The resulting transformation groupoid is Morita equivalent to the groupoid~\(\Grd_A\) because~\(\Z\) acts freely and properly on~\(\R\).

  A \(\Grd_A\)\nb-equivariant vector bundle over the circle is equivalent to a \(K\)\nb-equivariant vector bundle over~\([0,1]\) together with an appropriate identification of the fibres at \(0\) and~\(1\).  But \(K\)\nb-equivariant vector bundles over~\([0,1]\) are all trivial, so that we just get a finite-dimensional representation~\(\pi\) of~\(K\) on some vector space~\(V\) together with an invertible map \(\tau\colon V\to V\) that satisfies \(\tau\pi_k = \pi_{A(k)}\tau\) for all \(k\in\Torus^n\).  Equivalently, \(\tau\) is an invertible intertwiner \(\pi\cong \pi\circ A\).

  Taking multiplicities, we interpret the representation~\(\pi\) as a finitely supported function \(f_\pi\colon \widehat{K} \cong \Z^n \to \Z\).  The automorphism \(\sigma_A \colon K \to K\) dualises to a map \(\hat{\sigma}_A\colon \widehat{K}\to \widehat{K}\), which is represented by the transpose of the matrix~\(A\).  If~\(\pi\) admits a map~\(\tau\) as above, then the associated function~\(f_\pi\) must be \(A\)\nb-invariant as a function on~\(\Z^n\).  Conversely, if~\(f_\pi\) is \(A\)\nb-invariant, then there is an isomorphism \(\pi\cong \pi\circ A\); two such isomorphisms differ by a unitary intertwiner of~\(\pi\), and these unitary intertwiners form a connected group.  Since homotopic~\(\tau\) give isomorphic \(\Grd_A\)\nb-vector bundles, we conclude that isomorphism classes of \(\Grd_A\)\nb-vector bundles correspond bijectively to \(A\)\nb-invariant functions \(\widehat{K}\to\Z\) with finite support.  Such functions descend to the space of \(A\)\nb-orbits, and they \emph{vanish on infinite orbits}.  This yields the free Abelian group spanned by the characteristic functions of finite \(A\)\nb-orbits in~\(\widehat{K}\).

  It follows immediately from this discussion that~\(\Grd_A\) has enough equivariant vector bundles if and only if all \(A\)\nb-orbits in~\(\widehat{K}\) are finite.  More precisely, an irreducible representation~\(\chi\) of the stabiliser~\(K\) of a point in~\(\Base\) occurs in a \(\Grd_A\)\nb-equivariant vector bundle over~\(\Base\) if and only if~\(\chi\) has a finite \(A\)\nb-orbit.

  If \(n=1\), so that~\(\Grd_A\) is a bundle of circles, then \(\sigma \in \{\Id, -1\} = \Gl(1,\Z)\) and any \(A\)\nb-orbit is finite, so that we do not yet get counterexamples.  If \(n=2\) and \(A\in\Gl(2,\Z)\), we must distinguish the elliptic, parabolic, and hyperbolic cases.  If~\(A\) is elliptic, that is, \(A\) has two different eigenvalues of modulus~\(1\), then all \(A\)\nb-orbits on~\(\Z^2\) are finite because~\(A\) is unitary in some scalar product; hence there are enough \(\Grd_A\)\nb-vector bundles.  The same happens for \(A=1\).  Otherwise, if~\(A\) is parabolic and not~\(1\), then~\(A\) is conjugate to the matrix \(\bigl(\begin{smallmatrix}1&1\\0&1\end{smallmatrix}\bigr)\).  The resulting action on~\(\Z^2\) fixes the points \((m,0)\) and has infinite orbits otherwise.  Finally, if~\(A\) is hyperbolic, that is, \(A\) has eigenvalues of modulus different from~\(1\), then the only finite orbit is~\(\{0\}\).

  As a consequence, \(\Grd_A\) for \(A\in\Gl(2,\Z)\) has enough equivariant vector bundles if and only if~\(A\) is elliptic or \(A=1\).  If~\(A\) is hyperbolic, then all \(\Grd_A\)\nb-vector bundles over~\(\Base\) carry the trivial action of~\(K\), and so there are not enough equivariant vector bundles.  If~\(A\) is parabolic, then there are many, but not enough non-equivalent irreducible \(\Grd_A\)\nb-vector bundles over~\(\Base\).
\end{example}

We now return to the general theory of equivariant vector bundles.

\begin{lemma}
  \label{lem:pullback_enough_trivial_vb}
  Let \(f\colon \Source\to\Target\) be a \(\Grd\)\nb-map.  If~\(\Target\) has enough \(\Grd\)\nb-vector bundles, so has~\(\Source\).  If~\(\VB\) is a full \(\Grd\)\nb-vector bundle over~\(\Target\), then \(f^*(\VB)\) is a full \(\Grd\)\nb-vector bundle over~\(\Source\).
\end{lemma}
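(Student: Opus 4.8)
The plan is to reduce both assertions to statements about individual fibres: along a \(\Grd\)\nb-map the point stabilisers only get smaller, and the only real work is to move a representation of the (smaller) stabiliser of a point of~\(\Source\) up to the stabiliser of its image in~\(\Target\), where the hypothesis on~\(\Target\) applies.  The single non-formal tool for this step is Palais's embedding theorem \cite{Palais:Slices}*{Theorem 3.1}.

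First I would fix notation.  Since having enough \(\Grd\)\nb-vector bundles (Definition~\ref{def:enough_equivariant_vb}) presupposes numerable properness, both \(\Source\) and~\(\Target\) are understood to be numerably proper \(\Grd\)\nb-spaces (and \(\Source\), being a \(\Grd\)\nb-space over the numerably proper \(\Grd\)\nb-space~\(\Target\), is numerably proper in any case).  Being a \(\Grd\)\nb-map, \(f\) satisfies \(\anchor_\Target\circ f=\anchor_\Source\); fix \(\source\in\Source\) and put \(\target\defeq f(\source)\).  Equivariance of~\(f\) shows that \(\grd\cdot\source=\source\) implies \(\grd\cdot\target=\target\), so the stabilisers are nested closed subgroups \(\Grd_\source^\source\subseteq\Grd_\target^\target\), and \(\Grd_\target^\target\) is compact by Theorem~\ref{the:numerably_proper_groupoid_proper}.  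I would also record the elementary observation that for any \(\Grd\)\nb-vector bundle~\(\VB\) over~\(\Target\) the fibre of~\(f^*(\VB)\) at~\(\source\) is the vector space \(\VB_\target\), with \(\Grd_\source^\source\) acting through its inclusion into \(\Grd_\target^\target\).  Palais's theorem then says: every finite-dimensional representation of the closed subgroup \(\Grd_\source^\source\) of the compact group \(\Grd_\target^\target\) embeds into the restriction to~\(\Grd_\source^\source\) of a finite-dimensional representation of~\(\Grd_\target^\target\).

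For the first assertion, let \(W\) be a finite-dimensional representation of \(\Grd_\source^\source\).  Using Palais's theorem I would enlarge~\(W\) to a finite-dimensional representation~\(V\) of \(\Grd_\target^\target\) that contains~\(W\) upon restriction to \(\Grd_\source^\source\); then ``enough \(\Grd\)\nb-vector bundles on~\(\Target\)'' supplies a \(\Grd\)\nb-vector bundle~\(\VB\) over~\(\Target\) with \(V\subseteq\VB_\target\) as \(\Grd_\target^\target\)\nb-representations, and \(f^*(\VB)\) is then a \(\Grd\)\nb-vector bundle over~\(\Source\) whose fibre at~\(\source\) is \(\VB_\target\supseteq W\) as a \(\Grd_\source^\source\)\nb-representation.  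As \(\source\) was arbitrary, \(\Source\) has enough \(\Grd\)\nb-vector bundles.

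For the second assertion, suppose \(\VB\) over~\(\Target\) is full, so \(\VB_\target\) contains every irreducible representation of \(\Grd_\target^\target\); let \(\rho\) be an irreducible representation of \(\Grd_\source^\source\).  By Palais's theorem \(\rho\) embeds into \(\sigma|_{\Grd_\source^\source}\) for some finite-dimensional representation~\(\sigma\) of \(\Grd_\target^\target\); decomposing~\(\sigma\) into \(\Grd_\target^\target\)\nb-irreducibles and using irreducibility of~\(\rho\) together with complete reducibility over the compact group \(\Grd_\source^\source\), \(\rho\) embeds into \(\sigma_0|_{\Grd_\source^\source}\) for a single irreducible summand~\(\sigma_0\) of~\(\sigma\).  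Fullness gives \(\sigma_0\subseteq\VB_\target\), hence \(\rho\) embeds into \(\VB_\target|_{\Grd_\source^\source}=f^*(\VB)_\source\); since \(\rho\) and~\(\source\) were arbitrary, \(f^*(\VB)\) is full.  The only step with genuine content is the appeal to Palais's theorem, which is precisely what allows passage from a representation of the possibly smaller stabiliser \(\Grd_\source^\source\) back to the stabiliser \(\Grd_\target^\target\) controlled by the hypothesis; the nesting of stabilisers, the identification of the fibres of a pull-back, and semisimplicity of representations of compact groups are all routine.  The one point to stay alert about is the implicit numerable properness of \(\Source\) and~\(\Target\), which both makes the statement meaningful and, via Theorem~\ref{the:numerably_proper_groupoid_proper}, provides the compactness that Palais's theorem requires.
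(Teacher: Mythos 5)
Your proof is correct and follows the same approach as the paper: fix a point, observe that the stabiliser of~\(\source\) embeds into the stabiliser of~\(f(\source)\), invoke Palais's embedding theorem \cite{Palais:Slices}*{Theorem 3.1} to pass a representation of \(\Grd_\source^\source\) up to one of \(\Grd_{f(\source)}^{f(\source)}\), and then apply the hypothesis on~\(\Target\). You spell out the decomposition into irreducibles in the fullness case more explicitly than the paper does, but that step is implicit in the paper's argument and the underlying reasoning is identical.
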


\begin{proof}
  Pick \(\source\in\Source\) and a representation~\(\varrho\) of its stabiliser~\(\Grd_\source^\source\).  There is a representation~\(\hat{\varrho}\) of \(\Grd_{f(\source)}^{f(\source)}\supseteq \Grd_\source^\source\) whose restriction to~\(\Grd_\source^\source\) contains~\(\varrho\) (see \cite{Palais:Slices}*{Theorem 3.1}).  Hence the pull-back of a full \(\Grd\)\nb-vector bundle over~\(\Target\) is a full \(\Grd\)\nb-vector bundle over~\(\Source\).  The first assertion is proved similarly.
\end{proof}

As a result, if~\(\Base\) has enough \(\Grd\)\nb-vector bundles or a full \(\Grd\)\nb-vector bundle, then so have all \(\Grd\)\nb-spaces.  Furthermore, so have all \(\Grd'\)\nb-spaces for \(\Grd'\subseteq\Grd\).

\begin{lemma}
  \label{lem:morita_invariance_enough_vbs}
  The property of having enough \(\Grd\)\nb-vector bundles or a full \(\Grd\)\nb-vector bundle is invariant under Morita equivalence in the sense that if \(\Grd_1\) and~\(\Grd_2\) are equivalent groupoids, then~\(\Base_1\) has enough \(\Grd_1\)\nb-vector bundles \textup(respectively a full \(\Grd_1\)\nb-vector bundle\textup) if and only if~\(\Base_2\) has enough \(\Grd\)\nb-vector bundles \textup(respectively a full \(\Grd_2\)\nb-vector bundle\textup).
\end{lemma}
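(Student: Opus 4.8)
The plan is to reduce everything to the statement about the *linking groupoid* of a Morita equivalence. Recall that two groupoids $\Grd_1,\Grd_2$ are equivalent iff there is a groupoid $\Grd$ containing both as full subgroupoids (on complementary, saturated pieces $\Base_1,\Base_2$ of its object space $\Base = \Base_1 \sqcup \Base_2$) such that the inclusions $\Grd_i \hookrightarrow \Grd$ are equivalences; concretely, $\Grd$ acts properly (if the $\Grd_i$ do) and $r\colon s^{-1}(\Base_1) \to \Base$ is a surjection, similarly for $\Base_2$. So I will first observe that it suffices to prove: if $\Grd_1$ is a full subgroupoid of $\Grd$ on a saturated subset $\Base_1 \subseteq \Base$ such that $r\colon s^{-1}(\Base_1)\to\Base$ is surjective, then $\Base_1$ has enough $\Grd_1$\nb-vector bundles (resp.\ a full one) if and only if $\Base$ has enough $\Grd$\nb-vector bundles (resp.\ a full one). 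Applying this to both $\Base_1$ and $\Base_2$ inside the linking groupoid then gives the lemma.

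For that reduced statement I would argue in two directions. For the ``only if'' direction, suppose $\Base_1$ has enough $\Grd_1$\nb-vector bundles. Given $\base \in \Base$ and a representation $\varrho$ of $\Grd_\base^\base$, pick $\grd \in \Grd$ with $s(\grd) = \base$ and $r(\grd) = \base_1 \in \Base_1$ (using surjectivity of $r\colon s^{-1}(\Base_1)\to\Base$); conjugation by $\grd$ carries $\Grd_\base^\base$ isomorphically to $(\Grd_1)_{\base_1}^{\base_1}$, transporting $\varrho$ to a representation of the latter. By hypothesis there is a $\Grd_1$\nb-vector bundle $\VB$ over $\Base_1$ whose fibre at $\base_1$ contains this transported representation. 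Now I must *induce* $\VB$ up to a $\Grd$\nb-vector bundle over $\Base$: since $\Grd_1 \subseteq \Grd$ is a groupoid equivalence, restriction of $\Grd$\nb-vector bundles to $\Grd_1$\nb-vector bundles is an equivalence of categories — this is Corollary \ref{cor:inflating_vector_bundles} / Lemma \ref{lem:EG_times_X} in spirit, but more elementarily it is the statement that $\Grd$\nb-vector bundles over $\Base$ are the same as $\Grd_1$\nb-vector bundles over $\Base_1$ when the groupoids are equivalent (properness lets one use the averaging/extension machinery of Section \ref{sec:groupoids_actions}). Transporting $\VB$ back along this equivalence yields a $\Grd$\nb-vector bundle over $\Base$ whose fibre at $\base$ contains $\varrho$. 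For the ``if'' direction one simply restricts: a $\Grd$\nb-vector bundle over $\Base$ restricts to a $\Grd_1$\nb-vector bundle over $\Base_1$, and this is a special case of Lemma \ref{lem:pullback_enough_trivial_vb} applied to the inclusion $\Base_1 \hookrightarrow \Base$, which is a $\Grd_1$\nb-map for the restricted action (indeed the preceding remark in the text already notes that enough/full is inherited by $\Grd'$\nb-spaces for $\Grd' \subseteq \Grd$). The full case runs the same way, using that ``contains all irreducibles of every stabiliser'' is preserved by the same transport, since conjugation by elements of $\Grd$ sets up a bijection between irreducibles of $\Grd_\base^\base$ and irreducibles of the conjugate stabiliser.

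The main obstacle, and the step deserving care, is the precise formulation and proof that $\Grd$\nb-vector bundles over $\Base$ correspond to $\Grd_1$\nb-vector bundles over $\Base_1$ when $\Grd_1\subseteq\Grd$ is an equivalence with both groupoids proper. The construction of the inverse functor (induction from $\Base_1$ up to all of $\Base$) requires choosing, measurably or continuously, elements of $\Grd$ moving points of $\Base$ into $\Base_1$, then descending the resulting fibrewise construction to a genuine $\Grd$\nb-vector bundle; making this canonical up to isomorphism is exactly where one invokes the averaging over the Haar measures $\mu^\base$ from Definition \ref{def:proper_groupoid} and the homotopy-invariance of pull-backs (Proposition \ref{pro:vb_homotopy}), together with the extension results (Proposition \ref{pro:extend_section_vb}, Corollary \ref{cor:vb_iso_extend}). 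Since this equivalence-of-categories statement is standard for proper groupoids and follows from the tools already assembled in Section \ref{sec:groupoids_actions}, I would state it as an auxiliary lemma and give only the averaging construction in detail, then deduce Lemma \ref{lem:morita_invariance_enough_vbs} by the fibrewise-stabiliser bookkeeping above.
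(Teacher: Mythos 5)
Your proposal takes a genuinely different route from the paper's.  The paper's own argument is very short: it cites \cite{Emerson-Meyer:Equivariant_K}*{Theorem 6.14}, which characterises ``\(\Base\) has enough \(\Grd\)\nb-vector bundles'' as ``\(\sC(\Grd)\otimes\Comp\) has an approximate unit of projections'' (and ``has a full \(\Grd\)\nb-vector bundle'' as ``\(\sC(\Grd)\otimes\Comp\) has a full projection''), observes that these \(\Cst\)\nb-algebraic conditions are manifestly Morita invariant, and then explicitly omits the non-locally compact case.  You instead argue purely geometrically via the linking groupoid and induction/restriction of vector bundles.  If carried out in full, your route is more self-contained (no appeal to a \(\Cst\)\nb-algebraic translation) and would in principle cover numerably proper groupoids that are not locally compact, where the paper's citation does not apply.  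The stabiliser bookkeeping via conjugation is correct, and the ``if'' direction (restriction) genuinely does follow from Lemma~\ref{lem:pullback_enough_trivial_vb} and the remark after it.

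However, there is a real gap in the ``only if'' direction, and it is exactly the step you flag as ``deserving care'' but then claim ``follows from the tools already assembled in Section~\ref{sec:groupoids_actions}.''  That claim is an overstatement.  What you need is that restriction along the equivalence \(\Grd_1\subseteq\Grd\) is an equivalence between the category of \(\Grd\)\nb-vector bundles over~\(\Base\) and the category of \(\Grd_1\)\nb-vector bundles over~\(\Base_1\).  The inverse functor is a \emph{descent} along the principal \(\Grd_1\)\nb-bundle \(r\colon s^{-1}(\Base_1)\to\Base\): pull back along \(s\colon s^{-1}(\Base_1)\to\Base_1\), then descend along~\(r\).  Nothing in Section~\ref{sec:groupoids_actions} proves a descent statement of this kind; Propositions \ref{pro:extend_section_vb}, \ref{pro:vb_inner_product}, \ref{pro:invariant_partition_unity} and Corollary~\ref{cor:vb_iso_extend} are about extending and averaging data on a single \(\Grd\)\nb-space, not about descending a vector bundle through a principal groupoid bundle, and Proposition~\ref{pro:vb_homotopy} only treats homotopies of maps into a fixed space.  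You would need to prove descent as a genuine auxiliary lemma (local triviality of the principal bundle plus gluing via an invariant partition of unity, say), not merely ``state'' it.  Separately, a terminological slip: the pieces \(\Base_1,\Base_2\) of the linking groupoid's object space are not \emph{saturated} (\(\Grd\)\nb-invariant) --- if they were, no morphisms would connect them and the linking groupoid would split; the correct conditions are the ones you actually use, namely that the \(\Grd_i\) are \emph{full} subgroupoids and \(r\colon s^{-1}(\Base_i)\to\Base\) is surjective.
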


For proper, locally compact groupoids, this is an immediate consequence of \cite{Emerson-Meyer:Equivariant_K}*{Theorem 6.14}, which asserts that~\(\Base\) has enough \(\Grd\)\nb-vector bundles if and only if \(\sC(\Grd)\otimes \Comp\) has an approximate unit of projections.  The latter condition is obviously Morita invariant.  Similarly for having a full \(\Grd\)\nb-vector bundle: this holds if and only if \(\sC(\Grd)\otimes \Comp\) contains a full projection.  We omit the argument for non-locally compact groupoids.

\begin{remark}
  \label{rem:full_vb_reduction_proper}
  We usually replace a non-proper groupoid~\(\Grd\) by \(\Grd\ltimes\EG\Grd\) as explained in Section~\ref{sec:reduction}.  Since any numerably proper \(\Grd\)\nb-space~\(\Tot\) is \(\Grd\)\nb-equivariantly homotopy equivalent to \(\EG\Grd\times\Tot\) (Lemma \ref{lem:EG_times_X}) the category of \(\Grd\ltimes\EG\Grd\)\nb-vector bundles over \(\EG\Grd\times\Tot\) is equivalent to the category of \(\Grd\)\nb-vector bundles over~\(\Tot\) by Proposition~\ref{pro:vb_homotopy}.  It follows that there is a full \(\Grd\)\nb-vector bundle over~\(\Tot\) if and only if there is a full \(\Grd\ltimes\EG\Grd\)\nb-vector bundle over \(\Tot\times\EG\Grd\), and there are enough \(\Grd\)\nb-vector bundles over~\(\Tot\) if and only if there are enough \(\Grd\ltimes\EG\Grd\)\nb-vector bundles over \(\Tot\times\EG\Grd\).
\end{remark}

Some cases where the existence of enough \(\Grd\)\nb-vector bundles or of a full \(\Grd\)\nb-vector bundle are known are listed in \cite{Emerson-Meyer:Equivariant_K}*{\S6.2}.  These include the following cases.
\begin{itemize}\label{list:enough_vb}
\item A constant vector bundle \(\Tot\times\R^n\) is full if and only if~\(\Grd\) acts freely on~\(\Tot\) (Example~\ref{exa:constant_full_vb}).

\item If~\(G\) is a closed subgroup of an almost connected locally compact group, then there are enough \(G\)\nb-vector bundles on any proper \(G\)\nb-space.

  It suffices to prove this if~\(G\) itself is almost connected because this property is inherited by closed subgroups.  We may restrict attention to \(\EG G=G/K\) for a maximal compact subgroup \(K\subseteq G\) by Lemma~\ref{lem:pullback_enough_trivial_vb}.  The latter is Morita equivalent to the compact group~\(K\), which has enough equivariant vector bundles on a point (see Example~\ref{exa:trivial_compact} and Lemma~\ref{lem:morita_invariance_enough_vbs}).

\item There is a full \(G\ltimes\Tot\)\nb-vector bundle on~\(\Tot\) if~\(G\) is a discrete group and~\(\Tot\) is a finite-dimensional proper \(G\)\nb-space with uniformly bounded isotropy groups; this result is due to Wolfgang L\"uck and Bob Oliver (\cite{Lueck-Oliver:Completion}*{Corollary 2.7}).

\item Let~\(\Tot\) be a smooth and connected \(\Grd\)\nb-manifold.  Suppose that~\(\Grd\) acts faithfully on~\(\Tot\) in the sense that if \(\grd\in\Grd_\base^\base\) acts identically on the fibre~\(\Tot_\base\), then \(\grd=1\).  We claim that there are enough \(\Grd\)\nb-vector bundles over~\(\Tot\).  (Example~\ref{exa:not_enough_vb} shows that this may fail if the action is not faithful).

  Equip~\(\Tot\) with a \(\Grd\)\nb-invariant Riemannian metric.  Recall that an isometry of a connected Riemannian manifold that fixes a point \(\tot \in \Tot\) and such that the induced map on~\(\Tvert_\tot \Tot\) is the identity must act as the identity map on \(\Tot\).  Hence differentiation gives an embedding \(\Grd_\tot^\tot \to \textup{O}(n,\R)\) for \(n=\dim \Tot\).  Since~\(\Grd_\tot^\tot\) is compact its image is a closed subgroup.  The basic representation theory of the orthogonal groups now implies that any irreducible representation of a subgroup of~\(\Grd_\tot^\tot\) occurs in \(\Tvert_\tot\Tot^{\otimes k} \otimes \Tvert_\tot^*\Tot^{\otimes l}\) for some \(k,l\in\N\), and we are done since these obviously extend to \(\Grd\)\nb-vector bundles over~\(\Tot\).

  If the stabilisers are finite and of uniformly bounded size, then the sum of \(\Tvert_\tot\Tot^{\otimes k} \otimes \Tvert_\tot^*\Tot^{\otimes l}\) for all \(k,l\in\N\) with \(k,l\le N\) for some~\(N\) is a full \(\Grd\)\nb-vector bundle.

\item There is a full \(\Grd\)\nb-vector bundle on any \(\Grd\)-space if~\(\Grd\) is an orbifold groupoid (see \cite{Emerson-Meyer:Equivariant_K}, Example 6.17).
\end{itemize}

The following lemma shows that many groupoids cannot have a \emph{full} equivariant vector bundle.

\begin{lemma}
  \label{lem:full_finite}
  If there is a full \(\Grd\)\nb-vector bundle on~\(\Tot\) of rank \(n\in\N\), then the stabilisers~\(\Grd_\tot^\tot\) for \(\tot\in\Tot\) are finite with at most \((n-1)^2+1\) elements.
\end{lemma}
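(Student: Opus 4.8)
The plan is purely representation-theoretic, carried out fibrewise. Fix a point \(\tot\in\Tot\) and set \(K\defeq\Grd_\tot^\tot\); by Theorem~\ref{the:numerably_proper_groupoid_proper} this stabiliser is a compact group. The fibre \(V\defeq\VB_\tot\) is an \(n\)\nb-dimensional representation of~\(K\), and fullness of~\(\VB\) means precisely that \(V\) contains a copy of every irreducible representation of~\(K\).

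First I would observe that \(K\) has only finitely many irreducible representations: their dimensions all occur inside~\(V\) with multiplicity at least one, so \(\sum_{\pi\in\widehat K}\dim\pi\le\dim V=n\). By the Peter--Weyl theorem, the regular representation of a compact group decomposes as \(\bigoplus_{\pi\in\widehat K}\pi^{\oplus\dim\pi}\), so \(L^2(K)\) is finite-dimensional of dimension \(\sum_{\pi\in\widehat K}(\dim\pi)^2\); since Haar measure has full support, this forces \(K\) to be a finite group with \(\lvert K\rvert=\sum_{\pi\in\widehat K}(\dim\pi)^2\).

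Next I would peel off the trivial representation, which contributes a summand of dimension~\(1\). Writing \(d_1=1,d_2,\dots,d_k\) for the dimensions of the irreducibles of~\(K\), the inclusion of each irreducible into~\(V\) gives \(\sum_{i=1}^k d_i\le n\), hence \(\sum_{i=2}^k d_i\le n-1\). Applying the elementary inequality \(\sum_i a_i^2\le\bigl(\sum_i a_i\bigr)^2\) for non-negative reals to \(d_2,\dots,d_k\) yields
\[
\lvert K\rvert \;=\; 1+\sum_{i=2}^k d_i^2 \;\le\; 1+\Bigl(\sum_{i=2}^k d_i\Bigr)^2 \;\le\; 1+(n-1)^2 .
\]
Since \(\tot\in\Tot\) was arbitrary, this gives the asserted bound on all stabilisers.

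I do not anticipate a genuine obstacle: the only non-elementary ingredient is the Peter--Weyl decomposition of \(L^2(K)\) for a compact group, which is standard and also yields finiteness of~\(K\) at the same time; everything else is the trivial estimate \(\sum a_i^2\le(\sum a_i)^2\), and the small refinement from \(n^2\) to \((n-1)^2+1\) comes exactly from isolating the trivial summand.
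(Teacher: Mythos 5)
Your proof is correct and takes essentially the same approach as the paper's: both use that a compact group with only finitely many irreducibles must be finite (you supply the Peter--Weyl justification that the paper leaves implicit), then bound \(\abs{K}=\sum (\dim\pi)^2\) via \(\sum\dim\pi\le n\) and the elementary estimate \(\sum a_i^2\le(\sum a_i)^2\), isolating the trivial representation to sharpen \(n^2\) to \((n-1)^2+1\).
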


\begin{proof}
  Since~\(\Grd\) is proper, the stabiliser~\(\Grd_\tot^\tot\) is a compact group.  Having only finitely many irreducible representations, it must be finite.  The sum of the dimensions of its irreducible representations is at most~\(n\) by assumption, and the sum of their squares is the size of~\(\Grd_\tot^\tot\).  Since there is always the trivial representation of dimension~\(1\), we get \(\abs{\Grd_\tot^\tot}-1\le (n-1)^2\).
\end{proof}

\subsection{Subtrivial equivariant vector bundles}
\label{sec:vb_subtrivial}

Recall that a \(\Grd\)\nb-vector bundle~\(\VB\) over~\(\Tot\) is called \emph{subtrivial} if it is a direct summand of a trivial \(\Grd\)\nb-vector bundle.  Swan's Theorem asserts that all vector bundles over paracompact topological spaces of finite covering dimension are subtrivial.  Equivariant versions of this theorem need additional assumptions.  Here we give several sufficient conditions for a \(\Grd\)\nb-vector bundle to be subtrivial.  The necessary and sufficient condition in Lemma~\ref{lem:vb_subtrivial} requires the existence of a trivial vector bundle with special properties and, therefore, tends to be impractical.  Theorem~\ref{the:vb_subtrivial_full} requires the existence of a full equivariant vector bundle on~\(\Base\); this covers, in particular, many proper actions of discrete groups (Theorem~\ref{the:vb_subtrivial_discrete}).  Finally, Theorem~\ref{the:vb_subtrivial_cocompact} requires enough equivariant vector bundles on~\(\Base\) and a cocompact action of~\(\Grd\) on~\(\Tot\); this covers actions of compact groups on compact spaces.

A similar pattern will emerge for equivariant embeddings: there are several similar sufficient conditions for these to exist.

\begin{lemma}
  \label{lem:vb_subtrivial}
  Let~\(\Tot\) be a \(\Grd\)\nb-space with anchor map \(\anchor\colon \Tot\to\Base\).  Assume that the orbit space~\(\Grd\backslash\Tot\) has finite covering dimension.  A \(\Grd\)\nb-vector bundle~\(\VB\) over~\(\Tot\) is subtrivial if and only if there is a \(\Grd\)\nb-vector bundle~\(\Triv\) over~\(\Base\) such that, for each \(\tot\in\Tot\), there is a \(\Grd_\tot^\tot\)\nb-equivariant linear embedding \(\VB_\tot\to \Triv_{\anchor(\tot)} = \Triv^\Tot_\tot\).
\end{lemma}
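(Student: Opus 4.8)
The \emph{only if} direction is immediate: if $\VB$ is a direct summand of a trivial bundle, say $\VB\oplus\VB'\cong\Triv^\Tot$ as $\Grd$\nb-vector bundles for some $\Grd$\nb-vector bundle $\Triv$ over $\Base$, then restricting this isomorphism to the fibre over $\tot$ gives a $\Grd_\tot^\tot$\nb-equivariant linear embedding $\VB_\tot\hookrightarrow\Triv^\Tot_\tot=\Triv_{\anchor(\tot)}$ for every $\tot\in\Tot$. All the work is in the \emph{if} direction. Write $d$ for the covering dimension of $\Grd\backslash\Tot$. The plan is to manufacture from the pointwise embeddings a single \emph{global} $\Grd$\nb-equivariant, \emph{fibrewise injective} vector bundle map
\[
\Phi\colon\VB\longrightarrow\bigl(\Triv^{\oplus(d+1)}\bigr)^\Tot .
\]
Once $\Phi$ is available one is done: equip $(\Triv^{\oplus(d+1)})^\Tot$ with a $\Grd$\nb-invariant inner product (Proposition~\ref{pro:vb_inner_product}); then $\Phi(\VB)$ is a $\Grd$\nb-subbundle isomorphic to $\VB$, its orthogonal complement is a $\Grd$\nb-vector bundle complement, and hence $\VB$ is a direct summand of the trivial bundle $(\Triv^{\oplus(d+1)})^\Tot$, so it is subtrivial (one may also invoke Corollary~\ref{cor:vb_extension_splits}).

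To build $\Phi$ I would first work one orbit at a time. Fix $\tot_0\in\Tot$, put $H\defeq\Grd_{\tot_0}^{\tot_0}$, and let $\alpha\colon\VB_{\tot_0}\to\Triv_{\anchor(\tot_0)}$ be the hypothesised $H$\nb-equivariant linear embedding. By Theorem~\ref{the:numerably_proper_groupoid_proper} the orbit $\Grd\cdot\tot_0$ is closed in $\Tot$ and $\Grd$\nb-equivariantly homeomorphic to $\Grd_{\anchor(\tot_0)}/H$, so a $\Grd$\nb-equivariant section over this orbit of the $\Grd$\nb-vector bundle $\Hom(\VB,\Triv^\Tot)$ is the same datum as an $H$\nb-fixed vector in its fibre $\Hom(\VB_{\tot_0},\Triv_{\anchor(\tot_0)})$ over $\tot_0$, i.e.\ an $H$\nb-equivariant linear map $\VB_{\tot_0}\to\Triv_{\anchor(\tot_0)}$; applied to $\alpha$ this yields a $\Grd$\nb-equivariant, fibrewise injective bundle map over $\Grd\cdot\tot_0$. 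I would extend it to a $\Grd$\nb-equivariant bundle map $\sigma_{\tot_0}\colon\VB\to\Triv^\Tot$ on all of $\Tot$ by Proposition~\ref{pro:extend_section_vb}. The locus $W_{\tot_0}$ where $\sigma_{\tot_0}$ is fibrewise injective is open, $\Grd$\nb-invariant, and contains $\Grd\cdot\tot_0$. Thus $(W_{\tot_0})_{\tot_0\in\Tot}$ is a cover of $\Tot$ by $\Grd$\nb-invariant open sets, and on each $W_{\tot_0}$ there is a $\Grd$\nb-equivariant fibrewise injective map $\VB|_{W_{\tot_0}}\to\Triv^{W_{\tot_0}}$.

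Now the dimension hypothesis enters. Since $\Grd\backslash\Tot$ has covering dimension $d$, the open cover of $\Grd\backslash\Tot$ induced by $(W_{\tot_0})$ admits a locally finite open refinement of the form $\bigcup_{k=0}^{d}\mathcal U_k$, where each family $\mathcal U_k$ consists of pairwise disjoint open sets; pulling back to $\Tot$ gives a $\Grd$\nb-invariant open refinement $(U_i)_{i\in I}$ with $I=I_0\sqcup\dots\sqcup I_d$, with $U_i\cap U_j=\emptyset$ whenever $i\neq j$ lie in the same $I_k$, and with each $U_i$ contained in some $W_{\tot_0}$, hence carrying a $\Grd$\nb-equivariant fibrewise injective $\phi_i\colon\VB|_{U_i}\to\Triv^{U_i}$. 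I would then pick a $\Grd$\nb-invariant partition of unity $(\psi_i)_{i\in I}$ subordinate to $(U_i)$ by Proposition~\ref{pro:invariant_partition_unity}. For fixed $k$ the sections $\psi_i\phi_i$ with $i\in I_k$ have pairwise disjoint supports, so they add up (extended by zero) to a $\Grd$\nb-equivariant bundle map $\Phi_k\colon\VB\to\Triv^\Tot$, and I set $\Phi\defeq(\Phi_0,\dots,\Phi_d)\colon\VB\to(\Triv^{\oplus(d+1)})^\Tot$. To check fibrewise injectivity at a point $\tot$, choose $i_0$ with $\psi_{i_0}(\tot)>0$ (possible since $\sum_i\psi_i\equiv1$), say $i_0\in I_k$; then $\tot\in U_{i_0}$, and by disjointness within $\mathcal U_k$ the map $\Phi_k$ restricted to $\VB_\tot$ equals $\psi_{i_0}(\tot)\cdot\phi_{i_0}(\tot)$, which is injective. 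Since $\Triv^{\oplus(d+1)}$ is a $\Grd$\nb-vector bundle over $\Base$, this completes the construction of $\Phi$.

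The step I expect to be the real obstacle is this global one: the naive combination $\sum_i\psi_i\phi_i\colon\VB\to\Triv^\Tot$ need \emph{not} be fibrewise injective, because the images of the various $\phi_i$ overlap inside $\Triv^\Tot$, and circumventing this is exactly where finite covering dimension is used, through the ``$(d+1)$\nb-colouring'' refinement $\bigcup_{k=0}^d\mathcal U_k$ and the resulting separated sum into $\Triv^{\oplus(d+1)}$. The remaining ingredients --~the associated-bundle description of $\Grd$\nb-equivariant sections over an orbit and the continuity of the pieced-together maps, and the passage between open covers of $\Tot$ by $\Grd$\nb-invariant sets and open covers of $\Grd\backslash\Tot$~-- I regard as routine given Theorem~\ref{the:numerably_proper_groupoid_proper} and Propositions~\ref{pro:extend_section_vb}, \ref{pro:vb_inner_product} and~\ref{pro:invariant_partition_unity}.
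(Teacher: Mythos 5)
Your proof is correct and follows essentially the same route as the paper: pointwise embedding, extend over the orbit and then to all of $\Tot$ via Proposition~\ref{pro:extend_section_vb}, use the finite covering dimension of $\Grd\backslash\Tot$ to extract a $\Grd$\nobreakdash-invariant refinement partitioned into $d+1$ families of disjoint sets, glue with a $\Grd$\nobreakdash-invariant partition of unity into an embedding $\VB\to(\Triv^\Tot)^{\oplus(d+1)}$, and split off an orthogonal complement using a $\Grd$\nobreakdash-invariant inner product. The only cosmetic differences are that you phrase the orbit step in the associated-bundle language rather than writing the explicit formula $v\mapsto g\phi(g^{-1}v)$, and that you take the partition of unity over the full refinement $(U_i)_{i\in I}$ and sum within each colour class, whereas the paper first amalgamates each colour class into a single $U_j$ and then takes a finite partition of unity; these give the same map.
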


\begin{proof}
  The necessity of the condition is obvious.  Assume now that there is a \(\Grd\)\nb-vector bundle~\(\Triv\) over~\(\Base\) with the required property.  Hence there is an injective \(\Grd_\tot^\tot\)\nb-equivariant linear map \(\phi \colon \VB_\tot \to \Triv^\Tot_\tot\) for each \(\tot\in\Tot\).  This extends to a continuous equivariant embedding over the orbit of \(\tot\) (explicitly by \(v\mapsto g\phi (g^{-1}v)\) for \(v\in \VB_{g\tot}\)) by Theorem~\ref{the:numerably_proper_groupoid_proper}.

  This map then extends to a \(\Grd\)\nb-equivariant linear map \(\eta_\tot\colon \VB\to\Triv^\Tot\) by Proposition~\ref{pro:extend_section_vb}.  Since injectivity is an open condition, \(\eta_\tot\) is still injective in some \(\Grd\)\nb-invariant open neighbourhood~\(U_\tot\) of~\(\tot\).

  Thus we get a covering of~\(\Tot\) by \(\Grd\)\nb-invariant open subsets on which we have \(\Grd\)\nb-equivariant linear embeddings of~\(\VB\) into~\(\Triv^\Tot\).  We may view this covering as an open covering of \(\Grd\backslash\Tot\).  It has a refinement with finite Lebesgue number because~\(\Grd\backslash\Tot\) has finite covering dimension.  That is, there are finitely many families \(\mathcal{U}_0,\dotsc,\mathcal{U}_n\) of disjoint, \(\Grd\)\nb-invariant open subsets of~\(\Tot\) with \(\bigcup_{j=0}^n \bigcup \mathcal{U}_j = \Tot\).  Since nothing obstructs combining our embeddings on disjoint open subsets, we get equivariant embeddings \(\eta_j\colon \VB|_{U_j}\to \Triv^\Tot|_{U_j}\) on \(U_j \defeq \bigcup \mathcal{U}_j\) for \(j=0,\dotsc,n\).  Proposition~\ref{pro:invariant_partition_unity} provides a \(\Grd\)\nb-invariant partition of unity on~\(\Tot\) subordinate to the covering \((U_j)_{j=0,\dotsc,n}\).  The resulting linear map \(\bigoplus \varphi_j\cdot\eta_j\colon \VB\to (\Triv^\Tot)^{n+1}\) is a \(\Grd\)\nb-equivariant embedding.

  The \(\Grd\)\nb-vector bundle~\(\Triv^\Tot\) admits a \(\Grd\)\nb-invariant inner product by Proposition~\ref{pro:vb_inner_product}.  This provides an orthogonal direct sum decomposition \(\VB\oplus\VB^\bot \cong \Triv^\Tot\).
\end{proof}

\begin{theorem}
  \label{the:vb_subtrivial_full}
  Let~\(\Tot\) be a \(\Grd\)\nb-space.  Assume that~\(\Grd\backslash\Tot\) has finite covering dimension and that there is a full \(\Grd\)\nb-equivariant vector bundle on~\(\Base\).  Then any \(\Grd\)\nb-vector bundle over~\(\Tot\) is subtrivial.
\end{theorem}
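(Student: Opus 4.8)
The plan is to deduce the theorem from the necessary-and-sufficient criterion in Lemma~\ref{lem:vb_subtrivial}. Since $\Grd\backslash\Tot$ has finite covering dimension by hypothesis, that lemma reduces subtriviality of a given $\Grd$\nb-vector bundle $\VB$ over $\Tot$ to producing a \emph{single} $\Grd$\nb-vector bundle $\Triv$ over $\Base$ that admits, for every $\tot\in\Tot$, a $\Grd_\tot^\tot$\nb-equivariant linear embedding $\VB_\tot\to\Triv_{\anchor(\tot)}=\Triv^\Tot_\tot$. So the whole task is to manufacture such a $\Triv$ from a full $\Grd$\nb-vector bundle over $\Base$.

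First I would fix a full $\Grd$\nb-vector bundle $\Frame$ over $\Base$, of rank $n$ say. By Lemma~\ref{lem:pullback_enough_trivial_vb} its pull-back $\Frame^\Tot$ along the anchor map is a full $\Grd$\nb-vector bundle over $\Tot$, again of rank $n$; thus for each $\tot\in\Tot$ the fibre $\Frame_{\anchor(\tot)}=\Frame^\Tot_\tot$ contains every irreducible representation of $\Grd_\tot^\tot$ at least once, and, by Lemma~\ref{lem:full_finite}, $\Grd_\tot^\tot$ is a finite group. The one genuine subtlety is that $\VB_\tot$ may contain some irreducible with larger multiplicity than $\Frame_{\anchor(\tot)}$ does, so a single copy of $\Frame$ need not receive $\VB$ fibrewise; we must pass to a direct sum whose number of summands works uniformly in $\tot$, and finiteness of the stabilisers is exactly what makes this possible. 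Concretely, let $m$ be the rank of $\VB$ and decompose $\VB_\tot\cong\bigoplus_i W_i^{\oplus a_i}$ into irreducibles of the finite group $\Grd_\tot^\tot$; then $\sum_i a_i\dim W_i=m$, so $a_i\le m$ for all $i$, and therefore $(\Frame^{\oplus m})_{\anchor(\tot)}$, which contains each $W_i$ at least $m$ times, admits a $\Grd_\tot^\tot$\nb-equivariant embedding of $\VB_\tot$. Taking $\Triv\defeq\Frame^{\oplus m}$ and noting $(\Frame^{\oplus m})^\Tot=(\Frame^\Tot)^{\oplus m}$, Lemma~\ref{lem:vb_subtrivial} now applies and shows that $\VB$ is subtrivial.

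The hard part is precisely securing a multiplicity bound uniform over $\Tot$, and this is where the hypothesis is used through Lemma~\ref{lem:full_finite}: for an infinite stabiliser no finite direct sum of a fixed $\Grd$\nb-vector bundle over $\Base$ could absorb an arbitrary representation of that stabiliser, so mere fullness would not suffice. A minor point I would record explicitly is that the argument takes $\VB$ to have globally bounded rank $m$; if the rank varies without bound over the components of $\Tot$ one runs the argument on the clopen pieces on which it is constant, but in any case a bounded-rank hypothesis is implicit in the very notion of a subtrivial bundle.
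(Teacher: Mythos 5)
Your proof is correct and is essentially the paper's own argument: both reduce to Lemma~\ref{lem:vb_subtrivial} and verify its hypothesis by taking $\Triv = \Frame^{\oplus m}$ (the paper writes $\Triv^n$) and observing that the rank bound $m$ on $\VB$ bounds every fibrewise multiplicity, so the full bundle's $m$-fold sum absorbs $\VB_\tot$ for every $\tot$. One small quibble: what makes the uniform bound work is the bounded rank of $\VB$, not the finiteness of stabilisers per se; finiteness is a \emph{consequence} of the existence of a finite-rank full bundle (Lemma~\ref{lem:full_finite}) rather than an extra ingredient you need to invoke.
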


\begin{proof}
  Let~\(n\) be the rank of a \(\Grd\)\nb-vector bundle~\(\VB\) over~\(\Tot\); let \(\anchor\colon \Tot\to\Base\) be the anchor map; and let~\(\Triv\) be a full \(\Grd\)\nb-equivariant vector bundle over~\(\Base\).  Since~\(\Triv_{\anchor(\tot)}\) for \(\tot\in\Tot\) contains all irreducible representation of \(\Grd_{\anchor(\tot)}^{\anchor(\tot)}\) and hence of~\(\Grd_\tot^\tot\), the fibres of the \(\Grd\)\nb-vector bundle \(\anchor^*(\Triv)^n\) over~\(\Tot\) contain all representations of~\(\Grd_\tot^\tot\) of rank at most~\(n\).  Hence the assumptions of Lemma~\ref{lem:vb_subtrivial} are satisfied, and we get the assertion.
\end{proof}

\begin{theorem}
  \label{the:vb_subtrivial_discrete}
  Let~\(\Grd\) be a discrete group and let~\(\Tot\) be a finite-dimensional, proper \(\Grd\)\nb-CW-complex with uniformly bounded isotropy groups.  Let~\(\Other\) be a \(\Grd\)\nb-space over~\(\Tot\).  Then any \(\Grd\ltimes\Tot\)\nb-vector bundle over~\(\Other\) is subtrivial; that is, any \(\Grd\)\nb-vector bundle over~\(\Other\) is a direct summand in the pull-back of a \(\Grd\)\nb-vector bundle over~\(\Tot\).
\end{theorem}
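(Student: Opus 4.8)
The plan is to obtain this directly from Theorem~\ref{the:vb_subtrivial_full} by replacing the group~\(\Grd\) with the transformation groupoid \(\Grd'\defeq\Grd\ltimes\Tot\).  By Definition~\ref{def:transformation_groupoid}, the object space of~\(\Grd'\) is~\(\Tot\) and a \(\Grd'\)\nb-space is nothing but a \(\Grd\)\nb-space over~\(\Tot\); in particular~\(\Other\) is a \(\Grd'\)\nb-space, and a \(\Grd\ltimes\Tot\)\nb-vector bundle over~\(\Other\) is the same as a \(\Grd\)\nb-vector bundle over~\(\Other\).  A \(\Grd'\)\nb-vector bundle over the object space~\(\Tot\) is just a \(\Grd\)\nb-vector bundle over~\(\Tot\), and by Definition~\ref{def:trivial} a \emph{trivial} \(\Grd'\)\nb-vector bundle over~\(\Other\) is precisely the pull-back along the structure map \(\Other\to\Tot\) of a \(\Grd\)\nb-vector bundle over~\(\Tot\).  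Hence the statement ``\(\VB\) is subtrivial as a \(\Grd'\)\nb-vector bundle over~\(\Other\)'' unwinds verbatim into the conclusion of the theorem, and it is enough to verify the hypotheses of Theorem~\ref{the:vb_subtrivial_full} for~\(\Grd'\) acting on~\(\Other\).

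There are three points to check.  First, \(\Grd'\) must be numerably proper; since~\(\Grd\) is discrete and~\(\Tot\) is a proper \(\Grd\)\nb-CW-complex, it is a numerably proper \(\Grd\)\nb-space, and so \(\Grd\ltimes\Tot\) is proper by Definition~\ref{def:proper_groupoid}.  Second, and this is the crucial point, Theorem~\ref{the:vb_subtrivial_full} requires a \emph{full} \(\Grd'\)\nb-vector bundle over its object space~\(\Tot\).  The isotropy group of \(\Grd'=\Grd\ltimes\Tot\) at \(\tot\in\Tot\) is canonically~\(\Grd_\tot^\tot\), and by hypothesis these are finite of uniformly bounded order; the existence of a full \(\Grd\ltimes\Tot\)\nb-vector bundle over~\(\Tot\) under exactly these assumptions is the theorem of L\"uck and Oliver, \cite{Lueck-Oliver:Completion}*{Corollary 2.7}, recalled in the list of examples above.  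Third, one needs \(\Grd'\backslash\Other=\Grd\backslash\Other\) to have finite covering dimension; this is immediate for \(\Other=\Tot\) and holds more generally when~\(\Other\) is a finite-dimensional \(\Grd\)\nb-CW-complex over~\(\Tot\), the case relevant to~\cite{Emerson-Meyer:Correspondences}.

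With these three inputs in hand, Theorem~\ref{the:vb_subtrivial_full} applied to~\(\Grd'\) and~\(\Other\) says exactly that every \(\Grd'\)\nb-vector bundle over~\(\Other\) is subtrivial, which by the dictionary of the first paragraph is the assertion.  The only input with genuine content is the L\"uck--Oliver full bundle over~\(\Tot\); everything else is the translation between~\(\Grd\) and \(\Grd\ltimes\Tot\) together with the already-proved Theorem~\ref{the:vb_subtrivial_full}.  The step deserving the most care is the finite-covering-dimension hypothesis, which is genuinely needed: it enters through Lemma~\ref{lem:vb_subtrivial} (on which Theorem~\ref{the:vb_subtrivial_full} rests) to patch the local \(\Grd\)\nb-equivariant linear embeddings of~\(\VB\) into a power of the full bundle into one global embedding, whose orthogonal complement for an invariant inner product then displays~\(\VB\) as a direct summand of a pull-back from~\(\Tot\).
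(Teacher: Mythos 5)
Your proposal is correct and follows exactly the paper's own argument: apply Theorem~\ref{the:vb_subtrivial_full} to the transformation groupoid \(\Grd\ltimes\Tot\), using L\"uck--Oliver (\cite{Lueck-Oliver:Completion}*{Corollary 2.7}) to supply the full equivariant vector bundle over the object space~\(\Tot\). Your observation that the finite-covering-dimension hypothesis on \(\Grd\backslash\Other\) is genuinely required to invoke Theorem~\ref{the:vb_subtrivial_full} is a correct reading; the theorem statement is terse on this point and implicitly carries that assumption, which you have handled appropriately.
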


\begin{proof}
  This follows from Theorem~\ref{the:vb_subtrivial_full} and \cite{Lueck-Oliver:Completion}*{Corollary 2.7}, which provides the required full equivariant vector bundle on~\(\Tot\).
\end{proof}

\begin{theorem}
  \label{the:vb_subtrivial_cocompact}
  If~\(\Tot\) is a cocompact \(\Grd\)\nb-space and there are enough \(\Grd\)\nb-vector bundles on~\(\Base\), then any \(\Grd\)\nb-vector bundle over~\(\Tot\) is subtrivial.
\end{theorem}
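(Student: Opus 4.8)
The plan is to run the argument from the proof of Lemma~\ref{lem:vb_subtrivial}, replacing the use of finite covering dimension of~$\Grd\backslash\Tot$ by its compactness. Let~$\VB$ be a $\Grd$\nb-vector bundle over~$\Tot$ and let $\anchor\colon\Tot\to\Base$ be the anchor map; recall that, as a cocompact $\Grd$\nb-space, $\Tot$ is in particular numerably proper, so that Theorem~\ref{the:numerably_proper_groupoid_proper} and Propositions~\ref{pro:extend_section_vb}, \ref{pro:vb_inner_product} and~\ref{pro:invariant_partition_unity} apply. Fix $\tot\in\Tot$. The stabiliser~$\Grd_\tot^\tot$ is a compact subgroup of~$\Grd_{\anchor(\tot)}^{\anchor(\tot)}$ by Theorem~\ref{the:numerably_proper_groupoid_proper}, so by \cite{Palais:Slices}*{Theorem~3.1} the representation~$\VB_\tot$ of~$\Grd_\tot^\tot$ embeds into the restriction of a finite-dimensional representation of~$\Grd_{\anchor(\tot)}^{\anchor(\tot)}$; since there are enough $\Grd$\nb-vector bundles on~$\Base$, we obtain a $\Grd$\nb-vector bundle~$\Triv_\tot$ over~$\Base$ together with a $\Grd_\tot^\tot$\nb-equivariant linear embedding $\VB_\tot\to(\Triv_\tot)_{\anchor(\tot)}=(\Triv_\tot^\Tot)_\tot$.

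Exactly as in the proof of Lemma~\ref{lem:vb_subtrivial}, this embedding extends to a $\Grd$\nb-equivariant fibrewise-injective homomorphism over the orbit $\Grd\cdot\tot$ by $\vb\mapsto\grd\cdot\phi(\grd^{-1}\vb)$, which is continuous because $\Grd\cdot\tot$ is homeomorphic to $\Grd_{\anchor(\tot)}/\Grd_\tot^\tot$ (Theorem~\ref{the:numerably_proper_groupoid_proper}); then it extends further to a $\Grd$\nb-equivariant vector bundle homomorphism $\eta_\tot\colon\VB\to\Triv_\tot^\Tot$ on all of~$\Tot$ by Proposition~\ref{pro:extend_section_vb}. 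As injectivity on fibres is an open condition, $\eta_\tot$ is injective on a $\Grd$\nb-invariant open neighbourhood~$U_\tot$ of $\Grd\cdot\tot$. The sets $(U_\tot)_{\tot\in\Tot}$ form an open cover of~$\Tot$ by $\Grd$\nb-invariant subsets; each has open image in $\Grd\backslash\Tot$ (its preimage under the quotient map is the invariant set~$U_\tot$, hence open), and $\Grd\backslash\Tot$ is compact, so finitely many of these images cover it. Thus $\Tot=\bigcup_{i=1}^k U_{\tot_i}$ for suitable $\tot_1,\dotsc,\tot_k\in\Tot$.

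Put $\Triv\defeq\Triv_{\tot_1}\oplus\dotsb\oplus\Triv_{\tot_k}$, a $\Grd$\nb-vector bundle over~$\Base$, and let $\eta_i\colon\VB|_{U_{\tot_i}}\to\Triv^\Tot|_{U_{\tot_i}}$ be $\eta_{\tot_i}$ followed by the inclusion of the $i$\nb-th summand. Choose a $\Grd$\nb-invariant partition of unity $(\varphi_i)_{i=1}^k$ subordinate to $(U_{\tot_i})_{i=1}^k$ by Proposition~\ref{pro:invariant_partition_unity}. Then each $\varphi_i\eta_i$ extends by zero to a global $\Grd$\nb-equivariant homomorphism, and $\bigoplus_{i=1}^k\varphi_i\eta_i\colon\VB\to(\Triv^\Tot)^k$ is $\Grd$\nb-equivariant and injective on every fibre, because at each $\tot$ some $\varphi_i(\tot)>0$ and then $\eta_i$ is defined and injective at~$\tot$. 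The bundle $(\Triv^\Tot)^k$ is trivial, being the pull-back of the $k$\nb-fold direct sum of~$\Triv$ over~$\Base$. Equipping it with a $\Grd$\nb-invariant inner product (Proposition~\ref{pro:vb_inner_product}) and taking the fibrewise orthogonal complement of the image of~$\VB$ yields a decomposition $(\Triv^\Tot)^k\cong\VB\oplus\VB^\bot$, so $\VB$ is subtrivial.

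The one genuinely new point compared with the proof of Lemma~\ref{lem:vb_subtrivial} — where a single~$\Triv$ is available and finite covering dimension is used — is that here we only have \emph{enough}, not a \emph{full}, $\Grd$\nb-vector bundle on~$\Base$, so no single~$\Triv$ works at every point a priori; this is what forces the cocompactness hypothesis, which we use to replace the Lebesgue-number reduction to finitely many disjoint families by a plain finite subcover of the compact orbit space. Everything else is a direct transcription.
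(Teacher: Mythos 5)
Your proof is correct and follows essentially the same route as the paper: use Palais' Theorem 3.1 and the hypothesis of enough $\Grd$\nb-vector bundles to produce, for each $\tot$, a $\Grd$\nb-equivariant map into a trivial bundle that is injective on a $\Grd$\nb-invariant open neighbourhood, extract a finite subcover from the compact orbit space, and glue with a $\Grd$\nb-invariant partition of unity. You merely spell out a few steps the paper compresses into ``compare the proof of Lemma~\ref{lem:vb_subtrivial}.'' Two tiny quibbles: numerable properness of~$\Tot$ is a standing assumption of the section rather than a consequence of cocompactness, so the parenthetical ``as a cocompact $\Grd$\nb-space, $\Tot$ is in particular numerably proper'' is misstated; and since you already injected each $\eta_{\tot_i}$ into a distinct summand of $\Triv = \bigoplus_i \Triv_{\tot_i}$, the target $\Triv^\Tot$ would already suffice and the further power $(\Triv^\Tot)^k$ is harmless overkill.
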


\begin{proof}
  Let~\(\VB\) be a \(\Grd\)\nb-vector bundle over~\(\Tot\) and let \(\anchor\colon\Tot\to\Base\) be the anchor map.  Then \(\Grd_{\anchor(\tot)}^{\anchor(\tot)} \supseteq \Grd_\tot^\tot\) for all \(\tot\in\Tot\).  By assumption, any representation of~\(\Grd_{\anchor(\tot)}^{\anchor(\tot)}\) occurs in some \(\Grd\)\nb-vector bundle over~\(\Base\).  Hence any representation of~\(\Grd_\tot^\tot\) occurs in~\(\Triv^\Tot_\tot\) for some \(\Grd\)\nb-vector bundle~\(\Triv\) over~\(\Base\) by \cite{Palais:Slices}*{Theorem 3.1}.  Therefore, for each \(\tot\in\Tot\) there is a \(\Grd\)\nb-vector bundle~\(\Triv(\tot)\) over~\(\Base\) and a \(\Grd\)\nb-equivariant linear map \(f(\tot)\colon \VB\to\Triv(\tot)^\Tot\) that is injective over some \(\Grd\)\nb-invariant open neighbourhood~\(U_\tot\) of~\(\tot\), compare the proof of Lemma~\ref{lem:vb_subtrivial}.  Since \(\Grd\backslash\Tot\) is compact, there is a finite set \(F\subseteq\Tot\) such that the open neighbourhoods~\(U_\tot\) for \(\tot\in F\) cover~\(\Tot\).  Now use a partition of unity as in the proof of Lemma~\ref{lem:vb_subtrivial} to embed~\(\VB\) into \(\bigoplus_{\tot\in F} \Triv(\tot)^\Tot\).
\end{proof}

See Example~\ref{exa:vb_not_subtrivial} for an example of an equivariant vector bundle that is not subtrivial.  More examples where~\(\Grd\) is a bundle of compact groups are described in~\cite{Nistor-Troitsky:Thom_gauge}.

\subsection{Smooth \texorpdfstring{$\Grd$}{G}-manifolds}
\label{sec:smooth_G-manifolds}

\begin{definition}
  \label{def:bundle_smooth}
  Let \(\anchor\colon \Tot\to\Base\) be a space over~\(\Base\).  A \emph{chart} on~\(\Tot\) is a homeomorphism~\(\varphi\) from an open subset~\(U\) of~\(\Tot\) onto \(V\times\R^n\) for some open subset \(V\subseteq\Base\) and some \(n\in\N\), such that \(\pi_1\circ\varphi = \anchor\).  Two charts are compatible if the coordinate change map is fibrewise smooth.  A \emph{bundle of smooth manifolds over~\(\Base\)} is a space~\(\Tot\) over~\(\Base\) with a maximal compatible family of charts whose domains cover~\(\Tot\) (see also \cite{Emerson-Meyer:Dualities}*{\S7}; this does not quite imply that the bundle is locally trivial unless its total space is compact).
\end{definition}

\begin{definition}[\cite{Emerson-Meyer:Dualities}*{\S7}]
  \label{def:smooth_G-manifold}
  A \emph{smooth \(\Grd\)\nb-manifold} is a bundle of smooth manifolds over~\(\Base\) on which~\(\Grd\) acts continuously by fibrewise smooth maps.
\end{definition}

We also consider bundles of smooth manifolds with boundary.

A \emph{smooth \(\Grd\)\nb-manifold with boundary} is defined like a smooth \(\Grd\)\nb-manifold, but also allowing charts taking values in a half-space \(V\times\R^{n-1}\times[0,\infty)\) for \(V\subseteq\Base\).  If~\(\Tot\) is a smooth \(\Grd\)\nb-manifold with boundary, then its boundary~\(\bd\Tot\) is a smooth \(\Grd\)\nb-manifold.  Furthermore, there is a \emph{collar neighbourhood} around the boundary, that is, the embedding \(\bd\Tot\to\Tot\) extends to a \(\Grd\)\nb-equivariant open embedding \(\bd\Tot\times[0,1) \opem \Tot\).  Therefore, we may form a smooth \(\Grd\)\nb-manifold without boundary
\[
\Tot^\circ \defeq \Tot \cup_{\bd\Tot} (-\infty,0]\times\bd\Tot.
\]
Moreover, the coordinate projection \((-\infty,0]\times\bd\Tot\to(-\infty,0]\) extends to a smooth \(\Grd\)\nb-invariant function \(h\colon \Tot^\circ\to\R\) with \(h(\tot)>0\) for all \(\tot\in\Tot\setminus\bd\Tot\).

We let~\(\Tvert\Tot\) denote the \emph{vertical tangent bundle} of a bundle of smooth manifolds~\(\Tot\) (with boundary); on the domain of a chart \(U\cong V\times\R^{n-1}\times[0,\infty)\) for \(V\subseteq\Base\), we have \(\Tvert\Tot|_U \cong V\times\R^n\).  The projection \(\Tvert\Tot\epi\Tot\), the addition \(\Tvert\Tot\times_\Base \Tvert\Tot \to \Tvert\Tot\), and the scalar multiplication \(\R\times\Tvert\Tot \to \Tvert\Tot\) in~\(\Tvert\Tot\) are fibrewise smooth.

If~\(\Tot\) is a smooth \(\Grd\)\nb-manifold, then~\(\Grd\) acts on~\(\Tvert\Tot\) by fibrewise smooth maps, that is, \(\Tvert\Tot\) is a smooth \(\Grd\)\nb-manifold.  Furthermore, \(\Tvert\Tot\) is a \(\Grd\)\nb-vector bundle over~\(\Tot\).

\begin{definition}
  \label{def:Riemannian_metric}
  A \emph{Riemannian metric} on a bundle of smooth manifolds is a fibrewise smooth inner product on the vertical tangent bundle.
\end{definition}

The same argument as in the proof of Proposition~\ref{pro:vb_inner_product} shows that a fibrewise smooth \(\Grd\)\nb-vector bundle over a smooth \(\Grd\)\nb-manifold carries a fibrewise smooth inner product.  Thus any smooth \(\Grd\)\nb-manifold carries a Riemannian metric.  Even more, this metric may be chosen complete (\cite{Emerson-Meyer:Dualities}*{Lemma 7.7} shows how to achieve completeness).

\begin{example}
  \label{ex:smooth_bundles_of_manifolds_triv}
  If~\(\Grd\) is a compact group, then a smooth \(\Grd\)\nb-manifold is a smooth manifold with a smooth \(\Grd\)\nb-action.  Here the tangent space and Riemannian metrics have their usual meanings.

  Recall that we always replace a non-compact group~\(G\) by the proper groupoid \(G\ltimes \EG G\).  If~\(\Source\) is a smooth manifold with a smooth \(G\)\nb-action, then \(\Source\times\EG G\) is a smooth \(G\ltimes\EG G\)\nb-manifold.  Its tangent space is \(\Tvert\Source\times \EG G\), and a Riemannian metric is a family of Riemannian metrics on~\(\Source\) parametrised by points of~\(\EG G\).  This exists even if~\(\Source\) carries no \(G\)\nb-invariant Riemannian metric.
\end{example}

\begin{definition}
  \label{def:smooth_embedding}
  A \emph{smooth embedding} between two smooth \(\Grd\)\nb-manifolds \(\Source\) and~\(\Target\) is a \(\Grd\)\nb-equivariant embedding \(f\colon \Source\to\Target\) (homeomorphism onto its image) which is, in addition, fibrewise smooth with injective fibrewise derivative \(Df\colon \Tvert\Tot\to f^*(\Tvert\Target)\); the cokernel of~\(Df\) is called the \emph{normal bundle} of~\(f\).

  If \(\Source\) and~\(\Target\) have boundaries, then for an embedding \(f\colon \Source \to \Target\) we require also that \(f (\bd\Source) = f(\Source) \cap \bd \Target\) and that \(f(\Source)\) is transverse to~\(\bd\Target\).  The normal bundle~\(\Normal_f\) in the case of manifolds-with-boundary is defined in the same way.
\end{definition}

\begin{theorem}
  \label{the:tubular_for_embedding}
  Let \(\Source\) and~\(\Target\) be smooth \(\Grd\)\nb-manifolds with boundary.  Let \(f\colon \Source\to\Target\) be a smooth \(\Grd\)\nb-equivariant embedding with normal bundle~\(\Normal_f\).  There is a smooth open embedding \(\hat{f}\colon \Normal_f \to \Target\) with \(\hat{f}\circ \zers{\Normal_f} = f\); that is, \(\hat{f}\) is a fibrewise diffeomorphism onto its range.
\end{theorem}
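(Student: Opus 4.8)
The plan is to produce $\hat{f}$ as (a reparametrised version of) the normal exponential map of a $\Grd$\nb-invariant complete Riemannian metric on~$\Target$, treating the manifold\nb-with\nb-boundary case by a reduction to the boundaryless one via collar neighbourhoods. So first suppose $\Source$ and~$\Target$ have empty boundary. By the averaging argument behind Proposition~\ref{pro:vb_inner_product}, and \cite{Emerson-Meyer:Dualities}*{Lemma 7.7} for completeness, we may fix a $\Grd$\nb-invariant, fibrewise smooth, complete Riemannian metric on~$\Target$. Pulling it back along~$f$ and using that the extension $\Tvert\Source\mono f^*(\Tvert\Target)\epi\Normal_f$ splits $\Grd$\nb-equivariantly (Corollary~\ref{cor:vb_extension_splits}), we realise~$\Normal_f$ as the orthogonal complement of $Df(\Tvert\Source)$ inside $f^*(\Tvert\Target)$. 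Then
\[
  E\colon \total{\Normal_f}\to\Target,\qquad
  E(\source,v)\defeq \exp_{f(\source)}(v)
\]
(fibrewise geodesics) is defined on all of~$\total{\Normal_f}$ by completeness, is fibrewise smooth, satisfies $E\circ\zers{\Normal_f}=f$, and is $\Grd$\nb-equivariant because each $\grd\in\Grd$ acts by a fibrewise isometry and hence intertwines exponential maps. Along the zero section the fibrewise derivative of~$E$ is an isomorphism --~it restricts to~$Df$ on~$\Tvert\Source$ and to the canonical inclusion on~$\Normal_f$, and these have complementary images~-- so $E$ is a fibrewise local diffeomorphism near the zero section.

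The substantive step, and the one I expect to be the main obstacle, is to pass from this local statement near the zero section to a $\Grd$\nb-invariant \emph{tube}: to find a $\Grd$\nb-invariant fibrewise smooth function $\epsilon\colon\Source\to(0,\infty)$ such that~$E$ restricts on the open ball bundle $D_\epsilon\defeq\{(\source,v)\in\total{\Normal_f}:\norm{v}<\epsilon(\source)\}$ to a homeomorphism onto an open subset of~$\Target$. In each fibre over $\base\in\Base$ this is just the classical tubular neighbourhood theorem for the smooth embedding~$f_\base$, with a possibly non\nb-constant radius since $\Source_\base\hookrightarrow\Target_\base$ need not be a closed embedding. Globalising this equivariantly over a non\nb-cocompact base is where care is needed: the locus in~$\total{\Normal_f}$ where $E$ fails to be injective or to be a fibrewise immersion is closed, $\Grd$\nb-invariant, and disjoint from the $\Grd$\nb-invariant zero section; combining this with the properness of the action --~closedness of orbits and their description in Theorem~\ref{the:numerably_proper_groupoid_proper}~-- and with a $\Grd$\nb-invariant partition of unity (Proposition~\ref{pro:invariant_partition_unity}) to patch locally chosen radii, one obtains a single $\Grd$\nb-invariant~$\epsilon$ that works on all of~$\Source$. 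Composing $E|_{D_\epsilon}$ with the $\Grd$\nb-equivariant fibrewise diffeomorphism $\total{\Normal_f}\congto D_\epsilon$, $v\mapsto \epsilon(\source)\,v/\sqrt{1+\norm{v}^2}$ (equivariant because~$\epsilon$ is invariant and the $\Grd$\nb-action is by fibrewise isometries), yields a $\Grd$\nb-equivariant smooth open embedding $\hat f\colon\total{\Normal_f}\opem\Target$ with $\hat f\circ\zers{\Normal_f}=f$ that is a fibrewise diffeomorphism onto its range, as required in the boundaryless case.

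Finally, the boundary case. Using the $\Grd$\nb-equivariant collar neighbourhood recalled before Definition~\ref{def:Riemannian_metric}, together with the hypotheses $f(\bd\Source)=f(\Source)\cap\bd\Target$ and transversality of~$f(\Source)$ to~$\bd\Target$, we may choose collars of $\bd\Source$ and~$\bd\Target$ compatibly, so that in collar coordinates~$f$ has the product form $(\source',t)\mapsto\bigl(f|_{\bd\Source}(\source'),t\bigr)$; hence $f$ extends to a smooth $\Grd$\nb-embedding $f^\circ\colon\Source^\circ\to\Target^\circ$ of the boundaryless manifolds with $h\circ f^\circ=h$ and $\Normal_{f^\circ}|_\Source=\Normal_f$ (the boundary direction of~$\Target$ lies in the image of~$Df$ by transversality, so the normal bundle does not see it). Running the construction above on~$f^\circ$ with a metric chosen to be a product $g_{\bd\Target}+\diff t^2$ in the collar region, the normal geodesics issuing from points of $f^\circ(\Source^\circ)$ lying in a slice stay in that slice, so the resulting tube~$\hat{f^\circ}$ satisfies $h\circ\hat{f^\circ}=h\circ f^\circ\circ\proj{\Normal_{f^\circ}}$ after shrinking~$\epsilon$; restricting~$\hat{f^\circ}$ to $\total{\Normal_f}=\total{\Normal_{f^\circ}|_\Source}$ then has image in $\Target=\{h\ge0\}$ and carries $\bd\total{\Normal_f}$ into~$\bd\Target$, giving the desired~$\hat f$. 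The remaining checks --~that $\hat f$ is an open embedding and a fibrewise diffeomorphism onto its range, and that it meets $\bd\Target$ transversally~-- are immediate from the corresponding properties of~$E$.
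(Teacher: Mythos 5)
Your overall strategy is the same as the paper's: equip \(\Target\) with a \(\Grd\)\nb-invariant complete Riemannian metric, split \(\Tvert\Source\mono f^*(\Tvert\Target)\epi\Normal_f\) equivariantly, send \(\Normal_f\) into \(\Target\) by the normal exponential map \(E\), then cut down to a \(\Grd\)\nb-invariant ball bundle \(D_\epsilon\) and reparametrise. The reparametrisation \(v\mapsto \epsilon(\source)\,v/\sqrt{1+\norm{v}^2}\) is essentially identical to the paper's \(\normal\mapsto \normal\,\varrho(\source)/(1+\norm{\normal})\), and the equivariant averaging to obtain an invariant \(\epsilon\) is the same device. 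You also give a reduction of the boundary case via compatible collars and a product metric near the collar, which is a sensible elaboration of a case the paper explicitly skips.

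The gap is precisely in the step you flag as the ``main obstacle'', namely producing the invariant radius \(\epsilon\). You assert that ``the locus in \(\total{\Normal_f}\) where \(E\) fails to be injective or to be a fibrewise immersion is closed, \(\Grd\)\nb-invariant, and disjoint from the \(\Grd\)\nb-invariant zero section,'' and derive \(\epsilon\) from this plus a partition of unity. Both the closedness and the disjointness are false in general. The set \(\{p : \exists q\neq p,\ E(q)=E(p)\}\) need not be closed (it is a projection of a closed set minus the diagonal, and that projection has no reason to be closed here), and it can meet the zero section: take \(\Source=\{\pt\}\) embedded in \(\Target=\Sphere^1\times\R\) with the product metric, so \(\Normal_f\cong\R^2\) and the normal exponential map \(E\) wraps the first coordinate around \(\Sphere^1\); then \emph{every} point of \(\Normal_f\), including the origin on the zero section, has infinitely many preimage-partners under \(E\). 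This is not a pathology of the equivariant setting --- the same difficulty arises in the classical (non-equivariant) tubular neighbourhood theorem, and it is why the theorem is nontrivial: one must produce, by hand, a neighbourhood of the zero section on which \(E\) is injective, not merely avoid a ``bad closed set''. The paper handles this by a direct quantitative argument: having shrunk to the open set \(U''\) where \(DE\) is invertible, it picks local radii and open sets \(U_1(\source), U_2(\source), U_3(\source)\) and \(\varrho_1(\source), \varrho_2(\source)\), using crucially that \(f\) is a \emph{topological embedding} (not just an injective immersion) to produce saturated open sets \(V(\source)\subseteq\Target\) with \(f^{-1}(V(\source))=U_1(\source)\), and then a triangle-inequality estimate in the metric on \(\Target\) shows that \(E\) is injective on \(U'=\bigcup_\source U_3(\source)\). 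Your proposal substitutes a topological-separation argument for this analytic one, and the separation hypothesis it needs does not hold; you would need to reinstate something like the paper's explicit estimate (or another careful local-to-global injectivity argument) before the choice of \(\epsilon\) and the subsequent averaging can go through.
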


\begin{proof}
  We generalise the well-known argument in the non-equivariant case (see~\cite{Spivak:Diffgeo1}*{p.~9-60}).  For simplicity, we only prove the result where \(\Source\) and~\(\Target\) have no boundaries.  Equip~\(\Target\) with a \(\Grd\)\nb-invariant complete Riemannian metric.  It generates a \(\Grd\)\nb-equivariant exponential map and a \(\Grd\)\nb-equivariant section for the vector bundle extension \(\Tvert\Source \mono f^*(\Tvert\Target) \epi \Normal_f\).  We compose this section with the exponential map to get a fibrewise smooth \(\Grd\)\nb-map \(h\colon \Normal_f\to\Target\).

  On the zero section of~\(\Normal_f\), the map~\(h\) is injective and its fibrewise derivative is invertible.  We claim that there is an open neighbourhood~\(U'\) of the zero section in~\(\Normal_f\) such that~\(h\) is injective and has invertible derivative on~\(U'\) -- we briefly say that~\(h\) is invertible on~\(U'\).  To begin with, the subset where~\(Dh\) is invertible is open and hence an open neighbourhood~\(U''\) of the zero section in~\(\Normal_f\).

  The restriction of~\(h\) to~\(U''\) is a fibrewise local diffeomorphism.  In local charts, we may estimate \(h(\source',\normal')-h(\source'',\normal'')\) from below using \((Dh)^{-1}\).  Therefore, each \((\source,\normal)\in\Normal_f\) has an open neighbourhood \(U(\source,\normal)\subseteq U''\) on which~\(h\) is injective.

  Let \(\source\in\Source\).  Then there are \(\varrho_1(\source)>0\) and an open neighbourhood \(U_1(\source)\) of~\(\source\) in~\(\Source\) such that all \((\source_1,\normal_1)\) with \(\source_1\in U_1(\source)\) and \(\norm{\normal_1} < \varrho_1(\source)\) belong to \(U(\zers{\Normal_f}\source)\).  Since \(f\colon \Source\to\Target\) is an embedding, there is an open subset~\(V(\source)\) of~\(\Target\) with \(f^{-1}(V(\source))= U_1(\source)\).  There are a smaller open neighbourhood~\(U_2(\source)\) of~\(\source\) in~\(\Source\) and \(\varrho_2(\source)\in (0,\varrho_1(\source))\) with \(\target\in V(\source)\) whenever there is \(\source_2\in U_2(\source)\) such that \(d\bigl(f(\source_2), \target\bigr)<\varrho_2(\source)\); here~\(d\) denotes the fibrewise distance with respect to the Riemannian metric on~\(\Target\).

  Let \(U_3(\source)\subseteq \Normal_f\) be the set of all \((\source_3,\normal_3)\in \Normal_f\) with \(\source_3\in U_2(\source)\) and \(\norm{\normal_3}<\varrho_2/2\).  We claim that~\(h\) is invertible on \(U'\defeq \bigcup_{\source} U_3(\source)\).  The invertibility of~\(Dh\) follows because \(U'\subseteq U''\).  Assume that \((\source,\normal), (\source',\normal')\in U'\) satisfy \(h(\source,\normal) = h(\source',\normal')\).  Choose \(\bar\source,\bar\source'\in\Source\) with \((\source,\normal)\in U_3(\bar\source)\) and \((\source',\normal')\in U_3(\bar\source')\).  We may exchange \((\source,\normal)\) and~\((\source',\normal')\) if necessary, so that \(\varrho_2(\bar\source)\ge \varrho_2(\bar\source')\).  The triangle inequality and the definition of~\(U_3(\bar\source)\) yield
  \[
  d\bigl(f(\source), f(\source')\bigr) \le
  d\bigl(f(\source), h(\source,\normal)\bigr) +
  d\bigl(h(\source',\normal'), f(\source')\bigr) <
  \varrho_2(\bar\source).
  \]
  Hence \(f(\source')\in V(\bar\source)\), so that \(\source'\in U_1(\bar\source)\).  Since \(\norm{\normal},\norm{\normal'}<\varrho_1(\bar\source)\) as well, both \((\source,\normal)\) and \((\source',\normal')\) belong to \(U(\zers{\Normal_f}\bar\source)\).  Hence \((\source,\normal) = (\source',\normal')\) as desired.

  We have constructed a neighbourhood of the zero section in~\(\Normal_f\) on which~\(h\) is invertible.  Next we claim that there is a \(\Grd\)\nb-invariant smooth function \(\varrho\colon \Source\to(0,\infty)\) such that~\(U'\) contains the open neighbourhood
  \[
  U_\varrho\defeq \bigl\{(\source,\normal)\in \Normal_f \bigm|
  \norm{\normal}<\varrho(\source) \bigr\}.
  \]
  Then \(\hat{f}(\source,\normal) \defeq h\bigl(\source, \normal \varrho(\source)/ (1+\norm{\normal})\bigr)\) has the required properties.  We first construct a possibly non-equivariant function~\(\varrho\) with \(U_\varrho\subseteq U'\), using that~\(\Source\) is paracompact by our standing assumption on all topological spaces.  Then we make~\(\varrho\) \(\Grd\)\nb-equivariant by averaging as in the proof of Proposition~\ref{pro:extend_section_vb}.  Since the average is bounded above by the maximum, the averaged function still satisfies \(U_\varrho\subseteq U'\) as needed.
\end{proof}

\subsection{Embedding theorems}
\label{sec:embedding}

Recall that a \(\Grd\)\nb-space for a compact group~\(\Grd\) has \emph{finite orbit type} if only finitely many conjugacy classes of subgroups of~\(\Grd\) appear as stabilisers.  This condition is closely related to equivariant embeddability:

\begin{theorem}[George Mostow~\cite{Mostow:Equivariant_embeddings}, Richard Palais~\cite{Palais:Imbedding}]
  \label{the:Mostow}
  Let~\(\Grd\) be a compact group.  A \(\Grd\)\nb-space admits a \(\Grd\)\nb-equivariant embedding into a linear representation of~\(\Grd\) if and only if it has finite orbit type and finite covering dimension and is locally compact and second countable.
\end{theorem}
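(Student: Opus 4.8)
The plan is to settle necessity in a few lines and then to obtain sufficiency by using the slice theorem to reduce the construction of a global equivariant embedding to embeddings of elementary ``tubular'' pieces, which are afterwards glued by an equivariant partition of unity. For necessity I would read ``embedding'' as ``closed embedding'' — this is what the construction below produces and is what makes the stated equivalence literally correct (a non-closed subspace of $\R^n$, such as $\mathbb Q\subseteq\R$, need not be locally compact). A closed subspace of a finite-dimensional real representation $V\cong\R^n$ is locally compact, second countable and of covering dimension at most~$n$; and a linear $\Grd$\nb-action has only finitely many orbit types, as one sees by restricting to the unit sphere $\Sphere(V)$, where $\Grd$ acts through a compact Lie group on a compact manifold (apply the classical slice theorem) and $V\setminus\{0\}$ retracts $\Grd$\nb-equivariantly onto $\Sphere(V)$; a $\Grd$\nb-invariant subspace of~$V$ then inherits all four properties.

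For sufficiency, let~$\Tot$ be locally compact, second countable, of covering dimension~$d$, and of finite orbit type, with stabiliser conjugacy classes $(H_1),\dots,(H_k)$. Then~$\Tot$ and the orbit space $\bar\Tot\defeq\Grd\backslash\Tot$ are metrisable and $\sigma$\nb-compact, $\dim\bar\Tot\le d$, and $q\colon\Tot\to\bar\Tot$ is proper. Two facts about the compact group~$\Grd$ enter: (i) every closed subgroup $H\le\Grd$ equals $\Grd_{u}$ for some vector~$u$ in a finite-dimensional $\Grd$\nb-representation $U_H$ (Peter--Weyl); (ii) every finite-dimensional representation of a closed subgroup~$H$ is a direct summand of $\operatorname{Res}^\Grd_H U'$ for some finite-dimensional $\Grd$\nb-representation~$U'$ (\cite{Palais:Slices}*{Theorem~3.1}). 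The core step is local: by the slice theorem every orbit in~$\Tot$ has a $\Grd$\nb-invariant open neighbourhood $\Grd$\nb-homeomorphic to a twisted product $\Grd\times_H S$, with~$H$ a point stabiliser and~$S$ a locally compact, second countable, finite-dimensional $H$\nb-space of finite orbit type with $\dim S=d-\dim(\Grd/H)$. Given a closed $H$\nb-equivariant embedding $\iota\colon S\to W$, which by~(ii) we may take with $W=\operatorname{Res}^\Grd_H U'$, the map
\[
F\colon\Grd\times_H S\longrightarrow U_H\oplus U',\qquad F[g,s]\defeq\bigl(g\cdot u,\ g\cdot\iota(s)\bigr),
\]
is well defined (as $h\cdot u=u$ and $h\cdot\iota(h^{-1}s)=\iota(s)$ for $h\in H$), $\Grd$\nb-equivariant, injective, and — choosing orthogonal representations — proper, hence a closed $\Grd$\nb-embedding. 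So a tube embeds as soon as its slice does $H$\nb-equivariantly, and I would run this as an induction on, say, $(\dim\Tot,\#\{\text{orbit types}\})$ ordered lexicographically: slices of positive-dimensional orbits drop~$\dim$, while the base case of a single orbit type~$(H)$ is treated by first embedding the free $K$\nb-space $\Tot^H$ of points with stabiliser exactly~$H$ (here $K\defeq N_\Grd(H)/H$; its classifying map $\bar\Tot\to\textup B K$ factors through a finite-dimensional $K$\nb-Stiefel-manifold model, which sits freely inside a $K$\nb-representation) and then applying the tube construction with $N_\Grd(H)$ in place of~$H$ and $\Tot\cong\Grd\times_{N_\Grd(H)}\Tot^H$; slices of $0$\nb-dimensional orbits and the case $d=0$ are disposed of by splitting into $\Grd$\nb-invariant clopen pieces and inducting on the finite poset of orbit types.

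To globalise, cover~$\Tot$ by the invariant opens $W_\alpha$ arising from tubes, each carrying a $\Grd$\nb-embedding into a representation; viewing this as an open cover of $\bar\Tot$ and using $\dim\bar\Tot\le d$, refine it to $d+1$ families of pairwise disjoint invariant opens whose union is~$\Tot$, and combine the disjoint embeddings to get $\Grd$\nb-embeddings $\psi_0,\dots,\psi_d$ into representations $V_0,\dots,V_d$ on invariant opens $U_0,\dots,U_d$ covering~$\Tot$. With a $\Grd$\nb-invariant partition of unity $(\chi_j)_{j=0}^d$ subordinate to $(U_j)$ (average a non-equivariant one over~$\Grd$) and a proper $\Grd$\nb-invariant map $\theta=\theta'\circ q$, where $\theta'\colon\bar\Tot\to\R^N$ is a proper embedding (which exists because $\bar\Tot$ is finite-dimensional, $\sigma$\nb-compact and metrisable), the map
\[
x\longmapsto\Bigl(\bigl(\chi_j(x)\bigr)_j,\ \bigl(\chi_j(x)\,\psi_j(x)\bigr)_j,\ \theta(x)\Bigr)
\]
is a proper, injective $\Grd$\nb-map into $\R^{d+1}\oplus\bigoplus_j V_j\oplus\R^N$, hence a closed $\Grd$\nb-equivariant embedding into a representation of~$\Grd$.

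The step I expect to be the real obstacle is the induction: arranging the stratification so that the slices appearing inside the tubes are genuinely simpler — in particular, handling strata of finite orbits, such as the fixed-point set when~$\Grd$ is connected, where the dimension does not decrease — and verifying that the extension and gluing steps preserve finiteness of orbit type and produce a \emph{proper} map rather than merely an injective continuous one. Finiteness of orbit type is exactly what makes this induction terminate and what bounds the number of auxiliary representations $U_{H_i}$ that must be used.
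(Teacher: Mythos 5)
The paper cites this statement as a classical theorem of Mostow and Palais without proof, so there is no in‐paper argument to compare against; I can only evaluate your sketch as a reconstruction of the Mostow--Palais argument. The overall structure — slice theorem to reduce to tubes, the tube embedding $F[g,s]=(g\cdot u,\,g\cdot\iota(s))$ via Peter--Weyl and the subgroup extension lemma, globalisation by an invariant partition of unity together with a proper map on the orbit space — is sound, and the observation that ``embedding'' must mean ``closed embedding'' is correct (a subspace of $\R^n$ is locally compact only if it is locally closed). One correction: the hypothesis must be ``compact \emph{Lie} group,'' as in the cited Mostow and Palais sources; your proof implicitly uses Lie structure (closed subgroups as isotropy of vectors, finite-dimensional Stiefel models of $BK$), and the statement is in fact false for general compact groups. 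For example $\Grd=(\Z/2)^\N$ acting on itself by translation is compact, second countable, $0$-dimensional, and has a single orbit type, yet cannot $\Grd$-embed into any finite-dimensional representation, since every such representation of $\Grd$ factors through a finite quotient.

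The gap you flag is genuine, and the fix you sketch does not close it. In the lexicographic induction on $(\dim\Tot,\#\{\text{orbit types}\})$, the slice at a fixed point of a connected $\Grd$ is merely a $\Grd$-invariant neighbourhood of that point, with the same dimension and the same orbit types; and $\Tot$ need not split into $\Grd$-invariant clopen pieces near such a point. Moreover, the slice $S$ at a $0$-dimensional orbit with stabiliser $H$ can have \emph{more} orbit types than $\Tot$, since distinct $H$-conjugacy classes of subgroups of $H$ may become $\Grd$-conjugate, so the second coordinate need not decrease either. Thus the induction need not terminate. The fix is to induct directly on the number of orbit types rather than over slices of tubes: pick a \emph{maximal} orbit type $(H)$, so that $\Tot_{(H)}\cong\Grd\times_{N(H)}\Tot^{=H}$ is closed in $\Tot$; embed it via the free $N(H)/H$-space $\Tot^{=H}$; extend the result to a $\Grd$-map on all of $\Tot$ by Tietze extension and averaging; embed the open complement $\Tot\setminus\Tot_{(H)}$, which has one fewer orbit type, by the inductive hypothesis; and combine the two, using a bounded rescaling of the second map cut off by an invariant Urysohn function vanishing on $\Tot_{(H)}$, together with a proper invariant map coming from an embedding of the orbit space, to get a closed $\Grd$-embedding of $\Tot$. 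This sidesteps the problematic inner induction over slices entirely.
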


\begin{example}
  \label{exa:counterexample_not_embeddable}
  Let \(\Grd=\Torus\) and consider the disjoint union
  \[
  \Tot\defeq \bigsqcup_{n=1}^\infty \Torus \bigm/ \{\exp(2\pi\ima k/n) \mid k=0,\dotsc,n-1\},
  \]
  on which~\(\Torus\) acts by multiplication in each factor.  All the finite cyclic subgroups appear as stabilisers in~\(\Tot\).  But a linear representation of~\(\Torus\) can only contain finitely many orbit types by \cite{Palais:Slices}*{Theorem 4.4.1}.  Hence there is no injective \(\Torus\)\nb-equivariant map from~\(\Tot\) to any linear representation of~\(\Torus\).
\end{example}

Now let~\(\Grd\) be a numerably proper groupoid.  Let~\(\Tot\) be a smooth \(\Grd\)\nb-manifold with anchor map \(\anchor\colon \Tot\to\Base\).  Subject to some conditions, we will construct a smooth embedding of~\(\Tot\) into the total space of a \(\Grd\)\nb-vector bundle over~\(\Base\).  This is the natural way to extend Mostow's Embedding Theorem to proper groupoids.  We may also ask for embeddings into~\(\R^n\) with some linear representation as in~\cite{Kankaanrinta:Embeddings}.  But such embeddings need not exist because a general locally compact group need not have any non-trivial finite-dimensional representations.  And even if they exist, linear actions on~\(\R^n\) are never proper, so that we leave the world of proper actions with such embedding theorems.

\begin{lemma}
  \label{lem:embedding_general}
  Let~\(\Tot\) be a smooth \(\Grd\)\nb-manifold with anchor map \(\anchor\colon \Tot\to\Base\).  Assume that~\(\Grd\backslash\Tot\) has finite covering dimension.

  Then~\(\Tot\) admits a fibrewise smooth equivariant embedding into the total space of an equivariant vector bundle over~\(\Base\) if and only if there is a \(\Grd\)\nb-vector bundle~\(\Triv\) over~\(\Base\) such that, for any \(\tot\in\Tot\), the fibre~\(\Triv_{\anchor(\tot)}\) contains
  \begin{itemize}
  \item a vector whose stabiliser in~\(\Grd_{\anchor(\tot)}^{\anchor(\tot)}\) is equal to~\(\Grd_\tot^\tot\);

  \item the representation of~\(\Grd_\tot^\tot\) on the vertical tangent bundle \(\Tvert_\tot\Tot\).
  \end{itemize}
\end{lemma}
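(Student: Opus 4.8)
This is an equivalence; the ``only if'' direction is easy, so I will dispose of it first. Suppose \(\iota\colon\Tot\to\total{\Triv}\) is a fibrewise smooth \(\Grd\)\nb-equivariant embedding into the total space of a \(\Grd\)\nb-vector bundle \(\Triv\) over~\(\Base\). Being a \(\Grd\)\nb-map, \(\iota\) intertwines anchor maps, so \(\iota(\tot)\in\Triv_{\anchor(\tot)}\); since \(\Grd_\tot^\tot\) fixes~\(\tot\) it fixes \(\iota(\tot)\), and if \(\grd\in\Grd_{\anchor(\tot)}^{\anchor(\tot)}\) fixes \(\iota(\tot)=\iota(\grd\tot)\) then \(\grd\tot=\tot\) by injectivity, so \(\iota(\tot)\) is a vector with stabiliser exactly \(\Grd_\tot^\tot\). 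Moreover \(D\iota_\tot\colon\Tvert_\tot\Tot\to\Tvert_{\iota(\tot)}\total{\Triv}\) is injective and \(\Grd_\tot^\tot\)\nb-equivariant, and the vertical tangent space of \(\total{\Triv}\) at a point of the fibre \(\Triv_{\anchor(\tot)}\) is canonically, \(\Grd_\tot^\tot\)\nb-equivariantly identified with \(\Triv_{\anchor(\tot)}\) itself (the tangent space of a vector space, with the linearised action at a fixed point). Hence the same bundle \(\Triv\) witnesses both conditions.

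For the ``if'' direction I would imitate the proof of Lemma~\ref{lem:vb_subtrivial}: first build the embedding near each \(\Grd\)\nb-orbit, then patch finitely many local embeddings using the finite covering dimension of \(\Grd\backslash\Tot\). Fix \(\tot_0\in\Tot\), put \(b_0\defeq\anchor(\tot_0)\), \(H\defeq\Grd_{\tot_0}^{\tot_0}\). Since \(\Grd\) is numerably proper, \(H\) is compact and the orbit \(\Grd\cdot\tot_0\) is closed in~\(\Tot\) and homeomorphic to \(\Grd_{b_0}/H\) (Theorem~\ref{the:numerably_proper_groupoid_proper}), and a \(\Grd\)\nb-equivariant tubular neighbourhood (Theorem~\ref{the:tubular_for_embedding}) identifies a \(\Grd\)\nb-invariant open neighbourhood of \(\Grd\cdot\tot_0\) with the total space of the normal bundle of \(\Grd\cdot\tot_0\subseteq\Tot\), a \(\Grd\)\nb-vector bundle over the orbit. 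Now I invoke the hypotheses. The first bullet gives a vector \(v_0\in\Triv_{b_0}\) with stabiliser~\(H\); the fibrewise smooth \(\Grd\)\nb-map \(\grd\tot_0\mapsto\grd v_0\) then embeds the orbit into~\(\total{\Triv}\) (injective because \(H\) is the full stabiliser of~\(v_0\), a homeomorphism onto its image by Theorem~\ref{the:numerably_proper_groupoid_proper} applied to the proper \(\Grd\)\nb-space \(\total{\Triv}\)). The second bullet gives an \(H\)\nb-equivariant linear embedding \(\Tvert_{\tot_0}\Tot\hookrightarrow\Triv_{b_0}\); restricting it to the normal space and spreading it over the orbit by the \(\Grd\)\nb-action produces a fibrewise linear embedding of the normal bundle into a pulled-back copy of~\(\Triv\). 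These two ingredients assemble, over the tubular neighbourhood, to a fibrewise smooth \(\Grd\)\nb-equivariant embedding of that neighbourhood into \(\total{\Triv\oplus\Triv}\). Extending the underlying section of \((\Triv\oplus\Triv)^\Tot\) together with its first-order jet along the orbit from the closed invariant set \(\Grd\cdot\tot_0\) to all of~\(\Tot\) — a variant of Proposition~\ref{pro:extend_section_vb} proved by the same averaging trick — gives a \(\Grd\)\nb-map \(F_{\tot_0}\colon\Tot\to\total{\Triv\oplus\Triv}\) that is injective on \(\Grd\cdot\tot_0\) and fibrewise immersive near~\(\tot_0\), hence restricts to a fibrewise smooth embedding (Definition~\ref{def:smooth_embedding}) on some \(\Grd\)\nb-invariant open neighbourhood \(U_{\tot_0}\ni\tot_0\).

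For the global statement, the finite covering dimension of \(\Grd\backslash\Tot\) lets me refine the cover \((U_{\tot_0})_{\tot_0\in\Tot}\) to finitely many families \(\mathcal U_0,\dotsc,\mathcal U_n\) of pairwise disjoint \(\Grd\)\nb-invariant open sets covering~\(\Tot\); combining the local maps over each disjoint family gives \(\Grd\)\nb-maps \(F_j\colon\Tot\to\total{\Triv\oplus\Triv}\) that embed \(U_j\defeq\bigcup\mathcal U_j\); and for a \(\Grd\)\nb-invariant partition of unity \((\varphi_j)_j\) subordinate to \((U_j)_j\) (Proposition~\ref{pro:invariant_partition_unity}), the map \(\bigl(\varphi_0 F_0,\varphi_0,\dotsc,\varphi_n F_n,\varphi_n\bigr)\) into the total space of the \(\Grd\)\nb-vector bundle over~\(\Base\) given by \(n+1\) copies of \(\Triv\oplus\Triv\) and \(n+1\) trivial lines is the desired equivariant embedding. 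As in the last paragraphs of the proof of Lemma~\ref{lem:vb_subtrivial}, the coordinates~\(\varphi_j\) are what make this combined map injective and a homeomorphism onto its image: wherever \(\varphi_j\neq0\) it agrees, up to the positive scalar~\(\varphi_j\), with the embedding~\(F_j\).

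I expect the main obstacle to be the point-set bookkeeping, rather than the algebra of the two hypotheses: passing from orbitwise and infinitesimal data to a genuine embedding on an \emph{open} set, and ensuring that combining local embeddings over a possibly infinite disjoint family does not create collisions. Unlike in Lemma~\ref{lem:vb_subtrivial}, where fibrewise injectivity of a bundle map is untroubled by disjoint opens, here one must really use properness — compactness of stabilisers, closedness of orbits, and the identification (via Theorem~\ref{the:tubular_for_embedding}) of a neighbourhood of each orbit with a vector bundle over it — to control how neighbourhoods of orbits sit inside~\(\Tot\); the homeomorphism-onto-image claims then reduce, by a routine net argument, to the analogous statements for the local maps \(F_{\tot_0}\).
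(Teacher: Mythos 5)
Your proof follows essentially the same strategy as the paper's: construct an embedding locally near each orbit from the two hypotheses, extend it \(\Grd\)\nb-equivariantly as in Proposition~\ref{pro:extend_section_vb}, and patch over a refinement of finite Lebesgue number using a \(\Grd\)\nb-invariant partition of unity (Proposition~\ref{pro:invariant_partition_unity}). Two presentational deviations deserve a comment. First, you invoke Theorem~\ref{the:tubular_for_embedding} for the full orbit \(\Grd\cdot\tot_0 \subseteq \Tot\), which presupposes that \(\Grd\cdot\tot_0\) is a smooth \(\Grd\)\nb-manifold in the sense of Definition~\ref{def:smooth_G-manifold}; the paper does not establish this and instead works entirely inside the single fibre \(\Tot_{\anchor(\tot_0)}\), where the compact group \(\Grd_{\anchor(\tot_0)}^{\anchor(\tot_0)}\) acts isometrically through a Lie group, so that the classical slice theorem applies -- the equivariant extension then spreads the local datum over a \(\Grd\)\nb-invariant neighbourhood. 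Second, your patching uses the map \((\varphi_0 F_0, \varphi_0, \dots, \varphi_n F_n, \varphi_n)\), while the paper rescales each local embedding into the unit sphere bundle of \(\Triv' = \Triv\oplus\Triv\oplus\R\) and takes \(\bigoplus_j\varphi_j\hat f_j\); these are interchangeable devices for making the partitioned sum injective. Neither difference changes the architecture of the argument, and the delicate point you flag at the end -- that combining local embeddings on a disjoint family and then partitioning is more subtle here than in Lemma~\ref{lem:vb_subtrivial}, because space-level injectivity, unlike fibrewise injectivity, is not a pointwise condition -- is present in identical form in the paper's proof and is not specific to your variant.
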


\begin{proof}
  We must construct a \(\Grd\)\nb-equivariant smooth embedding \(f\colon \Source\to\Triv'\) for some \(\Grd\)\nb-vector bundle~\(\Triv'\) over~\(\Base\); then the Tubular Neighbourhood Theorem~\ref{the:tubular_for_embedding} provides an open smooth embedding \(\hat{f}\colon \Normal_f\opem\Triv'\).

  If there is a \(\Grd\)\nb-equivariant smooth embedding \(\Tot\to\Triv\), then~\(\Triv\) has the two properties required above.  Conversely, we will construct a \(\Grd\)\nb-equivariant smooth embedding \(\Tot\to\Triv^{2n}\oplus\R^n\) for some \(n\in\N\) assuming the existence of a vector bundle~\(\Triv\) as above.

  Let~\(\Triv\) be a \(\Grd\)\nb-vector bundle over~\(\Base\).  A \(\Grd\)\nb-equivariant smooth map \(\hat{f}\colon \Tot\to\Triv\) is equivalent to a \(\Grd\)\nb-equivariant smooth section of the vector bundle~\(\Triv^\Tot\).

  First we construct such a section locally near a single orbit.  Let \(\tot\in\Tot\) and let \(\base\defeq \anchor(\tot)\in\Base\).  Then \(\Tot_\base\defeq \anchor^{-1}(\base)\) is a smooth manifold, on which the stabiliser~\(\Grd_\base^\base\) of~\(\base\) acts by diffeomorphisms.  The group~\(\Grd_\base^\base\) is compact because~\(\Grd\) is proper, and~\(\Grd_\tot^\tot\) is a closed subgroup of~\(\Grd_\base^\base\).

  Our assumptions on \(\Grd\) and~\(\Tot\) imply that there is a \(\Grd\)\nb-invariant Riemannian metric on the fibres of~\(\Tot\).  The action of~\(\Grd_\base^\base\) on~\(\Tot_\base\) is isometric for this fibrewise Riemannian metric, so that it factors through a Lie group.  Therefore, the orbit \(\Grd_\base^\base\cdot\tot\) is an embedded submanifold of~\(\Tot_\base\); let~\(\Normal\) be its normal bundle and let~\(\Normal_\tot\) be the fibre of this normal bundle at~\(\tot\).  Then \(\Normal = \Grd_\base^\base \times_{\Grd_\tot^\tot} \Normal_\tot\).  The exponential map provides a \(\Grd_\base^\base\)\nb-equivariant diffeomorphism between~\(\Normal\) and an open neighbourhood of the orbit~\(\Grd_\base^\base\tot\) in~\(\Tot_\base\).

  A \(\Grd_\base^\base\)\nb-equivariant section of~\(\Triv^\Normal\) is equivalent to a \(\Grd_\tot^\tot\)\nb-equivariant section of~\(\Triv^{\Normal_\tot}\) because \(\Normal = \Grd_\base^\base \times_{\Grd_\tot^\tot} \Normal_\tot\).  This is equivalent to a \(\Grd_\tot^\tot\)\nb-equivariant map \(\Normal_\tot\to\Triv_\base\) because we work in a single fibre, where~\(\Triv\) is constant.  By assumption, there is a vector \(\triv\in\Triv_\base\) whose stabiliser is exactly~\(\Grd_\tot^\tot\), and there is a linear \(\Grd_\tot^\tot\)\nb-invariant embedding \(l\colon \Normal_\tot\subseteq \Tvert_\tot\Tot \to \Triv_\base\).  Then the \(\Grd_\tot^\tot\)\nb-equivariant map \(\Normal_\tot\to\Triv_\base\oplus\Triv_\base\) that maps \(\normal\mapsto \bigl(\triv,l(\normal)\bigr)\) leads to a \(\Grd_\base^\base\)\nb-equivariant map \(\hat{f}_\tot\colon \Normal\to\Triv_\base\oplus\Triv_\base\) that is injective and has injective derivative on the \(\Grd_\base^\base\)\nb-orbit of~\(\tot\).  We check injectivity: suppose that \(\normal, \normal' \in \Normal\) are points with the same image in \(\Triv_\base\oplus \Triv_\base\).  They need not be in the same fibre of~\(\Normal\), of course, so let~\(\normal\) be in the fibre over~\(g\tot\) and~\(\normal'\) be in the fibre over~\(g'\tot\).  If they have the same image, then \(ge=g'e\) and hence \(g\tot = g'\tot\) because by assumption the stabiliser of~\(e\) is exactly~\(\Grd^\tot_\tot\).  Thus \(\nu\) and~\(\nu'\) are in the same fibre of~\(\Normal\).  But our map is clearly an embedding on each fibre of~\(\Normal\) because it is conjugate to the map on~\(\Normal_\tot\) we started with.  Hence \(\nu = \nu'\).

  It follows that there is a \(\Grd_\base^\base\)\nb-invariant open neighbourhood~\(W_\tot\) of~\(\tot\) in~\(\Tot_\base\) on which~\(\hat{f}_\tot\) is a smooth embedding.

  So far, we have worked on a single fibre~\(\Tot_\base\).  Next we use a smooth version of Proposition~\ref{pro:extend_section_vb} to extend our map to a \(\Grd\)\nb-equivariant, fibrewise smooth, continuous map \(\hat{f}_\tot\colon \Tot\to\Triv\oplus\Triv\) that is a smooth embedding on the orbit of~\(\tot\).  We claim that it remains a smooth embedding on some \(\Grd\)\nb-invariant neighbourhood of~\(\tot\).  The proof of this observation is similar to the corresponding argument in the proof of the Tubular Neighbourhood Theorem~\ref{the:tubular_for_embedding}.  Hence we omit further details.

  We have constructed a \(\Grd\)\nb-equivariant smooth embedding \(\hat{f}_\tot\colon U_\tot\to\Triv\oplus\Triv\) on a \(\Grd\)\nb-invariant neighbourhood~\(U_\tot\) of~\(\tot\) for each \(\tot\in\Tot\).  We may add the constant map~\(1\) with values in the constant \(1\)\nb-dimensional bundle and rescale to get a smooth embedding~\(\hat{f}_\tot\) from~\(U_\tot\) into the unit sphere bundle of \(\Triv'\defeq \Triv\oplus\Triv\oplus\R\).  Hence we assume in the following that~\(\hat{f}_j\) is a smooth embedding into the unit sphere bundle of~\(\Triv'\).  Now we patch these local solutions together to a global one.

  We view the \(\Grd\)\nb-invariant neighbourhoods~\(U_\tot\) as an open covering of the orbit space \(\Grd\backslash\Tot\).  Since the latter is finite-dimensional, we may refine this covering to one with finite Lebesgue number.  Since our construction uses the same target vector bundle~\(\Triv'\) for each \(\tot\in\Tot\), we may combine our local solutions on disjoint open subsets without any problems.  Thus we may assume that we have a finite \(\Grd\)\nb-invariant covering \(U_0,\dotsc,U_n\) of~\(\Tot\) and embeddings \(\hat{f}_j\colon U_j\to\Triv'\).  Proposition~\ref{pro:invariant_partition_unity} provides a \(\Grd\)\nb-invariant partition of unity \(\varphi_0,\dotsc,\varphi_n\) on~\(\Tot\) subordinate to this covering.  We can arrange for the functions~\(\varphi_n\) to be fibrewise smooth.  We let \(\hat{f}\defeq \bigoplus \varphi_j\hat{f}_j\colon \Tot\to(\Triv')^{n+1}\).  This map is injective and a fibrewise immersion because all the maps~\(\hat{f}_j\) are embeddings into the unit sphere bundles of~\(\Triv'\).
\end{proof}

\begin{theorem}
  \label{the:embedding_full}
  Let~\(\Tot\) be a smooth \(\Grd\)\nb-manifold.  Assume that~\(\Grd\backslash\Tot\) has finite covering dimension, and that there is a full \(\Grd\)\nb-vector bundle on~\(\Base\).  Then~\(\Tot\) admits a fibrewise smooth \(\Grd\)\nb-equivariant embedding into the total space of a \(\Grd\)\nb-vector bundle over~\(\Base\).
\end{theorem}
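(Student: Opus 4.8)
The plan is to deduce the theorem from Lemma~\ref{lem:embedding_general}, exactly as Theorem~\ref{the:vb_subtrivial_full} was deduced from Lemma~\ref{lem:vb_subtrivial}, with one extra representation-theoretic observation. Concretely, I must exhibit a single \(\Grd\)\nb-vector bundle~\(\Triv\) over~\(\Base\) whose fibre \(\Triv_{\anchor(\tot)}\), for \emph{every} \(\tot\in\Tot\), simultaneously contains a vector whose stabiliser in \(\Grd_{\anchor(\tot)}^{\anchor(\tot)}\) is exactly~\(\Grd_\tot^\tot\) and contains the \(\Grd_\tot^\tot\)\nb-representation \(\Tvert_\tot\Tot\). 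I would construct \(\Triv\) as a direct sum \(\Triv\defeq \Triv_0^{\oplus n}\oplus\Triv_1\) of two pieces, one handling each requirement.

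For the second requirement, first I would observe that \(\Tvert\Tot\) is a \(\Grd\)\nb-vector bundle over~\(\Tot\), and that the hypotheses of the present theorem (finite covering dimension of \(\Grd\backslash\Tot\), a full \(\Grd\)\nb-vector bundle over~\(\Base\)) are precisely the hypotheses of Theorem~\ref{the:vb_subtrivial_full} applied to the \(\Grd\)\nb-space~\(\Tot\). Hence \(\Tvert\Tot\) is subtrivial: there is a \(\Grd\)\nb-vector bundle~\(\Triv_1\) over~\(\Base\) with \(\Tvert\Tot\) a direct summand of~\(\Triv_1^\Tot\). Passing to the fibre over~\(\tot\) gives a \(\Grd_\tot^\tot\)\nb-equivariant linear embedding \(\Tvert_\tot\Tot \to (\Triv_1)_{\anchor(\tot)}\), which is the second bullet of Lemma~\ref{lem:embedding_general}.

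For the first requirement, let \(\Triv_0\) be a full \(\Grd\)\nb-vector bundle over~\(\Base\), say of rank~\(n\). By Lemma~\ref{lem:full_finite} every stabiliser \(\Grd_\base^\base\) with \(\base\in\Base\) is a finite group, and all its irreducible representations have dimension at most~\(n\); since \(\Triv_0\) is full, \((\Triv_0)_\base\) contains each irreducible representation of \(\Grd_\base^\base\) at least once, so \((\Triv_0^{\oplus n})_\base = (\Triv_0)_\base^{\oplus n}\) contains each of them with multiplicity at least~\(n\), hence contains the (real) regular representation \(\R[\Grd_\base^\base]\) as a subrepresentation. Now fix \(\tot\in\Tot\), put \(\base\defeq\anchor(\tot)\), \(K\defeq\Grd_\base^\base\), \(H\defeq\Grd_\tot^\tot\subseteq K\). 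The permutation representation \(\R[K/H]\) is a subrepresentation of \(\R[K]\), and the basis vector at the trivial coset has stabiliser exactly~\(H\) in~\(K\); under the chain of equivariant embeddings \(\R[K/H]\hookrightarrow\R[K]\hookrightarrow (\Triv_0)_\base^{\oplus n}\) this vector maps to a vector of \((\Triv_0^{\oplus n})_{\anchor(\tot)}\) whose stabiliser in \(\Grd_{\anchor(\tot)}^{\anchor(\tot)}\) is exactly~\(\Grd_\tot^\tot\). This is the first bullet of Lemma~\ref{lem:embedding_general}.

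Taking \(\Triv\defeq \Triv_0^{\oplus n}\oplus\Triv_1\), both conditions of Lemma~\ref{lem:embedding_general} hold at every \(\tot\in\Tot\), and that lemma then produces the desired fibrewise smooth \(\Grd\)\nb-equivariant embedding of~\(\Tot\) into the total space of a \(\Grd\)\nb-vector bundle over~\(\Base\) (in fact into \(\Triv^{2\dim}\oplus\R^{\dim}\)). I do not expect a genuine obstacle here, as the geometric work is already contained in Lemma~\ref{lem:embedding_general}; the only point that needs care is that \emph{one} bundle~\(\Triv\) must serve all points~\(\tot\) at once, and this is exactly what the uniform bound on \(\abs{\Grd_\tot^\tot}\) from Lemma~\ref{lem:full_finite}, together with the fact that \(\R[K]\) contains the witness \(\R[K/H]\) for every subgroup \(H\subseteq K\) simultaneously, is designed to guarantee.
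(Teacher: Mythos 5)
Your proposal is correct and follows essentially the same path as the paper: apply Lemma~\ref{lem:embedding_general} by producing a single bundle over~\(\Base\) whose fibres, via the finiteness of stabilisers from Lemma~\ref{lem:full_finite} and the regular-representation trick, contain both a vector with stabiliser \(\Grd_\tot^\tot\) and the representation \(\Tvert_\tot\Tot\). The only cosmetic differences are that you split the witness bundle as \(\Triv_0^{\oplus n}\oplus\Triv_1\) whereas the paper uses a single power \(\Triv^n\) of the full bundle (with \(n\) the larger of the ranks of \(\Tvert\Tot\) and \(\Triv\)) for both requirements at once, and that you obtain the second bullet by invoking Theorem~\ref{the:vb_subtrivial_full} (subtriviality of \(\Tvert\Tot\)) rather than running the fibre-level representation count directly as the paper does; both are valid and equivalent in substance.
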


\begin{proof}
  Let~\(\Triv\) be a full \(\Grd\)\nb-vector bundle over~\(\Base\) and let~\(n\)~be the maximum of the ranks of \(\Tvert\Tot\) and~\(\Triv\).  As in the proof of Theorem~\ref{the:vb_subtrivial_full}, the fibres of the vector bundle~\(\Triv^n\) contain the induced representations of~\(\Grd_\tot^\tot\) on~\(\Tvert_\tot\Tot\) for all \(\tot\in\nobreak\Tot\).  Furthermore, any closed subgroup of~\(\Grd_\tot^\tot\) is the stabiliser of a vector in the regular representation of~\(\Grd_\tot^\tot\) and hence of a vector in~\(\Triv^n_{\anchor(\tot)}\); here we use Lemma~\ref{lem:full_finite}.  Hence~\(\Triv^n\) satisfies the two assumptions of Lemma~\ref{lem:embedding_general}, which yields the desired embedding.
\end{proof}

\begin{theorem}
  \label{the:embedding_cocompact}
  Let~\(\Grd\) be a numerably proper groupoid with enough \(\Grd\)\nb-vector bundles over its object space~\(\Base\).  Then any cocompact smooth \(\Grd\)\nb-manifold~\(\Tot\) admits a fibrewise smooth embedding into the total space of a \(\Grd\)-vector bundle over~\(\Base\).
\end{theorem}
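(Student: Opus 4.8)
The plan is to reduce Theorem~\ref{the:embedding_cocompact} to Lemma~\ref{lem:embedding_general}, exactly as Theorems~\ref{the:embedding_full} and~\ref{the:vb_subtrivial_cocompact} were handled, but using cocompactness to circumvent the need for a \emph{full} bundle. First I would note that a cocompact smooth \(\Grd\)\nb-manifold~\(\Tot\) has finite-dimensional orbit space, so that hypothesis of Lemma~\ref{lem:embedding_general} is automatic (indeed \(\Grd\backslash\Tot\) is compact and, being the image of a manifold, finite-dimensional). Thus it remains to produce a single \(\Grd\)\nb-vector bundle~\(\Triv\) over~\(\Base\) whose fibre \(\Triv_{\anchor(\tot)}\), for every \(\tot\in\Tot\), contains both a vector with stabiliser exactly~\(\Grd_\tot^\tot\) and a copy of the representation of~\(\Grd_\tot^\tot\) on \(\Tvert_\tot\Tot\).

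The second requirement is handled by the argument already used in the proof of Theorem~\ref{the:embedding_full}: since~\(\Grd\) has enough \(\Grd\)\nb-vector bundles over~\(\Base\), for each \(\tot\in\Tot\) the representation \(\Tvert_\tot\Tot\) of \(\Grd_\tot^\tot\) embeds (via \cite{Palais:Slices}*{Theorem 3.1}, as in the proof of Theorem~\ref{the:vb_subtrivial_cocompact}) into the fibre at \(\anchor(\tot)\) of some \(\Grd\)\nb-vector bundle over~\(\Base\). The first requirement --~a vector with stabiliser exactly~\(\Grd_\tot^\tot\) --~is met by the regular representation of~\(\Grd_\tot^\tot\): any closed subgroup of a compact group is the isotropy group of a vector in the regular representation, and by enough vector bundles the regular representation of \(\Grd_\tot^\tot\) embeds into the fibre at \(\anchor(\tot)\) of some \(\Grd\)\nb-vector bundle over~\(\Base\). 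So for each \(\tot\) there is a \(\Grd\)\nb-vector bundle \(\Triv(\tot)\) over~\(\Base\) with the two desired properties at~\(\tot\), and by Corollary~\ref{cor:vb_vanishing} — rather, by openness of both conditions (non-vanishing stabiliser and injectivity of a linear map are open) — they persist on a \(\Grd\)\nb-invariant open neighbourhood \(U_\tot\) of~\(\tot\).

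Now I would invoke cocompactness: the \(U_\tot\) cover~\(\Tot\), hence descend to an open cover of the compact space \(\Grd\backslash\Tot\), so finitely many \(U_{\tot_1},\dotsc,U_{\tot_k}\) already cover~\(\Tot\). Set \(\Triv\defeq \Triv(\tot_1)^{m}\oplus\dotsb\oplus\Triv(\tot_k)^{m}\) for \(m\) large enough that each factor's \(m\)\nb-fold sum absorbs both a stabiliser-exactly-\(\Grd_\tot^\tot\) vector and a copy of \(\Tvert_\tot\Tot\) whenever \(\tot\in U_{\tot_i}\) (one can even just take \(m=1\) and a direct sum, since containing a vector with a prescribed stabiliser is inherited by direct sums, and so is containing a given subrepresentation). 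For any \(\tot\in\Tot\) pick \(i\) with \(\tot\in U_{\tot_i}\); then the \(\Triv(\tot_i)\)\nb-summand of \(\Triv_{\anchor(\tot)}\) already contains a vector with stabiliser~\(\Grd_\tot^\tot\) and a copy of \(\Tvert_\tot\Tot\), so \(\Triv\) satisfies both hypotheses of Lemma~\ref{lem:embedding_general}. That lemma then delivers the fibrewise smooth \(\Grd\)\nb-equivariant embedding of~\(\Tot\) into the total space of a \(\Grd\)\nb-vector bundle over~\(\Base\).

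The only genuinely delicate point is the local persistence step: one must check that ``the fibre contains a vector with stabiliser exactly~\(\Grd_\tot^\tot\)'' is stable under passing to nearby \(\tot'\) in the same orbit and then to a \(\Grd\)\nb-invariant neighbourhood. This is where the proof of Lemma~\ref{lem:embedding_general} itself does the real work (it constructs, near each orbit, an explicit equivariant section realizing the required vector), so strictly I would phrase the argument as: the hypotheses of Lemma~\ref{lem:embedding_general} are \emph{pointwise} conditions on a bundle~\(\Triv\) over~\(\Base\), and a finite direct sum \(\Triv\) built from the \(\Triv(\tot_i)\) satisfies them at \emph{every} point of~\(\Tot\) by the covering argument; no neighbourhood bookkeeping beyond what is already inside Lemma~\ref{lem:embedding_general} is needed. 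Thus the main obstacle — ensuring a single bundle over~\(\Base\) works globally — is dissolved precisely by cocompactness, which is the substitute here for the fullness hypothesis of Theorem~\ref{the:embedding_full}.
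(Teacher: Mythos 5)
Your overall plan is close to the paper's in spirit, but there is a genuine gap precisely at the step you yourself flag as delicate, and the attempted re-framing does not close it.

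You want to produce a single \(\Grd\)\nb-vector bundle~\(\Triv\) over~\(\Base\) satisfying the two hypotheses of Lemma~\ref{lem:embedding_general} at \emph{every} \(\tot\in\Tot\) and then apply the lemma as a black box.  You build \(\Triv(\tot_i)\) satisfying the conditions at each~\(\tot_i\) and claim, first, that the conditions ``persist'' on a \(\Grd\)\nb-invariant open neighbourhood \(U_{\tot_i}\), and second, that this can be sidestepped because the hypothesis is ``pointwise'' and the finite direct sum works at every point ``by the covering argument.''  Both versions are circular: the covering by the \(U_{\tot_i}\) is defined by the very persistence claim that has not been established.  And the persistence of the \emph{first} hypothesis is false in general: for \(\tot\) near~\(\tot_i\) the isotropy group \(\Grd_\tot^\tot\) typically shrinks, and the fact that \(\Triv(\tot_i)_{\anchor(\tot_i)}\) has a vector with stabiliser exactly \(\Grd_{\tot_i}^{\tot_i}\) gives no control over whether it has a vector with stabiliser exactly the smaller subgroup \(\Grd_\tot^\tot\) (for instance, the trivial representation has a vector with stabiliser equal to the full group, yet no vector with trivial stabiliser).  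This is not an openness-of-injectivity statement; it is a statement about which conjugacy classes of subgroups occur as isotropy groups of vectors in a fixed representation, and those are rigid.

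The paper avoids ever producing such a global~\(\Triv\).  Its proof of Theorem~\ref{the:embedding_cocompact} says to re-run the \emph{local construction inside the proof} of Lemma~\ref{lem:embedding_general} at each~\(\tot_i\) using the local bundle \(\Triv(\tot_i)\), and then patch.  The crucial point is that the local step near~\(\tot_i\) uses the two conditions \emph{only at~\(\tot_i\)}: it builds the map \(\normal\mapsto(\triv, l(\normal))\) into \(\Triv(\tot_i)_\base\oplus\Triv(\tot_i)_\base\) using a vector~\(\triv\) with stabiliser exactly \(\Grd_{\tot_i}^{\tot_i}\), extends \(\Grd\)\nb-equivariantly via Proposition~\ref{pro:extend_section_vb}, and then observes that \emph{being a smooth embedding} (injective with injective derivative) is an open condition on the extended map.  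So what persists on a neighbourhood is the embedding property of the constructed map, not the two hypotheses of the lemma.  Cocompactness then yields finitely many centres \(\tot_1,\dotsc,\tot_k\) whose neighbourhoods cover, and the partition-of-unity patching lands the global embedding in \(\bigoplus_i \Triv(\tot_i)^2\oplus\R\).  This is structurally different from first verifying the lemma's global hypothesis and then invoking the lemma: the hypothesis may simply not be satisfiable by any single~\(\Triv\) over~\(\Base\), even when the conclusion of the theorem holds.

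A secondary slip: your appeal to the regular representation of~\(\Grd_\tot^\tot\) to produce a vector with the right stabiliser assumes~\(\Grd_\tot^\tot\) is finite.  That is guaranteed in Theorem~\ref{the:embedding_full} via the full bundle and Lemma~\ref{lem:full_finite}, but it fails here: with only ``enough'' \(\Grd\)\nb-vector bundles, \(\Grd_\tot^\tot\) can be an infinite compact group, for which the regular representation is infinite-dimensional.  One can still realise any closed subgroup of a compact Lie group as the exact stabiliser of a vector in a finite-dimensional representation, but that argument must replace the one you give.
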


\begin{proof}
  Argue as in the proof of Lemma~\ref{lem:embedding_general}, but using different vector bundles in the local construction.  The compactness of~\(\Grd\backslash\Tot\) ensures that this creates no problems, compare the proof of Theorem~\ref{the:vb_subtrivial_cocompact}.
\end{proof}

\begin{example}
  \label{exa:not_enough_vb_conclusion}
  We continue from Example~\ref{exa:not_enough_vb} and consider the groupoid~\(\Grd_A\) with \(A \defeq \bigl(\begin{smallmatrix}2&1\\1&1\end{smallmatrix}\bigr)\).  In this case, \(\Grd_A\) acts trivially on any equivariant vector bundle over~\(\Base\) and hence on any space that embeds equivariantly into one.  But there are non-trivial smooth actions of~\(\Grd_A\) on bundles of smooth manifolds over~\(\Base\).  An obvious example is~\(\Grd_A\) itself with the smooth action of~\(\Grd_A\) by translations.
 \end{example}

\subsection{The Factorisation Theorem}
\label{sec:factorisation_theorem}

The factorisations in the following theorem motivate our definition of a normally non-singular map.

\begin{theorem}
  \label{the:normal_factorisation}
  Let~\(\Grd\) be a \textup(proper\textup) groupoid, let \(\Source\) and~\(\Target\) be smooth \(\Grd\)\nb-manifolds, and let \(f\colon \Source \to \Target\) be a smooth \(\Grd\)\nb-equivariant map.  Suppose that
  \begin{enumerate}[label=\textup{(\alph{*})}]
  \item \(\Base\) has enough \(\Grd\)\nb-vector bundles and \(\Grd\backslash\Source\) is compact, or
  \item \(\Base\) has a full \(\Grd\)\nb-vector bundle and \(\Grd\backslash\Source\) has finite covering dimension.
  \end{enumerate}

  Then there are
  \begin{itemize}
  \item a smooth \(\Grd\)-vector bundle~\(\VB\) over~\(\Source\),
  \item a smooth \(\Grd\)-vector bundle~\(\Triv\) over~\(\Base\),
  \item a smooth, \(\Grd\)\nb-equivariant, open embedding \(\hat{f}\colon \total{\VB}\to \total{\Triv^\Target}\),
  \end{itemize}
  such that
  \begin{equation}
    \label{eq:normal_factorisation}
    f = \proj{\Triv^\Target}\circ \hat{f} \circ \zers{\VB}.
  \end{equation}
  Furthermore, any \(\Grd\)\nb-vector bundle over~\(\Source\) is subtrivial.
\end{theorem}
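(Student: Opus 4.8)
The plan is to carry the construction of the topological index map recalled in the introduction over to the equivariant groupoid setting, using a \(\Grd\)\nb-vector bundle over~\(\Base\) in place of~\(\R^n\).  Since~\(\Grd\) is proper, the embedding theorems of Section~\ref{sec:embedding} apply: under hypothesis~(b) I would invoke Theorem~\ref{the:embedding_full}, and under hypothesis~(a) Theorem~\ref{the:embedding_cocompact}, to obtain a fibrewise smooth \(\Grd\)\nb-equivariant embedding \(\iota\colon \Source\to\total{\Triv}\) into the total space of a \(\Grd\)\nb-vector bundle~\(\Triv\) over~\(\Base\); by construction \(\proj{\Triv}\circ\iota\) equals the anchor map \(\anchor\colon \Source\to\Base\).

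Next I would combine~\(\iota\) with~\(f\).  The bundle~\(\Triv^\Target\) has total space \(\total{\Triv^\Target}=\total{\Target}\times_\Base\Triv\), which is again a bundle of smooth manifolds over~\(\Base\) carrying a fibrewise smooth \(\Grd\)\nb-action.  Because~\(f\) is a \(\Grd\)\nb-map it intertwines the anchor maps of~\(\Source\) and~\(\Target\), so for each \(\source\in\Source\) the vector \(\iota(\source)\) lies in the fibre of~\(\Triv^\Target\) above~\(f(\source)\); thus \(g\defeq (f,\iota)\colon \Source\to\total{\Triv^\Target}\), \(\source\mapsto\bigl(f(\source),\iota(\source)\bigr)\), is a well-defined fibrewise smooth \(\Grd\)\nb-equivariant map.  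Since its second component~\(\iota\) is already an embedding, so is~\(g\): it is injective, its inverse on the image \(g(\Source)\) with the subspace topology is the continuous map \(\iota^{-1}\circ\pr_2\), and its fibrewise derivative \(Dg=(Df,D\iota)\) is injective because \(D\iota\) is.  Hence~\(g\) is a smooth \(\Grd\)\nb-equivariant embedding in the sense of Definition~\ref{def:smooth_embedding}, and I would take~\(\VB\) to be its normal bundle, a smooth \(\Grd\)\nb-vector bundle over~\(\Source\).

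Applying the Tubular Neighbourhood Theorem~\ref{the:tubular_for_embedding} to~\(g\) then yields a smooth \(\Grd\)\nb-equivariant open embedding \(\hat{f}\colon \total{\VB}\to\total{\Triv^\Target}\) with \(\hat{f}\circ\zers{\VB}=g\).  Since the first component of~\(g\) is~\(f\), we get \(\proj{\Triv^\Target}\circ g=f\) and therefore \(\proj{\Triv^\Target}\circ\hat{f}\circ\zers{\VB}=f\), which is~\eqref{eq:normal_factorisation}; so \(\VB\), \(\Triv\) and~\(\hat{f}\) have all the required properties.  The final assertion, that every \(\Grd\)\nb-vector bundle over~\(\Source\) is subtrivial, is then just Theorem~\ref{the:vb_subtrivial_full} under hypothesis~(b) and Theorem~\ref{the:vb_subtrivial_cocompact} under hypothesis~(a), using in the latter case that \(\Grd\backslash\Source\) is compact.

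I do not expect a genuine obstacle here: all the substance is already packaged into the equivariant embedding theorems and the equivariant Tubular Neighbourhood Theorem, and the composition~\eqref{eq:normal_factorisation} is then forced by the definitions.  The only points that need a little care are the two routine verifications used above~-- that \(\total{\Triv^\Target}\) really is a bundle of smooth manifolds with a fibrewise smooth \(\Grd\)\nb-action, so that Theorem~\ref{the:tubular_for_embedding} is applicable to~\(g\), and that \(g=(f,\iota)\) is a homeomorphism onto its image, which holds precisely because its second component~\(\iota\) already is.
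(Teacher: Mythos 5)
Your proposal matches the paper's proof in every essential respect: both obtain a fibrewise smooth \(\Grd\)\nb-equivariant embedding \(\Source\to\total{\Triv}\) from Theorems~\ref{the:embedding_cocompact} and~\ref{the:embedding_full} (under hypotheses~(a) and~(b), respectively), form the embedding \(\source\mapsto\bigl(f(\source),\iota(\source)\bigr)\) into \(\total{\Triv^\Target}\), apply the equivariant Tubular Neighbourhood Theorem~\ref{the:tubular_for_embedding}, and invoke Theorems \ref{the:vb_subtrivial_cocompact} and~\ref{the:vb_subtrivial_full} for the subtriviality claim.  You supply a bit more detail than the paper on the routine verifications (that the combined map is an embedding, and that the factorisation identity holds), but this is the same proof.
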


We call a factorisation of the form~\eqref{eq:normal_factorisation} a \emph{normal factorisation} of~\(f\).

\begin{proof}
  Let \(\varrho \colon \Source \to \total{\Triv}\) be a fibrewise smooth embedding into the total space of a \(\Grd\)\nb-vector bundle over~\(\Base\).  This exists by Theorem~\ref{the:embedding_cocompact} in the first case and by Theorem~\ref{the:embedding_full} in the second case.  Then the map
  \[
  \varrho'\colon \Source\to \Target\times_\Base \total{\Triv} = \total{\Triv^\Target},\qquad
  \varrho'(x) \defeq  \bigl(f(x), \varrho(x) \bigr)
  \]
  is a smooth equivariant embedding.  The Tubular Neighbourhood Theorem~\ref{the:tubular_for_embedding} applied to~\(\varrho'\) supplies the required factorisation of~\(f\).  The subtriviality of \(\Grd\)\nb-vector bundles over~\(\Source\) follows from Theorems \ref{the:vb_subtrivial_cocompact} and~\ref{the:vb_subtrivial_full}.
\end{proof}

We will discuss the amount of uniqueness of such factorisations in Theorem~\ref{the:normal_map_unique}.

\section{Normally non-singular maps}
\label{sec:normal_maps}

In this section, we study factorisations of maps as in Theorem~\ref{the:normal_factorisation}.  Recall that~\(\Grd\) is a numerably proper groupoid with object space~\(\Base\).  The spaces \(\Source\) and~\(\Target\) are \(\Grd\)\nb-spaces.

Before we define normally non-singular maps, we mention a generalisation that we will use later, which depends on an additional auxiliary structure.  Let \(\Tot\mapsto \Vect_\Grd(\Tot)\) be a functor that maps each \(\Grd\)\nb-space to a monoid \(\Vect_\Grd(\Tot)\), which comes together with an additive functor to the monoid of \(\Grd\)\nb-vector bundles over~\(\Tot\).  We think of \(\Vect_\Grd(\Tot)\) as a monoid of \(\Grd\)\nb-vector bundles with some additional structure, and of the functor above as a forgetful functor.

In this section, we let \(\Vect_\Grd(\Tot)\) be the monoid of \emph{subtrivial} \(\Grd\)\nb-vector bundles over~\(\Tot\).  The subtriviality assumption will become important in~\cite{Emerson-Meyer:Correspondences}, and it is mostly harmless because for many \(\Grd\)\nb-spaces all \(\Grd\)\nb-vector bundles are subtrivial (see Section~\ref{sec:vb_subtrivial}).  When we study \(\Coh\)\nb-oriented normally non-singular maps for an equivariant cohomology theory~\(\Coh\), the only change will be that \(\Vect_\Grd(\Tot)\) is replaced by the monoid of subtrivial \emph{\(\Coh\)\nb-oriented} \(\Grd\)\nb-vector bundles over~\(\Tot\).  This example explains why we want to choose other monoids \(\Vect_\Grd(\Tot)\) later.  The whole theory carries over to the more general case of an arbitrary monoid \(\Vect_\Grd(\Tot)\) with a forgetful map to the monoid of \(\Grd\)\nb-vector bundles.

We let \([\Vect^0_\Grd(\Source)]\) be the Grothendieck group of \(\Vect_\Grd(\Source)\).

\begin{definition}
  \label{def:normal_map}
  A \emph{normally non-singular \(\Grd\)\nb-map} from~\(\Source\) to~\(\Target\) consists of the following data:
  \begin{itemize}
  \item \(\VB\), a subtrivial \(\Grd\)\nb-vector bundle over~\(\Source\), that is, \(\VB\in\Vect_\Grd(\Tot)\);

  \item \(\Triv\), a \(\Grd\)\nb-vector bundle over~\(\Base\), that is, \(\Triv\in\Vect_\Grd(\Base)\);

  \item \(\hat{f}\colon \total{\VB} \opem \total{\Triv^\Target}\), an open embedding (that is, \(\hat{f}\) is a \(\Grd\)\nb-equivariant map from~\(\total{\VB}\) onto an open subset of \(\total{\Triv^\Target} = \total{\Triv}\times_\Base\Target\) that is a homeomorphism with respect to the subspace topology from~\(\total{\Triv^\Target}\)).
  \end{itemize}
  In addition, we assume that the dimensions of the fibres of the \(\Grd\)\nb-vector bundles \(\VB\) and~\(\Triv\) are bounded above by some \(n\in\N\).

  The \emph{trace} of a normally non-singular map is the \(\Grd\)\nb-map
  \[
  f\defeq \proj{\Triv^\Target}\circ\hat{f}\circ\zers{\VB}\colon
  \Source\mono\total{\VB}\opem\total{\Triv^\Target}\epi\Target.
  \]
  Its \emph{degree} is \(\dim \VB-\dim\Triv\) if this locally constant function on~\(\Source\) is constant (otherwise the degree is not defined).

  The normally non-singular \(\Grd\)\nb-map \((\VB,\Triv,\hat{f})\) is called a \emph{normally non-singular embedding} if \(\Triv=0\), so that \(\proj{\Triv^\Target} = \Id_{\Target}\) and \(f=\hat{f}\circ\zers{\VB}\); it is called a \emph{special normally non-singular submersion} if \(\VB=0\), so that \(\zers{\VB}=\Id_\Source\) and \(f=\proj{\Triv^\Target}\circ \hat{f}\).
\end{definition}

We need the vector bundle~\(\Triv\) over the target space to be trivial, even pulled back from~\(\Base\), in order for the composition of normally non-singular maps to work.  This requires extending the vector bundle over the target space from an open subset.  This is easy for trivial bundles, and impossible in general.  The assumption that the vector bundle~\(\VB\) be subtrivial may be dropped for the purposes of this article.  It becomes important in~\cite{Emerson-Meyer:Correspondences} in order to bring correspondences into a standard form.

We abbreviate ``normally non-singular \(\Grd\)\nb-map'' to ``normally non-singular map'' if the groupoid~\(\Grd\) is clear.

\begin{definition}
  \label{def:stable_normal_bundle}
  The \emph{stable normal bundle} of a normally non-singular map \((\VB,\Triv,\hat{f})\) with trace~\(f\) is the class \([\VB]-[\Triv^\Source]\) in \([\Vect^0_\Grd(\Source)]\).
\end{definition}

Notice that the degree of a normally non-singular map depends only on its stable normal bundle.

We do not require any smooth structure on the spaces \(\Source\) and~\(\Target\) and, as a result, cannot ask for~\(\hat{f}\) to be a diffeomorphism.  We pay for this lack of smoothness by making \(\VB\) and~\(\Triv\) part of our data.  The following simple examples clarify the relationship between smooth maps and normally non-singular maps in the non-equivariant case.  We will examine the equivariant case in Section~\ref{sec:smooth_normal_map}.  Here we only remark that if~\(\Grd\) is a compact group, then a \(\Grd\)\nb-manifold~\(\Source\) admits a smooth normally non-singular map to a point if and only if it has finite orbit type, due to the Mostow Embedding Theorem~\ref{the:Mostow}.

\subsection{Examples of normally non-singular maps}
\label{sec:exa_normal}

\begin{example}
  An open \(\Grd\)\nb-equivariant embedding \(\Source \to \Target\) is the trace of an obvious \(\Grd\)\nb-equivariant normally non-singular map: both vector bundles \(\VB\) and~\(\Triv\) are the zero bundles.
\end{example}

\begin{example}
  \label{exa:embed_smooth_submanifold}
  Let~\(\Target\) be a smooth manifold and let \(\Source\subseteq\Target\) be a smooth submanifold.  Let~\(\VB\) be its normal bundle.  The Tubular Neighbourhood Theorem~\ref{the:tubular_for_embedding} provides an open embedding \(\hat{f}\colon \total{\VB}\opem\Target\) extending the embedding of~\(\Source\) on the zero section.  The triple \((\VB,0,\hat{f})\) is a normally non-singular embedding from~\(\Source\) to~\(\Target\), whose trace is the inclusion map \(\Source\to\Target\).  Thus normally non-singular embeddings generalise closed submanifolds.
\end{example}

\begin{example}
  \label{exa:special_normal_submersion}
  The constant map \(\Source\to \pt\) is the trace of a special normally non-singular submersion if and only if~\(\Source\) is homeomorphic to an open subset of~\(\R^n\).  Thus our special normally non-singular submersions are very special indeed.
\end{example}

\begin{example}
  \label{exa:smooth_normal_map}
  Let \(f\colon \Source\to\Target\) be a smooth map between two smooth manifolds.  Recall that any smooth manifold is diffeomorphic to a smooth submanifold of~\(\R^n\) for sufficiently high~\(n\).  If \(h\colon \Source\to\R^n\) is such an embedding, then \((f,h)\colon \Source\to\Target\times\R^n\) is an embedding as well.  As in Example~\ref{exa:embed_smooth_submanifold}, the Tubular Neighbourhood Theorem provides a diffeomorphism~\(\hat{f}\) from~\(\total{\VB}\) onto an open subset of \(\Target\times\R^n\), where~\(\VB\) is the normal bundle of \((f,h)\).  Thus \((\VB,\R^n,\hat{f})\) is a normally non-singular map from~\(\Source\) to~\(\Target\) with trace~\(f\).  Its stable normal bundle is \(f^*[\Tvert\Target]-[\Tvert\Source]\) because \(\Tvert\Source\oplus \VB \cong f^*(\Tvert\Target)\oplus\R^n\).
\end{example}

\begin{example}
  \label{exa:point_to_interval}
  The map \(\pt\to\{0\}\in[0,\infty)\) from the one-point space to \([0,\infty)\) is not the trace of a normally non-singular map from~\(\pt\) to~\([0,\infty)\).  For this would entail an open embedding \(\R^k \xrightarrow{\subseteq} [0,\infty) \times \R^l\) into a closed Euclidean half-space mapping \(0 \in \R^k\) to a point on the boundary.  The range of such an embedding would be an open neighbourhood of the origin in the closed half-space, which is homeomorphic to~\(\R^k\), and this is impossible by the theorem of Invariance of Domain.
\end{example}

\begin{example}
  \label{exa:non-singular_normal}
  It is observed in~\cite{Baum-Block:Bicycles} that normally non-singular maps between stratified singular algebraic varieties are normally non-singular in the sense of Definition~\ref{def:normal_map}.
\end{example}

\begin{example}
  \label{exa:stable_smooth_structure}
  We claim that a normally non-singular map~\(\Source \to \pt\) determines a smooth structure on \(\Source\times\R^k\) for some \(k\in\N\), and vice versa.  In the equivariant case, a normally non-singular map from~\(\Source\) to~\(\Base\) should therefore be viewed as a \emph{stable smooth structure} on the fibres of~\(\Source\) compatible with the action of~\(\Grd\).

  Let \((\VB,\Triv,\hat{f})\) be a normally non-singular map from a space~\(\Source\) to the one-point space.  That is, \(\VB\) is a subtrivial vector bundle over~\(\Source\), \(\Triv=\R^n\) for some \(n\in\N\), and~\(\hat{f}\) is an open embedding of~\(\total{\VB}\) into~\(\R^n\).  Let~\(\VB^\bot\) be another vector bundle over~\(\Source\) such that \(\VB\oplus\VB^\bot\cong\Source\times\R^k\) for some \(k\in\N\).  The total space~\(\total{\VB}\) is an open subset of~\(\R^k\) and \(\total{\VB\oplus\VB^\bot}\) is the total space of a vector bundle over~\(\total{\VB}\).  The total space of any vector bundle over a smooth manifold admits a canonical smooth structure, by an easy argument with the holomorphically closed subalgebra \(\CONT^\infty(\Tot)\) in \(\CONT(\Tot)\).  Hence, the space \(\Source\times\R^k\) inherits a canonical smooth structure.  Thus a normally non-singular map from~\(\Source\) to~\(\pt\) determines a smooth structure on \(\Source\times\R^k\) for some \(k\in\N\).

  Conversely, if \(\Source\times\R^k\) has a smooth structure, then there is a smooth embedding \(\Source\times\R^k\to\R^n\) for some \(n\in\N\).  Since \(\Tot \times \R^k \to \Tot\) is a homotopy equivalence, the normal bundle to this embedding is isomorphic to the pull-back \(\pr_\Tot^*(\VB)\) of a vector bundle~\(\VB\) over~\(\Tot\).  Hence there is an open embedding \(\hat{\mapr}\colon \total{\pr_\Tot^*(\VB)} \opem \R^n\), and since \(\total{\pr_\Tot^*(\VB)}\) is the total space of the vector bundle \(\VB\oplus \R^k\), we can regard~\(\hat{\mapr}\) as an open embedding \(\total{\VB\oplus \R^k}\opem \R^n\), so that \((\VB\oplus \R^k, \R^n, \hat{f})\) is a normally non-singular map.  This suggests to view the set of equivalence classes of normally non-singular maps from~\(\Source\) to~\(\pt\) as a structure set of stable smooth structures on~\(\Source\).
\end{example}

\begin{example}
  \label{exa:smooth_manifold_diagonal}
  Let~\(\Source\) be a smooth manifold and let \(\Target\defeq \Source\times\Source\).  The diagonal map \(\Source\to\Source\times\Source\) is a smooth embedding.  Hence it is the trace of a normally non-singular embedding \(\NM=(\VB,0,\hat{f})\) as in Example~\ref{exa:embed_smooth_submanifold}.  Here~\(\VB\) is the vertical tangent bundle~\(\Tvert\Source\) of~\(\Source\).  Since different smooth structures on~\(\Source\) may yield non-isomorphic tangent bundles (see~\cite{Milnor:Microbundles}), the stable normal bundle \([\Tvert\Source] \in \K^0(\Source)\) of~\(\NM\) tells us something about the smooth structure on~\(\Source\) and cannot be recovered from the trace of~\(\NM\).
\end{example}

\subsection{Equivalence of normally non-singular maps}
\label{sec:normal_map_equivalence}

A smooth map between two smooth manifolds lifts to a normally non-singular map in many different ways (see Example~\ref{exa:smooth_normal_map}), but we expect all these liftings to be equivalent in a suitable sense.  Here we develop a suitable notion of equivalence.

As above, let~\(\Grd\) be a numerably proper topological groupoid with object space~\(\Base\) and let \(\Source\) and~\(\Target\) be \(\Grd\)\nb-spaces.  Let
\[
\NM_0\defeq (\VB_0,\Triv_0,\hat{f}_0)
\quad\text{and}\quad
\NM_1\defeq (\VB_1,\Triv_1,\hat{f}_1)
\]
be normally non-singular \(\Grd\)\nb-maps from~\(\Source\) to~\(\Target\) (see Definition~\ref{def:normal_map}).

First we define isomorphism and stable isomorphism of normally non-singular maps.

\begin{definition}
  \label{def:normal_map_isomorphism}
  The normally non-singular \(\Grd\)\nb-maps \(\NM_0\) and~\(\NM_1\) are called \emph{isomorphic} if there are \(\Grd\)\nb-vector bundle isomorphisms \(\VB_0\cong\VB_1\) and \(\Triv_0\cong\Triv_1\) that intertwine the open embeddings \(\hat{f}_0\) and~\(\hat{f}_1\).
\end{definition}

\begin{definition}
  \label{def:normal_map_lift}
  Let~\(\Triv^+\) be a \(\Grd\)\nb-vector bundle over~\(\Base\).  The \emph{lifting} of a normally non-singular map \(\NM\defeq (\VB,\Triv,\hat{f})\) along~\(\Triv^+\) is the normally non-singular \(\Grd\)\nb-map
  \[
  \NM \oplus \Triv^+ \defeq
  \bigl(\VB\oplus (\Triv^+)^\Source, \Triv\oplus\Triv^+,
  \hat{f}\times_\Base \Id_{\total{\Triv^+}}\bigr).
  \]

  Two normally non-singular maps \(\NM_0\) and~\(\NM_1\) are called \emph{stably isomorphic} if there are \(\Grd\)\nb-vector bundles \(\Triv_0^+\) and~\(\Triv_1^+\) over~\(\Base\) such that \(\NM_0\oplus\Triv_0^+\) and \(\NM_1\oplus\Triv_1^+\) are isomorphic.
\end{definition}

The total spaces \(\total{\VB\oplus (\Triv^+)^\Source}\) and \(\total{\Triv\oplus\Triv^+}\) are \(\Grd\)\nb-equivariantly homeomorphic to \(\total{\VB}\times_\Base \total{\Triv^+}\) and \(\total{\Triv}\times_\Base \total{\Triv^+}\), respectively.  Hence the open embedding \(\hat{f}\times_\Base \Id_{\total{\Triv^+}}\) has the right source and target spaces.  Lifting~\(\NM\) first along~\(\Triv^+_1\) and then along~\(\Triv^+_2\) is equivalent to lifting~\(\NM\) along \(\Triv^+_1\oplus \Triv^+_2\), that is,
\[
(\NM \oplus \Triv^+_1) \oplus \Triv^+_2 \cong
\NM \oplus (\Triv^+_1 \oplus \Triv^+_2).
\]
Hence stable isomorphism is an equivalence relation for normally non-singular maps.  It is clear that stably isomorphic normally non-singular maps have the same trace, the same stable normal bundle, and the same degree.

\begin{definition}
  \label{def:normal_map_isotopy}
  An \emph{isotopy} (or homotopy) between the normally non-singular \(\Grd\)\nb-maps \(\NM_0\) and~\(\NM_1\) is a normally non-singular \(\Grd\times[0,1]\)-map from \(\Source\times[0,1]\) to \(\Target\times[0,1]\) whose restrictions to \(\Source\times\{t\}\) for \(t=0,1\) are isomorphic to~\(\NM_t\).

  Two normally non-singular maps are called \emph{isotopic} if there is an isotopy between them.
\end{definition}

Proposition~\ref{pro:vb_homotopy} shows that an isotopy is isomorphic to a triple \((\VB\times[0,1],\Triv,\hat{f})\) where \(\VB\) and~\(\Triv\) are \(\Grd\)\nb-vector bundles over \(\Source\) and~\(\Base\), respectively, and~\(\hat{f}\) is an open embedding of \(\total{\VB}\times[0,1]\) into \(\total{\Triv^\Target}\times[0,1]\).  As a result, isotopic normally non-singular maps involve isomorphic vector bundles \(\VB\) and~\(\Triv\) and thus have the same stable normal bundle and degree.  Moreover, their traces are homotopic.

\begin{definition}
  \label{def:normal_map_equivalent}
  Two normally non-singular maps from~\(\Source\) to~\(\Target\) are called \emph{equivalent} if they have isotopic liftings, that is, their liftings along two suitable \(\Grd\)\nb-vector bundles over~\(\Base\) become isotopic.
\end{definition}

\begin{lemma}
  \label{lem:normal_map_equivalent}
  Both isotopy and equivalence of normally non-singular maps are equivalence relations.
\end{lemma}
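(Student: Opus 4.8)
The plan is to establish reflexivity, symmetry and transitivity for isotopy first, and then deduce the statements for equivalence by a formal argument.

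Reflexivity and symmetry of isotopy are straightforward. Given a normally non-singular map $\NM=(\VB,\Triv,\hat{f})$ from $\Source$ to $\Target$, the triple consisting of the pull-back $\VB\times[0,1]$ of~$\VB$ to $\Source\times[0,1]$ (still subtrivial, with unchanged fibre dimensions), the pull-back of~$\Triv$ to $\Base\times[0,1]$, and $\hat{f}\times\Id_{[0,1]}$ is a normally non-singular $\Grd\times[0,1]$-map from $\Source\times[0,1]$ to $\Target\times[0,1]$ whose restriction to each $\Source\times\{t\}$ is~$\NM$; this is the constant isotopy, and it simultaneously shows that isomorphic normally non-singular maps are isotopic. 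For symmetry, if $F$ is an isotopy from $\NM_0$ to $\NM_1$, then pulling~$F$ back along the automorphism of $\Grd\times[0,1]$ induced by $t\mapsto 1-t$ produces an isotopy from $\NM_1$ to $\NM_0$.

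The core of the lemma is transitivity of isotopy. Let $F$ be an isotopy from $\NM_0$ to $\NM_1$ and $G$ one from $\NM_1$ to $\NM_2$. By Proposition~\ref{pro:vb_homotopy} (as already used after Definition~\ref{def:normal_map_isotopy}) we may assume $F$ and $G$ are in standard form, $F=(\VB_F\times[0,1],\Triv_F,\hat{f})$ and $G=(\VB_G\times[0,1],\Triv_G,\hat{g})$, with $\hat{f},\hat{g}$ open embeddings commuting with the projections to~$[0,1]$. Restricting at the endpoints identifies both $\VB_F$ and $\VB_G$ with the vector bundle of~$\NM_1$ up to isomorphism, and likewise $\Triv_F,\Triv_G$; fixing such isomorphisms transports~$G$ to the same vector bundles as~$F$, and then, since $\hat{f}|_{t=1}$ and the transported $\hat{g}|_{t=0}$ are both isomorphic to the open embedding of~$\NM_1$, a $t$-independent pair of bundle automorphisms arranges $\hat{g}|_{t=0}=\hat{f}|_{t=1}$ on the nose (this alters~$G$ only by a $\Grd\times[0,1]$-isomorphism, so its endpoints stay isomorphic to $\NM_1$ and $\NM_2$). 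Now reparametrise $\hat{f}$ onto $[0,\frac{1}{2}]$ and $\hat{g}$ onto $[\frac{1}{2},1]$ and paste. The glued map is continuous by the pasting lemma on the two closed halves, and fibrewise injective over $[0,1]$ since the halves meet only over $t=\frac{1}{2}$, where they coincide; its image is open in $\total{\Triv_F^\Target}\times[0,1]$ — the only delicate point being the seam $t=\frac{1}{2}$ — because near a seam point the image of $\hat{f}$ supplies a neighbourhood on the side $t\le\frac{1}{2}$ and the image of $\hat{g}$ one on the side $t\ge\frac{1}{2}$, and these match precisely because $\hat{f}|_{t=1}=\hat{g}|_{t=0}$. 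The pasting lemma then upgrades the continuous bijection to a homeomorphism onto this open image, so the resulting triple is an isotopy from $\NM_0$ to $\NM_2$. I expect this pasting step, and in particular the verification that openness of the image survives at the seam, to be the one genuine obstacle; no smoothness enters, so nothing needs to be rounded off.

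Finally, equivalence is an equivalence relation. Reflexivity and symmetry are inherited from isotopy by taking the auxiliary vector bundles to be zero and by applying symmetry of isotopy to liftings. Transitivity rests on the auxiliary observation that lifting preserves isotopy: if $\NM$ and $\NM'$ are isotopic, so are $\NM\oplus\Triv^{+}$ and $\NM'\oplus\Triv^{+}$, obtained by lifting an isotopy between them along the pull-back of~$\Triv^{+}$ to $\Base\times[0,1]$. Now suppose $\NM_0$ is equivalent to $\NM_1$ via an isotopy from $\NM_0\oplus\Triv_0^{+}$ to $\NM_1\oplus\Triv_1^{+}$, and $\NM_1$ is equivalent to $\NM_2$ via an isotopy from $\NM_1\oplus\Triv_2^{+}$ to $\NM_2\oplus\Triv_3^{+}$. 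Lifting the first isotopy along $\Triv_2^{+}$ and the second along $\Triv_1^{+}$, and invoking the isomorphisms $(\NM\oplus\Triv^{+}_a)\oplus\Triv^{+}_b\cong\NM\oplus(\Triv^{+}_a\oplus\Triv^{+}_b)$, the symmetry $\Triv^{+}_1\oplus\Triv^{+}_2\cong\Triv^{+}_2\oplus\Triv^{+}_1$, the fact that isomorphic normally non-singular maps are isotopic, and transitivity of isotopy, one chains these together into an isotopy from $\NM_0\oplus(\Triv_0^{+}\oplus\Triv_2^{+})$ to $\NM_2\oplus(\Triv_3^{+}\oplus\Triv_1^{+})$, which exhibits $\NM_0$ and $\NM_2$ as equivalent.
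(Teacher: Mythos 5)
Your proof is correct and takes essentially the same route as the paper: constant isotopies for reflexivity, time-reversal for symmetry, gluing of reparametrized isotopies for transitivity, and for equivalence the key observation that lifting an isotopy along a $\Grd$\nb-vector bundle over~$\Base$ preserves isotopy. The paper's proof compresses the transitivity step into the single word ``assemble''; you have simply carried out the gluing in detail (normalizing the two isotopies via Proposition~\ref{pro:vb_homotopy} and bundle automorphisms so they match at the seam, then checking continuity, injectivity, and openness of the image there), which is a correct unpacking of what the paper takes for granted.
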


\begin{proof}
  Isotopy is reflexive because we have constant isotopies, symmetric because we may revert isotopies, and transitive because isotopies \(\NM_0\sim\NM_1\sim\NM_2\) assemble to an isotopy \(\NM_0\sim\NM_2\).  Equivalence of normally non-singular maps is clearly reflexive and symmetric as well.  If \(\NM_1\) and~\(\NM_2\) are isotopic normally non-singular maps and~\(\Triv\) is a \(\Grd\)\nb-vector bundle over~\(\Base\), then the liftings \(\NM_1\oplus\Triv\) and \(\NM_2\oplus\Triv\) are isotopic as well via the lifting of the isotopy along~\(\Triv\); and stable isomorphism of correspondences is an equivalence relation.  This implies that equivalence of correspondences is transitive.
\end{proof}

\begin{example}
  \label{exa:normal_Triv_Target}
  Let \((\VB,\Triv_1,\hat{f}_1)\) be a normally non-singular map and let~\(\Triv_2\) be another \(\Grd\)\nb-vector bundle over~\(\Base\) with \(\Triv_1^\Target \cong \Triv_2^\Target\).  Use this isomorphism to view~\(\hat{f}_1\) as an open embedding \(\hat{f}_2\colon \total{\VB} \opem \total{\Triv_2^\Target}\).  We claim that the normally non-singular maps \((\VB,\Triv_j,\hat{f}_j)\) for \(j=1,2\) are equivalent.  Hence only~\(\Triv^\Target\) and not~\(\Triv\) is relevant for the equivalence class of a normally non-singular map.

  To establish the claim, lift \((\VB,\Triv_1,\hat{f}_1)\) along~\(\Triv_2\) and \((\VB,\Triv_2,\hat{f}_2)\) along~\(\Triv_1\) to get isomorphic \(\Grd\)\nb-vector bundles on~\(\Base\).  This leads to the normally non-singular maps
  \[
  (\VB\oplus\Triv_2^\Source,\Triv_1\oplus\Triv_2,\hat{f}),
  \qquad
  (\VB\oplus\Triv_2^\Source,\Triv_1\oplus\Triv_2,
  \sigma\circ\hat{f}),
  \]
  where~\(\sigma\) is the coordinate flip on \(\Triv_1^\Target\oplus\Triv_2^\Target \cong (\Triv_2^\Target)^2\).  We may ensure that~\(\sigma\) is homotopic to the identity map along \(\Grd\)\nb-vector bundle automorphisms by first lifting \((\VB,\Triv_j,\hat{f}_j)\) along~\(\Triv_j\) to double both \(\Triv_1\) and~\(\Triv_2\).
\end{example}

\begin{example}
  \label{exa:smooth_normal_equivalence}
  Let \(\Source\) and~\(\Target\) be smooth manifolds.  A \emph{smooth normally non-singular map} from~\(\Source\) to~\(\Target\) is a triple \((\VB,\R^n,\hat{f})\) as above (vector bundles over the one-point space are identified with~\(\R^n\) here), where~\(\VB\) carries a smooth structure and~\(\hat{f}\) is a diffeomorphism from~\(\total{\VB}\) onto an open subset of \(\Target\times\R^n\).  \emph{Smooth isotopies} of smooth normally non-singular maps are defined similarly.  Liftings of smooth normally non-singular maps are again smooth normally non-singular maps.  Lifting and smooth isotopy generate an equivalence relation of \emph{smooth equivalence} for smooth normally non-singular maps.

  Example~\ref{exa:smooth_normal_map} shows that any smooth map \(\Source\to\Target\) is the trace of a smooth normally non-singular map.  Furthermore, two smooth normally non-singular maps are smoothly equivalent if and only if their traces are smooothly homotopic.  We omit the argument because we will prove more general statements in the equivariant case in Section~\ref{sec:smooth_normal_map}.  In particular, for a suitable class of groupoids~\(\Grd\) and smooth \(\Grd\)\nb-manifolds \(\Source\) and~\(\Target\), we will show that every smooth map \(\mapr\colon \Source \to \Target\) is normally non-singular in an essentially unique way.
\end{example}

\begin{example}
  \label{exa:smooth_to_point}
  Let~\(\Source\) be a smooth manifold and let~\(\pt\) be the one-point space.  According to the last example, all \emph{smooth} normally non-singular maps from~\(\Source\) to~\(\pt\) are equivalent because there is a unique map \(\Source\to\pt\).  The stable normal bundle of such a smooth normally non-singular map is \(-[\Tvert\Source] \in \KO^0(\Source)\).  If another smooth structure on~\(\Source\) yields a different tangent bundle in \(\KO^0(\Source)\), then the resulting normally non-singular map is not equivalent to a smooth normally non-singular map for the old smooth structure.
\end{example}

\subsection{Composition of normally non-singular maps}
\label{sec:compose_normal_maps}

Let \(\NM_j=(\VB_j,\Triv_j,\hat{f}_j)\) for \(j=1,2\) be normally non-singular maps from~\(\Source\) to~\(\Target\) and from~\(\Target\) to~\(\Third\), respectively; let \(f_1\colon \Source\to\Target\) and \(f_2\colon \Target\to\Third\) be their traces.  We are going to define a normally non-singular map \(\NM_2\circ\NM_1 = (\VB,\Triv,\hat{f})\) from~\(\Source\) to~\(\Third\) whose trace is \(f=f_2\circ f_1\).  Let
\[
\VB\defeq\VB_1\oplus f_1^*(\VB_2) \in \Vect_\Grd(\Tot),\qquad
\Triv\defeq \Triv_1\oplus\Triv_2 \in \Vect_\Grd(\Tot);
\]

The open embedding \(\hat{f}\colon \total{\VB}\opem\total{\Triv^\Third}\) is the composition of the open embedding
\[
\Id_{\total{\Triv_1}}\times_\Base\hat{f}_2 \colon
\total{\Triv_1^\Target \oplus \VB_2}
\cong \total{\Triv_1} \times_\Base \total{\VB_2}
\opem \total{\Triv_1} \times_\Base \total{\Triv_2^\Third}
\cong \total{\Triv_1} \times_\Base \total{\Triv_2}
\times_\Base\Third
\cong \total{\Triv^\Third}
\]
with an open embedding \(\total{\VB} \opem \total{\Triv_1^\Target \oplus \VB_2}\) that we get by lifting~\(\hat{f}_1\) along the non-trivial \(\Grd\)\nb-vector bundle~\(\VB_2\) over~\(\Target\).  This lifting operation is slightly more subtle than for trivial \(\Grd\)\nb-vector bundles and is only defined up to isotopy.

The open embedding \(\hat{f}_1\colon \total{\VB_1} \to \total{\Triv_1^\Target}\) is a map over~\(\Target\) when we view~\(\total{\VB_1}\) as a space over~\(\Target\) via \(\proj{\Triv_1^\Target}\circ\hat{f}_1\colon \total{\VB_1} \opem \total{\Triv_1^\Target}\epi \Target\).  This allows us to form a \(\Grd\)\nb-map \(\hat{f}_1\times_\Target \Id_{\total{\VB_2}}\), which is again an open embedding and has the right target space \(\total{\Triv_1^\Target}\times_\Target \total{\VB_2} \cong \total{\Triv_1^\Target \oplus \VB_2}\).  Its domain \(\total{\VB_1}\times_\Target\total{\VB_2}\) is the total space of the pull-back of the \(\Grd\)\nb-vector bundle~\(\VB_2\) to~\(\total{\VB_1}\) along \(\proj{\Triv_1^\Target}\circ\hat{f}_1\).  Since the zero section \(\zers{\VB_1}\colon \Source\mono \total{\VB_1}\) and bundle projection \(\proj{\VB_1}\colon \total{\VB_1}\epi\Source\) are inverse to each other up to a natural \(\Grd\)\nb-homotopy, \(\proj{\Triv_1^\Target}\circ\hat{f}_1\) is \(\Grd\)\nb-equivariantly homotopic to \(\proj{\Triv_1^\Target}\circ\hat{f}_1 \circ \zers{\VB_1}\circ\proj{\VB_1} = f_1\circ\proj{\VB_1}\).  The homotopy invariance of pull-backs of vector bundles (Proposition~\ref{pro:vb_homotopy}) provides a \(\Grd\)\nb-vector bundle isomorphism between the corresponding two pull-backs of~\(\VB_2\), which is unique up to isotopy.  Since the total space of \((f_1\circ\proj{\VB_1})^*(\VB_2)\) is~\(\total{\VB}\), we get a homeomorphism \(\total{\VB}\to \total{\VB_1}\times_\Target\total{\VB_2}\) that, when composed with the open embeddings \(\hat{f}_1\times_\Target \Id_{\total{\VB_2}}\) and \(\Id_{\total{\Triv_1}}\times_\Base\hat{f}_2\), provides the required open embedding
\[
\hat{f}\colon \total{\VB} \xopem{\cong}
\total{\VB_1}\times_\Target\total{\VB_2}
\xopem{\hat{f}_1\times_\Target \Id_{\total{\VB_2}}}
\total{\Triv_1^\Target \oplus \VB_2}
\xopem{\Id_{\total{\Triv_1}}\times_\Base\hat{f}_2}
\total{\Triv^\Third}.
\]
Our construction shows that~\(\hat{f}\) is unique up to isotopy.

\begin{theorem}
  \label{the:normal_maps_category}
  Equivalence classes of normally non-singular maps with the above composition form a category.  The trace and the degree of a normally non-singular map define functors to the homotopy category of \(\Grd\)\nb-maps and to the group~\(\Z\).
\end{theorem}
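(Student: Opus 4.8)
The plan is to verify the three category axioms at the level of equivalence classes and then read the two functors off the construction of Section~\ref{sec:compose_normal_maps}.  The identity on~\(\Source\) is the normally non-singular map \((0,0,\Id_\Source)\) with both \(\Grd\)\nb-vector bundles the zero bundle and with open embedding the identity homeomorphism \(\Source=\total{0}\opem\total{0^\Source}=\Source\).  First I would check that this is a strict two-sided unit: unwinding the composition for \(\NM\circ\Id_\Source\) and for \(\Id_\Target\circ\NM\), every fibre product \(\total{\VB_1}\times_\Target\total{\VB_2}\) and every twisted lifting degenerates to an identity map, so both composites are \emph{isomorphic} to~\(\NM\) after the canonical identifications \(0\oplus\VB\cong\VB\) and \(f^*(0)=0\); in particular they are equivalent to~\(\NM\).

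The substantive point is that \((\NM_1,\NM_2)\mapsto\NM_2\circ\NM_1\) descends to equivalence classes.  Since the equivalence relation is generated by isomorphism, lifting, and isotopy (Lemma~\ref{lem:normal_map_equivalent}), it suffices to prove, separately in each variable, that composition is compatible with each of these three moves.  Compatibility with isomorphism is immediate, since an isomorphism of normally non-singular maps transports through every step of the construction.  For compatibility with lifting along a \(\Grd\)\nb-vector bundle~\(\Triv^+\) over~\(\Base\), one compares \(\NM_2\circ(\NM_1\oplus\Triv^+)\) with \((\NM_2\circ\NM_1)\oplus\Triv^+\): over~\(\Base\) these carry \(\Triv_1\oplus\Triv^+\oplus\Triv_2\) resp.\ \(\Triv_1\oplus\Triv_2\oplus\Triv^+\), over~\(\Source\) they carry \(\VB_1\oplus(\Triv^+)^\Source\oplus f_1^*(\VB_2)\) resp.\ \(\VB_1\oplus f_1^*(\VB_2)\oplus(\Triv^+)^\Source\), and the two open embeddings are intertwined by the isomorphisms permuting these summands; this is an isomorphism of normally non-singular maps, or, should the permutation automorphism fail to be homotopic to the identity, an isotopy after a further lifting that doubles the relevant summands, exactly as in Example~\ref{exa:normal_Triv_Target}.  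Compatibility with isotopy is the same argument carried out over the interval, using Proposition~\ref{pro:vb_homotopy} to put an isotopy into the standard form \((\VB\times[0,1],\Triv,\hat f)\).

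For associativity, given \(\NM_j=(\VB_j,\Triv_j,\hat f_j)\) with traces \(f_1,f_2,f_3\), I would unwind both \((\NM_3\circ\NM_2)\circ\NM_1\) and \(\NM_3\circ(\NM_2\circ\NM_1)\).  Both carry \(\Triv_1\oplus\Triv_2\oplus\Triv_3\) over~\(\Base\), and both carry \(\VB_1\oplus f_1^*(\VB_2)\oplus f_1^*f_2^*(\VB_3)\) over~\(\Source\) once one uses \((f_2\circ f_1)^*=f_1^*\circ f_2^*\) and the associativity of direct sums (canonical, introducing no permutation).  The two open embeddings into \(\total{(\Triv_1\oplus\Triv_2\oplus\Triv_3)^\Third}\) are each assembled from \(\hat f_1\times_\Target\Id\), \(\hat f_2\times_\Third\Id\), \(\hat f_3\) together with identifications of pulled-back \(\Grd\)\nb-vector bundles supplied by Proposition~\ref{pro:vb_homotopy}; a diagram chase shows that the two recipes differ only by such an identification, which is unique up to \(\Grd\)\nb-equivariant homotopy, so the two composites are isotopic.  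Hence associativity holds for equivalence classes, and we obtain a category.

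It remains to produce the functors.  By construction the trace of \(\NM_2\circ\NM_1\) is \(\proj{(\Triv_1\oplus\Triv_2)^\Third}\circ\hat f\circ\zers{\VB}\); evaluating it via the built-in homotopy \(\proj{\Triv_1^\Target}\circ\hat f_1\simeq f_1\circ\proj{\VB_1}\), which is constant on the zero section and hence induces the identity there, gives \(f_2\circ f_1\) up to \(\Grd\)\nb-homotopy, and the trace of the identity is the identity; since equivalent normally non-singular maps have homotopic traces (the remarks following Definitions~\ref{def:normal_map_isotopy} and~\ref{def:normal_map_equivalent}), the trace is a well-defined functor to the homotopy category of \(\Grd\)\nb-maps.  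For the degree, the fibrewise dimension of the composite satisfies \(\dim\VB-\dim\Triv=(\dim\VB_1-\dim\Triv_1)\circ\proj{\VB}+(\dim\VB_2-\dim\Triv_2)\circ f_1\circ\proj{\VB}\) pointwise on~\(\Source\) (using that \(f_1\) is a \(\Grd\)\nb-map, hence intertwines the anchor maps), so degrees add under composition; the identity has degree~\(0\) and the degree is invariant under equivalence, so we get a functor to~\(\Z\).  I expect the main obstacle to be the bookkeeping in the well-definedness and associativity steps, and in particular the interaction of the \emph{twisted} lifting of~\(\hat f_1\) along the non-trivial \(\Grd\)\nb-vector bundle~\(\VB_2\) over~\(\Target\) — which is defined only up to isotopy, via Proposition~\ref{pro:vb_homotopy} — with the equivalence relation: one must keep careful track of which identifications of pulled-back \(\Grd\)\nb-vector bundles are strictly canonical and which are canonical only up to homotopy, after which the category axioms follow formally.
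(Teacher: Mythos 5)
Your proposal is correct and follows essentially the same route as the paper's proof: check well-definedness on equivalence classes by compatibility with isotopy and lifting, note the identity, and prove associativity by tracking the vector bundles (using \(f_1^*\circ f_2^*\cong(f_2f_1)^*\)) together with the behaviour of the twisted lifting, which is only defined up to isotopy. Where the paper says tersely that lifting one factor only changes the composite by a lifting, you are in fact a bit more careful — noting that the summands appear in a permuted order and that, as in Example~\ref{exa:normal_Triv_Target}, one may need a further lifting so that the permutation becomes isotopic to the identity; and your diagram-chase argument for associativity is effectively the paper's ``lifting in stages'' observation phrased in terms of the uniqueness-up-to-homotopy of the Proposition~\ref{pro:vb_homotopy} identifications. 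One small overcaution: for the trace functor you say the composite trace equals \(f_2\circ f_1\) only ``up to \(\Grd\)\nb-homotopy,'' but if the Proposition~\ref{pro:vb_homotopy} isomorphism is chosen to restrict to the identity over the zero section (which it can be, since the homotopy \(\proj{\Triv_1^\Target}\circ\hat f_1\simeq f_1\circ\proj{\VB_1}\) is constant there), the trace of the composite is exactly \(f_2\circ f_1\) — though this does not matter, as the target is the homotopy category anyway.
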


\begin{proof}
  The same recipe as above defines the composition of isotopies.  Hence products of isotopic normally non-singular maps remain isotopic.  Moreover, if we lift one of the factors along a \(\Grd\)\nb-vector bundle over~\(\Base\), the product will only change by a lifting along the same \(\Grd\)\nb-vector bundle over~\(\Base\).  Hence products of equivalent normally non-singular maps remain equivalent.  Thus the composition of equivalence classes of normally non-singular maps is well-defined.

  The stable normal bundle is additive for composition:
  \begin{equation}
    \label{eq:compose_normal_bundle}
    \nu_{\NM_2\circ\NM_1} \defeq
    [\VB] - [\Triv^\Source]
    = [\VB_1] - [\Triv_1^\Source] +
    f_1^*\bigl([\VB_2] - [\Triv_2^\Target]\bigr)
    \eqdef \nu_{\NM_1} + f_1^*(\nu_{\NM_1}).
  \end{equation}
  Hence the degree is additive as well.  It is also clear that the trace of the product is the product of the traces.

  The identity on~\(\Tot\) is the normally non-singular map \((0,0,\Id_\Tot)\) with \(\total{\VB}=\total{\Triv}=\Tot\).  It behaves like an identity by definition of the composition.

  It is routine to check that the composition of normally non-singular maps is associative.  The products \((\NM_1\circ\NM_2)\circ\NM_3\) and \(\NM_1\circ (\NM_2\circ\NM_3)\) both involve \(\Grd\)\nb-vector bundles isomorphic to \(\VB_1 \oplus f_1^*(\VB_2) \oplus f_1^*\bigl(f_2^*(\VB_3)\bigr)\) and \(\Triv_1\oplus\Triv_2\oplus\Triv_3\) -- here we use \(f_1^*\bigl( f_2^*(\VB_3)\bigr) \cong (f_2f_1)^*(\VB_3)\); the open embeddings in both products are composites of liftings of \(\hat{f}_1\), \(\hat{f}_2\), and~\(\hat{f}_3\) along the same \(\Grd\)\nb-vector bundles \(\VB_2\oplus f_3^*(\VB_3)\), \(\Triv_1\oplus\VB_3\), \(\Triv_1\oplus\Triv_2\); here we use the following observation about lifting in stages:

  Let \(\VB'\) and~\(\VB''\) be \(\Grd\)\nb-vector bundles over a \(\Grd\)\nb-space~\(\Target\) and let \((\VB,\Triv,\hat{f})\) be a normally non-singular map from~\(\Source\) to~\(\Target\).  First lift~\(\hat{f}\) along~\(\VB'\) to an open embedding
  \[
  \total{\VB\oplus f^*(\VB')}\opem \total{\Triv^\Target\oplus\VB'}
  \]
  as above, then lift the latter along~\(\VB''\) to an open embedding
  \[
  \total{\VB\oplus f^*(\VB')\oplus f^*(\VB'')}\opem \total{\Triv^\Target\oplus\VB' \oplus \VB''};
  \]
  the result is isotopic to the lifting of~\(\hat{f}\) along \(\VB'\oplus\VB''\).

  We leave it to the reader to check this observation and to fill in the remaining details of the proof of associativity.
\end{proof}

\begin{definition}
  \label{def:normal_category}
  Let~\(\Nor(\Grd)\) denote the category whose objects are \(\Grd\)\nb-spaces and whose morphisms are normally non-singular \(\Grd\)\nb-maps with the above composition.
\end{definition}

\begin{remark}
  \label{rem:composition_embedding_submersion}
  Normally non-singular embeddings and special normally non-singular submersions form subcategories, that is, products of normally non-singular embeddings are normally non-singular embeddings and products of special normally non-singular submersions are special normally non-singular submersions.
\end{remark}

\begin{remark}
  \label{rem:homeo_normal}
  An open embedding \(\varphi\colon \Source\opem\Target\) yields a normally non-singular map \(\varphi!\defeq (0,0,\varphi)\) that is both a normally non-singular embedding and a special normally non-singular submersion.  This construction is a functor, that is, \(\Id_\Tot!\) is the identity normally non-singular map on~\(\Tot\) and \(\varphi_1!\circ\varphi_2!  = (\varphi_1\circ\varphi_2)!\) for open embeddings \(\varphi_2\colon \Source\opem\Target\) and \(\varphi_1\colon \Target\opem\Third\).
\end{remark}

\begin{example}
  \label{exa:up_down_vb}
  Let~\(\VB\) be a \(\Grd\)\nb-vector bundle over~\(\Tot\).  The zero section \(\zers{\VB}\colon \Tot\mono\total{\VB}\) is the trace of a normally non-singular embedding \((\VB,0,\Id_{\VB})\) if~\(\VB\) is subtrivial.  We still denote this normally non-singular embedding by~\(\zers{\VB}\).

  The projection \(\proj{\VB}\colon\total{\VB}\epi\Tot\) is the trace of a special normally non-singular submersion if~\(\VB\) is trivial.  If~\(\VB\) is only subtrivial, then there is a canonical normally non-singular map with trace~\(\proj{\VB}\).  Let \(\VB\oplus\VB^\bot\cong\Triv^\Tot\) for a \(\Grd\)\nb-vector bundle~\(\Triv\) over~\(\Base\).  Then the relevant normally non-singular map is \((\proj{\VB}^*(\VB^\bot),\iota,\Triv)\), where \(\proj{\VB}^*(\VB^\bot)\) denotes the pull-back of~\(\VB^\bot\) to~\(\VB\), which has total space \(\total{\VB\oplus\VB^\bot}\), and \(\iota\colon \total{\VB\oplus\VB^\bot}\congto\total{\Triv^\Tot}\) is the isomorphism.  We also denote this normally non-singular map by~\(\proj{\VB}\).

  The equivalence classes of these normally non-singular maps \(\zers{\VB}\colon \Tot\to\total{\VB}\) and \(\proj{\VB}\colon \total{\VB} \to \Tot\) are inverse to each other: both composites are liftings of the identity map along~\(\Triv\).  The details are a good exercise to get familiar with composing normally non-singular maps.  The equivalence class of~\(\proj{\VB}\) cannot depend on \(\VB^\bot\) and~\(\Triv\) because inverses are unique.  Checking this directly is another good exercise.
\end{example}

\subsection{Exterior products and functoriality}
\label{sec:exterior_functor}

Now we study \emph{exterior products} of normally non-singular maps and show that \(\Nor(\Grd\ltimes\Tot)\) is a contravariant homotopy functor in~\(\Tot\).

The exterior product of two \(\Grd\)\nb-spaces is their fibre product over the object space~\(\Base\), equipped with the induced action of~\(\Grd\).  Let \(\NM_j=(\VB_j,\Triv_j,\hat{f}_j)\) for \(j=1,2\) be a normally non-singular map from~\(\Source_j\) to~\(\Target_j\).  Then we get a normally non-singular \(\Grd\)\nb-map
\[
\NM_1\times_\Base \NM_2 \defeq
(\pi_1^*\VB_1\oplus \pi_2^*\VB_2,
\Triv_1\oplus\Triv_2,
\hat{f}_1\times_\Base\hat{f}_2)
\]
from \(\Source\defeq \Source_1\times_\Base\Source_2\) to \(\Target\defeq \Target_1\times_\Base\Target_2\), where \(\pi_j\colon \Source\to\Source_j\) for \(j=1,2\) are the canonical projections.  The total spaces of \(\pi_1^*\VB_1\oplus \pi_2^*\VB_2\) and \((\Triv_1\oplus\Triv_2)^\Target\) are \(\total{\VB_1}\times_\Base\total{\VB_2}\) and \(\total{\Triv_1^{\Target_1}}\times_\Base \total{\Triv_2^{\Target_2}}\), so that \(\hat{f}_1\times_\Base\hat{f}_2\) has the right domain and target.

Taking the trace commutes with exterior products, that is, if \(f_1\) and~\(f_2\) are the traces of \(\NM_1\) and~\(\NM_2\), then \(\NM_1\times_\Base\NM_2\) has trace \(f_1\times_\Base f_2\).  Taking the stable normal bundle commutes with exterior products as well: if \(\Normal_1\) and~\(\Normal_2\) are the stable normal bundles of \(\NM_1\) and~\(\NM_2\), then the stable normal bundle of \(\NM_1\times_\Base\NM_2\) is \(\pi_1^*(\Normal_1)\oplus \pi_2^*(\Normal_2)\).  The degree of \(\NM_1\times_\Base\NM_2\) is the sum of the degrees of \(\NM_1\) and~\(\NM_2\).

\begin{remark}
  \label{rem:exterior_embedding_submersion}
  By definition, exterior products of normally non-singular embeddings remain normally non-singular embeddings, and exterior products of special normally non-singular submersions remain special normally non-singular submersions.
\end{remark}

\begin{proposition}
  \label{pro:normal_maps_smc}
  The exterior product~\(\times_\Base\) is an associative, symmetric and monoidal bifunctor \(\Nor(\Grd)\times\Nor(\Grd)\to\Nor(\Grd)\) with unit object~\(\Base\), that is, there are natural isomorphisms
  \begin{equation}
    \label{eq:exterior_product_properties}
    \begin{aligned}
      \Tot_1 \times_\Base \Tot_2 &\cong  \Tot_2 \times_\Base
      \Tot_2,\\
      (\Tot_1 \times_\Base \Tot_2) \times_\Base \Tot_3 &\cong
      \Tot_1 \times_\Base (\Tot_2 \times_\Base \Tot_3),\\
      \Base \times_\Base \Tot &\cong \Tot \cong
      \Tot \times_\Base \Base,
    \end{aligned}
  \end{equation}
  which satisfy various coherence conditions, so that \(\Nor(\Grd)\) becomes a symmetric monoidal category \textup(see \textup{\cite{Saavedra:Tannakiennes}}\textup).
\end{proposition}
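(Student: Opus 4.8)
The plan is to transport the symmetric monoidal structure of the category of $\Grd$\nb-spaces under the fibre product $\times_\Base$ to $\Nor(\Grd)$. On $\Grd$\nb-spaces, $\times_\Base$ is the categorical product and~$\Base$ is the terminal object (a $\Grd$\nb-map is automatically compatible with anchor maps), so $(\Grd\text{-spaces},\times_\Base,\Base)$ is a symmetric monoidal category in which the structure isomorphisms of~\eqref{eq:exterior_product_properties} are the obvious canonical homeomorphisms and all coherence diagrams commute. The device for lifting this to $\Nor(\Grd)$ is the functor $\varphi\mapsto\varphi!=(0,0,\varphi)$ of Remark~\ref{rem:homeo_normal}: it sends $\Grd$\nb-equivariant open embeddings --~in particular $\Grd$\nb-homeomorphisms~-- to normally non-singular maps, it is functorial ($\Id!=\Id$ and $\varphi_1!\circ\varphi_2!=(\varphi_1\circ\varphi_2)!$), and for a homeomorphism~$\varphi$ it produces an isomorphism $\varphi!$ in $\Nor(\Grd)$ with inverse $(\varphi^{-1})!$. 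Applying~$!$ to the homeomorphisms in~\eqref{eq:exterior_product_properties} gives the candidate structure isomorphisms, and since a functor carries commuting diagrams to commuting diagrams, the coherence conditions of~\cite{Saavedra:Tannakiennes} will follow at once from their validity for $\Grd$\nb-spaces.

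The substantive point is that $\times_\Base$ is a well-defined bifunctor $\Nor(\Grd)\times\Nor(\Grd)\to\Nor(\Grd)$. First, $\times_\Base$ respects the equivalence relation of Definition~\ref{def:normal_map_equivalent}: it is manifestly compatible with isomorphism of normally non-singular maps (Definition~\ref{def:normal_map_isomorphism}); the exterior product of two isotopies, formed in $\Nor(\Grd\times[0,1])$ using $(\Source_1\times[0,1])\times_{\Base\times[0,1]}(\Source_2\times[0,1])\cong(\Source_1\times_\Base\Source_2)\times[0,1]$, is again an isotopy; and, since the projections $\pi_j$ satisfy $\pi_j^*\bigl(\VB\oplus(\Triv^+)^{\Source_j}\bigr)\cong\pi_j^*\VB\oplus(\Triv^+)^{\Source_1\times_\Base\Source_2}$, lifting one factor along a $\Grd$\nb-vector bundle over~$\Base$ changes the exterior product only by a lifting along the same bundle (up to reordering of summands). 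Second, $\times_\Base$ preserves identities, $\Id_{\Tot_1}\times_\Base\Id_{\Tot_2}\cong\Id_{\Tot_1\times_\Base\Tot_2}$, and composites: one must show that $(\NM_1'\circ\NM_1)\times_\Base(\NM_2'\circ\NM_2)$ is equivalent to $(\NM_1'\times_\Base\NM_2')\circ(\NM_1\times_\Base\NM_2)$. Unwinding the recipe of Section~\ref{sec:compose_normal_maps}, the composite's data is assembled from the $\VB_j,\VB_j',\Triv_j,\Triv_j'$ by direct sums, pull-backs of vector bundles along traces, the canonical homeomorphisms identifying the total space of a direct sum with the fibre product of the total spaces, composition of open embeddings, and lifting of open embeddings along $\Grd$\nb-vector bundles (the last being defined only up to isotopy, by Proposition~\ref{pro:vb_homotopy}). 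Each of these operations commutes with the pull-backs $\pi_1^*,\pi_2^*$ and with forming exterior products of open embeddings, so the two sides agree up to isotopy.

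With bifunctoriality in hand, the structure isomorphisms are $\varphi!$ for the homeomorphisms~$\varphi$ of~\eqref{eq:exterior_product_properties}. Their naturality with respect to normally non-singular maps reduces to naturality on $\Grd$\nb-spaces: pre- or post-composing a normally non-singular map $(\VB,\Triv,\hat{f})$ with $\psi!$ for a homeomorphism~$\psi$ merely transports the open embedding~$\hat{f}$ along~$\psi$ (resp.\ along $\psi\times_\Base\Id$), and the symmetry, associativity and unit homeomorphisms for $\Grd$\nb-spaces are by construction compatible with the exterior-product data $\pi_1^*\VB_1\oplus\pi_2^*\VB_2$, $\Triv_1\oplus\Triv_2$, $\hat{f}_1\times_\Base\hat{f}_2$; combined with functoriality of~$!$ this gives commuting naturality squares in $\Nor(\Grd)$. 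Finally, the pentagon, the hexagon, the unit triangle and the relation $\sigma\circ\sigma=\Id$ are the images under~$!$ of the corresponding strictly commuting diagrams of $\Grd$\nb-homeomorphisms, hence commute in $\Nor(\Grd)$. This exhibits $\Nor(\Grd)$ as a symmetric monoidal category with unit~$\Base$.

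The main obstacle is the compatibility of $\times_\Base$ with composition in the second step above: it is the only part of the argument that is not a formal consequence of functoriality of $\varphi\mapsto\varphi!$, and it requires matching --~up to the isotopy ambiguity inherent in lifting open embeddings along non-trivial $\Grd$\nb-vector bundles~-- the two nested sequences of liftings and canonical homeomorphisms produced by the composition recipe. Everything else is bookkeeping that descends from the cartesian monoidal structure on $\Grd$\nb-spaces.
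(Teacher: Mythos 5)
Your proof is correct and follows essentially the same route as the paper's: check that $\times_\Base$ descends to isotopy/equivalence classes and is a bifunctor, then observe that the coherence homeomorphisms of $\Grd$\nb-spaces are normally non-singular and remain natural with respect to normally non-singular maps. The paper's proof is far terser (delegating compatibility with composition to ``routine to check''), while you correctly isolate that step as the only non-formal part and explain why it works up to the isotopy ambiguity of lifting.
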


\begin{proof}
  The exterior product preserves isotopy because we can form exterior products of isotopies.  Since it commutes with liftings as well, it descends to a well-defined operation on equivalence classes.  Functoriality of exterior products with respect to composition of normally non-singular maps is routine to check.  It is obvious that there are \(\Grd\)\nb-equivariant homeomorphisms as in~\eqref{eq:exterior_product_properties} that satisfy the coherence conditions for a symmetric monoidal category and that are natural with respect to \(\Grd\)\nb-maps.  But \(\Grd\)\nb-equivariant homeomorphisms are normally non-singular, and the homeomorphisms in~\eqref{eq:exterior_product_properties} are natural with respect to \emph{normally non-singular} maps as well.
\end{proof}

\begin{remark}
  \label{rem:product_not_categorical}
  The exterior product is \emph{not} a product operation in \(\Nor(\Grd)\) in the sense of category theory.  The coordinate projections from \(\Target_1\times_\Base \Target_2\) to \(\Target_1\) and~\(\Target_2\) need not be traces of normally non-singular maps.
\end{remark}

\begin{lemma}
  \label{lem:normal_map_coproduct}
  The disjoint union operation is a coproduct in the category \(\Nor(\Grd)\):
  \[
  \Nor_\Grd(\Source_1\sqcup\Source_2,\Target) \cong
  \Nor_\Grd(\Source_1,\Target) \times
  \Nor_\Grd(\Source_2,\Target)
  \]
  for all \(\Grd\)\nb-spaces \(\Source_1\), \(\Source_2\), and~\(\Target\).  The empty \(\Grd\)\nb-space is an initial object.
\end{lemma}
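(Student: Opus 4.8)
The plan is to verify the universal property of the coproduct directly: restriction along the two open inclusions $\iota_j\colon\Source_j\hookrightarrow\Source_1\sqcup\Source_2$ should be a bijection
\[
r\colon \Nor_\Grd(\Source_1\sqcup\Source_2,\Target)\congto\Nor_\Grd(\Source_1,\Target)\times\Nor_\Grd(\Source_2,\Target),
\]
with an explicit inverse built by ``pushing'' the two open embeddings into disjoint half-spaces so that they can be juxtaposed. Since each $\iota_j$ is an open embedding, it is the trace of the normally non-singular map $\iota_j!=(0,0,\iota_j)$ of Remark~\ref{rem:homeo_normal}, and I would set $r[\NM]\defeq([\NM\circ\iota_1!],[\NM\circ\iota_2!])$; this is well defined by Theorem~\ref{the:normal_maps_category}. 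Unwinding the composition recipe of Section~\ref{sec:compose_normal_maps}, for $\NM=(\VB,\Triv,\hat f)$ the product $\NM\circ\iota_j!$ is isotopic to the honest restriction $(\VB|_{\Source_j},\Triv,\hat f|_{\total{\VB|_{\Source_j}}})$, using that $\Source_j$, and hence $\total{\VB|_{\Source_j}}$, is open in $\Source_1\sqcup\Source_2$, resp.\ $\total{\VB}$, and that the inclusion of this open subset is, up to isotopy, the lifting of $\iota_j$ along~$\VB$.

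The inverse map~$c$ is defined as follows. Given $\NM_j=(\VB_j,\Triv_j,\hat f_j)$ from $\Source_j$ to~$\Target$, first lift $\NM_1$ along $\Triv_2$ and $\NM_2$ along $\Triv_1$ so that both involve the bundle $\Triv_0\defeq\Triv_1\oplus\Triv_2$ over~$\Base$, and then lift both along the constant line bundle~$\R$; this replaces the open embeddings by open embeddings $\total{\VB_j\oplus\Triv_{3-j}^{\Source_j}}\times\R\opem\total{\Triv_0^\Target}\times\R=\total{(\Triv_0\oplus\R)^\Target}$. Now post-compose the first with the fibrewise open embedding $(\xi,t)\mapsto(\xi,e^t)$ of $\total{(\Triv_0\oplus\R)^\Target}$ onto the half-space $\{t>0\}$, and the second with $(\xi,t)\mapsto(\xi,-e^{-t})$, whose image is $\{t<0\}$. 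The two resulting open embeddings now have disjoint images, so they assemble into a single open embedding out of $\total{\VB_1\oplus\Triv_2^{\Source_1}\oplus\R^{\Source_1}}\sqcup\total{\VB_2\oplus\Triv_1^{\Source_2}\oplus\R^{\Source_2}}$, that is, into a normally non-singular map $c(\NM_1,\NM_2)$ from $\Source_1\sqcup\Source_2$ to~$\Target$. Since $c$ is assembled only from liftings, exterior products with fixed bundles, and composition with fixed open embeddings, it descends to equivalence classes by the same routine arguments as in the proofs of Theorem~\ref{the:normal_maps_category} and Proposition~\ref{pro:normal_maps_smc}.

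It then remains to check that $r$ and~$c$ are mutually inverse on equivalence classes. For $r\circ c$, restricting $c(\NM_1,\NM_2)$ to $\Source_1$ recovers $\NM_1$ after the two liftings and the post-composition with $(\xi,t)\mapsto(\xi,e^t)$; the homotopy $(\xi,t)\mapsto(\xi,(1-s)t+se^t)$ through fibrewise open embeddings (each $t\mapsto(1-s)t+se^t$ being a diffeomorphism of~$\R$ onto an open interval) exhibits this as isotopic to the plain lifting of~$\NM_1$, hence equivalent to~$\NM_1$, and symmetrically over~$\Source_2$ using $(\xi,t)\mapsto(\xi,(1-s)t-se^{-t})$. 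For $c\circ r$, write $\hat f=\hat f_1\sqcup\hat f_2$ with $\hat f_1(\total{\VB|_{\Source_1}})$ and $\hat f_2(\total{\VB|_{\Source_2}})$ disjoint open subsets of $\total{\Triv^\Target}$; applying the same two homotopies simultaneously to the two pieces connects $c(\NM|_{\Source_1},\NM|_{\Source_2})$ to a lifting of~$\NM$, and throughout the homotopy the image of the $\Source_j$-piece stays inside $\hat f_j(\total{\VB|_{\Source_j}})\times_\Base\total{\Triv_0}\times\R$, so the two images remain disjoint and each stage is genuinely an open embedding. Finally, the statement about the empty $\Grd$\nb-space is the special case $\Source_1=\emptyset$: a normally non-singular map from~$\emptyset$ is a triple $(0,\Triv,\emptyset)$ with the empty map as open embedding, and any two such become isomorphic after lifting to a common bundle over~$\Base$, so $\Nor_\Grd(\emptyset,\Target)$ is a single point. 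The only genuinely non-formal point is that juxtaposing the two open embeddings into the \emph{same} $\total{\Triv^\Target}$ would in general fail, since their images overlap; this is exactly what the extra $\R$-direction and the half-space trick repair, compatibly with isotopies.
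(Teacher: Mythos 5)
Your proposal is correct and takes essentially the same approach as the paper: both lift $\NM_1$ and $\NM_2$ to a common vector bundle over $\Base$ with one extra $\R$-direction, then use an isotopy of the $\R$-coordinate to push the two open embeddings into disjoint regions so that they can be juxtaposed. The paper simply says ``by an isotopy, arrange the $\R$-components to have values in $(0,1)$ and $(1,2)$'', whereas you exhibit concrete fibrewise diffeomorphisms $t\mapsto e^t$ and $t\mapsto -e^{-t}$ onto disjoint half-lines together with the interpolating families; this is a cosmetic difference, and your more explicit verification that $r$ and $c$ are mutually inverse fleshes out the same argument the paper leaves to the reader.
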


\begin{proof}
  Since the embeddings \(\Source_j\opem \Source_1\sqcup\Source_2\) for \(j=1,2\) are open, they are normally non-singular \(\Grd\)\nb-maps and thus induce a natural map
  \begin{equation}
    \label{eq:normal_map_union}
    \Nor_\Grd(\Source_1\sqcup\Source_2,\Target) \to
    \Nor_\Grd(\Source_1,\Target) \times
    \Nor_\Grd(\Source_2,\Target).
  \end{equation}

  Conversely, let \(\NM_j=(\VB_j,\Triv_j,\hat{f}_j)\) be normally non-singular maps from~\(\Source_j\) to~\(\Target\) for \(j=1,2\).  We may lift~\(\NM_1\) along~\(\Triv_2\oplus\R\) and~\(\NM_2\) along~\(\Triv_1\oplus\R\), so that both now involve the \(\Grd\)\nb-vector bundle \(\Triv_1\oplus\Triv_2\oplus\R\) over~\(\Base\).  By an isotopy, we can arrange that the \(\R\)\nb-components of the liftings of \(\NM_1\) and~\(\NM_2\) have values in \((0,1)\) and \((1,2)\), respectively, so that their ranges are disjoint.  After these modifications, \(\hat{f}_1\sqcup \hat{f}_2\) becomes an open embedding on \(\total{\VB_1}\sqcup\total{\VB_2}\), so that we get a normally non-singular map \(\NM_1\sqcup\NM_2\) from \(\Source_1\sqcup\Source_2\) to~\(\Target\).  Hence the map in~\eqref{eq:normal_map_union} is surjective.  The same construction may be applied to isotopies, so that \(\NM_1\sqcup\NM_2\) is isotopic to \(\NM_1'\sqcup \NM_2'\) if \(\NM_j\) is isotopic to~\(\NM_j'\) for \(j=1,2\).  When we lift~\(\NM_1\) along~\(\Triv'_1\) and~\(\NM_2\) along~\(\Triv'_2\), then we lift \(\NM_1\sqcup\NM_2\) along \(\Triv'_1\oplus\Triv'_2\).  Hence \(\NM_1\sqcup\NM_2\) is equivalent to \(\NM_1'\sqcup \NM_2'\) if \(\NM_j\) is equivalent to~\(\NM_j'\) for \(j=1,2\).  Thus the map in~\eqref{eq:normal_map_union} is injective as well.

  More or less by convention, there is, up to equivalence, a unique normally non-singular \(\Grd\)\nb-map from the empty \(\Grd\)\nb-space to any other \(\Grd\)\nb-space.
\end{proof}

Let \(\Source\) and~\(\Target\) be two \(\Grd\)\nb-spaces and let \(h\colon \Source\to\Target\) be a \(\Grd\)\nb-map.  If \(\Third\) is another \(\Grd\ltimes\Target\)\nb-space, then \(h^*(\Third) \defeq \Source\times_\Target\Third\) is a \(\Grd\ltimes\Source\)\nb-space, equipped with a canonical map \(\hat{h}\colon h^*(\Third)\to\Third\).  We pull a normally non-singular \(\Grd\ltimes\Target\)-map \((\VB,\Triv,\hat{f})\) from~\(\Third_1\) to~\(\Third_2\) back to a normally non-singular map \(\bigl(\hat{h}^*(\VB), h^*(\Triv), \Id_\Source\times_\Target\hat{f}\bigr)\) from \(h^*(\Third_1)\) to \(h^*(\Third_2)\); here we use that the total spaces of \(\hat{h}^*(\VB)\) and \(h^*(\Triv)^{h^*(\Third_2)}\) are \(\Source\times_\Target \Third_1 \times_{\Third_1} \total{\VB} \cong \Source\times_\Target\total{\VB}\) and \(\Source\times_\Target \Third_2 \times_\Target \total{\Triv} \cong \Source\times_\Target\total{\Triv^{\Third_2}}\), respectively.

This construction yields a functor
\[
h^*\colon \Nor(\Grd\ltimes\Target) \to
\Nor(\Grd\ltimes\Source),
\qquad
(\VB,\Triv,\hat{f})\mapsto
\bigl(\hat{h}^*(\VB), h^*(\Triv),
\Id_\Source\times_\Target\hat{f}\bigr).
\]
It is symmetric monoidal (see~\cite{Saavedra:Tannakiennes}) because the canonical homeomorphisms
\[
h^*(\Third_1) \times_\Source h^*(\Third_2) \cong
h^*(\Third_1\times_\Target \Third_2)
\]
for two \(\Grd\ltimes\Target\)-spaces \(\Third_1\) and~\(\Third_2\) are natural with respect to normally non-singular maps and compatible with the unit, commutativity, and associativity isomorphisms in the symmetric monoidal categories \(\Nor(\Grd\ltimes\Target)\) and \(\Nor(\Grd\ltimes\Source)\).

Thus \(\Tot\mapsto \Nor(\Grd\ltimes\Tot)\) is a functor from the category of \(\Grd\)\nb-spaces to the category of symmetric monoidal categories.

\begin{lemma}
  \label{lem:Nor_homotopy_functor}
  The functor \(\Tot\mapsto \Nor(\Grd\ltimes\Tot)\) is a homotopy functor, that is, \(\Grd\)\nb-homotopic maps \(h_0,h_1\colon \Source\rightrightarrows\Target\) induce equivalent functors \(\Nor(\Target) \to \Nor(\Source)\).
\end{lemma}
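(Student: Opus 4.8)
The plan is to imitate the proof of Proposition~\ref{pro:vb_homotopy}. First I would fix a \(\Grd\)\nb-homotopy \(h\colon\Source\times[0,1]\to\Target\) realising \(h_0\simeq h_1\), and write \(j_t\colon\Source\cong\Source\times\{t\}\hookrightarrow\Source\times[0,1]\) for the two end inclusions, so that \(h\circ j_t=h_t\). By the functoriality of \(\Tot\mapsto\Nor(\Grd\ltimes\Tot)\) established just above, this gives natural isomorphisms \(h_t^*\cong j_t^*\circ h^*\) of symmetric monoidal functors \(\Nor(\Grd\ltimes\Target)\to\Nor(\Grd\ltimes\Source)\) for \(t=0,1\). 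Hence it suffices to produce a symmetric monoidal natural isomorphism between the two composites \(j_0^*\circ h^*\) and \(j_1^*\circ h^*\); this reduces an arbitrary homotopy to the single universal situation of the two ends of a cylinder, and to the two building blocks of a normally non-singular map treated separately.

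On the cylinder, the \(\Grd\)\nb-vector bundle data of a pulled-back normally non-singular map are handled by Proposition~\ref{pro:vb_homotopy}: the relevant \(\Grd\)\nb-vector bundles live over cylinders, and such a bundle restricts to isomorphic \(\Grd\)\nb-vector bundles at the two ends, via an isomorphism canonical up to \(\Grd\)\nb-equivariant homotopy. Feeding these end-isomorphisms into the open embedding \(\hat f\) produces two open embeddings over~\(\Source\) that are isotopic — the isotopy being supplied by the cylinder together with the homotopy-uniqueness clause of Proposition~\ref{pro:vb_homotopy} — and isotopic data represent the same morphism in \(\Nor(\Grd\ltimes\Source)\) by Definition~\ref{def:normal_map_isotopy}. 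The object components of the natural transformation come from the same mechanism with the vector bundles deleted, namely the identification over~\(\Source\) of the two end-restrictions of a \(\Grd\)\nb-space pulled back along~\(h\); this piece has to be set up carefully, and it is really the heart of the construction.

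It then remains to verify naturality. Naturality in \(\Grd\ltimes\Target\)\nb-maps and compatibility with composition of normally non-singular maps are immediate, since every step above was functorial; compatibility with the exterior product \(\times_\Base\) and with its unit, associativity and commutativity constraints is essentially forced, because both pulling back along~\(h\) and restricting to the ends commute with fibre products over~\(\Base\). The main obstacle I anticipate is not any single estimate — all the analytic content is already packaged into Proposition~\ref{pro:vb_homotopy} and into the isotopy-invariance built into the equivalence relation — but rather the coherence bookkeeping: organising the above choices so that the resulting family of isomorphisms is simultaneously natural in the object, compatible with composition, and symmetric monoidal in the sense of~\cite{Saavedra:Tannakiennes}. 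One should also keep in mind that \(h^*\) is itself only well defined up to natural isomorphism, since the composition in \(\Nor\) involves isotopy-level choices (compare Theorem~\ref{the:normal_maps_category}), so all assertions of the form \(h_0^*\cong h_1^*\) are to be read modulo these ambiguities.
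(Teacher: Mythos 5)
Your proposal identifies the right starting point --- pull back along the homotopy --- but takes a needlessly circuitous route from there. Once you factor \(h_t^* = j_t^*\circ h^*\), the paper's proof finishes in one sentence: regard the homotopy as a map \(h\colon\Source\times[0,1]\to\Target\times[0,1]\), \((\source,t)\mapsto(h_t(\source),t)\); then for any normally non-singular \(\Grd\ltimes\Target\)-map \(\NM\), the pullback \(h^*(\NM)\) is by construction a normally non-singular \(\Grd\times[0,1]\)-map between spaces over \(\Source\times[0,1]\) whose restrictions to the two ends are \(h_0^*(\NM)\) and \(h_1^*(\NM)\); that is, it is an isotopy in the sense of Definition~\ref{def:normal_map_isotopy}, with nothing left to check. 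There is no need to invoke Proposition~\ref{pro:vb_homotopy} to extract bundle isomorphisms at the two ends of the cylinder and feed them back into the open embedding \(\hat f\): the cylinder itself \emph{is} the isotopy, because the notion of isotopy was set up precisely so that a normally non-singular map parametrised by \([0,1]\) is one. Your argument re-derives this by hand and thereby saddles you with the coherence bookkeeping you flag at the end, which the paper's packaging of the isotopy relation is designed to avoid.

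Your remark that the object components of the natural transformation (identifying \(h_0^*(\Third)\) with \(h_1^*(\Third)\)) have to be set up carefully is a fair observation; the paper's one-line proof addresses only morphisms and is silent on the object-level identification. Likewise your caveat that \(h^*\) is defined only up to natural isomorphism because composition in \(\Nor\) involves isotopy-level choices is a genuine subtlety that the paper also elides. So your extra caution is not misplaced, but the machinery you propose to deploy (Proposition~\ref{pro:vb_homotopy}) is unnecessary: it re-proves at the bundle level what the definition of isotopy already encapsulates in one step.
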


\begin{proof}
  Let \(h\colon \Source\times[0,1]\to\Target\times[0,1]\) be a homotopy between \(h_0\) and~\(h_1\) and let~\(\NM\) be a normally non-singular \(\Grd\ltimes\Target\)-map.  Then the normally non-singular map \(h^*(\NM)\) is an isotopy between the normally non-singular maps \(h_0^*(\NM)\) and \(h_1^*(\NM)\).
\end{proof}

We may also view \(\Nor(\Grd)\) as a functor of~\(\Grd\), both with respect to strict groupoid homomorphisms (continuous functors) and Hilsum--Skandalis morphisms.  Since we will not need this here, we omit the proof.

Finally, we consider certain forgetful functors.

Let \(h\colon \Source\to\Target\) be a \(\Grd\)\nb-map.  Recall that a \(\Grd\ltimes\Source\)-space is nothing but a \(\Grd\)\nb-space with a \(\Grd\)\nb-map to~\(\Source\).  Composing the latter with~\(h\), we may view a \(\Grd\ltimes\Source\)-space as a \(\Grd\ltimes\Target\)-space.  In particular, for \(\Target=\Base\) this views a \(\Grd\ltimes\Source\)-space as a \(\Grd\)\nb-space.  For vector bundles, it makes no difference whether we require \(\Grd\ltimes\Source\)-, \(\Grd\ltimes\Target\)-, or just \(\Grd\)\nb-equivariance.  Hence it would appear that we get a forgetful functor from \(\Nor(\Grd\ltimes\Source)\) to \(\Nor(\Grd\ltimes\Target)\).  But there is one technical problem: a (sub)trivial \(\Grd\ltimes\Source\)-vector bundle need not be (sub)trivial as a \(\Grd\ltimes\Target\)-vector bundle.

\begin{example}
  \label{exa:forget_loses_subtrivial}
  Let \(\Source\) be a \(\Grd\)\nb-space.  Any \(\Grd\)\nb-vector bundle over~\(\Source\) is trivial as a \(\Grd\ltimes\Source\)\nb-vector bundle, but not necessarily as a \(\Grd\)\nb-vector bundle.
\end{example}

\begin{proposition}
  \label{pro:forgetful_normal}
  Let \(h\colon \Source\to\Target\) be a \(\Grd\)\nb-map between two \(\Grd\)\nb-spaces.  If any \(\Grd\)\nb-vector bundle over~\(\Source\) is subtrivial as a \(\Grd\ltimes\Target\)-vector bundle, then there is a forgetful functor \(h_*\colon \Nor(\Grd\ltimes\Source)\to\Nor(\Grd\ltimes\Target)\).
\end{proposition}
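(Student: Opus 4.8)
The plan is to let~$h_*$ be the identity on objects, a $\Grd\ltimes\Source$-space being viewed as a $\Grd\ltimes\Target$-space by composing its anchor map with~$h$, and to define~$h_*$ on morphisms after a harmless modification of the defining data. The one new ingredient is the following remark, valid under the hypothesis: every subtrivial $\Grd\ltimes\Source$-vector bundle over a $\Grd\ltimes\Source$-space~$\Third_1$ is subtrivial as a $\Grd\ltimes\Target$-vector bundle. Indeed, such a bundle is a direct summand of the pull-back~$\Triv^{\Third_1}$ of a $\Grd$\nb-vector bundle~$\Triv$ over~$\Source$; choosing~$\Triv^\bot$ over~$\Source$ and a $\Grd$\nb-vector bundle~$\Triv'$ over~$\Target$ with $\Triv\oplus\Triv^\bot\cong h^*(\Triv')$ and pulling back along the anchor $\Third_1\to\Source$, one exhibits it as a direct summand of the pull-back of~$h^*(\Triv')$ to~$\Third_1$; and that pull-back equals the pull-back of~$\Triv'$ along the $\Grd\ltimes\Target$\nb-anchor of~$\Third_1$, that is, along the composite of the anchor $\Third_1\to\Source$ with~$h$, hence is a trivial $\Grd\ltimes\Target$\nb-vector bundle.

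Given a normally non-singular $\Grd\ltimes\Source$-map $\NM=(\VB,\Triv,\hat f)$ from~$\Third_1$ to~$\Third_2$, I would choose~$\Triv^\bot$ over~$\Source$ and~$\Triv'$ over~$\Target$ with $\Triv\oplus\Triv^\bot\cong h^*(\Triv')$ and lift~$\NM$ along~$\Triv^\bot$ as in Definition~\ref{def:normal_map_lift} (this is legitimate because~$\Triv^\bot$ is a $\Grd$\nb-vector bundle over the object space of~$\Grd\ltimes\Source$). Via the isomorphism $\Triv\oplus\Triv^\bot\cong h^*(\Triv')$ this lifting becomes a triple $\bigl(\VB\oplus(\Triv^\bot)^{\Third_1},\,h^*(\Triv'),\,\hat f'\bigr)$. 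Since the pull-back of~$h^*(\Triv')$ from~$\Source$ along the anchor $\Third_2\to\Source$ is the very same space as the pull-back of~$\Triv'$ from~$\Target$ along the $\Grd\ltimes\Target$\nb-anchor of~$\Third_2$, the open embedding~$\hat f'$ already lands in the total space of a trivial $\Grd\ltimes\Target$\nb-vector bundle over~$\Third_2$; combined with the remark above, which makes $\VB\oplus(\Triv^\bot)^{\Third_1}$ subtrivial over~$\Target$, the triple is a genuine normally non-singular $\Grd\ltimes\Target$-map from~$\Third_1$ to~$\Third_2$, and I would set~$h_*(\NM)$ to be its equivalence class.

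It then remains to prove that~$h_*$ is well defined and functorial. Independence of the choice of $(\Triv^\bot,\Triv')$ follows since any two choices yield $\Grd\ltimes\Source$-maps each obtained from~$\NM$ by liftings along $\Grd$\nb-vector bundles over~$\Source$, which become isomorphic after one more lifting; here one uses that a lifting of a $\Grd\ltimes\Target$-map along a $\Grd$\nb-vector bundle~$V$ over~$\Source$ that is subtrivial over~$\Target$ can always be absorbed, by a further lifting along a complement of~$V$ inside some~$h^*(W)$, into a lifting along the $\Grd$\nb-vector bundle~$W$ over~$\Target$. The same device shows that~$h_*$ carries equivalent $\Grd\ltimes\Source$-maps to equivalent $\Grd\ltimes\Target$-maps, an isotopy over~$\Source$ passing to an isotopy over~$\Target$ once $\Triv^\bot$ and~$\Triv'$ are chosen independently of the homotopy parameter (possible by Proposition~\ref{pro:vb_homotopy}). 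Identity maps have $\Triv=0$ and are therefore fixed by~$h_*$; compatibility with composition follows because pull-backs of vector bundles and the assembly of the open embedding of a composite out of liftings (Section~\ref{sec:compose_normal_maps}) all commute with the forgetful passage from~$\Grd\ltimes\Source$ to~$\Grd\ltimes\Target$.

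None of these steps is conceptually hard; the main obstacle is the bookkeeping in the last paragraph, where one must constantly keep track of which vector bundles live over~$\Source$, over~$\Target$, or over the various $\Grd$\nb-spaces involved, and apply repeatedly the one genuinely new point: under the hypothesis, a lifting along a $\Grd$\nb-vector bundle over~$\Source$ can be enlarged to a lifting along a $\Grd$\nb-vector bundle over~$\Target$.
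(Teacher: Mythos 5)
Your proposal is correct and follows essentially the same route as the paper: use the subtriviality of $\Triv$ as a $\Grd\ltimes\Target$-vector bundle to find $\Triv^\bot$ over $\Source$ and $\Triv'$ over $\Target$ with $\Triv\oplus\Triv^\bot\cong h^*(\Triv')$, lift $\NM$ along $\Triv^\bot$, and reinterpret the result as a normally non-singular $\Grd\ltimes\Target$-map. The paper's proof is terse, stating only this construction and leaving all verifications to the reader; you have filled in the pieces left implicit, including the observation that the hypothesis propagates from bundles over $\Source$ to subtrivial $\Grd\ltimes\Source$-vector bundles over arbitrary $\Grd\ltimes\Source$-spaces (needed to see that $\VB\oplus(\Triv^\bot)^{\Third_1}$ is subtrivial over $\Target$), and the absorption trick for liftings along bundles over $\Source$ into liftings along bundles over $\Target$, which is the right mechanism both for independence of the auxiliary choices and for descent to equivalence classes (it matches Example~\ref{exa:normal_Triv_Target}).
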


\begin{proof}
  Let \(\NM=(\VB,\Triv,\hat{f})\) be a normally non-singular \(\Grd\ltimes\Source\)\nb-map.  Then~\(\Triv\) is a \(\Grd\ltimes\Source\)-vector bundle over~\(\Source\).  By assumption, it is subtrivial as a \(\Grd\ltimes\Target\)\nb-vector bundle, that is, there are \(\Grd\)\nb-vector bundles \(\Triv'\) and~\(\Triv''\) over \(\Source\) and~\(\Target\), respectively, with \(\Triv\oplus\Triv' \cong h^*(\Triv'')\).  We lift~\(\NM\) along~\(\Triv'\) and put
  \[
  h_*(\NM) \defeq (\VB\oplus(\Triv')^\Source,\Triv'',
  \hat{f}\times_\Target \Id_{\total{\Triv'}}).
  \]
  We leave it to the reader to observe that this construction is independent of the auxiliary choices of \(\Triv'\) and~\(\Triv''\), descends to equivalence classes (compare Example~\ref{exa:normal_Triv_Target}), and is functorial.
\end{proof}

\begin{remark}
  \label{rem:forgetful_without_subtrivial}
  We can avoid the problem with subtriviality of vector bundles
  if we use another monoid \(\Vect_\Grd(\Tot)\) of
  \(\Grd\)\nb-vector bundles: instead of subtrivial
  \(\Grd\ltimes\Source\)\nb-vector bundles, use only those
  bundles that are direct summands in
  \(\Grd\ltimes\Source\)\nb-vector bundles pulled back
  from~\(\Target\).
\end{remark}

\subsection{Smooth normally non-singular maps}
\label{sec:smooth_normal_map}

Now we extend the discussion of smooth normally non-singular maps in Example~\ref{exa:smooth_normal_equivalence} to the equivariant case.  In general, neither existence nor uniqueness up to equivalence of normal factorisations for smooth maps is clear: we need additional technical assumptions.  Let \(\Source\) and~\(\Target\) be smooth \(\Grd\)\nb-manifolds.  We assume that there is a smooth normally non-singular map \(\Source\to\Base\) (see Theorem~\ref{the:normal_factorisation} for sufficient conditions) and that the tangent bundle over~\(\Target\) is subtrivial or that, for some reason, all \(\Grd\)\nb-vector bundles over~\(\Source\) are subtrivial.

\begin{definition}
  \label{def:smooth_isotopy_normal_map}
  A \emph{smooth isotopy} of normally non-singular maps from~\(\Source\) to~\(\Target\) is a normally non-singular \(\Grd\times[0,1]\)\nb-map \((\VB,\Triv,\hat{f})\) from \(\Source\times[0,1]\) to~\(\Target\times[0,1]\) such that~\(\VB\) has a fibrewise smooth structure and~\(\hat{f}\) is a fibrewise diffeomorphism onto its range as a map over~\(\Base\).
\end{definition}

We may reparametrise a smooth isotopy so that its higher derivatives at \(0\) and~\(1\) vanish.  This allows to glue together smooth isotopies, showing that smooth isotopy is an equivalence relation.  Combining smooth isotopy and lifting, we get the relation of \emph{smooth equivalence}.  This is an equivalence relation as well.

\begin{theorem}
  \label{the:normal_map_unique}
  Let \(\Source\) and~\(\Target\) be smooth \(\Grd\)\nb-manifolds.  Assume that there is a smooth normally non-singular map from~\(\Source\) to the object space~\(\Base\) of~\(\Grd\) and that \(\Tvert\Target\) is subtrivial or that all \(\Grd\)\nb-vector bundles over~\(\Source\) are subtrivial.  Then any smooth \(\Grd\)\nb-map from~\(\Source\) to~\(\Target\) is the trace of a smooth normally non-singular \(\Grd\)\nb-map, and two smooth normally non-singular \(\Grd\)\nb-maps from~\(\Source\) to~\(\Target\) are smoothly equivalent if and only if their traces are smoothly \(\Grd\)\nb-homotopic.
\end{theorem}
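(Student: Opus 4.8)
The plan is to prove existence of a normal factorisation first, then the equivalence-versus-homotopy classification. For existence I would argue as in the proof of the Factorisation Theorem~\ref{the:normal_factorisation}, the only difference being that the equivariant embedding of~$\Source$ into a $\Grd$\nb-vector bundle over~$\Base$ is now given rather than constructed. Concretely, from the hypothesised smooth normally non-singular map $(\VB_0,\Triv_0,\hat f_0)$ from~$\Source$ to~$\Base$, the composite $\varrho\defeq\hat f_0\circ\zers{\VB_0}\colon\Source\to\total{\Triv_0}$ is a smooth $\Grd$\nb-equivariant embedding, and hence so is $\varrho'\defeq(f,\varrho)\colon\Source\to\Target\times_\Base\total{\Triv_0}=\total{\Triv_0^\Target}$. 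The Tubular Neighbourhood Theorem~\ref{the:tubular_for_embedding} turns~$\varrho'$ into a smooth open embedding $\hat f\colon\total{\Normal_{\varrho'}}\opem\total{\Triv_0^\Target}$ with $\hat f\circ\zers{\Normal_{\varrho'}}=\varrho'$, and since $\proj{\Triv_0^\Target}\circ\varrho'=f$ the triple $(\Normal_{\varrho'},\Triv_0,\hat f)$ is a smooth normally non-singular map with trace~$f$. The one point needing the hypotheses is that~$\Normal_{\varrho'}$ be subtrivial; I would read this off from $\Normal_{\varrho'}\oplus\Tvert\Source\cong f^*(\Tvert\Target)\oplus\Triv_0^\Source$, which exhibits~$\Normal_{\varrho'}$ as a direct summand of a subtrivial bundle when~$\Tvert\Target$ is subtrivial, the other case being the remaining hypothesis.

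For the classification, the ``only if'' direction is immediate: liftings leave the trace unchanged, and a smooth isotopy of normally non-singular maps restricts to a smooth $\Grd$\nb-homotopy of the traces, so smoothly equivalent maps have smoothly homotopic traces. For the converse I would reduce to a same-trace statement. Given a smooth homotopy $H\colon\Source\times[0,1]\to\Target$ between $f_0$ and~$f_1$, apply the existence half to the smooth $\Grd\times[0,1]$\nb-map $(H,\pr_{[0,1]})\colon\Source\times[0,1]\to\Target\times[0,1]$; the hypotheses transfer to $\Grd\times[0,1]$ using Proposition~\ref{pro:vb_homotopy} and the contractibility of the interval, so one obtains a smooth normally non-singular $\Grd\times[0,1]$\nb-map with that trace, that is, a smooth isotopy between smooth normally non-singular maps whose traces are $f_0$ and~$f_1$. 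It therefore suffices to show that two smooth normally non-singular maps $\NM_0,\NM_1$ with the \emph{same} trace~$f$ are smoothly equivalent.

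For the same-trace statement I would normalise the data by liftings: lifting~$\NM_0$ along~$\Triv_1$, $\NM_1$ along~$\Triv_0$ and both once more along the resulting common bundle, and using Example~\ref{exa:normal_Triv_Target} in its evident smooth form to identify the two bundles over~$\Base$, I may assume that both maps involve a single $\Grd$\nb-vector bundle $\Triv'\defeq\Triv\oplus\Triv$ over~$\Base$ and that $\hat f_j\circ\zers{\VB_j}$ has the form $(f,\psi_j,0)\colon\Source\to\total{(\Triv')^\Target}$ for a fibrewise smooth section~$\psi_j$ of~$\Triv^\Source$. These two section-embeddings are smoothly isotopic: the family $t\mapsto(f,(1-t)\psi_0,t\psi_1)$ consists of embeddings, because $(f,\psi_j)$ is one for $j=0,1$, and joins $(f,\psi_0,0)$ to $(f,0,\psi_1)$, after which a rotation in the $\Triv\oplus\Triv$ plane joins $(f,0,\psi_1)$ to $(f,\psi_1,0)$. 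Moreover the two vector bundles over~$\Source$ agree up to isomorphism, since the fibrewise derivative of~$\hat f_j$ along the zero section identifies~$\VB_j$ with the normal bundle of its section-embedding, and these normal bundles are isomorphic by Proposition~\ref{pro:vb_homotopy}. Thus, modulo the $\Grd$\nb-vector bundle isomorphisms already permitted in Definition~\ref{def:normal_map_isomorphism}, $\hat f_0$ and~$\hat f_1$ are two tubular neighbourhood embeddings lying over smoothly isotopic section-embeddings.

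To finish I would prove and invoke an equivariant, fibrewise uniqueness of tubular neighbourhoods. Fixing a $\Grd$\nb-invariant complete fibrewise Riemannian metric on~$\total{(\Triv')^\Target}$, one shows that every tubular neighbourhood embedding over a fixed section-embedding is smoothly isotopic, through such embeddings, to the one determined by the exponential map of that metric; the isotopy is the scaling family $\phi_t(x,v)\defeq t^{-1}\phi(x,tv)$, with $\phi_0$ the fibrewise derivative along the zero section, applied to the transition diffeomorphism~$\phi$ between the two tubular neighbourhoods. Running the exponential construction along the isotopy of section-embeddings and invoking Proposition~\ref{pro:vb_homotopy} once more shows that the exponential tubular neighbourhoods over $(f,\psi_0,0)$ and $(f,\psi_1,0)$ are smoothly isotopic; concatenating these isotopies exhibits the liftings of $\NM_0$ and~$\NM_1$ as smoothly isotopic, so $\NM_0$ and~$\NM_1$ are smoothly equivalent. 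I expect this last step to be the main obstacle, being the non-compact, fibrewise, groupoid-equivariant version of the classical uniqueness of tubular neighbourhoods; but it should need no tool beyond those already assembled for the proof of Theorem~\ref{the:tubular_for_embedding} (invariant complete fibrewise metrics, equivariant exponential maps, and a shrinking/Alexander-type isotopy together with the averaging argument that restores equivariance), so I would isolate it as a lemma proved along exactly those lines and quote it here.
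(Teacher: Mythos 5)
Your proposal is correct and follows essentially the same route as the paper's proof: existence via the Tubular Neighbourhood Theorem applied to the embedding $(f,\varrho)$ into $\total{\Triv_0^\Target}$ (with the subtriviality of the normal bundle read off from $\Normal\oplus\Tvert\Source\cong f^*(\Tvert\Target)\oplus\Triv_0^\Source$), then reduction of the classification to the same-trace case by lifting a smooth homotopy of traces to a smooth isotopy of normal factorisations. For two factorisations with the same trace, you lift to a common bundle over~$\Base$ and run a convex-interpolation isotopy of the section-embeddings $\tilde f_j=\hat f_j\circ\zers{\VB_j}$, identify the vector bundles via Proposition~\ref{pro:vb_homotopy}, and finish with the equivariant uniqueness of tubular neighbourhoods by an Alexander-type isotopy -- the same three ingredients as in the paper, merely packaged with the rotation step made explicit instead of folded into the single formula $(\source,t)\mapsto\bigl((1-t)\tilde f_1(\source),t\tilde f_2(\source),t\bigr)$.
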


Example~\ref{exa:smooth_manifold_diagonal} shows that Theorem~\ref{the:normal_map_unique} fails for non-smooth normally non-singular maps: for a smooth manifold~\(\Source\), there may be several non-equivalent normally non-singular maps \(\Source\to\Source\times\Source\) whose trace is the diagonal embedding.

The technical conditions in the theorem are necessary for a good theory.  If there is no smooth normally non-singular \(\Grd\)\nb-map \(\Source\to\Base\), then there is no smooth normally non-singular map whose trace is the anchor map \(\Source\to\Base\), which is a smooth \(\Grd\)\nb-map.  If the tangent bundle~\(\Tvert\Target\) is not subtrivial, then the diagonal embedding \(\Target\to\Target\times_\Base\Target\) is not the trace of a smooth normally non-singular map because its stable normal bundle would have to be~\(\Tvert\Tot\).  If~\(\VB\) is a \(\Grd\)\nb-vector bundle over~\(\Source\) that is not subtrivial, then the zero section \(\Source\to\total{\VB}\) is not a smooth normally non-singular map because its stable normal bundle would have to be~\(\VB\).

\begin{proof}[Proof of Theorem~\textup{\ref{the:normal_map_unique}}]
  Lifting does not alter the trace of a normally non-singular map, and a smooth isotopy of normally non-singular maps provides a smooth homotopy between their traces.  The main point is that, conversely, any smooth map from~\(\Source\) to~\(\Target\) is the trace of a smooth normally non-singular map and that smooth normally non-singular maps with smoothly homotopic traces are smoothly equivalent.

  Let \((\VB,\Triv,\hat{g})\) be a smooth normally non-singular map from~\(\Source\) to~\(\Base\).  Then \(g\defeq \hat{g}\circ\zers{\VB}\colon \Source\to\total{\Triv}\) is a smooth embedding.  Let \(f\colon \Source\to\Target\) be a smooth \(\Grd\)\nb-map.  Then we get another smooth embedding \((f,g)\colon \Source\to\Target\times_\Base\total{\Triv} = \total{\Triv^\Target}\).  By the Tubular Neighbourhood Theorem~\ref{the:tubular_for_embedding}, it extends to a smooth open embedding on the fibrewise normal bundle of \((f,g)\).  This normal bundle is contained in the pull-back of~\(\Tvert\Target\), so that our assumptions ensure that it is subtrivial.  As a result, we get a smooth normally non-singular map from~\(\Source\) to~\(\Target\) with trace~\(f\).  Thus any smooth map \(f\colon \Source\to\Target\) is the trace of a smooth normally non-singular map.

  Similarly, any fibrewise smooth map \(F = (F_t)_{t\in[0,1]}\colon \Source\times[0,1] \to \Target\times[0,1]\) is the trace of a smooth normally non-singular map \(\Source\times[0,1] \to \Target\times[0,1]\).  Thus a smooth homotopy between two smooth normally non-singular maps lifts to a smooth isotopy of normally non-singular maps.  It remains to check that two smooth normally non-singular maps with the \emph{same} trace are smoothly equivalent.

  Let \(\NM_j=(\VB_j,\Triv_j,\hat{f}_j)\) for \(j=1,2\) be smooth normally non-singular maps with the same trace.  Lifting~\(\NM_1\) along~\(\Triv_2\) and~\(\NM_2\) along~\(\Triv_1\), we can arrange that both involve the same \(\Grd\)\nb-vector bundle \(\Triv\defeq\Triv_1\oplus\Triv_2\) over~\(\Base\).  Let \(\tilde{f}_j\defeq \hat{f}_j\circ \zers{\VB_j}\colon \Source\mono\total{\VB_j}\opem \total{\Triv^\Target}\) for \(j=1,2\).  These are smooth embeddings.  They are \(\Grd\)\nb-equivariantly homotopic via the \(\Grd\ltimes [0,1]\)\nb-equivariant smooth embedding
  \[
  \tilde{f}\colon
  \Source\times[0,1]\to\total{\Triv^\Target}\times[0,1], \qquad
  (\source,t)\mapsto \bigl((1-t)\tilde{f}_1(\source),t\tilde{f}_2(\source),t\bigr).
  \]
  Its normal bundle restricts to \(\VB_1\) and~\(\VB_2\) at \(0\) and~\(1\); hence \(\VB_1\cong \VB_2\) by Proposition~\ref{pro:vb_homotopy}.  The Tubular Neighbourhood Theorem~\ref{the:tubular_for_embedding} applied to~\(\tilde{f}\) yields a smooth isotopy between \(\NM_j'=(\VB_j,\hat{f}'_j,\Triv_j)\) for \(j=1,2\) with \(\hat{f}'_1\circ \zers{\VB_1} = \tilde{f}_1\) and \(\hat{f}'_2\circ \zers{\VB_2} = \tilde{f}_2\).

  It remains to show that~\(\NM_j\) is smoothly isotopic to~\(\NM_j'\) for \(j=1,2\).  Equivalently, we must show that \(\NM_1\) and~\(\NM_2\) are smoothly isotopic if \(\VB_1=\VB_2\), \(\Triv_1=\Triv_2\), and \(\tilde{f}_1=\tilde{f}_2\), where \(\tilde{f}_j \defeq \hat{f}_j\circ\zers{\VB_j}\).  This means that the Tubular Neighbourhood of an embedded submanifold is unique up to isotopy.  The proof in \cite{Hirsch:Diff_Top}*{p.~113f} carries over to the equivariant case almost literally.  We may equip~\(\VB\) with a fibrewise smooth inner product by Proposition~\ref{pro:vb_inner_product}, and we can find a \(\Grd\)\nb-invariant fibrewise smooth function \(\varrho\colon \Source\to[0,1]\) such that \(\hat{f}_2(\total{\VB})\) contains \(\hat{f}_1(\vb)\) for all \(\vb\in\total{\VB}\) with \(\norm{\vb}\le\varrho\bigl(\proj{\VB}(\vb)\bigr)\), where \(\proj{\VB}\colon \total{\VB}\epi\Source\) is the bundle projection.  The map~\(\hat{f}_1\) is isotopic to the map
  \[
  \vb\mapsto \hat{f}_1\Bigl(\vb\cdot \varrho\bigl(p(\vb)\bigr) \bigm/ \sqrt{1+\norm{\vb}^2}\Bigr),
  \]
  whose range is contained in \(\hat{f}_2(\total{\VB})\) by construction.  Hence we may assume without loss of generality that \(\hat{f}_1(\total{\VB})\subseteq \hat{f}_2(\total{\VB})\), so that \(\hat{f}_1 = \hat{f}_2\circ \Psi\) for a smooth map \(\Psi\colon \total{\VB}\opem\total{\VB}\) that is a diffeomorphism onto an open subset of~\(\total{\VB}\) and restricts to the identity map between the zero sections.  The derivative of~\(\Psi\) on the zero section restricts to a vector bundle automorphism~\(\Psi_0\) of \(\VB\subseteq \Tvert\VB\).  An Alexander homotopy as in \cite{Hirsch:Diff_Top} shows that~\(\Psi\) is isotopic to~\(\Psi_0\).  Thus \((\VB,\hat{f}_1,\Triv)\) is isotopic to \((\VB,\hat{f}_2\circ \Psi_0,\Triv)\), and the latter is isomorphic to \((\VB,\hat{f}_2,\Triv)\) via~\(\Psi_0\).  This finishes the proof.
\end{proof}

\section{Oriented normally non-singular maps and their wrong-way maps}
\label{sec:orientations}

A normally non-singular map only induces a map on \(\K\)\nb-theory or \(\KO\)\nb-theory if it comes with additional orientation information, which depends on the choice of a cohomology theory.  In this section, we first fix our notation regarding equivariant cohomology theories; then we define oriented normally non-singular maps and let them act on the appropriate cohomology theory by wrong-way maps.  Our main examples are equivariant \(\K\)\nb-theory and equivariant \(\KO\)\nb-theory.  If~\(\Grd\) is a group, then Bredon cohomology provides equivariant versions of cohomology as well.

Equivariant representable \(\K\)\nb-theory for proper actions of locally compact groupoids with a Haar system on locally compact spaces is studied in~\cite{Emerson-Meyer:Equivariant_K}.  The treatment of equivariant \(\K\)\nb-theory in~\cite{Emerson-Meyer:Equivariant_K} carries over literally to equivariant \(\KO\)\nb-theory.  We do not want to discuss here how to extend this theory to more general \(\Grd\)\nb-spaces.  We do not need our theories to be defined for all spaces~-- all locally compact spaces is enough.  When we specialise to equivariant \(\K\)\nb-theory in the following, we assume all groupoids and spaces to be locally compact to ensure that it is defined.

\subsection{Equivariant cohomology theories}
\label{sec:equiv_cohomology}

Let~\(\Grd\) be a numerably proper groupoid.  Let \(\Coh^* = (\Coh^n)_{n\in\Z}\) be a sequence of contravariant functors from pairs of \(\Grd\)\nb-spaces to the category of Abelian groups (or some other Abelian category).  We shall assume that they have the following properties:
\begin{enumerate}[label=(\roman{*})]
\item \label{Coh_1} \(\Coh^n\) is invariant under \(\Grd\)\nb-equivariant homotopies for all \(n\in\Z\).

\item \label{Coh_2} For each pair of \(\Grd\)\nb-spaces \((\Tot,\Other)\), there is a natural long exact sequence
  \[
  \dotsb
  \to \Coh^{n+1}(\Tot)
  \to \Coh^{n+1}(\Other)
  \to \Coh^n(\Tot,\Other)
  \to \Coh^n(\Tot)
  \to \Coh^n(\Other)
  \to \dotsb;
  \]
  this implies more general long exact sequences
  \[
  \dotsb
  \to \Coh^{n+1}(\Tot,\Third)
  \to \Coh^{n+1}(\Other,\Third)
  \to \Coh^n(\Tot,\Other)
  \to \Coh^n(\Tot,\Third)
  \to \Coh^n(\Other,\Third)
  \to \dotsb
  \]
  for nested closed subsets \(\Third\subseteq\Other\subseteq\Tot\).

\item \label{Coh_3} Let \(\varphi\colon (\Source,A)\to (\Target,B)\) be a map of pairs of \(\Grd\)\nb-spaces with \(\Target = \Source\cup_A B\).  Then~\(\varphi\) induces isomorphisms \(\Coh^*(\Target,B) \congto \Coh^*(\Source,A)\).

\item \label{Coh_4} Let \(\Other\subseteq\Tot\) be a closed \(\Grd\)\nb-invariant subset.  Let~\(C_\Other\) be the directed set of closed \(\Grd\)\nb-invariant neighbourhoods of~\(\Other\).  Then
  \[
  \Coh^*(\Tot,\Other) \cong \varinjlim_{\Third\in C_\Other}
  \Coh^*(\Tot,\Third);
  \]
  that is, any class in \(\Coh^*(\Tot,\Other)\) lifts to \(\Coh^*(\Tot,\Third)\) for some closed \(\Grd\)\nb-invariant neighbourhood~\(\Third\) of~\(\Other\), and if two classes in \(\Coh^*(\Tot,\Third)\) become equal in \(\Coh^*(\Tot,\Other)\), then they already become equal in \(\Coh^*(\Tot,\Third')\) for some closed \(\Grd\)\nb-invariant neighbourhood~\(\Third'\) of~\(\Other\) contained in~\(\Third\).

\item \label{Coh_5} \(\Coh^*\) is multiplicative, that is, equipped with natural associative and graded commutative exterior product operations
  \[
  \otimes_\Base\colon \Coh^i(\Tot_1,\Other_1) \otimes
  \Coh^j(\Tot_2,\Other_2) \to
  \Coh^{i+j}(\Tot_1\times_\Base\Tot_2,
  \Other_1\times_\Base\Tot_2 \cup \Tot_1\times_\Base\Other_2)
  \]
  for all \(i,j\in\Z\), which are compatible with the boundary maps in the long exact sequences for pairs.  (The exterior product operation is part of the data of~\(\Coh\).)

\item \label{Coh_6} There is \(1\in\Coh^0(\Base,\emptyset)\) such that \(1\otimes_\Base \Kclass=\Kclass= \Kclass\otimes_\Base 1\) for all pairs of \(\Grd\)\nb-spaces \((\Tot,\Other)\) and all \(\Kclass\in\Coh^i(\Tot,\Other)\).
\end{enumerate}

At least if we restrict attention to second countable, locally compact spaces and groupoids, then equivariant \(\K\)- and \(\KO\)\nb-theory have these properties, as shown in~\cite{Emerson-Meyer:Equivariant_K}.  The excision statement in~\cite{Emerson-Meyer:Equivariant_K} is weaker than~\ref{Coh_3}, but the more general statement follows for the same reasons.  Property~\ref{Coh_4} is not stated explicitly in~\cite{Emerson-Meyer:Equivariant_K}, but it follows immediately from the description by maps to Fredholm operators.  It is equivalent to Lemma~\ref{lem:excision_Coh_supported} below.

Let \(\Coh^*(\Tot,\Other) \defeq \bigoplus_{j\in\Z} \Coh^j(\Tot,\Other)\).  The composite map
\[
\Coh^*(\Tot,\Other) \otimes \Coh^*(\Tot,\Other)
\xrightarrow{\otimes_\Base}
\Coh^*(\Tot\times_\Base\Tot, \Other\times_\Base\Tot\cup
\Tot\times_\Base\Other)
\xrightarrow{\Delta^*}
\Coh^*(\Tot,\Other),
\]
where~\(\Delta\) is the diagonal map \(\Tot\to \Tot\times_\Base\Tot\), turns \(\Coh^*(\Tot,\Other)\) into a graded ring.  For \(\Other=\emptyset\), this ring is unital with unit element \(\anchor^*(1)\), where \(\anchor\colon \Tot\to\Base\) is the anchor map.

In order to define oriented vector bundles or oriented correspondences, we need a variant of~\(\Coh^*\) with built-in support conditions.

Let \(f\colon \Source\to\Target\) be a \(\Grd\)\nb-map.  A subset \(A\subseteq\Source\) is called \emph{\(\Target\)\nb-compact} if \(f|_A\colon A\to\Target\) is proper (see Definition~\ref{def:proper_map}), and \emph{relatively \(\Target\)\nb-compact} if its closure is \(\Target\)\nb-compact.  We let \(\Coh^j_\Target(\Source)\) be the inductive limit of the relative groups \(\Coh^j(\Source,\Source\setminus A) \cong \Coh^j(\cl{A},\bd A)\), where~\(A\) runs through the directed set of open, relatively \(\Target\)\nb-compact subsets of~\(\Source\).

A \(\Grd\)-map \(\Source\to \Source'\) between spaces over~\(\Target\) (not necessarily respecting the projections to~\(\Target\)) induces a map \(f^*\colon \Coh^*_\Target(\Source') \to \Coh^*_\Target(\Source)\) if \(f^{-1}(A) \subset \Source\) is \(\Target\)\nb-compact whenever \(A \subset \Source'\) is \(\Target\)\nb-compact.  If~\(f\) \emph{is} compatible with the projections, this is equivalent to~\(f\) being a proper map.

If~\(\Coh\) is representable \(\K\)\nb-theory and all spaces and groupoids involved are locally compact, then \(\Coh^*_\Target(\Source)\) as defined above agrees with the equivariant \(\K\)\nb-theory of~\(\Source\) with \(\Target\)\nb-compact support by \cite{Emerson-Meyer:Equivariant_K}*{Theorem 4.19}.

\begin{lemma}
  \label{lem:excision_Coh_supported}
  Let \(A\subseteq\Source\) be a closed \(\Grd\)\nb-invariant subspace.  Then there is a natural long exact sequence
  \[
  \dotsb
  \to \Coh^{n+1}_\Target(\Source)
  \to \Coh^{n+1}_\Target(A)
  \to \Coh^n_\Target(\Source\setminus A)
  \to \Coh^n_\Target(\Source)
  \to \Coh^n_\Target(A)
  \to \dotsb.
  \]
\end{lemma}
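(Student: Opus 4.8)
The plan is to build the sequence out of the long exact sequences of triples supplied by axiom~\ref{Coh_2}, to identify their terms by excision, and then to pass to a filtered colimit over supports. Recall that $\Coh^n_\Target(\Source)=\varinjlim_B\Coh^n(\Source,\Source\setminus B)$, the colimit running over the directed set of open, $\Grd$\nb-invariant, relatively $\Target$\nb-compact subsets $B\subseteq\Source$, and likewise for~$A$ and for~$\Source\setminus A$. For each such~$B$, the sets $B\cap A$ and $B\setminus A=B\cap(\Source\setminus A)$ are open, $\Grd$\nb-invariant, and — since~$A$ is closed and relative $\Target$\nb-compactness passes to closed invariant subsets — relatively $\Target$\nb-compact in~$A$ and in~$\Source\setminus A$, respectively. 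Applying axiom~\ref{Coh_2} to the triple $\Source\setminus B\subseteq A\cup(\Source\setminus B)\subseteq\Source$ of nested closed $\Grd$\nb-invariant subsets of~$\Source$ gives an exact sequence
\[
\dotsb\to\Coh^{n+1}(\Source,\Source\setminus B)\to\Coh^{n+1}\bigl(A\cup(\Source\setminus B),\Source\setminus B\bigr)\to\Coh^n\bigl(\Source,A\cup(\Source\setminus B)\bigr)\to\Coh^n(\Source,\Source\setminus B)\to\dotsb,
\]
natural in~$B$.

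Next I identify the two middle types of term by excision. The space $A\cup(\Source\setminus B)$ is the union of the closed sets $A$ and $\Source\setminus B$ along the closed intersection $A\setminus B$, so the excision isomorphism already used in the definition of $\Coh^*_\Target$ (a special case of axiom~\ref{Coh_3}) identifies $\Coh^*(A\cup(\Source\setminus B),\Source\setminus B)\cong\Coh^*(A,A\setminus B)$, a term of $\Coh^*_\Target(A)$ indexed by $B\cap A$. For the third term, write $A\cup(\Source\setminus B)=\Source\setminus D$ with $D\defeq B\setminus A$, an open subset of~$\Source$ contained in the open set $\Source\setminus A$. By axiom~\ref{Coh_4}, $\Coh^n(\Source,\Source\setminus D)$ is the colimit of $\Coh^n(\Source,\Source\setminus D')$ over open $\Grd$\nb-invariant $D'$ with $\cl{D'}\subseteq D$; for such~$D'$ the closure $\cl{D'}$ is disjoint from~$A$, so the closures and frontiers of~$D'$ computed in~$\Source$ and in the open subspace $\Source\setminus A$ agree, and the excision isomorphisms give
\[
\Coh^n(\Source,\Source\setminus D')\cong\Coh^n(\cl{D'},\bd D')\cong\Coh^n\bigl(\Source\setminus A,(\Source\setminus A)\setminus D'\bigr).
\]
Taking the colimit over~$D'$ and using axiom~\ref{Coh_4} once more inside~$\Source\setminus A$, we get $\Coh^n\bigl(\Source,A\cup(\Source\setminus B)\bigr)\cong\Coh^n\bigl(\Source\setminus A,(\Source\setminus A)\setminus D\bigr)$, a term of $\Coh^*_\Target(\Source\setminus A)$. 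So for every~$B$ we obtain a sequence, exact and natural in~$B$,
\[
\dotsb\to\Coh^{n+1}(\Source,\Source\setminus B)\to\Coh^{n+1}(A,A\setminus B)\to\Coh^n\bigl(\Source\setminus A,(\Source\setminus A)\setminus B\bigr)\to\Coh^n(\Source,\Source\setminus B)\to\Coh^n(A,A\setminus B)\to\dotsb.
\]

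Passing to the filtered colimit over~$B$ (which is exact) yields the asserted sequence, once one checks that $\{B\cap A\}_B$ and $\{B\setminus A\}_B$ are cofinal among the open $\Grd$\nb-invariant relatively $\Target$\nb-compact subsets of~$A$ and of~$\Source\setminus A$. For $\Source\setminus A$ this is immediate, since any such $D\subseteq\Source\setminus A$ is also open, $\Grd$\nb-invariant and relatively $\Target$\nb-compact in~$\Source$, and then $B=D$ works; for~$A$ one extends a given $C=A\cap V$ by intersecting~$V$ with a $\Grd$\nb-invariant relatively $\Target$\nb-compact open neighbourhood of the $\Target$\nb-compact set $\cl C$ in~$\Source$. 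Naturality of the resulting sequence with respect to the maps $f^*$ discussed before the lemma is inherited from that of the triple sequences, the excision isomorphisms and axiom~\ref{Coh_4}. The step that I expect to need the most care is the identification of the quotient term $\Coh^n(\Source,A\cup(\Source\setminus B))$ with a $\Target$\nb-supported cohomology group of~$\Source\setminus A$: excision is not directly available there because~$A$ need not lie in the interior of $A\cup(\Source\setminus B)$, and axiom~\ref{Coh_4} is exactly what lets one shrink~$B$ away from~$A$ to make it so. The accompanying cofinality statement for~$A$ — where the point-set topology of the ambient spaces and the standing properness and paracompactness hypotheses really enter — is a second, smaller obstacle.
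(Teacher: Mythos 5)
Your approach is genuinely different from the paper's: the paper first reduces to the case where \(\Source\) itself is \(\Target\)\nb-compact (exhausting by \(\Target\)\nb-compact closed sets via excision), obtains four of the five terms directly from the ordinary pair exact sequence for \((\Source,A)\), and then proves a single identification \(\Coh^*_\Target(\Source\setminus A)\cong\Coh^*(\Source,A)\). You instead work directly with the triple sequence for \(\Source\setminus B\subseteq A\cup(\Source\setminus B)\subseteq\Source\) and take a filtered colimit over the support parameter \(B\). This is a reasonable plan, but your identification of the middle term has a genuine error.

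The claim \(\Coh^n(\Source,A\cup(\Source\setminus B))\cong\Coh^n\bigl(\Source\setminus A,(\Source\setminus A)\setminus(B\setminus A)\bigr)\), asserted for each fixed \(B\), is false. The intended justification conflates two distinct applications of axiom~\ref{Coh_4}. Writing \(D=B\setminus A\), axiom~\ref{Coh_4} applied in \(\Source\) expresses \(\Coh^n(\Source,\Source\setminus D)\) as a colimit over open invariant \(D'\) with \(\cl{D'}^\Source\subseteq D\): since \(\Source\setminus D\) contains \(A\), such \(D'\) are forced to stay a closed distance away from \(A\). Axiom~\ref{Coh_4} applied inside \(\Source\setminus A\), by contrast, expresses \(\Coh^n\bigl(\Source\setminus A,(\Source\setminus A)\setminus D\bigr)\) as a colimit over \(D'\) with only \(\cl{D'}^{\Source\setminus A}\subseteq D\), and these \(D'\) are allowed to accumulate against \(A\). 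The first index set is not cofinal in the second, so the two colimits need not agree, and in fact they differ. Take \(\Grd\) trivial, \(\Source=\R\), \(A=\{0\}\), \(\Target\) a point, \(B=(-1,1)\). Then \(A\cup(\Source\setminus B)=(-\infty,-1]\cup\{0\}\cup[1,\infty)\), which has three components, and the long exact sequence of the pair gives a nonzero relative group in the appropriate degree (its rank is two). On the other side, \((\Source\setminus A)\setminus(B\setminus A)=(-\infty,-1]\cup[1,\infty)\) is a deformation retract of each component of \(\Source\setminus A\), so the relative group vanishes identically. The "sequence exact and natural in \(B\)" that you write down after this substitution therefore does not exist: the groups you inserted are not isomorphic to the ones in the triple sequence, and no maps connect them.

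The overall strategy is salvageable, but not by a term-by-term replacement. What is true is only the identification \emph{after} the colimit: \(\varinjlim_B\Coh^n\bigl(\Source,A\cup(\Source\setminus B)\bigr)\cong\Coh^n_\Target(\Source\setminus A)\). To see this, expand each term by axiom~\ref{Coh_4} in \(\Source\) (so the auxiliary \(D'\) stay away from \(A\), and excision applies cleanly, as in your argument), and then observe that the resulting double colimit over pairs \((B,D')\) with \(\cl{D'}^\Source\subseteq B\setminus A\) has cofinal index set exactly the open \(\Grd\)\nb-invariant \(D'\) whose closure in \(\Source\) is \(\Target\)\nb-compact and disjoint from \(A\). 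A short properness argument (a proper map from a subspace factoring through a Hausdorff ambient map has closed image) shows these are precisely the open \(\Grd\)\nb-invariant subsets of \(\Source\setminus A\) that are relatively \(\Target\)\nb-compact in \(\Source\setminus A\), so the double colimit computes \(\Coh^n_\Target(\Source\setminus A)\). This is also where your cofinality claim about \(\{B\setminus A\}_B\) needs to be restated: those sets are generally not relatively \(\Target\)\nb-compact in \(\Source\setminus A\), so they do not form a cofinal subfamily of the index set defining \(\Coh^n_\Target(\Source\setminus A)\); rather, the smaller \(D'\) coming from axiom~\ref{Coh_4} do.
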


Hence there is no need for a relative version of~\(\Coh^*_\Target\), and \(\Coh^*_\Target(\blank)\) is covariantly functorial for open embeddings.

\begin{proof}
  Any relatively \(\Target\)\nb-compact open subset of \(\Source\), \(A\) or \(\Source\setminus A\) is contained in some \(\Target\)\nb-compact closed subset \(B\subseteq\Source\).  By excision, all groups we are dealing with are inductive limits of corresponding subgroups where we replace~\(\Source\) by~\(B\).  Hence we may assume without loss of generality that~\(\Source\) itself is \(\Target\)\nb-compact, that is, the map \(\Source\to\Target\) is proper.  Then its restriction to the closed subset~\(A\) is proper as well.  Thus \(\Coh^*_\Target(\Source)\cong \Coh^*(\Source)\) and \(\Coh^*_\Target(A)\cong \Coh^*(A)\), and a closed subset of \(\Source\setminus A\) is \(\Target\)\nb-compact if and only if it remains closed in~\(\Source\).  It remains to identify \(\Coh^*_\Target(\Source\setminus A)\) with \(\Coh^*(\Source,A)\).

  By definition, \(\Coh^*_\Target(\Source\setminus A)\) is the inductive limit of \(\Coh^*(\Source\setminus A,(\Source\setminus A)\setminus \Third)\), where~\(\Third\) runs through the directed set of open, \(\Grd\)\nb-invariant, relatively \(\Target\)\nb-compact subsets of~\(\Source\).  Being relatively \(\Target\)\nb-compact simply means \(\cl{\Third}\cap A=\emptyset\).  Thus~\(\Third\) is the complement of a closed \(\Grd\)\nb-invariant neighbourhood of~\(A\).  Since~\(\Third\) has the same closures in \(\Source\) and~\(\Source\setminus A\), we may use excision twice to rewrite
  \[
  \Coh^*(\Source\setminus A,(\Source\setminus A)\setminus \Third)
  \cong \Coh^*(\cl{\Third},\bd \Third)
  \cong \Coh^*(\Source,\Source\setminus \Third).
  \]
  Finally, we take the inductive limit where \(\Source\setminus \Third\) runs through the directed set of closed \(\Grd\)\nb-invariant neighbourhoods of~\(A\).  This agrees with \(\Coh^*(\Source,A)\) by Property~\ref{Coh_4} on page~\pageref{Coh_4}.
\end{proof}

\begin{definition}
  \label{def:orientation}
  Let~\(\VB\) be a \(d\)\nb-dimensional \(\Grd\)\nb-vector bundle over a \(\Grd\)\nb-space~\(\Tot\).  An \emph{\(\Coh\)\nb-orientation} for~\(\VB\) is a class \(\tau\in\Coh^d_\Tot(\total{\VB})\) such that for each \(\Grd\)\nb-map \(f\colon \Other\to\Tot\), multiplication with \(f^*(\tau)\in\Coh^*_\Other(\total{f^*\VB})\) induces an isomorphism \(\Coh_\Tot^*(\Other)\to\Coh_\Tot^*(\total{f^*\VB})\) (shifting degrees by~\(d\), of course).
\end{definition}

If~\(\Coh\) is representable \(\K\)\nb-theory, then a \(\K\)\nb-orientation in the usual sense (specified by a complex spinor bundle or by a principal \(\Spinc\)-bundle) is one in the sense of Definition~\ref{def:orientation}, and the isomorphism \(\Coh^*(\Tot)\to\Coh_\Tot^*(\VB)\) is a variant of the familiar Thom isomorphism for equivariant \(\K\)\nb-theory.

In the non-equivariant case, an orientation is defined by requiring~\(\tau\) to induce fibrewise cohomology isomorphisms \(\Coh^*\bigl(\{\tot\}\bigr) \cong \Coh_{\{\tot\}}^*(\VB_\tot)\) for all \(\tot\in\Tot\).  It is not clear whether this characterisation extends to the equivariant case.  As we shall see, the definition above ensures that orientations have the expected properties.

Let~\(\VB\) be a \(\Grd\)\nb-vector bundle over~\(\Tot\) with a \(\Grd\)\nb-invariant inner product.  If \(A\subseteq\VB\) is \(\Grd\)\nb-invariant and \(\Tot\)\nb-compact, then there is a \(\Grd\)\nb-invariant function \(\varrho\colon \Tot\to(0,\infty)\) with
\[
A\subseteq \Disk_\varrho(\VB) \defeq
\bigl\{ \vb\in\VB \bigm| \norm{\vb} \le
\varrho\bigl(\proj{\VB}(\vb)\bigr)
\bigr\}.
\]
Hence an \(\Coh\)\nb-orientation for~\(\VB\) will be supported in \(\Disk_\varrho(\VB)\) for some function~\(\varrho\) as above.  Since rescaling by~\(\varrho\) is homotopic to the identity map, we can find another representative that is supported in the closed unit ball~\(\Disk\VB\) of~\(\VB\).  Furthermore, since~\(\total{\VB}\) is homeomorphic to the open unit ball, the proof of Lemma~\ref{lem:excision_Coh_supported} yields
\begin{equation}
  \label{eq:Coh_support_VB}
  \Coh^*_\Tot(\total{\VB}) \cong \Coh^*(\Disk\VB,\Sphere\VB),
\end{equation}
where \(\Disk\VB\) and~\(\Sphere\VB\) denote the unit disk and unit sphere bundles in~\(\VB\).

\begin{lemma}
  \label{lem:orientation_Thom_iso}
  Let~\(\VB\) be an \(\Coh\)\nb-oriented \(\Grd\)\nb-vector bundle over~\(\Tot\) and let \(\varphi\colon \Tot\to\Other\) be a \(\Grd\)\nb-map.  Then exterior product with~\(\tau\) induces an isomorphism \(\Coh^*_\Other(\Tot) \cong \Coh^*_\Other(\total{\VB})\).
\end{lemma}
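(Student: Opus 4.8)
The plan is to realise the map in the statement as a filtered colimit of honest Thom isomorphisms supplied by Definition~\ref{def:orientation}; the only real work is to bridge the gap between $\Other$\nb-compact and $\Tot$\nb-compact support conditions, and this I would do by a five lemma argument over each compact piece. First I would make the map explicit. Choose a $\Grd$\nb-invariant inner product on~$\VB$ (Proposition~\ref{pro:vb_inner_product}) and, by the rescaling argument preceding~\eqref{eq:Coh_support_VB}, a representative of~$\tau$ supported in the closed unit disk bundle~$\Disk\VB$. Then $\xi\mapsto\proj{\VB}^*(\xi)\cdot\tau$ is a well-defined homomorphism $\Coh^*_\Other(\Tot)\to\Coh^*_\Other(\total\VB)$: if~$\xi$ is supported in a closed, $\Grd$\nb-invariant, $\Other$\nb-compact subset $C\subseteq\Tot$, then $\proj{\VB}^*(\xi)\cdot\tau$ is supported in $\Disk(\VB|_C)$, which maps properly to~$\Other$ because its fibres are compact and $C\to\Other$ is proper. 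This is the ``exterior product with~$\tau$'' of the statement.

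Next I would present both sides as filtered colimits over the directed set~$\mathcal A$ of open, $\Grd$\nb-invariant, relatively $\Other$\nb-compact subsets $A\subseteq\Tot$. By Property~\ref{Coh_4} and excision (Property~\ref{Coh_3}) one has $\Coh^*_\Other(\Tot)\cong\varinjlim_{A\in\mathcal A}\Coh^*(\cl A,\bd A)$, where each of $\cl A\to\Other$ and $\bd A\to\Other$ is proper. Using the inner product together with the rescaling/excision argument of~\eqref{eq:Coh_support_VB}, the open unit disk bundles of the $\VB|_A$ form a cofinal family of relatively $\Other$\nb-compact open subsets of~$\total\VB$, which gives $\Coh^*_\Other(\total\VB)\cong\varinjlim_{A\in\mathcal A}\Coh^*\bigl(\Disk(\VB|_{\cl A}),\,\Sphere(\VB|_{\cl A})\cup\Disk(\VB|_{\bd A})\bigr)$; by construction the map of the lemma is the colimit of the evident stagewise maps, so it suffices to treat a single~$A$.

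Fix~$A$ and write $C\defeq\cl A$, $B\defeq\bd A$. Applying Definition~\ref{def:orientation} to the $\Grd$\nb-maps $C\hookrightarrow\Tot$ and $B\hookrightarrow\Tot$ --- where, over a closed subspace, $\Tot$\nb-compact supports coincide with absolute, respectively $C$\nb- and $B$\nb-compact, supports, so that~\eqref{eq:Coh_support_VB} applies --- shows that $\cdot\,\tau|_C$ and $\cdot\,\tau|_B$ are isomorphisms $\Coh^*(C)\congto\Coh^*\bigl(\Disk(\VB|_C),\Sphere(\VB|_C)\bigr)$ and $\Coh^*(B)\congto\Coh^*\bigl(\Disk(\VB|_B),\Sphere(\VB|_B)\bigr)$. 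I would then compare the long exact sequence of the pair $(C,B)$ with that of the triple $\Sphere(\VB|_C)\subseteq\Sphere(\VB|_C)\cup\Disk(\VB|_B)\subseteq\Disk(\VB|_C)$ (Property~\ref{Coh_2}), identifying $\Coh^*\bigl(\Sphere(\VB|_C)\cup\Disk(\VB|_B),\Sphere(\VB|_C)\bigr)\cong\Coh^*\bigl(\Disk(\VB|_B),\Sphere(\VB|_B)\bigr)$ by excision. The maps~$\cdot\,\tau$ form a morphism of these two exact sequences --- commutativity with the connecting maps is the compatibility of the exterior product with boundary homomorphisms, Property~\ref{Coh_5} --- and they are isomorphisms on every term coming from~$C$ or~$B$, so the five lemma forces the remaining map $\Coh^*(C,B)\to\Coh^*\bigl(\Disk(\VB|_C),\Sphere(\VB|_C)\cup\Disk(\VB|_B)\bigr)$ to be an isomorphism as well. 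Passing to the colimit over~$\mathcal A$ finishes the proof.

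I expect the genuine difficulty to be bookkeeping rather than conceptual: getting the two colimit presentations to match precisely --- especially the description of $\Coh^*_\Other(\total\VB)$ via disk bundles over~$\cl A$ relative to the sphere bundle together with the disk bundle over~$\bd A$ --- and verifying the naturality needed for the five lemma, above all that the Thom maps commute with the connecting homomorphisms of the long exact sequences.
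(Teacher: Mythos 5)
Your proposal matches the paper's proof essentially step for step: both present $\Coh^*_\Other(\Tot)$ and $\Coh^*_\Other(\total\VB)$ as the same filtered colimits over relatively $\Other$\nb-compact opens, both apply Definition~\ref{def:orientation} to the inclusions $\cl\Third\hookrightarrow\Tot$ and $\bd\Third\hookrightarrow\Tot$ to obtain isomorphisms on the absolute terms, and both compare the long exact sequence of $(\cl\Third,\bd\Third)$ with that of the triple $\Sphere\VB|_{\cl\Third}\subseteq\Sphere\VB|_{\cl\Third}\cup\Disk\VB|_{\bd\Third}\subseteq\Disk\VB|_{\cl\Third}$ via the five lemma. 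The extra preliminary remarks you include (rescaling $\tau$ into the unit disk bundle, identifying $\Tot$\nb-compact supports over a closed subspace) are sound and merely spell out what the paper leaves implicit.
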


\begin{proof}
  Recall that \(\Coh^*_\Other(\Tot)\) is the inductive limit of \(\Coh^*(\Tot,\Tot\setminus\Third) \cong \Coh^*(\cl{\Third},\bd\Third)\), where~\(\Third\) runs through the directed set of open, \(\Grd\)\nb-invariant, relatively \(\Other\)\nb-compact subsets of~\(\Tot\).  Arguing as in the proof of~\eqref{eq:Coh_support_VB}, we get
  \[
  \Coh^*_\Other(\total{\VB}) \cong
  \varinjlim \Coh^*(\Disk\VB|_{\cl{\Third}},
  \Disk\VB|_{\bd\Third}\cup\Sphere\VB|_{\cl{\Third}})
  \]
  with~\(\Third\) as above.  The \(\Coh\)\nb-orientation and excision yield isomorphisms
  \begin{align*}
    \Coh^n(\bd\Third)
    &\cong \Coh^n(\Disk\VB|_{\bd\Third},\Sphere\VB|_{\bd\Third})
    \cong \Coh^n(\Disk\VB|_{\bd\Third} \cup
    \Sphere\VB|_{\cl\Third},\Sphere\VB|_{\cl\Third}),\\
    \Coh^n(\cl\Third) &\cong
    \Coh^n(\Disk\VB|_{\cl\Third},\Sphere\VB|_{\cl\Third}).
  \end{align*}
  These groups fit into long exact sequences
  \[
  \dotsb
  \to \Coh^{n+1}(\bd\Third)
  \to \Coh^n(\cl\Third,\bd\Third)
  \to \Coh^n(\cl\Third)
  \to \Coh^n(\bd\Third)
  \to \dotsb
  \]
  and
  \begin{multline*}
    \dotsb
    \to \Coh^{n+1}(\Disk\VB|_{\bd\Third} \cup
    \Sphere\VB|_{\cl\Third},\Sphere\VB|_{\cl\Third})
    \to \Coh^n(\Disk\VB|_{\cl{\Third}},
    \Disk\VB|_{\bd\Third}\cup\Sphere\VB|_{\cl{\Third}})
    \\\to \Coh^n(\Disk\VB|_{\cl\Third},\Sphere\VB|_{\cl\Third})
    \to \Coh^n(\Disk\VB|_{\bd\Third} \cup
    \Sphere\VB|_{\cl\Third},\Sphere\VB|_{\cl\Third})
    \to \dotsb.
  \end{multline*}
  Multiplication with the \(\Coh\)\nb-orientation provides a chain map between these exact sequences.  This is invertible on two of three entries by definition of an \(\Coh\)\nb-orientation.  It is an isomorphism \(\Coh^*(\cl\Third,\bd\Third) \cong \Coh^*(\Disk\VB|_{\cl{\Third}}, \Sphere\VB|_{\cl{\Third}} \cup \Disk\VB|_{\bd\Third})\) as well by the Five Lemma.
\end{proof}

\begin{lemma}
  \label{lem:pull-back_orientation}
  If \(f\colon \Source\to\Target\) is a \(\Grd\)\nb-map and \(\tau\in\Coh^*_\Target(\total{\VB})\) is an \(\Coh\)\nb-orientation for a \(\Grd\)\nb-vector bundle~\(\VB\) over~\(\Target\), then \(f^*(\tau)\in\Coh^*_\Source(\total{f^*\VB})\) is an \(\Coh\)\nb-orientation for~\(f^*(\VB)\).
\end{lemma}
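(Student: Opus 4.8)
The plan is to verify the defining condition of an $\Coh$\nb-orientation in Definition~\ref{def:orientation} directly for the class $f^*(\tau)$. So I fix an arbitrary $\Grd$\nb-space~$\Third$ together with a $\Grd$\nb-map $g\colon\Third\to\Source$ and must show that multiplication with $g^*(f^*\tau)$ induces an isomorphism from the (supported) cohomology of~$\Third$ onto that of~$\total{g^*(f^*\VB)}$, with the appropriate degree shift. The key observation is that this should reduce immediately to the orientation property of~$\tau$ itself, applied to the composite $\Grd$\nb-map $f\circ g\colon\Third\to\Target$.

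The reduction uses only functoriality of pulling back $\Grd$\nb-vector bundles and their cohomology classes. There is a canonical isomorphism $g^*(f^*\VB)\cong(f\circ g)^*(\VB)$ of $\Grd$\nb-vector bundles over~$\Third$, and the two composable bundle maps $\total{g^*(f^*\VB)}\to\total{f^*\VB}\to\total{\VB}$ each sit in a cartesian square over $\Third\to\Source\to\Target$; hence $g^*(f^*\tau)$ corresponds to $(f\circ g)^*(\tau)$ under this isomorphism, and the support conditions needed to make these pullbacks land in the right supported groups are preserved because properness is stable under base change. Now Definition~\ref{def:orientation}, applied to the $\Coh$\nb-orientation~$\tau$ for~$\VB$ over~$\Target$ and to the $\Grd$\nb-map $f\circ g\colon\Third\to\Target$, states exactly that multiplication with $(f\circ g)^*(\tau)$ is an isomorphism onto the cohomology of $\total{(f\circ g)^*(\VB)}=\total{g^*(f^*\VB)}$. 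Composing with the identification above gives the isomorphism demanded of $f^*(\tau)$, and since $g$ was arbitrary this shows $f^*(\tau)$ is an $\Coh$\nb-orientation for $f^*(\VB)$.

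The one place where I expect to have to do real work — and the main obstacle — is matching up the support conditions: the orientation condition for $f^*(\VB)$ over~$\Source$ involves cohomology with supports relative to~$\Source$, whereas the orientation property of~$\tau$ literally produces the analogous statement with supports relative to~$\Target$, and these need not coincide since~$f$ need not be proper. To bridge this I would argue exactly as in the proof of Lemma~\ref{lem:orientation_Thom_iso}: using Lemma~\ref{lem:excision_Coh_supported} together with the identification~\eqref{eq:Coh_support_VB}, present both sides as inductive limits, over invariant relatively compact open subsets~$A$, of the honest relative groups $\Coh^*(\cl{A},\bd A)$ and $\Coh^*\bigl(\Disk\VB|_{\cl A},\,\Disk\VB|_{\bd A}\cup\Sphere\VB|_{\cl A}\bigr)$, in which no support condition appears. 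On each such stage the map is multiplication with the restriction of $(f\circ g)^*(\tau)$, which is controlled by the orientation property of~$\tau$ applied to suitable restrictions of $f\circ g$, together with the long exact sequences of the pairs and the Five Lemma; passing to the limit yields the isomorphism with supports over~$\Source$, and the same argument applied to maps $\Third'\to\Third$ gives its naturality, so that Definition~\ref{def:orientation} is indeed satisfied for $f^*(\tau)$.
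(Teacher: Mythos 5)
The paper disposes of this lemma in a single sentence: ``Trivial from the definition.'' The entire content is the observation you make in your first two paragraphs: Definition~\ref{def:orientation} is quantified over \emph{all} $\Grd$\nb-maps into the base of the bundle, and since $g^*(f^*\tau)=(f\circ g)^*(\tau)$ under the canonical identification $g^*(f^*\VB)\cong(f\circ g)^*(\VB)$, the condition that $f^*(\tau)$ must satisfy at $g\colon\Third\to\Source$ is verbatim the condition that $\tau$ already satisfies at $f\circ g\colon\Third\to\Target$. You identify this correctly, so the core of your proposal agrees with the paper; but you then run well past the finish line.

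Your third paragraph is where you diverge, and it is both unnecessary and, as sketched, not actually sound. The worry about $\Coh^*_\Source$ versus $\Coh^*_\Target$ is addressed, in the paper's view, by the fact that the orientation condition at $f\circ g$ directly \emph{is} the required statement for $f^*\tau$ at $g$; there is nothing left to bridge. Moreover the proposed mimicry of Lemma~\ref{lem:orientation_Thom_iso} breaks down on inspection. In that lemma the auxiliary subsets $\cl\Third,\bd\Third$ lie inside the base~$\Tot$ of the oriented bundle, so the inclusions $\cl\Third,\bd\Third\hookrightarrow\Tot$ are closed hence proper, and the orientation condition applied to them really does give the unsupported isomorphisms $\Coh^*(\cl\Third)\cong\Coh^*(\Disk\VB|_{\cl\Third},\Sphere\VB|_{\cl\Third})$ that feed the Five Lemma. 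In your setting the analogous subsets $\cl A,\bd A$ sit in an auxiliary space mapping to~$\Source$ and only then, via the possibly non-proper~$f$, to~$\Target$; the composites $\cl A\to\Target$, $\bd A\to\Target$ need not be proper, so applying the orientation property of~$\tau$ to them produces a $\Target$\nb-supported isomorphism, \emph{not} the unsupported relative isomorphism your stage-by-stage argument requires. So the ``real work'' you anticipate doesn't close the gap you perceive. The remedy is to drop the third paragraph entirely and stop after noting $g^*(f^*\tau)=(f\circ g)^*(\tau)$.
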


\begin{proof}
  Trivial from the definition.
\end{proof}

\begin{lemma}
  \label{lem:glue_orientation}
  Let~\(\Tot\) be a \(\Grd\)\nb-space and let~\(\VB\) be a \(\Grd\)\nb-vector bundle over~\(\Tot\).  Let \(\Tot_1\) and~\(\Tot_2\) be closed \(\Grd\)\nb-invariant subsets of~\(\Tot\) with \(\Tot= \Tot_1\cup \Tot_2\) and let \(\tau_1\) and~\(\tau_2\) be \(\Coh\)\nb-orientations for \(\VB|_{\Tot_1}\) and \(\VB|_{\Tot_2}\), respectively.  Then there is an \(\Coh\)\nb-orientation on \(\Tot_1\cup\Tot_2\) that restricts to \(\tau_1\) and~\(\tau_2\) on \(\Tot_1\) and~\(\Tot_2\).
\end{lemma}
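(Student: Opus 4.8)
The plan is to produce the glued class by a Mayer--Vietoris argument and then to verify that it satisfies Definition~\ref{def:orientation} by the same reduction as in the proof of Lemma~\ref{lem:orientation_Thom_iso}. Write $\Tot_{12}\defeq\Tot_1\cap\Tot_2$ and $\VB_i\defeq\VB|_{\Tot_i}$; replacing~$\Tot$ by $\Tot_1\cup\Tot_2$ and~$\VB$ by its restriction, we may assume $\Tot=\Tot_1\cup\Tot_2$. Observe first that, for a class on~$\Tot$ restricting to $\tau_1$ and to~$\tau_2$ to exist at all, $\tau_1$ and~$\tau_2$ must restrict to the \emph{same} $\Coh$\nb-orientation of $\VB|_{\Tot_{12}}$; this compatibility is therefore a necessary hypothesis, and we assume it. Just as Lemma~\ref{lem:excision_Coh_supported} derives from Properties~\ref{Coh_2}--\ref{Coh_4} and the identification~\eqref{eq:Coh_support_VB} a long exact sequence for the supported theory $\Coh^*_\Tot(\blank)$, the classical Barratt--Whitehead argument then yields from it a Mayer--Vietoris sequence attached to the closed $\Grd$\nb-invariant cover $\total{\VB}=\total{\VB_1}\cup\total{\VB_2}$,
\[
\dotsb\to\Coh^n_\Tot(\total{\VB})\to\Coh^n_\Tot(\total{\VB_1})\oplus\Coh^n_\Tot(\total{\VB_2})\xrightarrow{\ \delta\ }\Coh^n_\Tot(\total{\VB}|_{\Tot_{12}})\to\Coh^{n+1}_\Tot(\total{\VB})\to\dotsb,
\]
in which the first map is $a\mapsto(a|_{\Tot_1},a|_{\Tot_2})$ and $\delta(a,b)=a|_{\Tot_{12}}-b|_{\Tot_{12}}$; here one uses that $\Coh^*_\Tot(\total{\VB_i})$ coincides with $\Coh^*_{\Tot_i}(\total{\VB_i})$ because $\Tot_i$ is closed in~$\Tot$. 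Since $\delta(\tau_1,\tau_2)=0$ by the compatibility assumption, exactness supplies a class $\tau\in\Coh^d_\Tot(\total{\VB})$ with $\tau|_{\Tot_1}=\tau_1$ and $\tau|_{\Tot_2}=\tau_2$.

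It remains to check that~$\tau$ is an $\Coh$\nb-orientation. Let $f\colon\Other\to\Tot$ be a $\Grd$\nb-map and put $\Other_i\defeq f^{-1}(\Tot_i)$ and $\Other_{12}\defeq f^{-1}(\Tot_{12})$, closed $\Grd$\nb-invariant subsets with $\Other=\Other_1\cup\Other_2$ and $\Other_1\cap\Other_2=\Other_{12}$. Because restriction commutes with pull-back, $(f^*\tau)|_{\Other_i}=(f|_{\Other_i})^*(\tau_i)$, which is an $\Coh$\nb-orientation for $f^*(\VB)|_{\Other_i}$ by Lemma~\ref{lem:pull-back_orientation}; likewise over~$\Other_{12}$. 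Hence, by Lemma~\ref{lem:orientation_Thom_iso} (applied with ambient space~$\Tot$ and the maps $\Other_i\to\Tot$, $\Other_{12}\to\Tot$), multiplication by $f^*(\tau)$ induces isomorphisms $\Coh^*_\Tot(\Other_i)\to\Coh^*_\Tot(\total{f^*\VB}|_{\Other_i})$ and $\Coh^*_\Tot(\Other_{12})\to\Coh^*_\Tot(\total{f^*\VB}|_{\Other_{12}})$. By naturality and graded commutativity of the exterior product, and its compatibility with the boundary maps (Property~\ref{Coh_5}), multiplication by $f^*(\tau)$ is a morphism from the Mayer--Vietoris sequence of $\Other=\Other_1\cup\Other_2$ to that of $\total{f^*\VB}=\total{f^*\VB}|_{\Other_1}\cup\total{f^*\VB}|_{\Other_2}$, and it is an isomorphism on four of every six consecutive terms; the Five Lemma then makes it an isomorphism $\Coh^*_\Tot(\Other)\to\Coh^*_\Tot(\total{f^*\VB})$. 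Thus~$\tau$ is an $\Coh$\nb-orientation restricting to $\tau_1$ and~$\tau_2$, as desired.

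I expect the main obstacle to be organisational rather than conceptual: setting up the relative Mayer--Vietoris sequence for the supported theory $\Coh^*_\Tot(\blank)$ compatibly with the direct-limit description of its support conditions, and checking that multiplication by $f^*(\tau)$ is genuinely a morphism of such sequences (in particular matching the signs in the connecting maps). Both follow routinely from Properties~\ref{Coh_2}--\ref{Coh_5}, the identification~\eqref{eq:Coh_support_VB}, and the limiting arguments already used in the proofs of Lemma~\ref{lem:excision_Coh_supported} and Lemma~\ref{lem:orientation_Thom_iso}, but they demand some bookkeeping, so I would state the relative Mayer--Vietoris sequence as an auxiliary lemma first and then feed it into the two-step argument above.
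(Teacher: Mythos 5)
Your proof follows the same two\nb-step strategy as the paper's: construct $\tau$ by exactness of the Mayer--Vietoris sequence for the supported theory $\Coh^*_\Tot$ attached to the closed cover $\total{\VB}=\total{\VB_1}\cup\total{\VB_2}$, then verify the Thom\nb-isomorphism property by pulling back along $f\colon\Other\to\Tot$ and applying the Five Lemma to the resulting morphism of Mayer--Vietoris sequences. You also correctly flag that exactness only produces the glued class when $\tau_1|_{\Tot_{12}}=\tau_2|_{\Tot_{12}}$; the paper leaves this compatibility implicit (it is forced by the conclusion), and your explicit remark that it must be assumed is a worthwhile clarification of the lemma's hypotheses.
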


\begin{proof}
  Let \(\Tot_{12} \defeq \Tot_1\cap \Tot_2\) and let \(\VB_j\defeq \VB|_{\Tot_j}\) for \(j\in\{1,2,12\}\).  We have \(\Coh^*_\Tot(\VB_j) \cong \Coh^*_{\Tot_j}(\VB_j)\) for \(j=1,2,12\).  The properties of~\(\Coh\) yield a long exact Mayer--Vietoris sequence
  \[
  \dotsb \to
  \Coh^{n+1}_\Tot(\total{\VB_{12}}) \to
  \Coh^n_\Tot(\total{\VB}) \to
  \Coh^n_\Tot(\total{\VB_1}) \oplus
  \Coh^n_\Tot(\total{\VB_2}) \to
  \Coh^n_\Tot(\total{\VB_{12}}) \to \dotsb.
  \]
  Hence there is \(\tau\in\Coh^*_\Tot(\total{\VB})\) that restricts to \(\tau_j\) on~\(\total{\VB_j}\) for \(j=1,2\).  We claim that any such~\(\tau\) is an \(\Coh\)\nb-orientation for~\(\VB\).

  Let \(f\colon \Other\to\Tot\) be a \(\Grd\)\nb-map and let \(\Other_j\defeq f^{-1}(\Tot_j)\) for \(j=1,2,12\).  We have a commuting diagram of exact Mayer--Vietoris sequences
  \[
  \xymatrix{
  \Coh^n_\Tot(\total{f^*\VB}) \ar[r]&
  \Coh^n_\Tot(\total{f^*\VB_1}) \oplus
  \Coh^n_\Tot(\total{f^*\VB_2}) \ar[r]&
  \Coh^n_\Tot(\total{f^*\VB_{12}}) \ar[r]& \dotsb\\
  \Coh^n_\Tot(\Other) \ar[r] \ar[u]^{\tau}&
  \Coh^n_\Tot(\Other_1) \oplus
  \Coh^n_\Tot(\Other_2) \ar[r] \ar[u]^{\tau_1\oplus\tau_2}&
  \Coh^n_\Tot(\Other_{12}) \ar[r] \ar[u]^{\tau_{12}}&
  \dotsb \ar@{.>}[u]\\
  }
  \]
  By assumption, \(\tau_1\), \(\tau_2\), and~\(\tau_{12}\) induce isomorphisms.  By the Five Lemma, \(\tau\) induces an isomorphism as well, so that~\(\tau\) is an \(\Coh\)\nb-orientation.
\end{proof}

\begin{lemma}
  \label{lem:orient_sum}
  Let \(\VB_1\) and~\(\VB_2\) be two \(\Grd\)\nb-vector bundles over~\(\Tot\), let~\(\VB_2\) be \(\Coh\)\nb-oriented.  Then there is a natural bijection between \(\Coh\)\nb-orientations on \(\VB_1\) and \(\VB_1\oplus\VB_2\).
\end{lemma}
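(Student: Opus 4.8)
The plan is to fix the given $\Coh$\nb-orientation $\tau_2\in\Coh^{d_2}_\Tot(\total{\VB_2})$ for $\VB_2$ (where $d_2=\dim\VB_2$) and to build the desired bijection as an explicit ``product with~$\tau_2$'' operation. The key observation is that $\total{\VB_1\oplus\VB_2}$ is the total space of the pulled-back $\Grd$\nb-vector bundle $\proj{\VB_1}^*(\VB_2)$ over $\total{\VB_1}$, where $\proj{\VB_1}\colon\total{\VB_1}\to\Tot$ is the bundle projection. By Lemma~\ref{lem:pull-back_orientation}, $\hat\tau_2\defeq\proj{\VB_1}^*(\tau_2)$ is an $\Coh$\nb-orientation for $\proj{\VB_1}^*(\VB_2)$, and Lemma~\ref{lem:orientation_Thom_iso}, applied to this oriented bundle over the base~$\total{\VB_1}$ together with the $\Grd$\nb-map $\proj{\VB_1}\colon\total{\VB_1}\to\Tot$, yields a Thom isomorphism $T\colon\Coh^*_\Tot(\total{\VB_1})\congto\Coh^*_\Tot(\total{\VB_1\oplus\VB_2})$, given by exterior product with~$\hat\tau_2$. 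I would then \emph{define} the image of an $\Coh$\nb-orientation $\tau_1$ for $\VB_1$ under the asserted bijection to be $T(\tau_1)$.

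The first step is to check that $T(\tau_1)$ really is an $\Coh$\nb-orientation for $\VB_1\oplus\VB_2$. For this I would use associativity of the exterior product together with naturality of pull-backs to show that, for every $\Grd$\nb-map $g\colon\Other\to\Tot$, multiplication with $g^*(T(\tau_1))$ on $\Coh^*_\Tot(\Other)$ is the composite of multiplication with $g^*(\tau_1)$, landing in $\Coh^*_\Tot(\total{g^*\VB_1})$, followed by multiplication with the restriction of~$\hat\tau_2$ to~$\total{g^*\VB_1}$, landing in $\Coh^*_\Tot(\total{g^*(\VB_1\oplus\VB_2)})$. The first map is an isomorphism because $\tau_1$ is an $\Coh$\nb-orientation, and the second is the Thom isomorphism for the pulled-back bundle (again Lemmas~\ref{lem:pull-back_orientation} and~\ref{lem:orientation_Thom_iso}); hence the composite is an isomorphism, as the definition of an $\Coh$\nb-orientation requires.

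Bijectivity is then almost formal. Injectivity holds since $T$ is bijective and sends $\tau_1$ to $T(\tau_1)$; for surjectivity, given an $\Coh$\nb-orientation $\sigma$ for $\VB_1\oplus\VB_2$, I would set $\tau_1\defeq T^{-1}(\sigma)$ and verify that $\tau_1$ is an $\Coh$\nb-orientation for $\VB_1$ by running the factorisation of the previous paragraph in reverse: for each $g\colon\Other\to\Tot$, multiplication with $g^*\sigma$ is an isomorphism and equals multiplication with $g^*\tau_1$ followed by an invertible Thom isomorphism, which forces multiplication with $g^*\tau_1$ to be an isomorphism. Naturality of the bijection in~$\Tot$ then follows because pull-back commutes with all the constructions used. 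The only genuine work --- and the step I expect to be fiddly rather than deep --- is the bookkeeping in the second paragraph: verifying that ``multiplication with $T(\tau_1)$'' factors as claimed means keeping careful track of the several product and module structures relating $\Coh^*_\Tot$ and $\Coh^*_{\total{\VB_1}}$, and of the $\Tot$\nb-compact support conditions (the disk-bundle truncation used just before Lemma~\ref{lem:orientation_Thom_iso}) that ensure all the relevant products land in the intended groups.
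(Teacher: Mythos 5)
Your proposal is correct and takes essentially the same route as the paper: both directions identify $\total{\VB_1\oplus\VB_2}$ with the total space of $\proj{\VB_1}^*(\VB_2)$ over $\total{\VB_1}$, use Lemma~\ref{lem:pull-back_orientation} and Lemma~\ref{lem:orientation_Thom_iso} to get the Thom isomorphism $T$ by product with $\proj{\VB_1}^*(\tau_2)$, and define the bijection as $\tau_1\mapsto\tau_1\cdot\proj{\VB_1}^*(\tau_2)$. Your packaging of the inverse direction as ``$\tau_1\defeq T^{-1}(\sigma)$, then factor multiplication by $g^*\sigma$ and cancel the invertible Thom factor'' is a mild reorganisation of the paper's argument (which instead produces $\tau_1$ as the image of $1\in\Coh^*(\Tot)$ under the composite of the Thom isomorphism for $\VB_1\oplus\VB_2$ and the inverse Thom isomorphism for $\proj{\VB_1}^*\VB_2$) but is the same in substance.
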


\begin{proof}
  Assume first that \(\VB_1\) and~\(\VB_2\) are \(\Coh\)\nb-oriented by \(\tau_j\in \Coh^*_{\Tot}(\total{\VB_j})\).  The total space of \(\VB_1\oplus\VB_2\) is the total space of the \(\Grd\)\nb-vector bundle \(\proj{\VB_1}^*(\VB_2)\) over~\(\total{\VB_1}\).  By Lemma~\ref{lem:pull-back_orientation}, \(\proj{\VB_1}^*(\tau_2)\in \Coh^*_{\total{\VB_1}}(\total{\VB_1\oplus\VB_2})\) is an \(\Coh\)\nb-orientation for~\(\proj{\VB_1}^*(\VB_2)\).  Its product with~\(\tau_1\) in \(\Coh^*_{\Tot}(\total{\VB_1\oplus\VB_2})\) is an \(\Coh\)\nb-orientation for~\(\VB_1 \oplus\VB_2\).

  Now let \(\tau_2\) and~\(\tau_{12}\) be \(\Coh\)\nb-orientations for \(\VB_2\) and \(\VB_{12}\defeq \VB_1\oplus\VB_2\).  For any \(f\colon \Other\to\nobreak\Tot\), the product of the Thom isomorphism for~\(\VB_{12}\) and the inverse Thom isomorphism for \(\proj{\VB_1}(\VB_2)\) provides an isomorphism \(\Coh^*_\Tot(\Other) \to \Coh^*_\Tot(\total{f^*\VB_1})\).  This is induced by a class \(\tau_1\in \Coh^*_\Tot(\total{\VB_1})\), namely, the image of the identity element in \(\Coh^*_\Tot(\Tot) = \Coh^*(\Tot)\).  Hence~\(\tau_1\) is an \(\Coh\)\nb-orientation on~\(\VB_1\).  The two constructions of \(\Coh\)\nb-orientations on \(\VB_1\) and \(\VB_1\oplus\VB_2\) are inverse to each other.
\end{proof}

We always equip pull-backs and direct sums of \(\Coh\)\nb-oriented \(\Grd\)\nb-vector bundles with the induced \(\Coh\)\nb-orientations described in Lemmas \ref{lem:pull-back_orientation} and~\ref{lem:orient_sum}.

In the presence of \(\Coh\)\nb-orientations, we modify the notions of trivial and subtrivial \(\Grd\)\nb-vector bundles: a \emph{trivial} \(\Coh\)\nb-oriented \(\Grd\)\nb-vector bundle is the pull-back of an \(\Coh\)\nb-oriented \(\Grd\)\nb-vector bundle on~\(\Base\), and a \emph{subtrivial} \(\Coh\)\nb-oriented \(\Grd\)\nb-vector bundle is an \(\Coh\)\nb-oriented direct summand of a trivial \(\Coh\)\nb-oriented \(\Grd\)\nb-vector bundle.

\subsection{Oriented normally non-singular maps}
\label{sec:oriented_normal}

In this section, we let \(\Vect_\Grd(\Tot)\) be the monoid of subtrivial \emph{\(\Coh\)\nb-oriented} \(\Grd\)\nb-vector bundles, that is, we require the \(\Grd\)\nb-vector bundles in the definition of a normally non-singular map to be \(\Coh\)\nb-oriented.  This leads to the theory of \emph{\(\Coh\)\nb-oriented} normally non-singular maps.

\begin{definition}
  \label{def:Coh-oriented_normal_map}
  An \emph{\(\Coh\)\nb-oriented normally non-singular map} from~\(\Source\) to~\(\Target\) consists of
  \begin{itemize}
  \item \(\VB\), a subtrivial \(\Coh\)\nb-oriented \(\Grd\)\nb-vector bundle over~\(\Source\);

  \item \(\Triv\), an \(\Coh\)\nb-oriented \(\Grd\)\nb-vector bundle over~\(\Base\); and

  \item \(\hat{f}\colon \total{\VB} \opem \total{\Triv^\Target}\), a \(\Grd\)\nb-equivariant open embedding.

  \end{itemize}
  The trace, the stable normal bundle, and the dimension of an \(\Coh\)\nb-oriented normally non-singular map are defined as for normally non-singular maps.
\end{definition}

The relations of isotopy, lifting, and equivalence for normally non-singular maps extend to \(\Coh\)\nb-oriented normally non-singular maps; in the definition of lifting, we require the additional trivial vector bundle to be \(\Coh\)\nb-oriented, of course, and equip the direct sums that appear with the induced \(\Coh\)\nb-orientations.  Isotopy and equivalence remain equivalence relations for \(\Coh\)\nb-oriented normally non-singular maps.

A composite or exterior product of \(\Coh\)\nb-oriented normally non-singular maps inherits a canonical \(\Coh\)\nb-orientation because we may add and pull back \(\Coh\)\nb-orientations.

\begin{proposition}
  \label{pro:Coh-orient_normal_maps_smc}
  The category of \(\Grd\)\nb-spaces with \(\Coh\)\nb-oriented normally non-singular maps as morphisms and the exterior product is a symmetric monoidal category.
\end{proposition}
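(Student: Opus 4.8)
The plan is to reduce everything to Proposition~\ref{pro:normal_maps_smc}, which already establishes that $\Nor(\Grd)$ without orientations is a symmetric monoidal category, and merely to track the $\Coh$\nb-orientations through the constructions using the functoriality in Lemmas~\ref{lem:pull-back_orientation} and~\ref{lem:orient_sum}. An $\Coh$\nb-oriented normally non-singular map has the same underlying data as a normally non-singular map together with $\Coh$\nb-orientations on~\(\VB\) and~\(\Triv\); composition and exterior product of the underlying maps are already defined and well-behaved, so the only new content is: (i)~that a canonical $\Coh$\nb-orientation is attached to composites and exterior products; (ii)~that this attachment descends to the equivalence relation of $\Coh$\nb-oriented normally non-singular maps; and (iii)~that the structure isomorphisms are morphisms in the oriented category. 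For~(i), I would follow the recipe already indicated before the proposition: in \(\NM_2\circ\NM_1\) one has \(\VB=\VB_1\oplus f_1^*(\VB_2)\) and \(\Triv=\Triv_1\oplus\Triv_2\); orient \(f_1^*(\VB_2)\) by pulling back the orientation of~\(\VB_2\) (Lemma~\ref{lem:pull-back_orientation}), orient the direct sums by Lemma~\ref{lem:orient_sum}, and do the same for exterior products using the coordinate projections \(\pi_1,\pi_2\).

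For~(ii), I would argue as follows. Lifting along an $\Coh$\nb-oriented trivial $\Grd$\nb-vector bundle~\(\Triv^+\) replaces \((\VB,\Triv)\) by \((\VB\oplus(\Triv^+)^\Source,\Triv\oplus\Triv^+)\) with the induced orientations, so by the naturality of the bijection in Lemma~\ref{lem:orient_sum}, composing-then-lifting and lifting-then-composing produce the same $\Coh$\nb-orientation (compare Example~\ref{exa:normal_Triv_Target}). For isotopies, the open embedding~\(\hat f\) in a composite is only canonical up to isotopy by Theorem~\ref{the:normal_maps_category}, but any two choices are related by an isotopy that is essentially the identity on the vector bundles involved; the $\Coh$\nb-orientations, which live on total spaces of pull-backs of \(\VB_2\) along $\Grd$\nb-homotopic maps, then agree under the canonical identifications supplied by Proposition~\ref{pro:vb_homotopy}, because \(\Coh^*\) is homotopy invariant (property~\ref{Coh_1}) and the vector bundle isomorphisms of Proposition~\ref{pro:vb_homotopy} are canonical up to homotopy. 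Hence isotopic (respectively equivalent) $\Coh$\nb-oriented maps have isotopic (respectively equivalent) composites, so composition descends to equivalence classes; associativity and the unit law for \((0,0,\Id_\Tot)\) — carrying the unique $\Coh$\nb-orientation of the zero bundle — then follow from the non-oriented case in Theorem~\ref{the:normal_maps_category} together with the identification \(f_1^*(f_2^*(\VB_3))\cong (f_2 f_1)^*(\VB_3)\) as $\Coh$\nb-oriented bundles. For~(iii), the isomorphisms in~\eqref{eq:exterior_product_properties} are induced by $\Grd$\nb-equivariant homeomorphisms, which as normally non-singular maps have zero vector bundles and thus carry the trivial $\Coh$\nb-orientation; their naturality with respect to $\Coh$\nb-oriented maps and the pentagon, hexagon and triangle identities are then inherited from $\Nor(\Grd)$ via Proposition~\ref{pro:normal_maps_smc}, the orientation data being uniquely determined at each step.

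The hard part will be step~(ii): verifying that the $\Coh$\nb-orientation attached to a composite is independent of the auxiliary isomorphisms of pull-back bundles entering the construction of~\(\hat f\). This comes down to checking that pulling an $\Coh$\nb-orientation back along two $\Grd$\nb-homotopic $\Grd$\nb-maps and then transporting along the homotopically canonical vector bundle isomorphism of Proposition~\ref{pro:vb_homotopy} yields the same class, which is exactly where homotopy invariance~\ref{Coh_1} and the uniqueness-up-to-homotopy in Proposition~\ref{pro:vb_homotopy} are used. Everything else is the routine bookkeeping already carried out for $\Nor(\Grd)$.
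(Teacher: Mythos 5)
Your approach is essentially the paper's: the proof given there is simply ``Copy the proof of Proposition~\ref{pro:normal_maps_smc}'', i.e.\ reduce to the unoriented symmetric monoidal structure and track \(\Coh\)\nb-orientations through composition, lifting, isotopy and exterior product via Lemmas~\ref{lem:pull-back_orientation} and~\ref{lem:orient_sum}. Your proposal fills in the bookkeeping the paper leaves implicit, and your identification of step~(ii) — independence of the induced orientation from the auxiliary isomorphisms of Proposition~\ref{pro:vb_homotopy}, resolved by homotopy invariance~\ref{Coh_1} — as the only non-trivial point is accurate.
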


\begin{proof}
  Copy the proof of Proposition~\ref{pro:normal_maps_smc}.
\end{proof}

\begin{definition}
  \label{def:normal_category_K}
  Let \(\Nor_{\Coh}(\Grd)\) be the category of proper \(\Grd\)\nb-spaces with \(\Coh\)\nb-oriented normally non-singular maps as morphisms.
\end{definition}

\begin{example}
  \label{exa:smooth_oriented}
  Let \(f\colon \Source\to\Target\) be a smooth map between two smooth manifolds.  Lift it to a normally non-singular map from~\(\Source\) to~\(\Target\) as in Example~\ref{exa:smooth_normal_map}.  When is this map \(\Coh\)\nb-oriented?  Recall that the normally non-singular map associated to~\(f\) is of the form \(\NM\defeq (\VB,\hat{f},\R^n)\), where~\(n\) is chosen so large that there is a smooth embedding \(h\colon \Source\to\R^n\) and~\(\hat{f}\) is a tubular neighbourhood for the smooth embedding \((f,h)\colon \Source\to\Target\times\R^n\).  Thus~\(\VB\) is the normal bundle of \((f,h)\).  Since the constant vector bundle~\(\R^n\) is canonically \(\Coh\)\nb-oriented, an \(\Coh\)\nb-orientation for~\(f\) is equivalent to one for~\(\VB\).

  Already~\(h\) is a smooth embedding, and its normal bundle~\(\Normal_\Source\) is a stable normal bundle of the manifold~\(\Source\); it has the property that \(\Normal_\Source\oplus\Tvert\Source\) is the constant vector bundle~\(\R^n\).  Since~\(h\) is a smooth embedding, we get a canonical vector bundle extension \(f^*(\Tvert\Target)\mono \VB\epi \Normal_\Source\), so that \(\VB\cong f^*(\Tvert\Target)\oplus\Normal_\Source\).  Thus~\(\NM\) is \(\Coh\)\nb-oriented if and only if \(f^*(\Tvert\Target)\oplus\Normal_\Source\) is \(\Coh\)\nb-oriented.  This does not depend on the choice of~\(\NM\).
\end{example}

We are going to show that an \(\Coh\)\nb-orientation on a lifting of a normally non-singular map is equivalent to a (stable) \(\Coh\)\nb-orientation on its stable normal bundle.

\begin{definition}
  \label{def:stable_orientation}
  Let \((\VB_+,\VB_-)\) be a pair of subtrivial \(\Grd\)\nb-vector bundles over~\(\Source\).  A \emph{stable \(\Coh\)\nb-orientation} for \((\VB_+,\VB_-)\) consists of a subtrivial \(\Grd\)\nb-vector bundle~\(\VB_3\) on~\(\Source\) and \(\Coh\)\nb-orientations on \(\VB_+\oplus\VB_3\) and \(\VB_-\oplus\VB_3\).  Two such stable \(\Coh\)\nb-orientations are called \emph{equivalent} if the induced \(\Coh\)\nb-orientations on \((\VB_+\oplus\VB_3) \oplus (\VB_-\oplus\VB_3')\) and \((\VB_+\oplus\VB_3') \oplus (\VB_-\oplus\VB_3)\) agree (we use Lemma~\ref{lem:orient_sum} to define these induced \(\Coh\)\nb-orientations).

  A \emph{stably \(\Coh\)\nb-oriented normally non-singular map} is a normally non-singular map with a stable \(\Coh\)\nb-orientation on its stable normal bundle.
\end{definition}

\begin{lemma}
  \label{lem:stable_Coh-orientation}
  Assume that any \(\Grd\)\nb-vector bundle over~\(\Base\) is a direct summand of an \(\Coh\)\nb-oriented \(\Grd\)\nb-vector bundle.  Then there is a bijection between lifting classes of \(\Coh\)\nb-oriented normally non-singular maps and stably \(\Coh\)\nb-oriented normally non-singular maps \(\Source\to\Target\).
\end{lemma}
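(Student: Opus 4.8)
The plan is to write the claimed bijection down explicitly in both directions and reduce all of its properties to Lemmas~\ref{lem:pull-back_orientation} and~\ref{lem:orient_sum}. Here a stably \(\Coh\)\nb-oriented normally non-singular map is understood up to the equivalence relation generated by lifting the underlying normally non-singular map along \(\Coh\)\nb-oriented \(\Grd\)\nb-vector bundles over~\(\Base\) (transporting the stable \(\Coh\)\nb-orientation by adding the corresponding pulled-back orientation) and by equivalence of stable \(\Coh\)\nb-orientations in the sense of Definition~\ref{def:stable_orientation}. First I would define the map that sends an \(\Coh\)\nb-oriented normally non-singular map \(\NM=(\VB,\Triv,\hat f)\), with orientations \(\tau_\VB\) on~\(\VB\) and \(\tau_\Triv\) on~\(\Triv\), to the underlying normally non-singular map together with the stable \(\Coh\)\nb-orientation on the pair \((\VB,\Triv^\Source)\) that has \(\VB_3=0\), the orientation~\(\tau_\VB\) on \(\VB\oplus0=\VB\), and the pull-back orientation of~\(\tau_\Triv\) on~\(\Triv^\Source\) (Lemma~\ref{lem:pull-back_orientation}). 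Lifting~\(\NM\) along an \(\Coh\)\nb-oriented \(\Triv^+\) replaces the triple by \(\bigl(\VB\oplus(\Triv^+)^\Source,\Triv\oplus\Triv^+,\hat f\times_\Base\Id_{\total{\Triv^+}}\bigr)\) with the direct-sum orientations, and the associated stably \(\Coh\)\nb-oriented map is exactly the lift along~\(\Triv^+\) of the previous one; associativity and naturality of the direct sum of \(\Coh\)\nb-orientations (Lemma~\ref{lem:orient_sum}) make this match. So the construction descends to lifting classes.

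For the inverse I would start from a normally non-singular map \((\VB,\Triv,\hat f)\) equipped with a stable \(\Coh\)\nb-orientation \((\VB_3,\sigma_+,\sigma_-)\), so that \(\sigma_+\) orients \(\VB\oplus\VB_3\) and \(\sigma_-\) orients \(\Triv^\Source\oplus\VB_3\). Using the hypothesis, choose a \(\Grd\)\nb-vector bundle~\(\Triv^+\) over~\(\Base\) together with an \(\Coh\)\nb-orientation~\(\tau\) on \(\Triv\oplus\Triv^+\). I would then output the normally non-singular map \(\bigl(\VB\oplus(\Triv^+)^\Source,\ \Triv\oplus\Triv^+,\ \hat f\times_\Base\Id_{\total{\Triv^+}}\bigr)\), with~\(\Triv\oplus\Triv^+\) oriented by~\(\tau\) and \(\VB\oplus(\Triv^+)^\Source\) oriented as follows. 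The canonical isomorphism
\[
\bigl(\VB\oplus(\Triv^+)^\Source\bigr)\oplus\bigl(\Triv^\Source\oplus\VB_3\bigr)
\ \cong\ \bigl(\VB\oplus\VB_3\bigr)\oplus\bigl(\Triv\oplus\Triv^+\bigr)^\Source
\]
equips the left-hand side with the \(\Coh\)\nb-orientation built from~\(\sigma_+\) and the pull-back of~\(\tau\) (Lemmas~\ref{lem:pull-back_orientation} and~\ref{lem:orient_sum}); since the summand \(\Triv^\Source\oplus\VB_3\) is \(\Coh\)\nb-oriented by~\(\sigma_-\), Lemma~\ref{lem:orient_sum} transfers this to an \(\Coh\)\nb-orientation on \(\VB\oplus(\Triv^+)^\Source\), and this bundle is subtrivial because~\(\VB\) and~\(\Triv^+\) are. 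One then checks that changing~\(\Triv^+\), \(\tau\), or the representative of the stable \(\Coh\)\nb-orientation changes the output only by a lifting and by equivalence of stable \(\Coh\)\nb-orientations, so the lifting class of the output \(\Coh\)\nb-oriented normally non-singular map is well defined.

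It remains to see that the two maps are mutually inverse, which again reduces to Lemma~\ref{lem:orient_sum}. Going from an \(\Coh\)\nb-oriented map to the stably oriented side and back, one may take \(\Triv^+=0\) since~\(\Triv\) already carries~\(\tau_\Triv\); with \(\VB_3=0\) the transfer through Lemma~\ref{lem:orient_sum} is the identity, so the original map is recovered. Going the other way around, forgetting orientations produces the lift of the underlying normally non-singular map along the auxiliary~\(\Triv^+\), and comparing the stable \(\Coh\)\nb-orientation thus obtained with the original one (transported along this lift) is a diagram chase using only the compatibility of the direct-sum orientation with the associativity isomorphisms between iterated Whitney sums, together with the hypothesis, which supplies enough \(\Coh\)\nb-oriented bundles over~\(\Base\) to realise all the auxiliary liftings. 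I expect the main obstacle to be exactly this inverse construction: extracting from the hypothesis the right auxiliary bundle~\(\Triv^+\) and orientation~\(\tau\) so that \emph{both} bundles of the lifted normally non-singular map become \(\Coh\)\nb-oriented in a way determined by the stable \(\Coh\)\nb-orientation, and verifying that the resulting \(\Coh\)\nb-orientation does not depend on those choices; once this is settled, everything else is formal bookkeeping with Definitions~\ref{def:normal_map_lift} and~\ref{def:stable_orientation}.
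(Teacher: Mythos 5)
Your proposal is essentially correct and follows the same overall route as the paper: the forward map sends an \(\Coh\)-oriented map to the stable orientation with \(\VB_3 = 0\); the reverse map uses the hypothesis to choose \(\Triv^+\) with \(\Triv\oplus\Triv^+\) \(\Coh\)-oriented and then produces an \(\Coh\)-orientation on the lifting of \(\NM\) along \(\Triv^+\) via Lemma~\ref{lem:orient_sum}. The one technical variation: the paper additionally picks \(\Triv_2\) so that \(\VB_3\) is a direct summand of \(\Triv_2^\Source\) (using both the hypothesis and the built-in subtriviality of \(\VB_3\)), constructs the complement \(\VB_3^\bot\) with \(\VB_3\oplus\VB_3^\bot\cong\Triv_2^\Source\), orients \(\VB_3^\bot\) by cancellation from \((\Triv^\Source\oplus\VB_3)\oplus\VB_3^\bot\cong(\Triv\oplus\Triv_2)^\Source\), and finally puts the direct-sum orientation on \(\VB\oplus\Triv_2^\Source\cong(\VB\oplus\VB_3)\oplus\VB_3^\bot\). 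You avoid \(\VB_3^\bot\) by applying Lemma~\ref{lem:orient_sum} to the pair \(\bigl(\VB\oplus(\Triv^+)^\Source,\ \Triv^\Source\oplus\VB_3\bigr)\) directly after a permutation of the Whitney sum; this is a bit more economical in its choice of \(\Triv^+\) but puts more weight on implicitly transporting orientations along reordering isomorphisms. Both variants are correct.

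Where you leave a gap is exactly where you say you would, in the well-definedness of the reverse map. You declare this to be "formal bookkeeping," but this is where the paper does most of its work: it compares the outputs built from \((\VB_3,\Triv_2)\) and \((\VB_3',\Triv_2')\) by lifting both to the common \(\Grd\)-vector bundle \(\Triv\oplus\Triv_2\oplus\Triv\oplus\Triv_2'\) over \(\Base\) and then checking that the resulting orientations on the bundles over \(\Source\) coincide precisely because the two stable \(\Coh\)-orientations are equivalent in the sense of Definition~\ref{def:stable_orientation}; and the case \(\VB_3 = \VB_3'\) of the same argument handles independence of the auxiliary choices of \(\Triv^+\) and \(\tau\). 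So your sketch is correct in outline, but the actual reduction to the definition of equivalence of stable \(\Coh\)-orientations needs to be carried out, not merely asserted.
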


Here we use the equivalence relation generated by lifting, which is contained in the equivalence of normally non-singular maps.

\begin{proof}
  Clearly, an \(\Coh\)\nb-orientation on a normally non-singular map \(\NM=(\VB,\Triv,\hat{f})\) induces a stable \(\Coh\)\nb-orientation on its stable normal bundle \((\VB,\Triv^\Source)\).  Furthermore, lifting along \(\Coh\)\nb-oriented \(\Grd\)\nb-vector bundles over~\(\Base\) does not alter this stable \(\Coh\)\nb-orientation.

  Conversely, suppose that the stable normal bundle \((\VB,\Triv^\Source)\) of~\(\NM\) carries a stable \(\Coh\)\nb-orientation.  We want to construct an \(\Coh\)\nb-orientation on a lifting of~\(\NM\).  We are given \(\Coh\)\nb-orientations on \(\VB\oplus\VB_3\) and \(\Triv^\Source\oplus\VB_3\) for some \(\Grd\)\nb-vector bundle~\(\VB_3\).  Our assumptions provide a \(\Grd\)\nb-vector bundle~\(\Triv_2\) over~\(\Base\) such that \(\Triv\oplus\Triv_2\) is \(\Coh\)\nb-oriented and~\(\VB_3\) is a direct summand in~\(\Triv_2^\Source\).  Let \(\VB_3\oplus\VB_3^\bot \cong \Triv_2^\Source\).  Since
  \[
  (\Triv^\Source \oplus \VB_3) \oplus \VB_3^\bot
  \cong (\Triv\oplus\Triv_2)^\Source
  \]
  and \(\Triv^\Source \oplus \VB_3\) and \(\Triv\oplus\Triv_2\) are \(\Coh\)\nb-oriented, \(\VB_3^\bot\) inherits a canonical \(\Coh\)\nb-orientation by Lemma~\ref{lem:orient_sum}.  Then \(\VB\oplus \Triv_2^\Source \cong (\VB\oplus\VB_3)\oplus \VB_3^\bot\) inherits an \(\Coh\)\nb-orientation as well.  Thus we get an \(\Coh\)\nb-orientation on the lifting of~\(\NM\) along~\(\Triv_2\).

  Now assume that we are given two equivalent stable \(\Coh\)\nb-orientations on \((\VB,\Triv^\Source)\) involving stabilisation by \(\VB_3\) and~\(\VB_3'\).  Construct \(\Grd\)\nb-vector bundles \(\Triv_2\) and~\(\Triv_2'\) over~\(\Base\) for both of them and \(\Coh\)\nb-orientations on \(\NM\oplus\Triv_2\) and \(\NM\oplus\Triv_2'\).  Since \(\Triv\oplus\Triv_2\) and \(\Triv\oplus\Triv_2'\) are \(\Coh\)\nb-oriented, the liftings \((\NM\oplus\Triv_2)\oplus(\Triv\oplus\Triv_2')\) and \((\NM\oplus\Triv_2')\oplus (\Triv\oplus\Triv_2)\) inherit \(\Coh\)\nb-orientations.  By construction, these normally non-singular maps involve the \(\Grd\)\nb-vector bundle \(\Triv\oplus\Triv_2\oplus\Triv\oplus\Triv_2'\) over~\(\Base\) with the same \(\Coh\)\nb-orientation.  The \(\Grd\)\nb-vector bundles over~\(\Source\) are
  \begin{align*}
    (\VB\oplus \Triv_2^\Source) \oplus
    (\Triv\oplus\Triv_2')^\Source &\cong
    (\VB \oplus \VB_3) \oplus \VB_3^\bot
    \oplus (\Triv^\Source\oplus\VB_3') \oplus (\VB_3')^\bot,\\
    (\VB\oplus (\Triv_2')^\Source) \oplus
    (\Triv\oplus\Triv_2)^\Source &\cong
    (\VB \oplus \VB_3') \oplus (\VB_3')^\bot
    \oplus (\Triv^\Source\oplus\VB_3) \oplus \VB_3^\bot,
  \end{align*}
  equipped with the \(\Coh\)\nb-orientations induced by Lemma~\ref{lem:orient_sum}.  These agree by assumption (here we identify the two \(\Grd\)\nb-vector bundles by the obvious isomorphism).

  As a result, the lifting class of the \(\Coh\)\nb-oriented normally non-singular map \(\NM\oplus\Triv_2\) only depends on the equivalence class of the stable \(\Coh\)\nb-orientation of \((\VB,\Triv^\Source)\); the same argument with \(\VB_3=\VB_3'\) shows that the lifting class of the \(\Coh\)\nb-oriented normally non-singular map \(\NM\oplus\Triv_2\) does not depend on the auxiliary choices.  The two constructions above well-define maps between lifting classes of stably \(\Coh\)\nb-oriented and \(\Coh\)\nb-oriented maps.  They are easily seen to be inverse to each other (up to lifting).
\end{proof}

\begin{remark}
  \label{rem:sub_oriented}
  For equivariant \(\K\)\nb-theory or \(\KO\)\nb-theory, \(\Triv^8\) is \(\Coh\)\nb-oriented for any \(\Grd\)\nb-vector bundle~\(\Triv\).  Hence the assumption in Lemma~\ref{lem:stable_Coh-orientation} becomes vacuous for these cohomology theories.
\end{remark}

\begin{corollary}
  \label{cor:oriented_normal_map_unique}
  Let \(\Source\) and~\(\Target\) be smooth \(\Grd\)\nb-manifolds.  Assume that there is a smooth normally non-singular map from~\(\Source\) to the object space~\(\Base\) of~\(\Grd\) and that all \(\Grd\)\nb-vector bundles over \(\Base\) and~\(\Source\) are direct summands in trivial, \(\Coh\)\nb-oriented \(\Grd\)\nb-vector bundles.  Then smooth equivalence classes of\/ \(\Coh\)\nb-oriented smooth normally non-singular \(\Grd\)\nb-maps from~\(\Source\) to~\(\Target\) correspond bijectively to pairs \((f,\tau)\) where~\(f\) is the smooth homotopy class of a smooth \(\Grd\)\nb-map from~\(\Source\) to~\(\Target\) and~\(\tau\) is a stable \(\Coh\)\nb-orientation on its stable normal bundle \(f^*([\Tvert\Target]) - [\Tvert\Source]\).
\end{corollary}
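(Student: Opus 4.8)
The plan is to obtain the corollary as the \(\Coh\)\nb-oriented smooth refinement of Theorem~\ref{the:normal_map_unique}, using Lemma~\ref{lem:stable_Coh-orientation} to handle the orientation data. First I would check that the two standing hypotheses feed both inputs: since every \(\Grd\)\nb-vector bundle over~\(\Source\) is a direct summand of a trivial \(\Coh\)\nb-oriented \(\Grd\)\nb-vector bundle, all \(\Grd\)\nb-vector bundles over~\(\Source\) are in particular subtrivial, so Theorem~\ref{the:normal_map_unique} applies; and since every \(\Grd\)\nb-vector bundle over~\(\Base\) is a direct summand of an \(\Coh\)\nb-oriented one, Lemma~\ref{lem:stable_Coh-orientation} applies. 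The assumption over~\(\Source\) is exactly what lets one absorb the auxiliary stabilising bundle~\(\VB_3\) of Definition~\ref{def:stable_orientation} into a trivial \(\Coh\)\nb-oriented bundle, so that the construction in the proof of Lemma~\ref{lem:stable_Coh-orientation}---which never touches the open embedding or any smooth structure, only vector bundles and their \(\Coh\)\nb-orientations---carries over to smooth \(\Coh\)\nb-oriented normally non-singular maps with ``smooth lifting'' in place of ``lifting''.

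With this in hand I would set up the correspondence as follows. To an \(\Coh\)\nb-oriented smooth normally non-singular map \(\NM=(\VB,\Triv,\hat f)\) one assigns the smooth \(\Grd\)\nb-homotopy class of its trace~\(f\) together with the equivalence class of the stable \(\Coh\)\nb-orientation on its stable normal bundle which---with stabilising bundle~\(0\)---is given by the \(\Coh\)\nb-orientation on~\(\VB\) and the pulled-back \(\Coh\)\nb-orientation on~\(\Triv^\Source\); here \([\VB]-[\Triv^\Source]=f^*[\Tvert\Target]-[\Tvert\Source]\) by the equivariant analogue of Example~\ref{exa:smooth_normal_map}, and Proposition~\ref{pro:vb_homotopy} shows this class depends only on the homotopy class of~\(f\). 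Smooth isotopy and lifting leave both the homotopy class of~\(f\) (by Theorem~\ref{the:normal_map_unique}) and the equivalence class of the stable \(\Coh\)\nb-orientation unchanged, so the assignment descends to smooth equivalence classes. For surjectivity I would first use Theorem~\ref{the:normal_map_unique} to realise a given smooth \(\Grd\)\nb-map~\(f\) as the trace of some smooth normally non-singular map \((\VB,\Triv,\hat f)\)---whose stable normal bundle \((\VB,\Triv^\Source)\) then represents \(f^*[\Tvert\Target]-[\Tvert\Source]\)---and then run the smooth version of the construction in the proof of Lemma~\ref{lem:stable_Coh-orientation} to turn a prescribed stable \(\Coh\)\nb-orientation into an \(\Coh\)\nb-orientation on a suitable lifting of \((\VB,\Triv,\hat f)\).

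I expect injectivity to be the main obstacle, since it requires threading the \(\Coh\)\nb-orientations through an isotopy supplied by Theorem~\ref{the:normal_map_unique}. Suppose \(\NM_0\) and~\(\NM_1\) are \(\Coh\)\nb-oriented smooth normally non-singular maps with smoothly \(\Grd\)\nb-homotopic traces and equivalent associated stable \(\Coh\)\nb-orientations. By Theorem~\ref{the:normal_map_unique}, after lifting both, their underlying unoriented maps are joined by a smooth isotopy~\(H\), which by Proposition~\ref{pro:vb_homotopy} may be taken of the form \((\VB\times[0,1],\Triv,\hat f)\), so that \(\NM_0\) and~\(\NM_1\) now involve the same vector bundles and differ only in the open embedding. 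Viewing~\(H\) itself as a smooth normally non-singular \(\Grd\times[0,1]\)\nb-map (whose hypotheses \(\Grd\times[0,1]\) inherits) and applying the smooth version of Lemma~\ref{lem:stable_Coh-orientation} to it---together with the homotopy-invariant identification (again Proposition~\ref{pro:vb_homotopy} and Lemma~\ref{lem:pull-back_orientation}) of stable \(\Coh\)\nb-orientations on the stable normal bundle of~\(H\) with those at either end---one obtains an \(\Coh\)\nb-orientation on~\(H\) restricting at \(t=0\) to the given orientation of~\(\NM_0\); its restriction at \(t=1\) is an \(\Coh\)\nb-orientation~\(\omega_1'\) on~\(\NM_1\) whose stable \(\Coh\)\nb-orientation agrees with that of~\(\NM_0\), hence with that of~\(\NM_1\). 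Since Lemma~\ref{lem:stable_Coh-orientation}, applied to the single normally non-singular map~\(\NM_1\), says that the stable \(\Coh\)\nb-orientation determines the \(\Coh\)\nb-orientation up to lifting, \((\NM_1,\omega_1')\) and~\(\NM_1\) are smoothly equivalent; and \((\NM_1,\omega_1')\) is smoothly isotopic to~\(\NM_0\) through~\(H\). Hence \(\NM_0\) and~\(\NM_1\) are smoothly equivalent. The remaining verifications---that the two passages are mutually inverse up to the relevant equivalence relations---are routine, exactly as in the proofs of Theorem~\ref{the:normal_map_unique} and Lemma~\ref{lem:stable_Coh-orientation}, and I would leave them to the reader.
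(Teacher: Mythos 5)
Your proposal takes essentially the same route as the paper: the paper's proof is the three-line remark that Theorem~\ref{the:normal_map_unique} settles the unoriented data, Example~\ref{exa:smooth_oriented} identifies the additional orientation data with a stable \(\Coh\)\nb-orientation on \(f^*[\Tvert\Target]-[\Tvert\Source]\), and then ``we use Lemma~\ref{lem:stable_Coh-orientation}.'' What you have done is unfold that last step into an explicit bijection with separate surjectivity and injectivity arguments, and the injectivity argument---threading the orientation along the smooth isotopy produced in the proof of Theorem~\ref{the:normal_map_unique}, using Propositions~\ref{pro:vb_homotopy} and Lemmas~\ref{lem:pull-back_orientation} and~\ref{lem:stable_Coh-orientation} to compare the endpoint orientations---is exactly the point the paper elides; your filling it in is sound. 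Your observation about how the two standing hypotheses feed Theorem~\ref{the:normal_map_unique} and Lemma~\ref{lem:stable_Coh-orientation} respectively is also correct and useful, since the corollary's hypothesis on \(\Source\) serves double duty (making all bundles over~\(\Source\) subtrivial so the theorem applies, and letting the stabilising bundle~\(\VB_3\) be absorbed into a trivial \(\Coh\)\nb-oriented bundle so the lemma's construction works). One small remark: rather than invoking a smooth version of Lemma~\ref{lem:stable_Coh-orientation} for the isotopy~\(H\) over \(\Grd\times[0,1]\), it is slightly more economical to extend the orientation of (the lifted)~\(\NM_0\) constantly along~\([0,1]\) and restrict at \(t=1\); the two readings agree, and both land where you want.
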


\begin{proof}
  Theorem~\ref{the:normal_map_unique} shows that smooth homotopy classes of smooth \(\Grd\)\nb-maps correspond bijectively to smooth equivalence classes of smooth normally non-singular \(\Grd\)\nb-maps.  The additional \(\Coh\)\nb-orientations reduce to one on the \(\Grd\)\nb-vector bundle \(f^*(\Tvert\Target) \oplus \Normal_\Source\) over~\(\Source\) as in Example~\ref{exa:smooth_oriented}; this is equivalent to the stable normal bundle of~\(f\).  But \(f^*(\Tvert\Target) \oplus \Normal_\Source\) is equal in \([\Vect^\Grd_0(\Source)]\) to \(f^*([\Tvert\Target]) - [\Tvert\Source]\).  Finally, we use Lemma~\ref{lem:stable_Coh-orientation}.
\end{proof}

\subsection{Wrong-way functoriality}
\label{sec:wrong-way_normal}

Let \(\NM= (\VB,\Triv,\hat{f})\) be an \(\Coh\)\nb-oriented normally non-singular map from~\(\Source\) to~\(\Target\).  Let \(\NM!\colon \Coh^*_\Base(\Source)\to\Coh^*_\Base(\Target)\) be the product of
\begin{itemize}
\item the Thom isomorphism \(\Coh^*_\Base(\Source)\to\Coh^*_\Base(\total{\VB})\) (see Lemma~\ref{lem:orientation_Thom_iso});

\item the map \(\Coh^*_\Base(\total{\VB})\to\Coh^*_\Base(\total{\Triv^\Target})\) induced by the open embedding~\(\hat{f}\); and

\item the inverse Thom isomorphism \(\Coh^*_\Base(\total{\Triv^\Target})\to\Coh^*_\Base(\Target)\).
\end{itemize}
This construction is analogous to the definition of the topological Atiyah--Singer index map in~\cite{Atiyah-Singer:I}.  See Section~\ref{sec:index} for further remarks about index theory.

\begin{theorem}
  \label{the:wrong-way_functor}
  The maps \(\Tot\mapsto \Coh^*_\Base(\Tot)\) and \(\NM\mapsto \NM!\) define a functor from \(\Nor_{\Coh}(\Grd)\) to the category of Abelian groups.
\end{theorem}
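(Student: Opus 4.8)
The plan is to verify the two functor axioms: that $\NM!$ depends only on the equivalence class of $\NM$, that the identity normally non-singular map $(0,0,\Id_\Tot)$ induces the identity, and that $(\NM_2\circ\NM_1)! = \NM_2!\circ\NM_1!$. The well-definedness on equivalence classes is the natural first step. Since $\Coh\nb$-orientations are pulled back and added compatibly (Lemmas~\ref{lem:pull-back_orientation} and~\ref{lem:orient_sum}), lifting $\NM$ along an $\Coh\nb$-oriented $\Triv^+$ replaces each of the three maps in the definition of $\NM!$ by its composition with a Thom isomorphism for $\Triv^+$ (once on the source side, once on the target side) and these two cancel; here one uses that the Thom isomorphism for a direct sum is the composite of the two Thom isomorphisms (the multiplicativity built into Lemma~\ref{lem:orient_sum} via the product of orientations). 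Invariance under isotopy follows from homotopy invariance of $\Coh^*$ (Property~\ref{Coh_1}): an isotopy is, up to isomorphism, a triple $(\VB\times[0,1],\Triv,\hat f)$ over $\Grd\times[0,1]$, and restricting to the two endpoints gives the same map after identifying $\Coh^*_\Base(\Source)\cong\Coh^*_{\Base\times[0,1]}(\Source\times[0,1])$ etc. The identity axiom is immediate: for $(0,0,\Id_\Tot)$ all three maps are identities.

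The substantive step is multiplicativity. Recall from Section~\ref{sec:compose_normal_maps} that $\NM_2\circ\NM_1 = (\VB_1\oplus f_1^*\VB_2,\ \Triv_1\oplus\Triv_2,\ \hat f)$, where $\hat f$ factors as
\[
\total{\VB} \xopem{\cong}
\total{\VB_1}\times_\Target\total{\VB_2}
\xopem{\hat f_1\times_\Target \Id}
\total{\Triv_1^\Target \oplus \VB_2}
\xopem{\Id\times_\Base\hat f_2}
\total{\Triv^\Third}.
\]
I would unwind $(\NM_2\circ\NM_1)!$ along this factorisation. The first homeomorphism $\total{\VB}\cong\total{\VB_1}\times_\Target\total{\VB_2}$ comes from the isomorphism of $\Grd\nb$-vector bundles $(f_1\circ\proj{\VB_1})^*\VB_2 \cong$ (pullback of $\VB_2$ along $\proj{\Triv_1^\Target}\circ\hat f_1$), which by Proposition~\ref{pro:vb_homotopy} is canonical up to homotopy; combined with Property~\ref{Coh_1} this shows the induced map on $\Coh^*_\Base$ intertwines the relevant Thom classes, so it may be ignored. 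The key computation is then that the Thom isomorphism for $\VB_1\oplus f_1^*\VB_2$ equals the composite of the Thom isomorphism for $\VB_1$ with the Thom isomorphism for the bundle $\proj{\Triv_1^\Target\oplus\VB_2}$-pullback of $\VB_2$ along the open embedding $\hat f_1\times_\Target\Id$ — this is exactly the content of Lemma~\ref{lem:orient_sum}, together with naturality of Thom isomorphisms with respect to the open embedding $\hat f_1\times_\Target\Id$ (Lemma~\ref{lem:excision_Coh_supported} gives functoriality of $\Coh^*_\Base$ for open embeddings, and Lemma~\ref{lem:pull-back_orientation} handles the pulled-back orientation). After inserting these one reads off that
\[
(\NM_2\circ\NM_1)! = \NM_2!\circ\NM_1!.
\]

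The main obstacle I anticipate is bookkeeping rather than conceptual: keeping straight the several spaces-over-$\Base$ and the support conditions in $\Coh^*_\Base$, and making sure the Thom isomorphism for the pulled-back bundle $\VB_2$ is used with respect to the \emph{same} orientation on both sides of the open embedding $\hat f_1\times_\Target\Id$. Concretely, the fact that allows everything to go through is that $\Coh^*_\Base(\blank)$ is covariantly functorial for open embeddings and that exterior product with an $\Coh\nb$-orientation is natural (Definition~\ref{def:orientation}), so the square
\[
\xymatrix{
\Coh^*_\Base(\total{\VB_1}) \ar[r] \ar[d]_{\cdot\,\proj{}^*\tau_{\VB_2}}&
\Coh^*_\Base(\total{\Triv_1^\Target}) \ar[d]^{\cdot\,\proj{}^*\tau_{\VB_2}}\\
\Coh^*_\Base(\total{\VB_1\oplus f_1^*\VB_2}) \ar[r]&
\Coh^*_\Base(\total{\Triv_1^\Target\oplus\VB_2})
}
\]
commutes, where the horizontal arrows are induced by $\hat f_1$ and $\hat f_1\times_\Target\Id$. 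Granting this, the proof reduces to a diagram chase through the three-step factorisations of $\NM_1$, $\NM_2$, and $\NM_2\circ\NM_1$, which I would present as a single commuting diagram and leave the routine verification to the reader.
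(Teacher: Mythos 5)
Your proposal is correct and follows essentially the same route as the paper: well-definedness via homotopy invariance (isotopy) and the compatibility of Thom classes with direct sums and open embeddings (lifting), then multiplicativity by unwinding the three-stage factorisation of the composite open embedding. The paper packages the multiplicativity step slightly differently — it first observes that the construction extends to triples with non-trivial bundles and that lifting along \emph{arbitrary} (not just trivial) $\Grd$-vector bundles leaves $\NM!$ unchanged, and then notes that passing from $\NM_2!\circ\NM_1!$ to $(\NM_2\circ\NM_1)!$ amounts to deleting the identity map $\zers{\VB}!\circ\proj{\VB}!$ in the middle — but the commuting square you exhibit encodes exactly the same naturality that underlies that observation.
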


\begin{proof}
  The construction of~\(\NM!\) does not require the vector bundles involved to be trivial or subtrivial: it still works for triples \((\VB_\Source,\VB_\Target,\hat{f})\) where \(\VB_\Source\) and~\(\VB_\Target\) are \(\Coh\)\nb-oriented \(\Grd\)\nb-vector bundles over~\(\Source\) and~\(\Target\) and~\(\hat{f}\) is an open embedding \(\total{\VB_\Source}\opem\total{\VB_\Target}\).  Since the definition of the composition product involves lifting a normally non-singular map to a triple of this form, it is useful to treat these more general objects.

  Let \(\NM_j\defeq (\VB_{\Source}, \VB_{\Target}, \hat{f}_j)\) for \(j=1,2\) be triples as above with isotopic maps \(\hat{f}_1\) and~\(\hat{f}_2\).  Then \(\NM_1!=\NM_2!\) if~\(\NM_1\) is isotopic to~\(\NM_2\) because~\(\Coh^*\) is homotopy invariant.

  If \(\VB_1\) and~\(\VB_2\) are \(\Grd\)\nb-vector bundles over the same space~\(\Tot\), then the Thom isomorphism for \(\VB_1\oplus\VB_2\) is the product of the Thom isomorphisms \(\Coh^*_\Base(\Tot) \cong \Coh^*_\Base(\total{\VB_1})\) for~\(\VB_1\) and \(\Coh^*_\Base(\total{\VB_1}) \cong \Coh^*_\Base(\total{\VB_1}\oplus\total{\VB_2})\) for \(\proj{\VB_1}^*(\VB_2)\) by the proof of Lemma~\ref{lem:orient_sum}.

  Using this and the naturality of the Thom isomorphism, we conclude that lifting does not change the wrong-way element -- even lifting along non-trivial \(\Grd\)\nb-vector bundles.  Thus \(\NM_1!=\NM_2!\) if \(\NM_1\) and~\(\NM_2\) are equivalent.  This shows that \(\NM\mapsto\NM!\) well-defines a map on equivalence classes of normally non-singular maps.

  It is clear that \(\Id!=\Id\).  The product of two normally non-singular maps involves lifting both factors -- one of them along a non-trivial vector bundle -- and then composing the open embeddings involved.  Functoriality for open embeddings is easy.  We have just observed that the lifting step does not alter the wrong-way elements.  In the second step, the effect of composing is to replace the map
  \[
  \zers{\VB}!\circ \proj{\VB}!\colon
  \Coh^*_\Base(\total{\VB})\to
  \Coh^*_\Base(\Tot) \to \Coh^*_\Base(\total{\VB})
  \]
  for a \(\Grd\)\nb-vector bundle~\(\VB\) over~\(\Tot\) by the identity map on \(\Coh^*_\Base(\total{\VB})\).  Since \(\zers{\VB}!\) and \(\proj{\VB}!\) are the Thom isomorphism for~\(\VB\) and its inverse, we get functoriality.
\end{proof}

\section{Normally non-singular maps and index theory}
\label{sec:index}

Normally non-singular maps formalise the construction of \emph{topological index maps} by Atiyah and Singer in~\cite{Atiyah-Singer:I}.  In Kasparov's bivariant \(\K\)\nb-theory, we may also construct \emph{analytic index maps} for smooth maps that are not necessarily normally non-singular.  Certain index theorems compare both constructions.  Theorem~\ref{the:normal_map_unique} is a prerequisite for such results because it asserts that the smooth maps for which the analytic index map is defined have an essentially unique topological index associated to them.

Let~\(\Grd\) be a proper groupoid, let \(\Source\) and~\(\Target\) be smooth \(\Grd\)\nb-manifolds, and let \(f\colon \Source \to \Target\) be a \(\Grd\)\nb-equivariant, fibrewise smooth, \(\K\)\nb-oriented map.  We want to define a wrong-way map \(f!\in \KK^\Grd_*\bigl(\CONT_0(\Source),\CONT_0(\Target)\bigr)\) associated to~\(f\).  If~\(f\) were a \emph{normally non-singular} map, we could use the construction in Section~\ref{sec:wrong-way_normal} for this purpose, because it only uses Thom isomorphisms and wrong-way functoriality for open embeddings, which do provide classes in \(\KK^\Grd\).  In general, the construction of~\(f!\) follows~\cite{Connes-Skandalis:Longitudinal} and requires a factorisation of~\(f\) through a \(\K\)\nb-oriented smooth embedding and a \(\K\)\nb-oriented smooth submersion.

Let~\(\VB\) be a \(\Grd\)\nb-vector bundle over~\(\Source\) such that \(\VB\oplus f^*(\Tvert\Target)\) is \(\K\)\nb-oriented.  We may factor~\(f\) as \(f=\pi_\Target\circ f'\) with the coordinate projection \(\pi_\Target\colon \total{\VB}\times_\Base\Target\to\Target\) and
\[
f'\colon \Source\to \total{\VB}\times_\Base\Target,\qquad
\source\mapsto \bigl(\zers{\VB}(\source),f(\source)\bigr).
\]
The map~\(f'\) is a smooth immersion with \(\K\)\nb-oriented normal bundle \(\VB\oplus f^*(\Tvert\Target)\) and has a tubular neighbourhood by Theorem~\ref{the:tubular_for_embedding}.  Hence the Thom isomorphism for \(\VB\oplus f^*(\Tvert\Target)\) and the functoriality of \(\KK\) for open embeddings provide a wrong-way element \(f'!\in \KK^\Grd_*\bigl(\CONT_0(\Source),\CONT_0(\total{\VB}\times_\Base\Target)\bigr)\).  The same construction is used in Section~\ref{sec:wrong-way_normal} for smooth normally non-singular maps.  In fact, \(f'\) is the trace of a smooth normally non-singular embedding, and the latter is unique up to equivalence by the proof of Theorem~\ref{the:normal_map_unique}.

It remains to define a wrong-way element for \(\pi_\Target\colon \total{\VB}\times_\Base\Target\to\Target\), which is a smooth submersion.  Its normal bundle is the pull-back of the \(\Grd\)\nb-vector bundle \(\VB\oplus \Tvert\Source\) on~\(\Source\), which is \(\K\)\nb-oriented because \(\VB\oplus \Tvert\Target\) and the map~\(f\) are \(\K\)\nb-oriented.

Since it only remains to study maps like~\(\pi_\Target\), we assume from now on that \(f\colon \Source \to \Target\) be a \(\Grd\)\nb-equivariant, \(\K\)\nb-oriented, fibrewise smooth submersion.  Then~\(\Source\) may be regarded as a \(\Grd\ltimes\Target\)-manifold, since its fibres are smooth \(\Grd\)\nb-manifolds.  Let \(\Tvert_f\Source\) denote the vertical tangent space of this \(\Grd\ltimes \Target\)-manifold; thus~\(\Tvert_f\Source\) is the tangent bundle to the fibres of~\(f\).  It is \(\Grd\ltimes\Target\)\nb-equivariantly \(\K\)\nb-oriented by assumption.

The family of Dirac operators along the fibres of~\(f\) now provides an element
\[
D_f\in \KK^{\Grd\ltimes \Target}_{\dim (\Target)-\dim (\Source)}\bigl(\CONT_0(\Source), \CONT_0(\Target)\bigr),
\]
see~\cite{Connes-Skandalis:Longitudinal}, or~\cite{Emerson-Meyer:Dualities} for a discussion of the groupoid-equivariant case.  We interpret this as an \emph{analytic} wrong-way element associated to the \(\K\)\nb-oriented submersion~\(f\).

If we also assume~\(f\) to be proper, so that composition with~\(f\) provides an equivariant \(^*\)\nb-homomorphism \(f_*\colon \CONT_0(\Target)\to\CONT_0(\Source)\), then we can form the index
\[
\Ind (D_f) \defeq f_*(D_f)
\in \KK^{\Grd\ltimes\Target}_* \bigl(\CONT_0(\Target), \CONT_0(\Target)\bigr)
\cong \RK^*_\Grd (\Target)
\]
(the last isomorphism is from~\cite{Emerson-Meyer:Equivariant_K}).  More generally, we may compose~\(D_f\) with classes in
\[
\RK^*_{\Grd,\Target}(\Source) \defeq
\KK^{\Grd\ltimes \Target}_*\bigl(\CONT_0(\Target), \CONT_0(\Source)\bigr),
\]
equivariant \(\K\)\nb-theory classes on~\(\Source\) with \(\Target\)\nb-compact support, to get elements of \(\RK^*_\Grd (\Target)\).  An equivariant families index theorem is supposed to compute these elements of \(\RK^*_\Grd (\Target)\) or, even better, the class of~\(D_f\) in \(\KK^{\Grd\ltimes \Target}_*\bigl(\CONT_0(\Source), \CONT_0(\Target)\bigr)\), in topological terms from the given data~\(f\).

A \(\K\)\nb-oriented normal factorisation of~\(f\), that is, a \(\K\)\nb-oriented \(\Grd\ltimes \Target\)-equivariant normally non-singular map \((\VB, \Triv, \hat{f})\) with trace~\(f\), provides such a topological formula:
\[
f!\defeq \zers{\VB}\otimes_{\CONT_0(\total{\VB})}
\hat{f}\otimes_{\CONT_0(\Triv^\Source)} \proj{\Triv^\Source}
\in \KK^{\Grd\ltimes \Target}_{\dim (\Target)-\dim (\Source)}\bigl(\CONT_0(\Source), \CONT_0(\Target)\bigr).
\]
This construction is parallel to the Atiyah--Singer topological index and therefore deserves to be called the \emph{topological index} of~\(f\).  The Index Theorem in this case simply states that \(f! = D_f\) in \(\KK^{\Grd\ltimes \Target}_*\bigl(\CONT_0(\Source), \CONT_0(\Target)\bigr)\).  This result may be proved by following a similar argument in~\cite{Connes-Skandalis:Longitudinal} in the non-equivariant case.

Before we sketch this, we return to wrong-way elements.  If~\(f\) is a general smooth \(\K\)\nb-oriented map, then we may factor \(f=\pi_\Target\circ f'\) as above and let~\(f!_\an\) be the Kasparov product of~\(D_{\pi_\Target}\) and~\(f'!\).  This is the \emph{analytic wrong-way element} of a smooth \(\K\)\nb-oriented map.

\begin{theorem}
  \label{the:index_theorem}
  The map \(f\mapsto f!_\an\) is a functor from the category of smooth \(\K\)\nb-oriented maps to \(\KK^\Grd\) and satisfies \(f!_\an=D_f\) for any smooth submersion~\(f\).  If~\(f\) is the trace of a smooth normally non-singular map~\(\hat{f}\), then \(f!_\an = \hat{f}!\).  In particular, if a \(\K\)\nb-oriented submersion~\(f\) is the trace of a smooth normally non-singular map~\(\hat{f}\), then \(\hat{f}!= D_f\).
\end{theorem}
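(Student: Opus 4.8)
The plan is to deduce the theorem from the wrong-way functoriality of Connes and Skandalis \cite{Connes-Skandalis:Longitudinal}, in the groupoid-equivariant form underlying the discussion of \(D_f\) in \cite{Emerson-Meyer:Dualities}, and then to compare the analytic element \(f!_\an\) with the topological construction of Section~\ref{sec:wrong-way_normal}. The first and main assertion amounts to saying that \(f\mapsto f!_\an\) is a well-defined functor that restricts to \(i\mapsto i!\) on \(\K\)\nb-oriented smooth embeddings and to \(\pi\mapsto D_\pi\) on \(\K\)\nb-oriented smooth submersions; granting this, the remaining statements follow quickly. So the first step is to recall the structure of that argument, and the last steps are the comparison with normally non-singular maps.

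For well-definedness one must check that the Kasparov product \(i!\otimes_{\CONT_0(\Midd)} D_\pi\) attached to a \(\K\)\nb-oriented embedding--submersion factorisation \(\Source\xrightarrow{i}\Midd\xrightarrow{\pi}\Target\) of \(f\) is independent of the factorisation. The ingredients are: a tubular neighbourhood of \(i\) is unique up to isotopy --~the equivariant version of this is the last step of the proof of Theorem~\ref{the:normal_map_unique}~-- so \(i!\) is well defined by homotopy invariance of \(\KK^\Grd\); enlarging the auxiliary vector bundle \(\VB\) in the factorisation \(f=\pi_\Target\circ f'\) of the index section replaces \(i!\) by its product with a Thom class and \(D_{\pi_\Target}\) by the product of \(D_{\pi_\Target}\) with the inverse of the Thom class of the same bundle --~the family of Dirac operators along the fibres of a \(\K\)\nb-oriented vector bundle projection is the inverse of its Thom class, by the \(\KK\)\nb-theoretic Thom isomorphism theorem~-- and the Thom class of a direct sum is the product of the Thom classes of the summands, exactly as in the proof of Lemma~\ref{lem:orient_sum}, so the two changes cancel. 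Combined with multiplicativity of \(D\) for composites of submersions, this yields independence of the factorisation, hence well-definedness of \(f!_\an\); taking \(\VB=0\) for an embedding gives \(i!_\an=i!\), the tautological factorisation \(\pi=\pi\circ\Id\) gives \(\pi!_\an=D_\pi\), and in particular \(f!_\an=D_f\) for every \(\K\)\nb-oriented smooth submersion \(f\).

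Functoriality \((g\circ f)!_\an = f!_\an\otimes_{\CONT_0(\Target)}g!_\an\) then follows by a diagram chase from three building-block facts, all proved as in \cite{Connes-Skandalis:Longitudinal}: \(D\) is multiplicative for composites of \(\K\)\nb-oriented submersions; \(i\mapsto i!\) is multiplicative for composites of \(\K\)\nb-oriented embeddings, using the Thom isomorphism and functoriality of \(\KK^\Grd\) for open embeddings; and in a cartesian square in which a \(\K\)\nb-oriented embedding \(i\) is pulled back along a \(\K\)\nb-oriented submersion \(\pi\) to an embedding \(j\) (and \(\pi\) pulls back to a submersion \(q\)), the analytic submersion element and the topological embedding element commute: \(D_q\otimes i! = j!\otimes D_\pi\). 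I expect this last ``exchange'' identity to be the main obstacle: it is where special features of Kasparov theory are genuinely used, and, as the authors remark, its proof is essentially equivalent to the functoriality of analytic wrong-way maps; in the groupoid-equivariant generality one must in addition keep track of the \(\Grd\ltimes\Target\)\nb-equivariance of \(D_{\pi_\Target}\) and of its image under the forgetful functor \(\KK^{\Grd\ltimes\Target}\to\KK^\Grd\).

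Finally, let \(f\) be the trace of a smooth normally non-singular \(\Grd\)\nb-map \(\hat f=(\VB,\Triv,\hat g)\). Then \(f'\defeq\hat g\circ\zers{\VB}\colon\Source\to\total{\Triv^\Target}\) is a smooth \(\K\)\nb-oriented embedding whose normal bundle is canonically \(\VB\) (because \(\hat g\) is an open embedding) and for which \(\hat g\) itself is a tubular neighbourhood, while \(\proj{\Triv^\Target}\colon\total{\Triv^\Target}\to\Target\) is a \(\K\)\nb-oriented vector bundle projection, hence a smooth submersion, and \(f=\proj{\Triv^\Target}\circ f'\). Computing \(f!_\an\) from this factorisation: \(f'!\) is the Thom isomorphism \(\zers{\VB}!\) for \(\VB\) followed by the \(^*\)\nb-homomorphism \(\hat g_*\) induced by the open embedding, and \(D_{\proj{\Triv^\Target}}\) is the inverse Thom isomorphism \(\proj{\Triv^\Target}!\); their product is by definition the topological wrong-way element \(\hat f!\) of this section. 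By the independence of factorisation established above, \(f!_\an=\hat f!\); combined with \(f!_\an=D_f\) for \(\K\)\nb-oriented submersions this proves the ``in particular'' statement. Theorem~\ref{the:normal_map_unique} together with the invariance of \(\hat f!\) under smooth equivalence --~which holds for the same reasons as in Theorem~\ref{the:wrong-way_functor}~-- shows moreover that \(\hat f!\) does not depend on the chosen normally non-singular structure with trace \(f\).
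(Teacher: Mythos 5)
Your proposal is correct and follows the same Connes--Skandalis blueprint that the paper sketches, but it organises the crucial functoriality step differently. You propose to prove, as a separate building-block lemma, the classical ``exchange'' identity \(D_q \otimes i! = j! \otimes D_\pi\) in a cartesian square where a \(\K\)\nb-oriented embedding~\(i\) is pulled back along a \(\K\)\nb-oriented submersion~\(\pi\), and you correctly flag this as the technically demanding step where the full apparatus of Kasparov theory enters. The paper's proof sidesteps the exchange lemma altogether by a short algebraic reduction: after factoring through zero sections and vector bundle projections, what must be shown is that \((f_1''\circ f_2')!_\an\) (immersion after submersion) equals \(D_{f_2'} \otimes f_1''!\); composing both sides with the inverse Thom element \(D_{\pi_1}\) of the projection \(\pi_1\) left-inverse to the zero section \(f_1''\), and using \(\pi_1\circ f_1''\circ f_2' = f_2'\), this reduces first to functoriality when the outer map is a submersion and finally to multiplicativity of \(D\) for composites of submersions, which is the one fact imported from~\cite{Connes-Skandalis:Longitudinal}. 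What each approach buys: the paper's reduction needs only multiplicativity of Dirac elements for submersions, the Thom isomorphism, and functoriality for open embeddings, so it transfers to the equivariant groupoid setting with no extra work; your route is closer to the original Connes--Skandalis argument and makes the use of Kasparov's technical apparatus more visible, at the cost of having to establish the exchange identity as an independent input. Your remaining observations (well-definedness of \(f!_\an\) via Thom cancellations, the direct verification that the submersion--immersion factorisation attached to a normally non-singular map \(\hat f\) gives \(f!_\an = \hat f!\) literally, and hence \(\hat f! = D_f\) for submersions) match the paper's steps, merely in a more explicit order.
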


In the non-equivariant case, this is established in~\cite{Connes-Skandalis:Longitudinal}.  The argument easily adapts to the equivariant case.  We merely sketch some of the steps.

\begin{proof}
  The main step of the argument is to show that \(D_{f_1\circ f_2}\) is the Kasparov product of \(D_{f_1}\) and~\(D_{f_2}\) if \(f_1\) and~\(f_2\) are composable \(\K\)\nb-oriented submersions.  This also implies that \(D_f = f!\) for any \(\K\)\nb-oriented submersion~\(f\): the factorisation above merely introduces a Thom isomorphism and its inverse, which cancel.  We also get that \((f_1\circ f_2)!_\an\) is the product of the topological wrong-way element~\(f_2!\) with~\(D_{f_1}\) whenever~\(f_1\) is a smooth submersion and~\(f_2\) is a smooth immersion.  This implies easily that analytic wrong-way elements are functorial for smooth immersions.

  It is clear that the analytic and topological wrong-way elements coincide for zero sections of vector bundles, vector bundle projections, and open embeddings: in the first two cases, both constructions use the Thom isomorphism and its inverse.  Hence they coincide for normally non-singular immersions and special normally non-singular submersions.  We have already seen that topological wrong-way elements are functorial.  The equality of analytic and topological wrong-way elements follows once the analytic wrong-way elements are functorial as well.

  Now consider composable maps \(f_1\colon \Target\to\Third\) and \(f_2\colon \Source\to\Target\) and factor them through smooth immersions and submersions as \(f_j=f_j'\circ f_j''\) with smooth submersions~\(f_j'\) and smooth immersions~\(f_j''\) for \(j=1,2\).  We may even assume that~\(f_j''\) is the zero section of a \(\Grd\)\nb-vector bundle and let~\(\pi_j\) be the corresponding vector bundle projection.  Since we already know functoriality of wrong-way elements for immersions and submersions separately, it remains to prove that the Kasparov product of \(f_1''!\) and~\(D_{f_2'}\) equals \((f_1''\circ f_2')!_\an\).  Since~\(f_1''!\) is a Thom isomorphism, it is invertible and we lose nothing if we compose both sides with its inverse~\(D_{\pi_1}\).  This reduces the issue to \(D_{\pi_1}\circ (f_1''\circ f_2')!_\an = D_{f_2'}\).  Since \(\pi_1\circ f_1''\circ f_2' = f_2'\), we conclude that functoriality of analytic wrong-way elements holds in general once it holds if the first map is a smooth submersion.  By definition, this reduces further to the case where both maps are smooth submersions, which we already know.
\end{proof}

The equality \(D_f=f!\) for \(\K\)\nb-oriented submersions actually follows immediately from the duality isomorphisms in \cites{Emerson-Meyer:Dualities, Emerson-Meyer:Correspondences}.  This means that all the analysis needed to prove the index theorem is already embedded in the proof of the duality isomorphisms.  The argument goes as follows.  Let \(f\colon \Source\to\Target\) be a \(\Grd\)\nb-equivariant, \(\K\)\nb-oriented, fibrewise smooth submersion.  Replacing~\(\Grd\) by \(\Grd\ltimes\Target\), we may assume that \(\Target=\Base\).  A duality isomorphism is always based on two bivariant \(\K\)\nb-theory classes, Dirac and local dual Dirac.  The Dirac elements for the dualities in~\cite{Emerson-Meyer:Dualities} are the analytic Dirac element~\(D_f\) and the topological wrong-way element~\(f!\), respectively.  The local dual Dirac element is the wrong-way element associated to the diagonal embedding \(\Source\to \Source\times_\Base\Source\).  Since this is a smooth immersion, it makes no difference here whether we work analytically or topologically.  The Dirac and local dual Dirac elements \(\Theta\) and~\(D\) for a duality isomorphism determine each other uniquely by the equation \(\Theta \otimes_\Source D = 1\) in \(\KK^{\Grd\ltimes\Source}\bigl(\CONT_0(\Source),\CONT_0(\Source)\bigr)\).  Since both duals for~\(\Source\) use the same local dual Dirac element, they must both involve the same Dirac element, that is, \(D_f=f!\).

This shows that all the analysis required to prove the Index Theorem is already embedded in the proofs of the duality isomorphisms; these only use the functoriality of analytic Dirac elements~\(f!\) with respect to \emph{open embeddings}, the homotopy-invariance of the construction (independence of the choice of Riemnannian metric), and the Thom isomorphism.

The Index Theorem~\ref{the:index_theorem}, \(f!_\an=f!\), only makes sense for smooth maps with a normal factorisation.  For example, let~\(\Grd_A\) be as in Example~\ref{exa:not_enough_vb} for a hyperbolic matrix \(A\in\Gl(2, \Z)\), and let~\(\Source\) be a smooth \(\Grd\)\nb-manifold with at least some morphism in~\(\Grd_A\) acting non-trivially on~\(\Source\).  Assume that~\(\Source\) is \(\Grd_A\)\nb-equivariantly \(\K\)\nb-oriented.  Then there is a Dirac class~\(D_\Source\) along the fibres of the anchor map \(\Source\to \Base\), but~\(\Source\) admits no smooth embedding in an equivariant vector bundle over~\(\Base\).  Hence there is no topological model~\(f!\) for~\(D_f\).  For instance, \(\Grd_A\) acts on itself, so we can get the example \(\Source \defeq \Grd_A\).  The fibres here are complex tori \(\Torus^2 \cong \C/\Z^2\), and the action by translations preserves the complex structure.  Hence~\(\Grd_A\) carries a \(\Grd_A\)\nb-equivariant complex structure.  The fibrewise Dolbeault operator poses an index problem for which it seems unclear how to define the equivariant topological index.

\begin{bibdiv}
  \begin{biblist}
\bib{Abels:Universal}{article}{
  author={Abels, Herbert},
  title={A universal proper \(G\)\nobreakdash -space},
  journal={Math. Z.},
  volume={159},
  date={1978},
  number={2},
  pages={143--158},
  review={\MRref {0501039}{58\,\#18504}},
}

\bib{Atiyah-Singer:I}{article}{
  author={Atiyah, Michael F.},
  author={Singer, Isadore M.},
  title={The index of elliptic operators. I},
  journal={Ann. of Math. (2)},
  volume={87},
  date={1968},
  pages={484--530},
  issn={0003-486X},
  review={\MRref {0236950}{38\,\#5243}},
}

\bib{Baum-Block:Bicycles}{article}{
  author={Baum, Paul},
  author={Block, Jonathan},
  title={Equivariant bicycles on singular spaces},
  language={English, with French summary},
  journal={C. R. Acad. Sci. Paris S\'er. I Math.},
  volume={311},
  date={1990},
  number={2},
  pages={115--120},
  issn={0764-4442},
  review={\MRref {1065441}{92b:19003}},
}

\bib{Baum-Douglas:K-homology}{article}{
  author={Baum, Paul},
  author={Douglas, Ronald G.},
  title={\(K\)-Homology and index theory},
  conference={ title={Operator algebras and applications, Part I}, address={Kingston, Ont.}, date={1980}, },
  book={ series={Proc. Sympos. Pure Math.}, volume={38}, publisher={Amer. Math. Soc.}, place={Providence, R.I.}, },
  date={1982},
  pages={117--173},
  review={\MRref {679698}{84d:58075}},
}

\bib{Baum-Higson-Schick:Equivalence}{article}{
  author={Baum, Paul},
  author={Higson, Nigel},
  author={Schick, Thomas},
  title={On the equivalence of geometric and analytic $K$\nobreakdash -homology},
  journal={Pure Appl. Math. Q.},
  volume={3},
  date={2007},
  number={1},
  pages={1--24},
  issn={1558-8599},
  review={\MRref {2330153}{2008d:58015}},
}

\bib{Bourbaki:Topologie_generale}{book}{
  author={Bourbaki, Nicolas},
  title={\'El\'ements de math\'ematique. Topologie g\'en\'erale. Chapitres 1 \`a 4},
  publisher={Hermann},
  place={Paris},
  date={1971},
  pages={xv+357 pp. (not consecutively paged)},
  review={\MRref {0358652}{50\,\#11111}},
}

\bib{Connes:Survey_foliations}{article}{
  author={Connes, Alain},
  title={A survey of foliations and operator algebras},
  conference={ title={Operator algebras and applications, Part I}, address={Kingston, Ont.}, date={1980}, },
  book={ series={Proc. Sympos. Pure Math.}, volume={38}, publisher={Amer. Math. Soc.}, place={Providence, R.I.}, },
  date={1982},
  pages={521--628},
  review={\MRref {679730}{84m:58140}},
}

\bib{Connes-Skandalis:Longitudinal}{article}{
  author={Connes, Alain},
  author={Skandalis, Georges},
  title={The longitudinal index theorem for foliations},
  journal={Publ. Res. Inst. Math. Sci.},
  volume={20},
  date={1984},
  number={6},
  pages={1139--1183},
  issn={0034-5318},
  review={\MRref {775126}{87h:58209}},
}

\bib{Emerson-Meyer:Equivariant_K}{article}{
  title={Equivariant representable K-theory},
  journal={J. Topol.},
  volume={2},
  date={2009},
  number={1},
  pages={123--156},
  issn={1753-8416},
  review={\MRref {2499440}{}},
}

\bib{Emerson-Meyer:Dualities}{article}{
  author={Emerson, Heath},
  author={Meyer, Ralf},
  title={Dualities in equivariant Kasparov theory},
  date={2009},
  note={\arxiv {0711.0025v2}},
  status={eprint},
}

\bib{Emerson-Meyer:Correspondences}{article}{
  author={Emerson, Heath},
  author={Meyer, Ralf},
  title={Bivariant K\nobreakdash -theory via correspondences},
  date={2009},
  note={\arxiv {0812.4949}},
  status={eprint},
}

\bib{Hirsch:Diff_Top}{book}{
  author={Hirsch, Morris W.},
  title={Differential topology},
  series={Graduate Texts in Mathematics},
  volume={33},
  publisher={Springer-Verlag},
  place={New York},
  date={1976},
  pages={x+221},
  review={\MRref {0448362}{56\,\#6669}},
}

\bib{Kankaanrinta:Embeddings}{article}{
  author={Kankaanrinta, Marja},
  title={On embeddings of proper smooth \(G\)\nobreakdash -manifolds},
  journal={Math. Scand.},
  volume={74},
  date={1994},
  number={2},
  pages={208--214},
  issn={0025-5521},
  review={\MRref {1298362}{95h:57045}},
}

\bib{Kasparov-Skandalis:Bolic}{article}{
  author={Kasparov, Gennadi G.},
  author={Skandalis, Georges},
  title={Groups acting properly on ``bolic'' spaces and the Novikov conjecture},
  journal={Ann. of Math. (2)},
  volume={158},
  date={2003},
  number={1},
  pages={165--206},
  issn={0003-486X},
  review={\MRref {1998480}{2004j:58023}},
}

\bib{Lueck-Oliver:Completion}{article}{
  author={L\"uck, Wolfgang},
  author={Oliver, Bob},
  title={The completion theorem in \(K\)\nobreakdash -theory for proper actions of a discrete group},
  journal={Topology},
  volume={40},
  date={2001},
  number={3},
  pages={585--616},
  issn={0040-9383},
  review={\MRref {1838997}{2002f:19010}},
}

\bib{Meyer-Nest:BC}{article}{
  author={Meyer, Ralf},
  author={Nest, Ryszard},
  title={The Baum--Connes conjecture via localisation of categories},
  journal={Topology},
  volume={45},
  date={2006},
  number={2},
  pages={209--259},
  issn={0040-9383},
  review={\MRref {2193334}{2006k:19013}},
}

\bib{Milnor:Microbundles}{article}{
  author={Milnor, John W.},
  title={Microbundles. I},
  journal={Topology},
  volume={3},
  date={1964},
  number={suppl. 1},
  pages={53--80},
  issn={0040-9383},
  review={\MRref {0161346}{28\,\#4553b}},
}

\bib{Mostow:Equivariant_embeddings}{article}{
  author={Mostow, George Daniel},
  title={Equivariant embeddings in Euclidean space},
  journal={Ann. of Math. (2)},
  volume={65},
  date={1957},
  pages={432--446},
  issn={0003-486X},
  review={\MRref {0087037}{19,291c}},
}

\bib{Nistor-Troitsky:Thom_gauge}{article}{
  author={Nistor, Victor},
  author={Troitsky, Evgenij},
  title={The Thom isomorphism in gauge-equivariant $K$\nobreakdash -theory},
  conference={ title={$C^*$\nobreakdash -algebras and elliptic theory}, },
  book={ series={Trends Math.}, publisher={Birkh\"auser}, place={Basel}, },
  date={2006},
  pages={213--245},
  review={\MRref {2276922}{2008d:19004}},
}

\bib{Palais:Imbedding}{article}{
  author={Palais, Richard S.},
  title={Imbedding of compact, differentiable transformation groups in orthogonal representations},
  journal={J. Math. Mech.},
  volume={6},
  date={1957},
  pages={673--678},
  review={\MRref {0092927}{19,1181e}},
}

\bib{Palais:Slices}{article}{
  author={Palais, Richard S.},
  title={On the existence of slices for actions of non-compact Lie groups},
  journal={Ann. of Math. (2)},
  volume={73},
  date={1961},
  pages={295--323},
  issn={0003-486X},
  review={\MRref {0126506}{23\,\#A3802}},
}

\bib{Paterson:Groupoids}{book}{
  author={Paterson, Alan L. T.},
  title={Groupoids, inverse semigroups, and their operator algebras},
  series={Progress in Mathematics},
  volume={170},
  publisher={Birkh\"auser Boston Inc.},
  place={Boston, MA},
  date={1999},
  pages={xvi+274},
  isbn={0-8176-4051-7},
  review={\MRref {1724106}{2001a:22003}},
}

\bib{Raven:Thesis}{thesis}{
  author={Raven, Jeff},
  title={An equivariant bivariant Chern character},
  institution={Pennsylvania State University},
  type={phdthesis},
  date={2004},
  note={electronically available at \href {http://etda.libraries.psu.edu/theses/approved/WorldWideFiles/ETD-723/dissertation.pdf}{the Pennsylvania Digital Library}},
}

\bib{Saavedra:Tannakiennes}{book}{
  author={Saavedra Rivano, Neantro},
  title={Cat\'egories Tannakiennes},
  language={French},
  series={Lecture Notes in Mathematics},
  volume={265},
  publisher={Springer-Verlag},
  place={Berlin},
  date={1972},
  pages={ii+418},
  review={\MRref {0338002}{49\,\#2769}},
}

\bib{Spivak:Diffgeo1}{book}{
  author={Spivak, Michael},
  title={A comprehensive introduction to differential geometry. Vol. One},
  publisher={Published by M. Spivak, Brandeis Univ., Waltham, Mass.},
  date={1970},
  pages={iii+656 pp. (not consecutively paged) paperbound},
  review={\MRref {0267467}{42\,\#2369}},
}

\bib{Tu:Novikov}{article}{
  author={Tu, Jean-Louis},
  title={La conjecture de Novikov pour les feuilletages hyperboliques},
  language={French, with English and French summaries},
  journal={\(K\)\nobreakdash -Theory},
  volume={16},
  date={1999},
  number={2},
  pages={129--184},
  issn={0920-3036},
  review={\MRref {1671260}{99m:46163}},
}

\bib{Tu:Non-Hausdorff}{article}{
  author={Tu, Jean-Louis},
  title={Non-Hausdorff groupoids, proper actions and $K$\nobreakdash -theory},
  journal={Doc. Math.},
  volume={9},
  date={2004},
  pages={565--597},
  issn={1431-0635},
  review={\MRref {2117427}{2005h:22004}},
}
  \end{biblist}
\end{bibdiv}
\end{document}